\newtheorem{lemma}{Lemma}[section]
\newtheorem{theorem}{Theorem}[section]
\newtheorem{proposition}{Proposition}[section]
\newtheorem{remark}{Remark}[section]
\numberwithin{equation}{section}
\newcommand{\R}{\mathbb{R}}
\newcommand{\T}{\mathbb{T}}
\newcommand{\FI}{\mathbf{I}}
\newcommand{\na}{\nabla}
\newcommand{\al}{\alpha}
\newcommand{\ga}{\gamma}
\newcommand{\om}{\omega}
\newcommand{\lag}{\langle}
\newcommand{\rag}{\rangle}
\newcommand{\md}{\mathrm{d}}
\begin{document}

\title[VPB system in the weakly collisonal regime]{The non-cutoff Vlasov-Poisson-Boltzmann system with weak collisions}

\author[Y.-J. Lei]{Yuanjie Lei}
\address[YJL]{School of Mathematics and Statistics, Huazhong University of Science and Technology, Wuhan 430074, China; Hubei Key Laboratory of Engineering Modeling and Scientific Computing, Huazhong University of Science and Technology, Wuhan 430074, China}
\email{leiyuanjie@hust.edu.cn}

\author[S.-Q. Liu]{Shuangqian Liu}
\address[SQL]{School of Mathematics and Statistics, Central China Normal University, Wuhan 430079, China; Hubei Key Laboratory of Mathematical Sciences, Central China Normal University, Wuhan 430079, China}
\email{sqliu@ccnu.edu.cn}

\author[Q.-H. Xiao]{Qinghua Xiao}
\address[QHX]{Innovation Academy for Precision Measurement Science and Technology, Chinese Academy of Sciences, Wuhan 430071, China}
\email{xiaoqh@apm.ac.cn}

\author[H.-J. Zhao]{Huijiang Zhao}
\address[HJZ]{School of Mathematics and Statistics, Wuhan University, Wuhan 430072, China; Computational Science Hubei Key Laboratory, Wuhan University, Wuhan 430072, China}
\email{hhjjzhao@whu.edu.cn}

\begin{abstract}
We prove global existence of smooth solutions near Maxwellians for the non-cutoff Vlasov-Poisson-Boltzmann system in the weakly collisional regime. To address the weak dissipation of the non-cutoff linearized Boltzmann operator, we develop a refined velocity-weighted energy framework combined with vector-field techniques to control the transport term, nonlinear collisions, and the self-consistent electric field. This approach yields uniform-in-time bounds, captures enhanced dissipation of the solution, and establishes Landau damping for both the density and electric field, providing the first global-in-time result of this type for the non-cutoff Vlasov-Poisson-Boltzmann system. Our approach is inspired by the recent work of Chaturvedi-Luk-Nguyen ({\it J. Amer. Math. Soc.} {\bf 36} (2023), no. 4, 1103--1189.)

{\bf Key Words:} The Vlasov-Poisson-Boltzmann system; non-cutoff; weakly collisional regime.
\end{abstract}

\maketitle
\thispagestyle{empty}
\tableofcontents

\section{Introduction and main results}

\setcounter{equation}{0}

The Vlasov-Poisson-Boltzmann (VPB) system is a fundamental kinetic model describing the dynamics of dilute charged particles subject to self-consistent electrostatic forces and binary collisions. It provides a rigorous framework to capture the interplay between long-range mean-field interactions, represented by the Vlasov-Poisson coupling, and short-range collisional effects, modeled by the Boltzmann collision operator. In plasma physics and rarefied gas dynamics, such a system is particularly relevant in regimes where collisions are infrequent but still play a non-negligible role in the evolution of the distribution function.

In this paper, we consider the following Cauchy problem of the non-cutoff VPB system
\begin{align}\label{VPB}
\begin{cases}
 \displaystyle \partial_tF+ v  \cdot\nabla_xF-\nabla_x\phi\cdot\nabla_{ v  }F=\nu Q(F,F),\quad (x,v)\in \mathbb{T}^3\times\mathbb{R}^3,\\[2mm]
 -\Delta_x\phi= \displaystyle{\int_{\mathbb{R}^3}}F\,\mathrm{d}v-\frac{1}{(2\pi)^3}\displaystyle{\int_{\T^3\times\mathbb{R}^3}}F\,\mathrm{d}v\mathrm{d}x,\ \int_{\T^3}\phi \md x=0,\\[2mm]
\displaystyle F(0,x,v)=F_{0}(x,v).
 \end{cases}
\end{align}
Here, $\nabla_x=\left[\partial_{x_1}, \partial_{x_2},\partial_{x_3}\right], \nabla_v=\left[\partial_{v_1}, \partial_{v_2},\partial_{v_3}\right]$.  The unknown function $F= F(t,x, v) \geq  0$ is the number density functions with position $x = (x_1, x_2, x_3)\in \mathbb{T}^3=[0,2\pi]^3$ and velocity $ v=( v_1,  v_2,  v_3) \in {\mathbb{R}}^3$ at time $t\geq 0$. The constant $\nu>0$ represents the strength of the collisional interaction.

The Boltzmann collision operator $Q$ in \eqref{VPB} is given by
\begin{equation*}
  Q(F,G)(v)
  =\int_{\mathbb{R}^3\times \mathbb{S}^2}{\bf B}(v-u,\sigma)\{F(u')G(v')-F(u)G(v)\}
  \,\mathrm{d}\sigma\mathrm{ d}u,
\end{equation*}
where in terms of velocities $u$ and $v$ before the collision, velocities  $v'$ and $u'$ after the collision are defined by
\begin{equation*}
v'=\frac{v+u}{2}+\frac{|v-u|}{2}\sigma,\ \ \ \ u'=\frac{v+u}{2}-\frac{|v-u|}{2}\sigma.
\end{equation*}

The Boltzmann collision kernel ${\bf B}(v-u,\sigma)$ depends only on the relative velocity $|v-u|$ and on the
deviation angle $\theta$ given by $\cos\theta =\sigma\cdot\ (v-u)/{|v-u|}$. 
Throughout the paper, the collision
kernel is further supposed to satisfy the following assumptions:

\begin{itemize}
\item[(A1).] ${\bf B}(v-u,\sigma)$ takes the product form in its argument as
$$
{\bf B}(v-u,\sigma)=\Phi(|v-u|){\bf b}(\cos\theta)
$$
with $\Phi$ and ${\bf b}$ being non-negative functions;

\item[(A2).] The angular function $\sigma\rightarrow {\bf b}(\sigma\cdot\ (v-u)/{|v-u|})$ is not integrable on ${\mathbb{S}}^2$, i.e.
\[
\int_{{\mathbb{S}}^2}{\bf b}(\cos\theta)\,\mathrm{d}\sigma=2\pi\int_0^{\pi/2}\sin\theta\ {\bf b}(\cos\theta)\,\mathrm{d}\theta=\infty.
\]
Moreover, there are two positive constants $c_b>0, 0<s<1$ such that
\[
\frac{c_b}{\theta^{1+2s}}\leq\sin\theta {\bf b}(\cos\theta)\leq \frac{1}{c_b\theta^{1+2s}};
\]

\item[(A3).] The kinetic function $z\rightarrow \Phi(|z|)$ satisfies
\[
\Phi(|z|)=C_\Phi|z|^\gamma
\]
for some positive constant $C_\Phi> 0.$ The exponent $\gamma>-3$ is determined by the intermolecular interaction law.
\end{itemize}

It is convenient to call hard potentials when $\gamma+2s\geq0$ and
soft potentials when $-3<\gamma<-2s$ with $0<s<1$. In this paper, we restrict our attention to the following range:
\begin{equation}\label{case}
\max\left\{-3,-\frac32-2s\right\}<\gamma<-2s, \ \  0< s<1.
\end{equation}
From the physical viewpoint, this regime corresponds to soft long-range interactions between particles, such as those arising from inverse power law forces.

\subsection{The problem}
Set the normalized global Maxwellian
$
  \mu=\mu(v)=(2\pi)^{-{3}/{2}}e^{-| v  |^2/2},
$
and introduce the perturbation $f=f(t,x, v)$ by
$
F(t,x,v) = \mu+\mu^{\frac12}f(t, x,  v).
$
With this formulation, the Cauchy problem for the VPB system \eqref{VPB} can be rewritten as
\begin{equation} \label{f}
\begin{cases}
  \displaystyle\partial_tf+ v  \cdot\nabla_xf-\nabla_x\phi\cdot\nabla_{ v  }f+\nabla_x\phi\cdot v \mu^{\frac12}+ \frac12\nabla_x\phi\cdot v f+\nu\mathcal{L} f=\nu {\Gamma}(f,f), \\[2mm]
\displaystyle -\Delta_x\phi=\int_{\mathbb{R}^3}\mu^{\frac12} f\mathrm{d}v-\frac{1}{(2\pi)^3}\int_{\T^3\times\R^3}\mu^{\frac12}f_0(x,v)\mathrm{d}x\mathrm{d}v,\ \int_{\T^3}\phi \md x=0,
\quad (x,v)\in \mathbb{T}^3\times\mathbb{R}^3,\\
\end{cases}
\end{equation}
with initial datum
\begin{equation}\label{f-initial}
f(0,x,v)=f_{0}(x,v).
\end{equation}
Here, the linear collision operator $\mathcal{L} f$ and nonlinear collision operator ${ \Gamma}(f,g)$ are defined as
\begin{equation*}
\begin{aligned}
\mathcal{L} f =& -{\bf \mu}^{-\frac12}
\big\{{Q( \mu,{\bf \mu}^{\frac12}f)+ Q( \mu^{\frac12}f, \mu)}\big\},\\[2mm]
{ \Gamma}(f,g) =&{\bf \mu}^{-\frac12}Q\big({\bf \mu}^{\frac12}f,{\bf \mu}^{\frac12}g\big)+{\bf \mu}^{-\frac12}Q\big({\bf \mu}^{\frac12}g,{\bf \mu}^{\frac12}f\big).
\end{aligned}
\end{equation*}

For the linearized Boltzmann collision operator $\mathcal{L}$, it is well known \cite{Gressman_Strain-JAMS-2011} that it is non-negative
  and the null space of $\mathcal{L}$ is given by
\begin{equation*}
{\mathcal{ N}}={\textrm{span}}\left\{\mu^{\frac12}, v_i{\mu}^{\frac12} (1\leq i\leq3),|v|^2{\bf \mu}^{\frac12}\right\}.
\end{equation*}
Define ${\bf P}$ as the orthogonal projection from $ L^2({\mathbb{R}}^3_ v)$ to $\mathcal{N}$. For any given function $f(t, x, v )\in L^2({\mathbb{R}}^3_ v)$, one has
\begin{equation*}
  {\bf P}f ={a(t, x)\mu^{\frac12}+\sum_{i=1}^{3}b_i(t, x) v_i{\mu}^{\frac12}+c(t, x)(| v|^2-3)}{\bf \mu}^{\frac12}
\end{equation*}
with
\begin{equation*}
  a=\int_{{\mathbb{R}}^3}{\bf \mu}^{\frac12}f\,\mathrm{d} v,\quad
  b_i=\int_{{\mathbb{R}}^3} v  _i {\bf \mu}^{\frac12}f\,\mathrm{d} v,\quad
  c=\frac{1}{6}\int_{{\mathbb{R}}^3}(| v|^2-3){\bf \mu}^{\frac12}f \,\mathrm{d} v.
\end{equation*}

Therefore, for the given global Maxwellian $\mu$, we have the following macro-micro decomposition  \cite{Guo-IUMJ-04}:
\begin{equation}
 f(t,x, v)={\bf P}f(t,x, v)+{\bf P}^{\bot}f(t, x, v),\notag
\end{equation}
where  ${\bf P}f$ and ${\bf P}^{\bot}f$ correspond to the macroscopic and the microscopic component of $f(t,x,v)$, respectively.

Notice that
\[\int_{\mathbb{R}^3}\psi(v)\cdot{\bf P}^{\bot}f\,\mathrm{d}v=0,
\ \ \forall \psi(v)\in\mathcal{ N}.\]
For any fixed $(t,x)\in \R^+\times\mathbb{T}^3$, $\mathcal{L} f$ is self-adjoint and is known to be locally coercive
in the following sense \cite{AMUXY-JFA-2012,Gressman_Strain-JAMS-2011}:
\[\langle \mathcal{L} f,f\rangle\gtrsim \left|{\bf P}^{\bot} f\right|^2_{D},\]
where $|\cdot|_{D}$ is defined by
\[|f|^2_{D}=|f|^2_{L^2_{\gamma+2s}}+\int_{\mathbb{R}^3\times\mathbb{R}^3}(\langle v\rangle\langle v'\rangle)^{\frac{\gamma+2s+1}2}\frac{(f(v)-f(v'))^2}{d(v,v')^{3+2s}}\chi_{\{(v,v')|d(v,v')\leq1\}}
\, \mathrm{d}v'\mathrm{d}v\]
with
\[\langle v\rangle=\sqrt{1+|v|^2},\ \
d(v,v')=\sqrt{|v-v'|+\frac14(|v|^2-|v'|^2)^2},\]
and $\chi_{\Omega}$ is the standard indicator function of the set $\Omega$. Note that \begin{equation*}
|f|_{L^2_{\gamma+2s}}+|f|_{H^s_\gamma} \lesssim |f|_{D}\lesssim|f|_{H^s_{\gamma+2s}}.
\end{equation*}

Our main goal of this manuscript is to construct a global in time classical solution to the Cauchy problem \eqref{f} and \eqref{f-initial} for any $\nu>0$ under the condition \eqref{case}.
 \subsection{The notations}
Before stating our main results, we first introduce some notations used throughout the paper.\begin{itemize}
	\item $Y=t\nabla_{x}+\nabla_{v}$ is a commuting vector field: $[Y,\partial_t+v\cdot \nabla_{x}]=0$.
  \item
  $C$ denotes some positive constant (generally large) and $\eta,~\delta$ denote
some positive constant (generally small), where  $C$, $\eta$ and $\delta$ may take different values in different places.
\item
$A\lesssim B$ means that there is a generic constant $C> 0$ such that $A \leq   CB$. $A \sim B$ means $A\lesssim B$ and $B\lesssim A$.
\item The multi-indices $ \alpha= [\alpha_1,\alpha_2, \alpha_3]$, $\beta = [\beta_1, \beta_2, \beta_3]$, and $\omega = [\omega_1, \omega_2, \omega_3]$ will be used to record spatial, velocity derivatives, and vector field derivatives respectively. And $ \partial^\alpha_x\partial^\beta_v Y^\omega=\partial^{\alpha_1}_{x_1}\partial^{\alpha_2}_{x_2}\partial^{\alpha_3}_{x_3}
\partial^{\beta_1}_{ v_1}\partial^{\beta_2}_{ v_2}\partial^{\beta_3}_{ v_3}Y^{\omega_1}Y^{\omega_2}Y^{\omega_3}$. Similarly, the notation $\partial^{\alpha}$ will be used when $\beta=0$ and likewise for $\partial_{\beta}$. The length
of $\alpha$ is denoted by $|\alpha|=\alpha_1 +\alpha_2 +\alpha_3$. $\alpha'\leq  \alpha$ means that no component of $\alpha'$ is greater than the corresponding component of $\alpha$, and $\alpha'<\alpha$ means that $\alpha'\leq  \alpha$ and $|\alpha'|<|\alpha|$. And it is convenient to write $\nabla_x^k=\partial^{\alpha}$ with $|\alpha|=k$.
\item $\langle\cdot,\cdot\rangle$ denotes the ${L^2_{ v}}$ inner product in ${\mathbb{ R}}^3_{ v}$, with the ${L^2}$ norm $|\cdot|_{L^2}$. For notational simplicity, $(\cdot, \cdot)$ denotes the ${L^2}$ inner product either in ${\mathbb{ T}}^3_{x}\times{\mathbb{ R}}^3_{ v }$ or in ${\mathbb{ T}}^3_{x}$ with the $L^2$ norm $\|\cdot\|$.
    \item $B_C \subset \mathbb{R}^3$
denotes the ball of radius $C$ centered at the origin, and $L^2
(B_C)$ stands for the space $L^2$ over $B_C$ and likewise for other spaces.
\item
For $l\in \mathbb{R}$ , $\langle v\rangle=\sqrt{1+| v|^2}$, $L_l^2$  denotes the weighted space with norm
$
|f|_{L^2_{l}}^2\equiv\int_{\mathbb{R}^3_v}|f(v)|^2\langle v\rangle^{2l}\,\mathrm{d}v.
$
The weighted frictional Sobolev norm $|f(v)|_{H^s_l}^2=|\langle v\rangle^lf(v)|_{H^s}^2$ is given by
\begin{equation*}
|f(v)|_{H^s_l}^2=|f|^2_{L^2_l}+\int_{\mathbb{R}^3\times\mathbb{R}^3}
\frac{[\langle v\rangle^lf(v)-\langle v'\rangle^lf(v')]^2}{|v-v'|^{3+2s}}\chi_{\{(v,v')||v-v'|\leq1\}}\,\mathrm{d}v\mathrm{d}v'.
\end{equation*}

\item For functions in $\mathbb{T}^3_x\times\mathbb{R}^3_v$, short forms $\|\cdot\|_{H^s_{l}}=\||\cdot|_{H^s_l}\|_{L^2_x}$, $\|\cdot\|_{H^s}=\|\cdot\|_{H^s_0}$ when $l=0$, and $\|\cdot\|_{D}=\||\cdot|_{D}\|_{L^2_x}$ are used.

\item We use the notation $(\cdot)_{=0}$ to denote {the part corresponding to $0$ mode of a function  after spatial Fourier transformation, i.e. $(\cdot)_{=0}=\frac{1}{|\T^3|}\int_{\mathbb{T}^3}(\cdot)\,\mathrm{d}x$.} In addition, we write \[ f_{\neq 0} := f - \frac{1}{|\T^3|}\int_{\mathbb{T}^3} f \, \mathrm{d}x, \]
for the fluctuation part of
$f$ with zero mean.
\end{itemize}
\subsection{The weights and the corresponding functionals}
For given constants $q\geq0$ and $\iota_0$, we introduce the following weight function $w^{(q)}_{\iota_0,N}$:
\begin{equation}\label{def-weight-1}
w^{(q)}_{\iota_0,N}=w^{(q)}_{\iota_0,N}(\alpha, \beta,\omega)=\langle v\rangle^{\frac{-\gamma}{2s}(\iota_0-|\alpha|-|\beta|-
	|\omega|)}e^{{q}\langle v\rangle},\quad |\alpha|+|\beta|+
	|\omega|=N.
\end{equation}
Here and below we choose $q>0$  when $|\omega|=0$ and $q=0$ when $|\omega|>0$.

To capture the structure of the VPB system \eqref{f}, as in \cite{CLN-JAMS-2023}, we define the energy norm $\mathcal{E}^{(q,\iota_0)}_{\alpha,\beta,\omega}[f](t)$ as
\begin{equation}
	\begin{split}
	\mathcal{E}^{(q,\iota_0)}_{\alpha,\beta,\omega}[f](t)
	\equiv&
	A_0\sum_{|\alpha'|\leq 1}\|e^{\frac12\phi}\nu^{\kappa |\beta|} w^{(q)}_{\iota_0,N+|\alpha'|}\partial^{\alpha+\alpha'}_x\partial^\beta_v Y^\omega f\|^2\nonumber\\
	&+\nu^\kappa\int_{\mathbb{R}^3\times \mathbb{R}^3}e^{\phi}\nu^{2\kappa |\beta|} w^{(q)}_{\iota_0,N+1}\nabla_x\partial^{\alpha}_x\partial^\beta_v Y^\omega f\cdot w^{(q)}_{\iota_0,N+1}\nabla_v\partial^{\alpha}_x\partial^\beta_v Y^\omega f\,\mathrm{d}v\mathrm{d}x\\
	&
+\nu^{2\kappa}\sum_{|\beta'|= 1}\|e^{\frac12\phi}\nu^{\kappa |\beta|} w^{(q)}_{\iota_0,N+1}\partial^{\alpha}_x\partial^{\beta+\beta'}_v Y^\omega f\|^2,
	\end{split}
\end{equation}
and the corresponding dissipation functionals $\mathcal{D}^{(q,\iota_0)}_{\alpha,\beta,\omega}[f](t)$ as
\begin{equation}
	\begin{split}
		\mathcal{D}^{(q,\iota_0)}_{\alpha,\beta,\omega}[f](t)
		\equiv&
		A_0\nu^{1-\kappa}\sum_{|\alpha'|\leq 1}\|e^{\frac12\phi}\nu^{\kappa |\beta|} w^{(q)}_{\iota_0,N+|\alpha'|}\partial^{\alpha+\alpha'}_x\partial^\beta_v Y^\omega f\|_D^2\nonumber\\
		&+\|e^{\frac12\phi}\nu^{\kappa |\beta|} w^{(q)}_{\iota_0,N+1}\partial^{\alpha+e_i}_x\partial^\beta_v Y^\omega f\|^2\\
		&
		+\nu^{1+\kappa}\sum_{|\beta'|= 1}\|e^{\frac12\phi}\nu^{\kappa |\beta|} w^{(q)}_{\iota_0,N+1}\partial^{\alpha}_x\partial^{\beta+\beta'}_v Y^\omega f\|^2_D,
	\end{split}
\end{equation}
respectively. Here, $A_0>0$ and $\kappa=\frac{1}{1+2s}.$

In addition, to capture Landau damping and estimate the density function $\rho:=\int_{\mathbb{R}^3}\mu^{\frac12} f$, we introduce  more complete version of energy norms $\widetilde{\mathcal{E}}^{(q,\iota_0)}_{\alpha,\beta,\omega}[f](t)$ as
\begin{equation}\label{def-E}
\widetilde{\mathcal{E}}^{(q,\iota_0)}_{\alpha,\beta,\omega}[f](t)
=\mathcal{E}^{(q,\iota_0)}_{\alpha,\beta,\omega}[f](t)+A_0^{-1}\sum_{|\beta'|=2}
\nu^{4\kappa}\|e^{\frac12\phi}\nu^{\kappa |\beta|} w^{(q)}_{\iota_0,N+|\beta'|}\partial^{\alpha}_x\partial^{\beta+\beta'}_v Y^\omega f\|^2,
\end{equation}
and the corresponding dissipation  $\widetilde{\mathcal{D}}^{(q,\iota_0)}_{\alpha,\beta,\omega}[f](t)$ as
\begin{equation}\label{def-D}
	\widetilde{\mathcal{D}}^{(q,\iota_0)}_{\alpha,\beta,\omega}[f](t)
=\mathcal{D}^{(q,\iota_0)}_{\alpha,\beta,\omega}[f](t)+A_0^{-1}\sum_{|\beta'|=2}
\nu^{1+3\kappa}\|e^{\frac12\phi}\nu^{\kappa |\beta|} w^{(q)}_{\iota_0,N+|\beta'|}\partial^{\alpha}_x\partial^{\beta+\beta'}_v Y^\omega f\|^2_D.
\end{equation}
We also introduce the following combined energy norms, classified with respect to $\om$ and $q$, for the index $(\alpha, \beta)$
\begin{equation*}
	\begin{split}
		\mathbb{E}^{(q,\iota_0)}_{N_\alpha^{low},N_{\alpha,\beta},N_\beta,N_\omega}[f](t)\equiv\sum_{|\alpha|\geq N_\alpha^{low},|\beta|\leq N_\beta,\atop |\alpha|+|\beta|\leq N_{\alpha,\beta}}\mathcal{E}^{(q,\iota_0)}_{\alpha,\beta,0}[f](t)+\sum_{|\alpha|\geq N_\alpha^{low},|\beta|\leq N_\beta,\atop |\alpha|+|\beta|\leq N_{\alpha,\beta},1\leq |\omega|\leq N_\omega}\mathcal{E}^{(0,\iota_0)}_{\alpha,\beta,\omega}[f](t),
		\end{split}
\end{equation*}
and the case-wise combined dissipation functionals
\begin{equation*}
	\begin{split}
		\mathbb{D}^{(q,\iota_0)}_{N_\alpha^{low},N_{\alpha,\beta},N_\beta,N_\omega}[f](t)\equiv\sum_{|\alpha|\geq N_\alpha^{low},|\beta|\leq N_\beta,\atop |\alpha|+|\beta|\leq N_{\alpha,\beta}}\mathcal{D}^{(q,\iota_0)}_{\alpha,\beta,0}[f](t)+\sum_{|\alpha|\geq N_\alpha^{low},|\beta|\leq N_\beta,\atop |\alpha|+|\beta|\leq N_{\alpha,\beta},1\leq |\omega|\leq N_\omega}\mathcal{D}^{(0,\iota_0)}_{\alpha,\beta,\omega}[f](t),
	\end{split}
\end{equation*}
where $0\leq N_\alpha^{low}, ~N_\beta\leq N_{\al,\beta}$.

Similarly, the case-wise combined version of \eqref{def-E} and \eqref{def-D} is
\begin{equation*}
	\begin{split}
		\widetilde{\mathbb{E}}^{(q,\iota_0)}_{N_\alpha^{low},N_{\alpha,\beta},N_\beta,N_\omega}[f](t)\equiv\sum_{|\alpha|\geq N_\alpha^{low},|\beta|\leq N_\beta,\atop |\alpha|+|\beta|\leq N_{\alpha,\beta}}	\widetilde{\mathcal{E}}^{(q,\iota_0)}_{\alpha,\beta,0}[f](t)+\sum_{|\alpha|\geq N_\alpha^{low},|\beta|\leq N_\beta,\atop |\alpha|+|\beta|\leq N_{\alpha,\beta},1\leq |\omega|\leq N_\omega}	\widetilde{\mathcal{E}}^{(0,\iota_0)}_{\alpha,\beta,\omega}[f](t),
	\end{split}
\end{equation*}
and
\begin{equation*}
	\begin{split}
			\widetilde{\mathbb{D}}^{(q,\iota_0)}_{N_\alpha^{low},N_{\alpha,\beta},N_\beta,N_\omega}[f](t)\equiv\sum_{|\alpha|\geq N_\alpha^{low},|\beta|\leq N_\beta,\atop |\alpha|+|\beta|\leq N_{\alpha,\beta}}	\widetilde{\mathcal{D}}^{(q,\iota_0)}_{\alpha,\beta,0}[f](t)+\sum_{|\alpha|\geq N_\alpha^{low},|\beta|\leq N_\beta,\atop |\alpha|+|\beta|\leq N_{\alpha,\beta},1\leq |\omega|\leq N_\omega}	\widetilde{\mathcal{D}}^{(0,\iota_0)}_{\alpha,\beta,\omega}[f](t).
	\end{split}
\end{equation*}
For convenience, we define the following total energy norms for all indexes satisfying $|\alpha|+|\beta|+|\omega|\leq N$:
\begin{equation}\label{Energy-N}
	\widetilde{\mathbb{E}}^{(q,\iota_0)}_N[f](t)\equiv\sum_{N_{\alpha,\beta}+N_\omega\leq N}\widetilde{\mathbb{E}}^{(q,\iota_0)}_{0,N_{\alpha,\beta},N_\beta,N_\omega}[f](t),
\end{equation}
and the total dissipation functionals
\begin{equation}\label{Dissi-N}
	\widetilde{\mathbb{D}}^{(q,\iota_0)}_N[f](t)\equiv\sum_{N_{\alpha,\beta}+N_\omega\leq N}\widetilde{\mathbb{D}}^{(q,\iota_0)}_{0,N_{\alpha,\beta},N_\beta,N_\omega}[f](t).
\end{equation}
\subsection{The main result}
With the above preparation in hand, our main result concerning the global solvability of the Cauchy problem \eqref{f} and \eqref{f-initial} can be stated as follows.
\begin{theorem}\label{Main-Th.}
Let $0<q\ll 1$, \( \iota_0 \geq N_{max}+9 \), and $N_{max}\geq 9$. Denote
		\[\kappa=\frac{1}{1+2s}\]
under the condition \eqref{case}. Assume the initial datum \( f_0 \) satisfies the following conservation laws:
\begin{align}\label{cons-laws}
  &\int_{\mathbb{T}^3\times\mathbb{R}^3}\mu^{\frac12} f_0\,\mathrm{d}v\mathrm{d}x=0,\quad \int_{\mathbb{T}^3\times\mathbb{R}^3}v_j\mu^{\frac12} f_0\,\mathrm{d}v\mathrm{d}x=0,\quad 1\leq j\leq 3,\nonumber\\
  &\int_{\mathbb{T}^3\times\mathbb{R}^3}|v|^2\mu^{\frac12} f_0\,\mathrm{d}v\mathrm{d}x+\int_{\mathbb{T}^3}|\nabla_x\phi(0,x)|^2\,\mathrm{d}v\mathrm{d}x=0.
\end{align}
	There exist positive constants $\mathcal{M}\in (0,\mathcal{M}_0]$, and $\nu\in(0,\nu_0]$ such that if:
	\begin{equation}
	 \sum_{|\alpha|+|\beta| \leq N_{\text{max}}+2} \left\| e^{q(v)} \langle v \rangle^{-\frac{\iota_0\gamma}{2s}} \partial_x^\alpha \partial_v^\beta f_0 \right\| \leq \mathcal{M} \nu^{\kappa},\notag
	\end{equation}
	then the Cauchy problem \eqref{f} and \eqref{f-initial} admits a unique global smooth solution $ f(t,x,v) $. Moreover, The solution $ f(t,x,v) $ satisfies:
	
	\begin{itemize}
		\item[\textit{(i)}] \textbf{Bounded weighted energy:}
		\begin{equation}\label{main1}
		\sum_{|\alpha|+|\beta|\leq N_{max}-1} \nu^{\kappa|\beta|} \left\| e^{q(v)} \langle v \rangle^{\frac{-\gamma}{2s}(\iota_0-|\alpha|-|\beta|)} \partial_x^\alpha \partial_v^\beta f(t)\right\| \leq \mathcal{M}\nu^{\kappa},
			\end{equation}
		and
			\begin{equation}\label{main2}
			\sum_{|\alpha|+|\beta|+|\omega|\leq N_{max}-2} \nu^{\kappa|\beta|} \left\|  \langle v \rangle^{\frac{-\gamma}{2s}(\iota_0-|\alpha|-|\beta|-|\omega|)} \partial_x^\alpha \partial_v^\beta Y^\omega f(t)\right\| \leq \mathcal{M}\nu^{\kappa}.
		\end{equation}
			
			\item[\textit{(ii)}] \textbf{Energy decay:}
			\begin{equation}\label{main3}
			\sum_{|\alpha|+|\beta|+|\omega| \leq N_{\text{max}}-2} \nu^{\kappa|\beta|} \left\| \langle v \rangle^{\frac{-\gamma}{2s}(\iota_0-|\alpha|-|\beta|)} \partial_x^\alpha \partial_v^\beta Y^\omega f(t) \right\| \leq C\mathcal{M}\nu^\kappa e^{-\delta {(\nu t)}^{\frac{1}{1-\gamma-2s}}}.
				\end{equation}
				
				\item[\textit{(iii)}] \textbf{Enhanced dissipation:} For \( f_{\neq 0} := f - \frac{1}{|\T^3|}\int_{\mathbb{T}^3} f \, \mathrm{d}x \),
				\begin{equation}\label{main10}
				\sum_{|\alpha|+|\beta|+|\omega| \leq N_{\text{max}}-2} \nu^{\kappa|\beta|} \left\| \langle v \rangle^{\frac{-\gamma}{2s}(\iota_0-|\alpha|-|\beta|-|\omega|)}\partial_x^\alpha \partial_v^\beta Y^\omega f_{\neq 0} \right\| \leq C\mathcal{M}\nu^{\kappa} \min\left\{e^{-\delta_N(\nu^\kappa t)^{\frac{s}{s-\gamma}}},e^{-\delta_N(\nu t)^{\frac{1}{1-\gamma-2s}}}\right\}.
					\end{equation}
					
					\item[\textit{(iv)}] \textbf{Uniform polynomial decay:} For \( \rho_{\neq 0}(t,x) = \sum_{k \neq0} \rho_k(t) e^{ik \cdot x}\),
						\begin{equation}\label{main11}
					|\rho_k(t)| \leq C\mathcal{M}\nu^{\kappa} (1 + |k| + |kt|)^{-N_{\text{max}}+1} \min\left\{e^{-\delta_N(\nu^\kappa t)^{\frac{s}{s-\gamma}}},e^{-\delta_N(\nu t)^{\frac{1}{1-\gamma-2s}}}\right\}.
					\end{equation}
				\end{itemize}
			\end{theorem}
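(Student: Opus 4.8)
The plan is to run a bootstrap (continuity) argument on the total energy–dissipation functional $\widetilde{\mathbb{E}}^{(q,\iota_0)}_N[f](t)$ built from the weighted norms defined above. First I would establish local-in-time existence and uniqueness in the weighted space by a standard iteration scheme adapted to the non-cutoff kernel (using the coercivity estimate $\langle \mathcal{L}f,f\rangle \gtrsim |\mathbf{P}^\perp f|_D^2$ and the trilinear estimates for $\Gamma$), so that it suffices to close an a priori estimate of the form
\begin{equation*}
\widetilde{\mathbb{E}}^{(q,\iota_0)}_N[f](t) \leq C\Big(\widetilde{\mathbb{E}}^{(q,\iota_0)}_N[f_0] + \big(\widetilde{\mathbb{E}}^{(q,\iota_0)}_N[f](t)\big)^{3/2}\Big)
\end{equation*}
uniformly in $\nu\in(0,\nu_0]$, with the dissipation controlling the time integral. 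The key structural point — following Chaturvedi–Luk–Nguyen — is that the commuting vector field $Y=t\nabla_x+\nabla_v$ satisfies $[Y,\partial_t+v\cdot\nabla_x]=0$, so applying $\partial_x^\alpha\partial_v^\beta Y^\omega$ to \eqref{f} produces a closed hierarchy in which the transport term contributes no growth; the price is that $Y$ hitting the linearized operator $\mathcal{L}$, the nonlinear term $\Gamma$, and especially the field terms $\nabla_x\phi\cdot v\mu^{1/2}$, $\tfrac12\nabla_x\phi\cdot vf$, $\nabla_x\phi\cdot\nabla_v f$ generates commutators that must be absorbed. The velocity weight $w^{(q)}_{\iota_0,N}(\alpha,\beta,\omega)=\langle v\rangle^{\frac{-\gamma}{2s}(\iota_0-|\alpha|-|\beta|-|\omega|)}e^{q\langle v\rangle}$ is designed precisely so that each derivative "spends" a power of $\langle v\rangle^{-\gamma/2s}$: this compensates the loss of $\langle v\rangle^{\gamma}$ weight in the soft-potential dissipation norm $|\cdot|_D$ when commutators land on $\mathcal{L}$, and the exponential factor $e^{q\langle v\rangle}$ (present only for $|\omega|=0$) handles the remaining large-velocity terms from $\Gamma$ and the transport-against-field interaction. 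The powers of $\nu$ (the factors $\nu^{\kappa|\beta|}$, $\nu^\kappa$, $\nu^{1\pm\kappa}$, $\nu^{2\kappa}$, $\nu^{4\kappa}$ with $\kappa=\tfrac{1}{1+2s}$) are calibrated so that the $v$-derivative terms — which are only weakly dissipated, with rate $\sim\nu$ rather than $O(1)$ — still close; this is the heart of the weakly-collisional regime and mirrors the CLN scaling $v$-moment $\leftrightarrow \nu$-power trade-off.

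The main steps, in order: (1) Zeroth-order macroscopic estimates: from the conservation laws \eqref{cons-laws} and the Poisson equation, obtain the basic energy identity for $f$ itself and control the macroscopic components $(a,b,c)$ via the classical macro–micro decomposition and an interaction-functional (Kawashima-type) argument, extracting dissipation for $\nabla_x(a,b,c)$ and for $\nabla_x\phi$; here the Poisson coupling gives an extra $\|\nabla_x\phi\|^2$ contribution that must be tracked with the weight $e^{\phi}$ appearing in the energy (whence the factors $e^{\frac12\phi}$ in $\mathcal{E}$, which linearize the field term $\tfrac12\nabla_x\phi\cdot vf$ into a perfect-derivative/commutator structure). (2) Pure spatial-derivative estimates $\partial_x^\alpha$: apply $\partial_x^\alpha$, pair with $e^{\phi}w^2\partial_x^\alpha f$, use coercivity of $\mathcal{L}$, and bound the nonlinear and field commutators by $\mathcal{E}^{1/2}\mathcal{D}$ using the weighted $\Gamma$-estimates — this yields the bound \eqref{main1}. (3) Mixed $\partial_x^\alpha\partial_v^\beta$ estimates: the $v$-derivatives are the delicate ones because $\partial_v$ does not commute with $v\cdot\nabla_x$ (it produces $\nabla_x$), and $\mathcal{L}$ loses weight under $\partial_v$; the cross term $\int e^{\phi}\nu^{2\kappa|\beta|}w^2\nabla_x\partial^\alpha\partial^\beta Yf\cdot\nabla_v\partial^\alpha\partial^\beta Yf$ in $\mathcal{E}$ is an Hörmander-type hypocoercivity correction that converts the commutator loss $\nabla_x$ into controlled dissipation — one differentiates this cross term in $t$, uses the equation, and checks the $\nu$-powers balance. (4) Vector-field estimates $Y^\omega$: here $q=0$, and one exploits $[Y,\partial_t+v\cdot\nabla_x]=0$ so that the transport term is harmless; the field terms, however, now carry factors of $t$ (since $Y=t\nabla_x+\nabla_v$), and these are absorbed using the enhanced/Landau-damping decay of $\rho_{\neq 0}$ bootstrapped simultaneously — $\|\nabla_x\phi_{\neq 0}(t)\|\lesssim \langle t\rangle^{-2}(\cdots)$ beats the linear growth in $t$. (5) Enhanced dissipation and Landau damping: decompose $f=f_{=0}+f_{\neq 0}$; on $f_{\neq 0}$ the transport operator $v\cdot\nabla_x$ together with the weak collision term gives, after the $Y$-weighted estimates and a spectral/hypocoercivity argument in the style of CLN, the stretched-exponential rate $e^{-\delta_N(\nu^\kappa t)^{s/(s-\gamma)}}$, while on the $0$-mode the genuine Boltzmann dissipation at rate $\nu$ gives $e^{-\delta(\nu t)^{1/(1-\gamma-2s)}}$ — the velocity-weight stratification (losing $\langle v\rangle^{-\gamma/2s}$ per derivative) is what turns the $v$-localized exponential dissipation into the stretched-exponential in $t$, a Kawashima-type time-decay mechanism for soft potentials. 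The density bound \eqref{main11} then follows by writing $\rho_k(t)$ via Duhamel along characteristics and using the $Y^\omega$-bounds to get the $(1+|k|+|kt|)^{-N_{max}+1}$ factor (phase mixing) times the decay from \eqref{main10}.

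I expect the main obstacle to be step (3)–(4): controlling the commutators generated when $\partial_v^\beta$ and $Y^\omega$ hit the non-cutoff operator $\mathcal{L}$ and the nonlinear term $\Gamma$, and simultaneously the field terms, while keeping every estimate \emph{uniform in $\nu$}. Because $\mathcal{L}$ for soft potentials only controls $\langle v\rangle^{\gamma+2s}$-weighted norms, each velocity derivative forces one to borrow from the weight $\langle v\rangle^{-\gamma/2s}$, and one must verify that the hierarchy of weights $w^{(q)}_{\iota_0,N+|\alpha'|}$, $w^{(q)}_{\iota_0,N+1}$, $w^{(q)}_{\iota_0,N+2}$ built into $\widetilde{\mathcal{E}}$ and $\widetilde{\mathcal{D}}$ is exactly consistent with the number of derivatives and the $\nu$-powers $\nu^{\kappa|\beta|}$, $\nu^{2\kappa}$, $\nu^{4\kappa}$ — a single mismatched power would break uniformity in the weakly-collisional limit. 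The second serious difficulty is closing the $Y^\omega$-estimates against the explicit $t$-weights in $Y$: this requires the Landau-damping decay of $\nabla_x\phi_{\neq 0}$ and the enhanced dissipation to be proven \emph{in tandem} with the energy bound in a single bootstrap, so the argument must be carefully ordered so that no estimate is used before it is available. Once these commutator/weight/$\nu$-bookkeeping issues are resolved, the decay statements \eqref{main3}–\eqref{main11} follow from the closed differential inequalities for $\widetilde{\mathbb{E}}_N$ and $\widetilde{\mathbb{D}}_N$ by standard interpolation and ODE comparison arguments.
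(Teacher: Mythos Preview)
Your high-level bootstrap architecture and identification of the hypocoercivity cross term, the weight hierarchy $w^{(q)}_{\iota_0,N}$, and the $\nu^{\kappa|\beta|}$ scaling are all on target. However, there is a genuine gap in how you propose to obtain the density/Landau-damping decay, and your ordering of the argument is inverted relative to what is actually needed. You treat the bound on $\rho_k(t)$ as a \emph{consequence} of the $Y^\omega$-energy estimates (``writing $\rho_k(t)$ via Duhamel along characteristics and using the $Y^\omega$-bounds''), and you propose a ``Kawashima-type interaction functional'' to dissipate $\nabla_x(a,b,c)$. Neither of these is what the paper does, and neither would close the estimates here. The actual mechanism is: (a) prove sharp pointwise-in-$k$ decay for the \emph{linear} Boltzmann semigroup $S_k(t)$ acting on moments (Proposition~\ref{linear-decay}), yielding both phase-mixing $\langle kt+\eta\rangle^{-N}$ and stretched-exponential factors; (b) recast the nonlinear density as a Volterra equation $\widehat\rho_k(t)=\mathcal{N}_k(t)+\int_0^t G_k(t-s)\mathcal{N}_k(s)\,\mathrm{d}s$ with a resolvent kernel $G_k$ inheriting the same decay; (c) bootstrap the density bound (Theorem~\ref{dens-esti}) \emph{under} the a~priori energy assumption, and only then close the weighted energy estimates using the resulting $\langle t\rangle^{-2}$ decay of $\nabla_x\phi$. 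No Kawashima functional is used, and the macroscopic dissipation in $\widetilde{\mathbb{D}}_N$ comes entirely from the enhanced-dissipation cross term plus the externally supplied density decay.

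A second, more technical gap: you do not address the obstruction specific to the non-cutoff case that the paper singles out as its main new difficulty. Because the linearized operator only yields $H^s_v$ rather than $H^1_v$ dissipation, the CLN estimate that controls $\nu\|{\bf P}(Y^{\omega'}f)_{=0}\|^2$ for $|\omega'|<|\omega|$ by lower-order dissipation forces $\kappa\le 1/3$, contradicting $\kappa=\tfrac{1}{1+2s}>\tfrac13$. The paper resolves this (Lemma~\ref{lemma-nonweight-Y}) by interpolating $\sum_{|\omega'|\le|\omega|-1}\|Y^{\omega'}f\|_D^2$ between $\|f\|_D^2$ and $\sum_{|\omega'|=N}\|Y^{\omega'}f\|_D^2$, then splitting ${\bf P}Y^\omega f$ into zero and nonzero spatial modes: the nonzero mode is handled by Poincar\'e, and the zero mode is rewritten, via conservation laws, purely in terms of $\bar c$ and ${\bf P}^\perp f$. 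Your proposal does not contain this step, and without it the $Y^\omega$-hierarchy does not close uniformly in $\nu$.
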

We make some remarks on Theorem \ref{Main-Th.}.
\begin{remark} 
Our results remain valid for collision kernels of the hard-potential type.
\end{remark}
\begin{remark} 
  For the non-cutoff collision operator, all existing estimates \cite{DLYZ-KRM2013, FLLZ-SCM-2018} require a first-order exponential weight in velocity, $e^{q\langle v\rangle}$, and it remains an open question whether the large-time behavior and Landau damping for the VPB system in the weak-collision regime can be established under this weight. Therefore, we choose the exponential velocity weight $e^{q\langle v\rangle}$ in the whole article.
\end{remark}
\begin{remark}  The sub-exponential decay in time that we establish is likely optimal. Indeed, if one employs a second-order exponential weight $e^{q\langle v\rangle^2}$ and lets $s\to 1$, then applying our  to the Vlasov-Poisson-Landau system yields a sub-exponential rate of the form $e^{-\delta_N(\nu^{1/3} t)^{2/5}}$. This improves upon the decay rate $e^{-\delta_N(\nu^{1/3} t)^{1/3}}$ obtained in \cite{CLN-JAMS-2023}. We attribute this improvement to the choice of a different algebraic weight function, namely
    \[
    \langle v\rangle^{-\frac{\gamma}{2s}(\iota_0-|\alpha|-|\beta|-|\omega|)}.
    \]
 This help us derive the  dissipation estimate \eqref{equation-decay}, which immediately is the basis of the stretched exponential decay estimate \eqref{Pro-linear-decay-2}, see the proof of Proposition \ref{linear-decay} for more details.  
\end{remark}
\begin{remark}
For the index $\kappa = \tfrac{1}{1+2s}$, we note that $\kappa = \tfrac{1}{3}$ when $s=1$, which corresponds to the Landau collision operator, and $\kappa = 1$ when $s=0$, formally corresponding to the cutoff Boltzmann collision operator. In fact, it is known from \cite{DL-M3AS-1992,Des-TTSP-1992} that the Landau operator can be obtained from the Boltzmann operator through the so-called grazing collision limit. Therefore, under the weak collision framework, our result provides a unified connection: on one hand, it relates to the recent work \cite{CLN-JAMS-2023}, which captures Landau damping and enhanced dissipation effects for the Vlasov–Poisson–Landau system; on the other hand, it also links to earlier works such as \cite{Guo-CPAM-2002,DYZ-JDE-2012,DYZ-MMMAS-2013,XXZ-JDE-2013,XXZ-JFA-2017,YZ-CMP-2006,YYZ-ARMA-2006}, which established the existence theory for the cutoff VPB system but without exhibiting these dissipative mechanisms.

\end{remark}


\subsection{Literatures}

The study of kinetic equations describing dilute charged particles has attracted sustained interest. The spatially inhomogeneous Boltzmann equation without angular cutoff serves as a fundamental model, capturing the regularization effect of grazing collisions and the nonlinear coupling between transport and dissipation. The global existence of classical solutions near Maxwellians for the non-cutoff Boltzmann equation was independently established by Gressman-Strain \cite{Gressman_Strain-JAMS-2011} and by Alexandre-Morimoto-Ukai-Xu-Yang \cite{AMUXY-JFA-2012}, who developed anisotropic Littlewood–Paley analysis and obtained sharp coercivity estimates revealing the fractional derivative structure of the collision operator. These works settled the global well-posedness problem in the perturbative setting and provided a framework for further analysis of regularity and decay.


When the Boltzmann equation is coupled with the Poisson equation, one obtains the VPB system, which accounts for both collisional and collective plasma effects. In the strongly collisional regime $\nu=1$,
a relatively complete mathematical theory is available. Lions \cite{Lions-JMKU-1994} constructed global renormalized solutions for the spatially inhomogeneous VPB system, and Mischler \cite{M-CMP-2000} extended the analysis to the case with boundaries. In the perturbative setting near a global Maxwellian, Guo \cite{Guo-CPAM-2002} introduced the macro–micro decomposition method and established global existence for the hard-sphere model in a periodic domain. Yang-Zhao \cite{YZ-CMP-2006} and Yang-Yu-Zhao \cite{YYZ-ARMA-2006} extended the result to the whole space, while Duan-Yang-Zhao \cite{DYZ-JDE-2012,DYZ-MMMAS-2013} derived global classical solutions for hard and moderately soft potentials by introducing velocity weights. Employing refined time-decay estimates, Xiao-Xiong-Zhao \cite{XXZ-JDE-2013,XXZ-SCM-2014,XXZ-JFA-2017} further treated the non-cutoff hard potentials and the whole range of cutoff soft potentials with relaxed initial conditions. More recent results include the work of Cao-Kim-Lee \cite{CKL-ARMA-2019} on global solutions in convex domains with diffuse reflection and of Li-Wang \cite{LW-JDE-2021} on solutions with in-flow boundary conditions.

The large-time behavior of solutions has also been investigated. Duan-Strain \cite{DS-ARMA-2011} obtained optimal decay rates by combining linear decay with the Duhamel principle, and Yang-Yu \cite{YY-CMP-2011} constructed compensating functions to handle the multi-species case. Li-Zhong-Yang \cite{LYZ-IUMJ-2016,LYZ-ARMA-2021} used spectral analysis to derive optimal decay rates and constructed corresponding global classical solutions. In the non-cutoff setting, Duan-Liu \cite{DL-CMP-2013} proved global classical solutions near Maxwellians for the VPB system without angular cutoff and established decay estimates, extending the non-cutoff Boltzmann theory to the VPB framework. Their work has been further developed in subsequent studies: Duan-Liu-Yang-Zhao \cite{DLYZ-KRM2013} analyzed the nonrelativistic Vlasov-Maxwell-Boltzmann system with non-cutoff potentials, and Fan-Lei-Liu-Zhao \cite{FLLZ-SCM-2018} considered the case with weak angular singularity. These results show that the non-cutoff approach can be systematically applied to collisional kinetic–field models.

In comparison, the VPB system in the weakly collisional regime remains much less understood. Formally, as
$\nu\rightarrow0$, the linearized collision operator loses its dissipativity and the VPB system approaches the collisionless Vlasov-Poisson equations, where nonlinear Landau damping describes the main relaxation mechanism. It is therefore natural to ask whether global classical solutions still exist for small but positive collision strength and how the solution behavior is related to Landau damping. While a complete theory is not yet available, several related results are informative. 
For the Vlasov-Fokker-Planck equation, Bedrossian \cite{B-AP-2017} established nonlinear Landau damping, while Bedrossian-Zhao-Zi \cite{BZZ-CMP-2025} investigated the stability threshold problem for initial data with Gevrey regularity. Recently, 
Chaturvedi-Luk-Nguyen \cite{CLN-JAMS-2023} established global existence and decay estimates for the Vlasov-Poisson-Landau (VPL) system in the weakly collisional periodic setting, Bedrossian-Zhao-Zi \cite{BZZ-arxiv-2025} studied the collisionless limit of the VPL system. 
In parallel, Bedrossian-Zelati-Dolce \cite{BCD-PLMS-2024} analyzed the long-time behavior of the non-cutoff Boltzmann equation with soft potentials, clarifying how weak collisions interact with transport to accelerate relaxation toward equilibrium. Taken together, these works demonstrate a common feature of the weakly collisional regime: transport effects such as phase mixing and kinetic Taylor dispersion become dominant over collisions in determining the long-time dynamics.
Their analysis combines phase mixing with enhanced dissipation and provides a general approach to control nonlinear effects when collisional dissipation is small. These developments indicate that transport effects together with the residual collision operator may still suffice to yield global solutions and asymptotic stability in the weakly collisional regime.

\subsection{Difficulties and strategies}
We now outline the principal ideas of our proof. The essential difficulty in establishing the global solution to the VPB system in the weak-collision regime, amplifying the dissipative effect, and analyzing the large-time behavior lies in the  {\bf intrinsic weakness of the diffusion furnished by the non-cutoff Boltzmann operator}. This weakness manifests concretely through the following difficulties:
\begin{itemize}
\item[1)] {\it Transport term at top order.}
In the VPB system, the density constraint requires that the highest-order derivatives in the energy space be taken with respect to the velocity variable. As a result, the transport term $v\cdot\nabla_x f$ does not vanish in top-order estimates, in contrast to the force-free Boltzmann equation studied in \cite{BCD-PLMS-2024}. Controlling this contribution is therefore more delicate.

    \item [2)] {\it Nonliear density estimate and weighted energy estimates under weak dissipation.}
Near global equilibrium, the linearized Boltzmann operator provides only weak $H_v^s$ dissipation $(s<1)$ compared with the full $H^1_v$ dissipation of the Landau operator. This weaker coercivity creates significant difficulties in controlling both the nonlinear density component and the weighted energy estimates, especially for terms involving higher-order velocity derivatives and the electric-field interaction.

\item[3)]{\it Insufficiency of fractional diffusion for energy closure.}
The dissipation provided by the non-cutoff Boltzmann operator corresponds to fractional diffusion, which is too weak to close the weighted energy estimates in the VPB setting. For example, the analog of the linearized estimate in \cite{CLN-JAMS-2023} yields
\begin{equation*}
    A_0\nu\|\mu(Y^{\omega'}f)_{=0} \|^2_{L^2_{x,v}}\lesssim A_0\nu\|\partial_{v_j}\mu(Y^{\omega''}f)_{=0} \|^2_{L^2_{x,v}}\lesssim A_0\nu^{\frac13}\|\mu(Y^{\omega''}f)_{=0} \|^2_{\mathcal{D}^{(\vartheta)}_{\alpha,0,\omega''}},
\end{equation*}
where $Y^{\omega'}=Y_jY^{\omega''}$. For the VPB system, however, the analogous estimate takes the form
\begin{equation}\label{diff-2-1}
    A_0\nu\|\mu(Y^{\omega'}f)_{=0} \|^2_{L^2_{x,v}}\lesssim A_0\nu\|\partial_{v_j}\mu(Y^{\omega''}f)_{=0} \|^2_{L^2_{x,v}}\lesssim A_0\nu^{1-2\kappa}\|\mu(Y^{\omega''}f)_{=0} \|^2_{\mathcal{D}^{(\vartheta)}_{\alpha,0,\omega''}}.
\end{equation}
This in turn gives
\begin{equation}\label{diff-2-2}
    A_0\nu^{1-2\kappa}\|\mu(Y^{\omega''}f)_{=0} \|^2_{\mathcal{D}^{(\vartheta)}_{\alpha,0,\omega''}}\lesssim A_0\nu^{\kappa}\|\mu(Y^{\omega''}f)_{=0} \|^2_{\mathcal{D}^{(\vartheta)}_{\alpha,0,\omega''}}.
\end{equation}
This forces $\kappa \leq \tfrac{1}{3}$, contradicting the natural choice $\kappa = \tfrac{1}{1+2s} > \tfrac{1}{3}$ for $0<s<1$. The contradiction highlights the intrinsic weakness of fractional diffusion in $\nu$, making it impossible to close the estimates without a refined approach.

\end{itemize}
Together, these three obstacles reflect the fundamental difficulty caused by the weak diffusion of the non-cutoff Boltzmann operator.

To overcome the first difficulty arising from the persistence of the transport term in top-order estimates, we introduce a velocity-weighted energy functional
\[w^{(q)}_{\iota_0,N}=w^{(q)}_{\iota_0,N}(\alpha, \beta,\omega)=\langle v\rangle^{\frac{-\gamma}{2s}(\iota_0-|\alpha|-|\beta|-
	|\omega|)}e^{{q}\langle v\rangle},\quad |\alpha|+|\beta|+
	|\omega|=N,\]
where the algebraic weight is carefully tuned to the dissipative structure of the non-cutoff Boltzmann operator. This design allows us to control the transport term via the crucial inequality
 \begin{align*}
  &\left\|\nu^{\kappa|\beta|}w_{\iota_0-N}\partial^{\beta'}_{v}
  \partial^\alpha_x\partial^{\beta-\beta'}_v Y^\omega f\right\|^2\nonumber\\
  \lesssim& A_0^{-s}\Big(
 A_0
  \nu^{1-\kappa}\left\|\nu^{\kappa(|\beta|-1)}w_{\iota_0-N+1} \partial^\alpha_x\partial^{\beta-\beta'}_v Y^\omega  f\right\|_D^2+\nu^{1+\kappa}\left\| \nu^{\kappa(|\beta|-1)}w_{\iota_0-N}\nabla_v \partial^\alpha_x\partial^{\beta-\beta'}_v Y^\omega f\right\|^2_D\Big)
\end{align*}
valid for $|\beta'|=1$ with the choice $\kappa=\frac{1}{1+2s}$, $s\in(0,1)$. This $\kappa$ coincides with the one used in the Landau setting of \cite{CLN-JAMS-2023} as $s\to 1$, but here its role is even more delicate since the top-order derivative is taken in $v$, not $x$, and hence the transport term does not drop out as in the force-free Boltzmann case \cite{BCD-PLMS-2024}.

The second difficulty arises from the weaker coercivity of the linearized Boltzmann operator compared with the Landau operator. While the Landau operator controls the full $H^1_v$-norm, the Boltzmann operator only generates an $H^s_v$-norm with $s<1$. This loss of dissipation creates additional difficulties in both the nonlinear density estimates and the weighted energy estimates.

\begin{itemize}
\item For the nonlinear density estimates, to compensate for the weaker dissipation, we derive a Volterra-type bound for the density component, adapted from \cite{CLN-JAMS-2023}, but now carefully tracking the $H^s_v$-regularity of $f$ up to order $\max\{1+2s,2\}$:
\[
\nu^{(|\beta|+1+2s)\kappa}\sum_{\substack{|\alpha|+|\beta|+|\omega|=N_{\max},\atop |\beta'|=1}}
\|\partial^\alpha_x\partial^{\beta+\beta'}v Y^\omega f\|_{\dot{H}^{2s}_v}.
\]
The dependence of this estimate on higher-order velocity derivatives leads to a delicate interplay between the $\nu$-weights and the nonlinear estimates. To address this, we introduce a more refined bookkeeping of the $\nu$–exponents in the nonlinear collision term than in the Landau case (see \eqref{dens-colli} for the detailed bounds), which ultimately enables us to close the uniform energy estimate.

\item For the weighted energy estimates, we need to control the term
\[
\sum_{|\alpha_1|+|\omega_1|\leq |\alpha|+|\omega|}
\Big(\partial^{\alpha_1+e_i}_x Y^{\omega_1}\phi
\partial^{\alpha-\alpha_1}_x\partial^{\beta+e_i}_vY^{\omega-\omega_1}f,
e^{\phi}\nu^{2\kappa|\beta|}w^2_{\iota_0-N}\partial^\alpha_x\partial^\beta_v Y^\omega f\Big),
\]
with $|\alpha|+|\beta|+|\omega|=N\leq N_{\max}$. In \cite{CLN-JAMS-2023}, when $|\alpha_1|+|\omega_1|\geq 5$, this term could be dominated by
$$\sum_{|\alpha|+|\omega|\leq N}\|\partial^\alpha_xY^\omega \rho_{\neq0}\| \min\left\{\left(\widetilde{\mathbb{E}}^{(q,\iota_0)}_N[f](t)\right)^{1/2},
\left(\widetilde{\mathbb{D}}^{(q,\iota_0)}_N[f](t)\right)^{1/2}\right\}$$ 
via integration by parts, thereby absorbing the extra velocity derivative into the dissipation norm $\widetilde{\mathbb{D}}^{(q,\iota_0)}_N[f](t)$. However, due to the weaker $H^s_v$ coercivity of the Boltzmann operator, this approach fails in our setting, particularly for terms of the form $\partial^\alpha_x\partial^{\beta+\beta'}_vY^\omega$ with $|\beta'|=2$.

To overcome this difficulty, instead of following \cite{CLN-JAMS-2023}, we exploit the sharp time decay of $\nabla_v\phi$ from \eqref{coro-elec} together with the favorable decay of the norms $\widetilde{\mathbb{E}}^{(q,\iota_0)}_{\bar{N}}f$ for sufficiently small $\bar{N}$, using the {\it a priori} assumption \eqref{priori-assu} in the cases $|\alpha_1|+|\omega_1|\leq N_{\max}-2$ and $|\alpha_1|+|\omega_1|\geq N_{\max}-1$, respectively (see \eqref{new1} and the discussion above it for details).
\end{itemize}

To address the third difficulty, posed by \eqref{diff-2-1} and \eqref{diff-2-2}, we additionally perform energy estimates on the derivatives of the vector field
$Y$, specifically estimates of the form
   \begin{eqnarray}
\frac{\mathrm{d}}{\mathrm{d}t}\left\|Y^\omega f(t)\right\|^2+\delta\nu\left\|{\bf P}^{\bot} Y^\omega f\right\|^2_{D}\nonumber
\lesssim C_\eta\nu\sum_{|\omega'|\leq |\omega|-1}\|Y^{\omega'} f\|_{D}^2+\cdots.
\end{eqnarray}

To control the first term on the right-hand side of the above inequality, we employ an interpolation inequality for the $Y-$derivative,i.e.
\[C_\eta\nu\sum_{|\omega'|\leq |\omega|-1}\|Y^{\omega'} f\|_{D}^2\lesssim C_{\tilde{\eta}}\nu\|f\|_{D}^2+\tilde{\eta}\nu\sum_{|\omega'|=N}\|Y^{\omega'} f\|_{D}^2,\]
which necessitates controlling the following two macroscopic quantities i.e. $$C_{\tilde{\eta}}\nu\|{\bf P}f\|_D^2+\tilde{\eta}\nu\sum_{|\omega'|=N}\|{\bf P}Y^{\omega'} f\|_{D}^2.$$To this end, we decompose the expression into two parts: $(\cdot)_{\neq0}+(\cdot)_{=0}$. By adopting the same strategy as in the estimates \eqref{nozero-Y} and \eqref{zero-Y}, we obtain
\begin{align*}
&C_{\tilde{\eta}}\nu\|{\bf P}f\|^2+\tilde{\eta}\nu\sum_{|\omega|=N}\|{\bf P}Y^{\omega} f\|^2\\
&\hspace{1cm}\lesssim C_{\tilde{\eta}}\nu\|\nabla_x{\bf P}f\|^2+\tilde{\eta}\nu\sum_{|\omega|=N}\|\nabla_x{\bf P}Y^{\omega} f\|^2+C_{\tilde{\eta}}\nu\left(\|\bar{c}\|^2+\|{\bf P}^{\bot}f\|^2_D\right)\\
&\hspace{1cm}\lesssim A^{-1}_0\nu\left(C_{\tilde{\eta}}\widetilde{\mathbb{D}}^{(q,\iota_0)}_N[f](t)
+\tilde{\eta}\widetilde{\mathbb{D}}^{(q,\iota_0)}_0[f](t)\right)+C_{\tilde{\eta}}\nu\left(\mathcal{M}^2\nu^{1+2\kappa}\langle t\rangle^{-4}+\|{\bf P}^{\bot}f\|^2_D\right).
\end{align*}

By taking suitable linear combinations, the simultaneous appearance of both \eqref{diff-2-1} and \eqref{diff-2-2} can be avoided.
The detailed derivation is given in Lemma \ref{lemma-nonweight-Y}. It should be pointed out that our approach is equally applicable to the treatment of the VPL system. Specifically, this method can serve as a replacement for the treatment in $(5.35)$ of \cite{CLN-JAMS-2023}.

\subsection{Outline of the paper} The remainder of the paper is structured as follows.
\begin{itemize}
    \item In Section \ref{Linear Boltzmann equation}, we focus on time decay estimates of the density function of the solution to  the following linear Boltzmann equation without angular cutoff.
    \item In Section \ref{density-estimates}, we are devoted to the density estimates of the VPB system \eqref{f}.
    \item In Section \ref{Nonlinear energy estimates}, we prove the global existence of a smooth solution $f(t,x,v)$ to the Cauchy problem \eqref{f} and \eqref{f-initial} based on the {\it a priori} estimates via the continuation argument.
    \item In Section \ref{The stretched exponential decay}, we establish the stretched exponential decay of the solution $f(t,x,v)$ to the  Cauchy problem \eqref{f} and \eqref{f-initial} and its density function $\rho(t,x)$.
    \item Finally, in Section \ref{Appendix}, we provide the relevant estimates for the linear and nonlinear Boltzmann operators, together with Strain-Guo type lemmas tailored to our framework.
\end{itemize}
\section{Linearized Boltzmann equation}\label{Linear Boltzmann equation}
In this section, we focus on time decay estimates of the density function of the solution to  the linear Boltzmann equation, including the phase mixing decay with respect to $\langle kt\rangle$, which is independent of $\nu$, and enhanced dissipation decay with respect to $ \nu^{\kappa} t$. These estimates will serve as the basis for the subsequent time decay estimates of the density function of \eqref{f} in Section \ref{density-estimates}.

Now we consider the following linear Boltzmann equation
\begin{equation}\label{Linear-BE}
	\partial_t f+v\cdot \nabla_x f+\nu \mathcal{L} f=0, \quad (x,v)\in \mathbb{T}^3\times \mathbb{R}^3,
\end{equation}
with initial datum $f(0,x,v)=f_0(x,v)$. For the given initial datum $f_0(x,v)$, denote
\begin{equation*}
S(t)[f_0](x,v):=f(t,x,v),
\end{equation*}
where $S(t)$ is the semigroup associated to \eqref{Linear-BE}, and $f(t,x,v)$ is the unique solution to \eqref{Linear-BE}.

Applying the Fourier transform to \eqref{Linear-BE}, we can write
\begin{eqnarray*}
	S(t)[f_0](x,v)=\sum_{k\in \mathbb{Z}^3}e^{ik\cdot x}S_k(t)[\hat{f}_{0k}](v),
\end{eqnarray*}
where $S_k(t)[\hat{f}_{0k}](v)$ denotes the corresponding semigroup to the Fourier transform of \eqref{Linear-BE} with intial datum $\hat{f}_{0k}(v)$ for each $k\in \mathbb{Z}^3$.
Then $h(t)=S_k(t)[h_0]$ solves the following linear Boltzmann equation
\begin{equation}\label{linear-h}
	\partial_t h+i k\cdot vh+\nu \mathcal{L}h=0
	\end{equation}
with intial datum $h_0(k,v)=h(0,k,v)$.

For later use, we denote
\[Y_{k,\eta}=\nabla_v+i(\eta+kt),\ \eta\in \mathbb{R}^3,\,k\in\mathbb{Z}^3.\]
\begin{proposition}\label{linear-decay}
	For any given $\eta\in \mathbb{R}^3, k\in\mathbb{Z}^3$ with $|k|\neq0$, the solution to \eqref{linear-h} satisfies the following time decay estimates:
	\begin{itemize}
		\item \textbf{Polynomial decay:}\begin{eqnarray}\label{Pro-linear-decay-1}
			\left|\int_{\mathbb{R}^3}S_k(t)[h_0]\mu^{\frac{1}{2}}\,\mathrm{d}v\right|\leq C_N\langle kt+\eta\rangle^{-N}\langle \nu^\kappa t\rangle^{-\frac32}\left\{\mathbb{E}_{k,\eta,N}^{(0,5)}[h_0]\right\}^{\frac12};
		\end{eqnarray}
		\item \textbf{Stretched exponential decay:}\begin{eqnarray}\label{Pro-linear-decay-2}
		\left|\int_{\mathbb{R}^3}S_k(t)[h_0]\mu^{\frac{1}{2}}\,\mathrm{d}v\right|\leq C_0\min\left\{e^{-\delta_0(\nu^\kappa t)^{\frac{s}{s-\gamma}}},e^{-\delta_0(\nu t)^{\frac{1}{1-\gamma-2s}}}\right\}\left\{\mathbb{E}_{k,\eta,0}^{(\bar{q},1)}[h_0]\right\}^{\frac12};
	\end{eqnarray}
	\item \textbf{Mixed decay:}\begin{eqnarray}\label{Pro-linear-decay-3}
	&&\left|\int_{\mathbb{R}^3}S_k(t)[h_0]\mu^{\frac{1}{2}}\,\mathrm{d}v\right|\nonumber\\ 
    &\lesssim&\langle kt+\eta\rangle^{-N}\min\left\{e^{-\delta_N(\nu^\kappa t)^{\frac{s}{s-\gamma}}},e^{-\delta_N(\nu t)^{\frac{1}{1-\gamma-2s}}}\right\}\left\{\mathbb{E}_{k,\eta,N+1}^{(0,5)}[h_0]
    +\mathbb{E}_{k,\eta,0}^{(\bar{q},1)}[h_0]\right\}^{\frac12},
\end{eqnarray}
	\end{itemize}
where $\delta$ is a small positive constant, and  $\mathbb{E}^{(\bar{q},\bar{\ell})}_{k,\eta,N}[h_0]$ with $\bar{q}=\frac{q}{2}$ for $|\omega|=0$ and $\bar{q}=0$ for $|\omega|>0$ is defined as
\begin{align*}
\mathbb{E}^{(\bar{q},\bar{\ell})}_{k,\eta,N}[h_0]=&\left|\langle v\rangle^{-\frac{\gamma\bar{\ell}}{2s}}e^{\bar{q}\langle v\rangle}h_0\right|^2_{L^2}+\sum_{1\leq|\omega|\leq N} \left |\langle v\rangle^{-\frac{\gamma\bar{\ell}}{2s}}Y^{\omega}_{0,\eta}h_0\right|^2_{L^2}  \nonumber\\
&+\nu^{2\kappa}|k|^{-2}\sum_{|\beta'|=1}\left(\left|\langle v\rangle^{\frac{-\gamma}{2s}(\bar{\ell}-1)}e^{\bar{q}\langle v\rangle}\partial^{\beta'}h_0\right|^2_{L^2}+\sum_{1\leq|\omega|\leq N}\left|\langle v\rangle^{\frac{-\gamma}{2s}(\bar{\ell}-1)}\partial^{\beta'}Y^{\omega}_{0,\eta}h_0\right|^2_{L^2}\right).
\end{align*}
\end{proposition}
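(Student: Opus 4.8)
The plan is to work entirely at the level of the Fourier variable $k$ and estimate the scalar quantity $\rho_h(t):=\int_{\mathbb{R}^3}S_k(t)[h_0]\mu^{1/2}\,\mathrm{d}v$ by a closed energy argument on the equation \eqref{linear-h}. The backbone is a weighted energy inequality: testing \eqref{linear-h} against $\langle v\rangle^{-\gamma\bar\ell/s}e^{2\bar q\langle v\rangle}\bar h$ and using the coercivity $\langle\mathcal{L}h,h\rangle\gtrsim|\mathbf{P}^\perp h|_D^2$ together with the Strain--Guo-type commutator bounds from Section \ref{Appendix}, one obtains
\[
\frac{\mathrm{d}}{\mathrm{d}t}\,|w h|_{L^2}^2 + \delta\nu\,|w\mathbf{P}^\perp h|_D^2 \lesssim \nu\,|w\mathbf{P}h|_{L^2}^2,
\]
and the weighted dissipative norm $|\cdot|_D$ controls $\langle v\rangle^{(\gamma+2s)/2}|\cdot|_{L^2}$ from below; this is exactly where the algebraic weight $\langle v\rangle^{-\gamma\bar\ell/(2s)}$ is tuned so that, interpolating between the weighted $L^2$ bound and the $D$-dissipation, the loss of velocity decay is converted into the factor $(\nu^\kappa t)^{s/(s-\gamma)}$ inside a stretched exponential. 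The macroscopic part $\mathbf{P}h$ is handled by the standard observation that in the linear problem the hydrodynamic moments satisfy a closed ODE system whose transport operator is $ik\cdot v$; phase mixing then forces the decay $\langle kt+\eta\rangle^{-N}$ once one commutes with $Y_{k,\eta}$.

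First I would prove the stretched exponential bound \eqref{Pro-linear-decay-2}. The key steps: (1) Establish the basic weighted energy identity above with $w=\langle v\rangle^{-\gamma/(2s)}e^{\bar q\langle v\rangle}$ (the $\bar\ell=1$ case), absorbing the macroscopic term using the spectral gap of $\mathcal{L}$ plus the transport term's skew-adjointness so that one actually gets $\frac{\mathrm{d}}{\mathrm{d}t}\mathbb{E}_{k,\eta,0}^{(\bar q,1)}[h(t)] + \delta\nu\,|w\mathbf{P}^\perp h|_D^2 \lesssim 0$ up to lower order. (2) Interpolate: since $|w\mathbf{P}^\perp h|_D^2 \gtrsim |\langle v\rangle^{(\gamma+2s)/2}w\mathbf{P}^\perp h|_{L^2}^2$ and the full energy carries the stronger weight, a standard Strain--Guo iteration (splitting velocity space into $|v|\le R(t)$ and $|v|>R(t)$ and optimizing $R(t)$) yields $\mathbb{E}(t)\lesssim \mathbb{E}(0)e^{-\delta(\nu t)^{1/(1-\gamma-2s)}}$ from the $\nu$-dissipation alone. (3) The companion rate $e^{-\delta(\nu^\kappa t)^{s/(s-\gamma)}}$ comes from combining the $\nu$-weighted second-derivative control built into $\mathbb{E}_{k,\eta,0}^{(\bar q,1)}$ (the $\nu^{2\kappa}|k|^{-2}\partial^{\beta'}$ terms) with the $|k|\ne0$ transport: this is the enhanced-dissipation mechanism, where $ik\cdot v$ mixes $v$-derivatives into the dissipation over the hypoelliptic timescale $\nu^{-\kappa}$. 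Taking the minimum of the two rates gives \eqref{Pro-linear-decay-2}.

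Next, \eqref{Pro-linear-decay-1}: apply the commuting vector field $Y_{k,\eta}=\nabla_v+i(\eta+kt)$, which satisfies $[Y_{k,\eta},\partial_t+ik\cdot v]=0$, so $Y_{k,\eta}^\omega h$ solves \eqref{linear-h} with $\mathcal{L}$ replaced by $\mathcal{L}$ plus controllable commutators. Running the same weighted energy estimate on $Y_{k,\eta}^\omega h$ for $|\omega|\le N$ gives uniform-in-$t$ boundedness of $\mathbb{E}_{k,\eta,N}^{(0,5)}[h(t)]$ (the weight exponent $5$ is chosen large enough to absorb the commutators), together with an $\langle\nu^\kappa t\rangle^{-3/2}$ gain from the enhanced dissipation at the base level. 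Then one writes $\rho_h(t)$ in terms of $Y_{k,\eta}$-derivatives: integrating by parts in $v$, $(\eta+kt)\cdot(\text{moments of }h)$ is expressible through $Y_{k,\eta}h$, so $|(\eta+kt)|^N|\rho_h(t)|\lesssim \sum_{|\omega|\le N}|\langle v\rangle^{-N}Y_{k,\eta}^\omega h|_{L^2}\cdot|\langle v\rangle^{N}\mu^{1/2}|_{L^2}$, which is $\lesssim \langle\nu^\kappa t\rangle^{-3/2}\{\mathbb{E}_{k,\eta,N}^{(0,5)}[h_0]\}^{1/2}$; combined with the trivial bound $|\rho_h(t)|\lesssim\{\mathbb{E}^{(0,5)}_{k,\eta,N}[h_0]\}^{1/2}$ this yields \eqref{Pro-linear-decay-1}. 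Finally \eqref{Pro-linear-decay-3} follows by interpolating \eqref{Pro-linear-decay-1} and \eqref{Pro-linear-decay-2}: split $h_0$ is not needed, rather run the $Y_{k,\eta}$-energy estimate while simultaneously tracking the stretched exponential decay of the base energy (possible because the $\bar q>0$ weight propagates to all $Y$-levels at the cost of the $(0,5)$ data norm plus the $(\bar q,1)$ data norm), giving the product $\langle kt+\eta\rangle^{-N}\cdot\min\{\cdots\}$.

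The main obstacle I expect is closing the stretched-exponential estimate with the \emph{right} exponent $s/(s-\gamma)$ — i.e. making the interplay between the algebraic weight $\langle v\rangle^{-\gamma\bar\ell/(2s)}$, the fractional dissipative norm $|\cdot|_D$ (which only controls $H^s_{\gamma+2s}$, much weaker than the $H^1$ of the Landau case), and the $\nu^\kappa$-scaling all fit together so that the Strain--Guo splitting optimizes to the claimed rate rather than a worse one. In particular one must verify that the commutator terms generated by pushing $e^{\bar q\langle v\rangle}$ and $Y_{k,\eta}^\omega$ through $\mathcal{L}$ are genuinely lower order in this weighted-fractional topology (this is precisely the content of the Strain--Guo-type lemmas promised in Section \ref{Appendix}), and that the enhanced-dissipation gain from $ik\cdot v$ at rate $\nu^\kappa$ is compatible with — and not destroyed by — the soft-potential velocity decay loss; balancing these two competing effects is the technical heart of the proposition.
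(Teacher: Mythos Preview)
Your overall architecture --- weighted energy estimate, commuting $Y_{k,\eta}$, Strain--Guo iteration, then interpolation for the mixed bound --- matches the paper's. But there is a real gap in step (1) of your stretched-exponential argument: you write that you will ``absorb the macroscopic term using the spectral gap of $\mathcal{L}$ plus the transport term's skew-adjointness'' to arrive at $\frac{\mathrm{d}}{\mathrm{d}t}\mathbb{E}+\delta\nu|w\mathbf{P}^\perp h|_D^2\lesssim 0$. This cannot work as stated. The linear operator $\mathcal{L}$ has zero spectral gap on the macroscopic part $\mathbf{P}h$, and skew-adjointness of $ik\cdot v$ contributes nothing to the dissipation; so from a bare weighted-$L^2$ estimate you are stuck with a right-hand side of size $\nu|\mathbf{P}h|_{L^2}^2$ and no mechanism to absorb it. The paper closes this by an explicit \emph{hypocoercive} construction: one tests $\partial_{v_j}$ of \eqref{linear-h} against $ik_j\bar h$ (with the weight $w\langle v\rangle^{\gamma/(2s)}$) to generate the cross term $-\mathfrak{R}\int \nu^\kappa w\langle v\rangle^{\gamma/(2s)}\nabla_v h\cdot w\langle v\rangle^{\gamma/(2s)}ik\bar h\,\mathrm{d}v$, whose time derivative produces the crucial additional dissipation $\nu^\kappa\int w^2\langle v\rangle^{\gamma/s}|kh|^2\,\mathrm{d}v$. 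Since $|k|\ge 1$, this term controls the full $|wh|_{L^2}^2$ (macroscopic part included) at the cost of a $\langle v\rangle^{\gamma/s}$ weight loss, and \emph{that} is where the exponent $s/(s-\gamma)$ in Strain--Guo comes from. The companion rate $(\nu t)^{1/(1-\gamma-2s)}$ arises by instead comparing the same functional against the raw $\nu$-dissipation $\nu|wh|_D^2\gtrsim\nu\int\langle v\rangle^{\gamma+2s}|wh|^2\,\mathrm{d}v$.

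Your description of enhanced dissipation (``$ik\cdot v$ mixes $v$-derivatives into the dissipation over the timescale $\nu^{-\kappa}$'') is correct in spirit, but without the explicit cross-term functional you have not explained how the argument closes at the level of inequalities; this is the missing idea. Once you insert that hypocoercive term into your energy $\mathbb{E}$, the rest of your plan --- Strain--Guo with exponential weight for \eqref{Pro-linear-decay-2}, propagating the $Y_{k,\eta}^\omega$-energy and integrating by parts against $\mu^{1/2}$ for \eqref{Pro-linear-decay-1}, and interpolating $\sum_{|\omega|\le N}|Y_{k,\eta}^\omega h|_{L^2}\lesssim(\sum_{|\omega|\le N+1}|Y^\omega h|_{L^2})^{N/(N+1)}|h|_{L^2}^{1/(N+1)}$ for \eqref{Pro-linear-decay-3} --- goes through essentially as in the paper.
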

\begin{proof} For clarity, we divide our proof into the following four steps.

\noindent\underline{{\it Step 1. 
Basic energy estimates:}} In order to derive the time decay estimates \eqref{Pro-linear-decay-1}, \eqref{Pro-linear-decay-2}, and \eqref{Pro-linear-decay-3}, we first need to establish basic energy estimates. To this aim, for any proper velocity weight $w(v)$ and $j=1,2,3$, we apply $\partial_{v_j}$ to \eqref{linear-h} and multiply the resulting equation by $\nu^\kappa w(v)\langle v\rangle^{\frac{\gamma}{2s}}$ to have
\begin{align}\label{h-v}
    (\partial_t+ik\cdot v)[\nu^\kappa w(v)\langle v\rangle^{\frac{\gamma}{2s}}\partial_{v_j}h]+\nu^\kappa w(v)\langle v\rangle^{\frac{\gamma}{2s}} ik_jh+\nu^{1+\kappa} w(v)\langle v\rangle^{\frac{\gamma}{2s}}\partial_{v_j}\mathcal{L} h=0.
\end{align}
Multiplying \eqref{linear-h} by $w(v)\langle v\rangle^{\frac{\gamma}{2s}} k_j$ yields
\begin{align}\label{h-k}
    (\partial_t+ik\cdot v)[k_jw(v)\langle v\rangle^{\frac{\gamma}{2s}} h]+\nu w(v)\langle v\rangle^{\frac{\gamma}{2s}}\mathcal{L} [k_jh]=0.
\end{align}

Noting that  $w(v)\langle v\rangle^{\frac{\gamma}{s}}\overline{ik_jh}=-w(v)\langle v\rangle^{\frac{\gamma}{2s}} ik_j\overline{h}$, we take the inner product of  \eqref{h-v} with $w(v)\langle v\rangle^{\frac{\gamma}{2s}} ik_j\overline{h}$ in $L^2(\mathbb{R}^3_v)$ to obtain
\begin{align*}
   & -\mathfrak{R}\int_{\mathbb{R}^3}(\partial_t+ik\cdot v)[\nu^\kappa w(v)\langle v\rangle^{\frac{\gamma}{2s}}\partial_{v_j}h] [w(v)\langle v\rangle^{\frac{\gamma}{2s}}ik_j\overline{h}]\,\mathrm{d}v\\
   &+\nu^\kappa\int_{\mathbb{R}^3}|w(v)\langle v\rangle^{\frac{\gamma}{s}} k_j h|^2\,\mathrm{d}v-\nu^{1+\kappa} \int_{\mathbb{R}^3}w^2(v)\langle v\rangle^{\frac{\gamma}{s}}\partial_{v_j}\mathcal{L}hik_j\overline{h}\,\mathrm{d}v=0.
\end{align*}
Then we have
\begin{eqnarray}\label{R-h-1}
&&-\mathfrak{R}\partial_t\int_{\mathbb{R}^3}[\nu^\kappa w(v)\langle v\rangle^{\frac{\gamma}{2s}}\partial_{v_j}h] [w(v)\langle v\rangle^{\frac{\gamma}{2s}}ik_j\overline{h}]\,\mathrm{d}v
+\mathfrak{R}\int_{\mathbb{R}^3}[\nu^\kappa w(v)\langle v\rangle^{\frac{\gamma}{2s}}\partial_{v_j}h] \partial_t[w(v)\langle v\rangle^{\frac{\gamma}{2s}}ik_j\overline{h}]\,\mathrm{d}v\nonumber\\
&&-\mathfrak{R}\int_{\mathbb{R}^3}ik\cdot v[\nu^\kappa w(v)\langle v\rangle^{\frac{\gamma}{2s}}\partial_{v_j}h] [w(v)\langle v\rangle^{\frac{\gamma}{2s}}ik_j\overline{h}]\,\mathrm{d}v+\nu^{\kappa}\int_{\mathbb{R}^3}|w(v)\langle v\rangle^{\frac{\gamma}{s}} k_j h|^2\,\mathrm{d}v\nonumber\\
&&-\nu^{1+\kappa} \int_{\mathbb{R}^3}w^2(v)\langle v\rangle^{\frac{\gamma}{s}}\partial_{v_j}\mathcal{L}hik_j\overline{h}\,\mathrm{d}v=0.
\end{eqnarray}
We take the conjugate of  \eqref{h-k} to have
\begin{eqnarray*}
    \partial_t( w(v)\langle v\rangle^{\frac{\gamma}{2s}} k_j \overline{h})=-\nu w(v)\langle v\rangle^{\frac{\gamma}{2s}} \mathcal{L}[k_j \overline{h}]+ik\cdot v [ w(v)\langle v\rangle^{\frac{\gamma}{2s}} k_j\overline{h}],
\end{eqnarray*}
which together with \eqref{R-h-1} gives
\begin{eqnarray}\label{h-linear-end-1}
&&-\mathfrak{R}\partial_t\int_{\mathbb{R}^3}[\nu^\kappa w(v)\langle v\rangle^{\frac{\gamma}{2s}}\nabla_{v}h]\cdot [w(v)\langle v\rangle^{\frac{\gamma}{2s}}ik\overline{h}]\,\mathrm{d}v
+\nu^\kappa\int_{\mathbb{R}^3}|w(v)\langle v\rangle^{\frac{\gamma}{2s}} k h|^2\,\mathrm{d}v\nonumber\\
&&-\nu^{1+\kappa}\mathfrak{R}\int_{\mathbb{R}^3}[w(v)\langle v\rangle^{\frac{\gamma}{2s}}\nabla_{v}h]\cdot[w(v)\langle v\rangle^{\frac{\gamma}{2s}}\mathcal{L}[i k \overline{h}]]\,\mathrm{d}v\nonumber\\
&&-\nu^{1+\kappa}\mathfrak{R}\int_{\mathbb{R}^3}[w(v)\langle v\rangle^{\frac{\gamma}{2s}}i kh]\cdot[w(v)\langle v\rangle^{\frac{\gamma}{2s}}\nabla_{v}\mathcal{L}[ \overline{h}]]\,\mathrm{d}v=0.
\end{eqnarray}
We take the inner product of \eqref{linear-h} with $\overline{h}$ in $L^2(\mathbb{R}^3_v)$ and use \eqref{Lemma L_1} to have
\begin{eqnarray}\label{h-linear-noweight-0}
\frac{\mathrm{d}}{\mathrm{d}t}\mathfrak{R}\int_{\mathbb{R}^3}|h|^2\,\mathrm{d}v+   \nu \delta|{\bf P}^{\bot}h|_D^2\leq 0,
\end{eqnarray}
where $\delta>0$ is a constant.
Multiplying \eqref{linear-h} by $w^2(v)\overline{h}$, integrating the result equation over $\mathbb{R}^3$ and taking the real part, we use \eqref{Lemma L_2} to have
\begin{eqnarray}\label{h-linear-weight-0}
&&\frac{\mathrm{d}}{\mathrm{d}t}\mathfrak{R}\int_{\mathbb{R}^3}|w(v)h|^2\,\mathrm{d}v+   \delta\nu\left|w(v)h\right|_{D}^2\leq C\nu\left|\chi_{\{| v|\leq C\}}h\right|^2_{L^2}\lesssim \nu \left|{\bf P}^{\bot}h\right|_D^2+\nu |k|^2 \left|\bf Ph\right|_{L^2}^2,
\end{eqnarray}
where we used $|k|\geq1$. 

As the derivation of \eqref{h-linear-weight-0}, we can also obtain
\begin{eqnarray}\label{h-linear-weight-1}
\frac{\mathrm{d}}{\mathrm{d}t}\mathfrak{R}\int_{\mathbb{R}^3}w^2(v)\langle v\rangle^{\frac{\gamma}{s}}|k|^2|h|^2\,\mathrm{d}v+   \nu \delta |k|^2\left|w(v)h\right|_{D}^2\lesssim\nu |k|^2 \left|{\bf P}^{\bot}h\right|_D^2+\nu |k|^2 \left|{\bf P}h\right|_{L^2}^2 .
\end{eqnarray}
Applying $\partial_{v_j}$ to \eqref{linear-h} and taking the inner product of the resulting equation with $\nu^{2\kappa}w^2(v)\langle v\rangle^{\frac{\gamma}{s}}\partial_{v_j}h$ in $L^2(\mathbb{R}^3_v)$, we further take its real part and  use \eqref{Lemma L_3} to get
\begin{eqnarray}\label{h-linear-noweight-v}
&&\frac{\mathrm{d}}{\mathrm{d}t}\mathfrak{R}\int_{\mathbb{R}^3}\nu^{2\kappa}w^2(v)\langle v\rangle^{\frac{\gamma}{s}}|\nabla_vh|^2\,\mathrm{d}v
+\mathfrak{R}\int_{\mathbb{R}^3}\nu^{2\kappa}w^2(v)\langle v\rangle^{\frac{\gamma}{s}}ik h\cdot \nabla_v\overline{h}\,\mathrm{d}v+\nu^{1+2\kappa}\left|w(v)\langle v\rangle^{\frac{\gamma}{2s}} \partial_{v_j} h\right|^2_D\nonumber\\
&&\qquad\lesssim  
\eta \nu^{1+2\kappa}\left|w(v)\langle v\rangle^{\frac{\gamma}{2s}} \nabla_v h\right|^2_D+ \nu^{1+2\kappa} \left|{\bf P}^{\bot}h\right|_D^2+\nu^{1+2\kappa} |k|^2 \left|{\bf P}h\right|_{L^2}^2.
\end{eqnarray}
Note that
\begin{align*}
\mathfrak{R}\int_{\mathbb{R}^3}&\nu^{2\kappa}w^2(v)\langle v\rangle^{\frac{\gamma}{s}}ikh\cdot\nabla_v\overline{h}\,\mathrm{d}v\nonumber\\
\lesssim&\nu^\kappa\int_{\mathbb{R}^3}w^2(v)\langle v\rangle^{\frac{\gamma}{s}}|k h|^2\,\mathrm{d}v+\eta\nu^{3\kappa}\int_{\mathbb{R}^3}w^2(v)\langle v\rangle^{\frac{\gamma}{s}}|\nabla_v h|^2\,\mathrm{d}v\nonumber\\
  \lesssim&\nu^\kappa\int_{\mathbb{R}^3} w^2(v)\langle v\rangle^{\frac{\gamma}{s}}|k h|^2\,\mathrm{d}v+\eta\nu \left|w(v)h\right|^2_{D}+\eta\nu^{1+2\kappa} \left|\langle v\rangle^{\frac{\gamma}{2s}}w(v)\nabla_vh\right|^2_{D},
\end{align*}
where we have used the weighted Gagliardo–Nirenberg interpolation inequality \eqref{ws-ine} in Lemma \ref{ws-ine-lem}, and the relation $3\kappa \geq s+(1+2\kappa)(1-s)$ holds since 
$$\frac{\gamma}{s}=s\gamma +(1-s)\frac{1+s}{s}\gamma,\ \textrm{and}\ \kappa=\frac1{1+2s}.$$

We make a proper linear combination of \eqref{h-linear-end-1}, \eqref{h-linear-noweight-0}, 
\eqref{h-linear-weight-0}, \eqref{h-linear-weight-1} and \eqref{h-linear-noweight-v} to obtain
\begin{eqnarray}\label{h-liear-end-weight}
   && \frac{\mathrm{d}}{\mathrm{d}t}\left\{\int_{\mathbb{R}^3}A_{0}\{w^2(v)|h|^2+w^2(v)\langle v\rangle^{\frac{\gamma}{s}}|kh|^2\}\,\mathrm{d}v
   +\int_{\mathbb{R}^3}C_{A_{0}}
   |h|^2\,\mathrm{d}v\right\}\nonumber\\
   && + \frac{\mathrm{d}}{\mathrm{d}t}\left\{ \int_{\mathbb{R}^3}\nu^{2\kappa}w^2(v)\langle v\rangle^{\frac{\gamma}{s}}|\nabla_{v}h|^2\,\mathrm{d}v\right\}
   -\frac{\mathrm{d}}{\mathrm{d}t}\mathfrak{R}\int_{\mathbb{R}^3}[\nu^\kappa w(v)\langle v\rangle^{\frac{\gamma}{2s}}\nabla_{v}h] \cdot[w(v)\langle v\rangle^{\frac{\gamma}{2s}}ik\overline{h}]\,\mathrm{d}v\\
   &&
   +\nu^\kappa\int_{\mathbb{R}^3}w^2(v)\langle v\rangle^{\frac{\gamma}{s}}|k h|^2\,\mathrm{d}v
   +\nu\left|w(v) h\right|_D^2+\nu\left|w(v)\langle v\rangle^{\frac{\gamma}{2s}}k h\right|_D^2+\nu^{1+2\kappa}\left|w(v)\langle v\rangle^{\frac{\gamma}{2s}} \nabla_v h\right|^2_D\lesssim 0,\nonumber
\end{eqnarray}
where $C_{A_{0}}$ is a constant dependent on $A_{0}$.
Noting the fact
\[
[\partial_t+ik\cdot v, Y_{k,\eta}]=0
\]
We apply $Y_{k,\eta}$ to the equation \eqref{linear-h} and use \eqref{Lemma L_1} to obtain
\begin{eqnarray*}
    \frac12\frac d{dt}\left|Y_{k,\eta}h\right|_{L^2}^2+\delta\nu \left|Y_{k,\eta}h\right|_D^2\lesssim \nu |k|^2\left|{\bf P}[Y_{k,\eta}h]\right|_{L^2}^2.
\end{eqnarray*}
As the derivation of \eqref{h-liear-end-weight}, we can also similarly get
\begin{eqnarray}
   && \frac{\mathrm{d}}{\mathrm{d}t}\left\{
   \int_{\mathbb{R}^3}A_{0}\{w^2(v)|Y_{k,\eta}^\omega h|^2+w ^2(v)\langle v\rangle^{\frac{\gamma}{s}}|k Y_{k,\eta}^\omega h|^2\}+C_{A_{0}}
   |Y_{k,\eta}^\omega h|^2+\nu^{2\kappa}w^2(v)\langle v\rangle^{\frac{\gamma}{s}}|\nabla_{v}Y_{k,\eta}^\omega h|^2\,\mathrm{d}v\right\}\nonumber\\
   &&\qquad-\frac{\mathrm{d}}{\mathrm{d}t}\mathfrak{R}\int_{\mathbb{R}^3}[\nu^\kappa w(v)\langle v\rangle^{\frac{\gamma}{2s}}\nabla_{v}Y_{k,\eta}^\omega h] \cdot[w(v)\langle v\rangle^{\frac{\gamma}{2s}}ik Y_{k,\eta}^\omega\overline{h}]\,\mathrm{d}v+\nu^\kappa\int_{\mathbb{R}^3}\{w^2(v)\langle v\rangle^{\frac{\gamma}{s}}|k Y_{k,\eta}^\omega h|^2\}\,\mathrm{d}v\nonumber\\
   &&
  \qquad\qquad +\nu|w(v) Y_{k,\eta}^\omega h|_D^2+\nu|w(v)\langle v\rangle^{\frac{\gamma}{2s}}k Y_{k,\eta}^\omega h|_D^2+\nu^{1+2\kappa}|w(v)\langle v\rangle^{\frac{\gamma}{2s}} \nabla_v Y_{k,\eta}^\omega h|^2_D\lesssim 0.\notag
\end{eqnarray}
Consequently, we conclude that
\begin{align}\label{h-linear-energy}
    \frac{\mathrm{d}}{\mathrm{d}t}&\sum_{|\omega|\leq N}\left\{\int_{\mathbb{R}^3}A_{0}\{w^2(v)|Y_{k,\eta}^\omega h|^2+w ^2(v)\langle v\rangle^{\frac{\gamma}{s}}|k Y_{k,\eta}^\omega h|^2\}\,\mathrm{d}v\right\}\nonumber\\
   &+ \frac{\mathrm{d}}{\mathrm{d}t}\sum_{|\omega|\leq N}\left\{
   \int_{\mathbb{R}^3}C_{A_{0}}
   |Y_{k,\eta}^\omega h|^2\,\mathrm{d}v+\int_{\mathbb{R}^3}\nu^{2\kappa}w^2(v)\langle v\rangle^{\frac{\gamma}{s}}|\nabla_{v}Y_{k,\eta}^\omega h|^2\,\mathrm{d}v\right\}\nonumber\\
   &-\frac{\mathrm{d}}{\mathrm{d}t}\sum_{|\omega|\leq N}\mathfrak{R}\int_{\mathbb{R}^3}[\nu^\kappa w(v)\langle v\rangle^{\frac{\gamma}{2s}}\nabla_{v}Y_{k,\eta}^\omega h]\cdot [w(v)\langle v\rangle^{\frac{\gamma}{2s}}ik Y_{k,\eta}^\omega\overline{h}]\,\mathrm{d}v\nonumber\\
   &+\nu^\kappa\sum_{|\omega|\leq N}\int_{\mathbb{R}^3}\{w^2(v)\langle v\rangle^{\frac{\gamma}{s}}|k Y_{k,\eta}^\omega h|^2\}\,\mathrm{d}v
   +\nu\sum_{|\omega|\leq N}|w(v) Y_{k,\eta}^\omega h|_D^2\nonumber\\
   &+\nu\sum_{|\omega|\leq N}|w(v)\langle v\rangle^{\frac{\gamma}{2s}}k Y_{k,\eta}^\omega h|_D^2+\nu^{1+2\kappa}\sum_{|\omega|\leq N}|w(v)\langle v\rangle^{\frac{\gamma}{2s}} \nabla_v Y_{k,\eta}^\omega h|^2_D\lesssim 0.
\end{align}
\noindent\underline{{\it Step 2. Polynomial decay:}} With the energy estimate \eqref{h-linear-energy} in hand, we come to prove the desired time decay estimates. For convenience, we define the following energy functional
\begin{align}\label{g-kh}
g^2(t,v)=&\sum_{|\omega|\leq N}\left\{A_{0}\left\{\langle v\rangle^{\frac{-\gamma}{s}}|Y_{k,\eta}^\omega h|^2+|k Y_{k,\eta}^\omega h|^2\right\}+C_{A_{0}}|Y_{k,\eta}^\omega h|^2\right\}\nonumber\\
&+\sum_{|\omega|\leq N}\nu^{2\kappa}|\nabla_{v}Y_{k,\eta}^\omega h|^2-\mathfrak{R}\sum_{|\omega|\leq N}[\nu^\kappa\nabla_{v}Y_{k,\eta}^\omega h] \cdot[ ik Y_{k,\eta}^\omega\overline{h}].
\end{align}
Taking $w(v)=\langle v\rangle^{\frac{-\gamma}{2s}}$ in \eqref{h-linear-energy}, we can deduce that {there is a small positive constant $\delta$ such that}
\begin{align}\label{equation-decay}
    \frac{\mathrm{d}}{\mathrm{d}t}\int_{\mathbb{R}^3}g^2\,\mathrm{d}v+\delta\nu^\kappa\int_{\mathbb{R}^3}\langle v\rangle^{\frac{\gamma}{s}} g^2\,\mathrm{d}v\lesssim 0.
\end{align}
Here we used the following facts:
\begin{align}\label{nabla-Diss}
&\left|\langle v\rangle^{\frac{\gamma}{2s}}(\langle v\rangle^{\frac{-\gamma}{2s}}Y_{k,\eta}^\omega h)\right|^2_{L^2}\lesssim |kY_{k,\eta}^\omega h|^2_{L^2},\quad |k|\geq1,\nonumber\\
  \nu^{2\kappa}\sum_{|\omega|\leq N}|\nabla_{v}Y_{k,\eta}^\omega h|^2_{L^2}\lesssim &A_0^{-s}
  \left(\nu^{1-\kappa}A_0\sum_{|\omega|\leq N}|\langle v\rangle^{\frac{-\gamma}{2s}+\frac{\gamma}{2}} Y_{k,\eta}^\omega h|_s^2\right)^s\left(\nu^{1+\kappa}\sum_{|\omega|\leq N}{|\langle v\rangle^{\frac{\gamma}{2}} }\nabla_v Y_{k,\eta}^\omega h|^2_s\right)^{1-s}\nonumber\\
   \lesssim &A_0^{-s}\left(
  \nu^{1-\kappa}A_0\sum_{|\omega|\leq N}|\langle v\rangle^{\frac{-\gamma}{2s}} Y_{k,\eta}^\omega h|_D^2+\nu^{1+\kappa}\sum_{|\omega|\leq N}| \nabla_v Y_{k,\eta}^\omega h|^2_D\right)
\end{align}
due to $2\kappa\geq (1-\kappa)s+(1+\kappa)(1-s)$ and thus
\begin{align*}
    \sum_{|\omega|\leq N}&\int_{\mathbb{R}^3}w^2(v)\langle v\rangle^{\frac{\gamma}{s}}|k Y_{k,\eta}^\omega h|^2\,\mathrm{d}v
   +\nu^{1-\kappa}\sum_{|\omega|\leq N}|w(v) Y_{k,\eta}^\omega h|_D^2\nonumber\\
   &+\nu^{1-\kappa}\sum_{|\omega|\leq N}|w(v)\langle v\rangle^{\frac{\gamma}{2s}}k Y_{k,\eta}^\omega h|_D^2+\nu^{1+\kappa}\sum_{|\omega|\leq N}|w(v)\langle v\rangle^{\frac{\gamma}{2s}} \nabla_v Y_{k,\eta}^\omega h|^2_D\nonumber\\
  \gtrsim& \int_{\mathbb{R}^3}\langle v\rangle^{\frac{\gamma}{s}} g^2\,\mathrm{d}v.
\end{align*}
On the other hand, taking $w(v)=\langle v\rangle^{-\frac{5\gamma}{s}}$ in  \eqref{h-linear-energy} gives 
\begin{align*}
    &\sup_{t\in [0,\infty)}\int_{\mathbb{R}^3}\langle v\rangle^{-\frac{4\gamma}{s}}g^2(t,v)\,\mathrm{d}v\nonumber\\
    \lesssim&
    \sup_{t\in [0,\infty)}\sum_{|\omega|\leq N}\int_{\mathbb{R}^3}\langle v\rangle^{-\frac{4\gamma}{s}}\left\{A_{0}\left\{\langle v\rangle^{\frac{-\gamma}{s}}|Y_{k,\eta}^\omega h|^2+|k Y_{k,\eta}^\omega h|^2\right\}+\nu^{2\kappa}|\nabla_{v}Y_{k,\eta}^\omega h|^2\right\}\,\mathrm{d}v\\
    \lesssim&\sum_{|\omega|\leq N}\int_{\mathbb{R}^3}\langle v\rangle^{-\frac{4\gamma}{s}}\left\{A_{0}\left\{\langle v\rangle^{\frac{-\gamma}{s}}|Y_{k,\eta}^\omega h(0)|^2+|k Y_{k,\eta}^\omega h(0)|^2\right\}+\nu^{2\kappa}|\nabla_{v}Y_{k,\eta}^\omega h(0)|^2\right\}\,\mathrm{d}v\nonumber\\
    \lesssim& \int_{\mathbb{R}^3}\langle v\rangle^{-\frac{4\gamma}{s}}{g}^2(0)\,\mathrm{d}v\lesssim |k|^2 \mathbb{E}_{k,\eta,N}^{(0,5)}[h_0].
\end{align*}
Then we apply Lemma \ref{decay-algeb} to have
\begin{eqnarray}\label{algeb-Yh}
   |k|^2 \sum_{|\omega|\leq N}|Y_{k,\eta}^\omega h|^2_{L^2} \lesssim \int_{\mathbb{R}^3}g^2(t,v)\,\mathrm{d}v\lesssim\langle \nu^\kappa t\rangle^{-3}\int_{\mathbb{R}^3}\langle v\rangle^{-\frac{4\gamma}{s}}{g}^2(0)\,\mathrm{d}v\lesssim |k|^2\langle \nu^\kappa t\rangle^{-\frac32}\mathbb{E}_{k,\eta,N}^{(0,5)}[h_0].
\end{eqnarray}
Now we let $h(t)=S_k(t)[h_0]$ and use $|\eta+kt|=|Y_{k,\eta}-\nabla_v|$ to have
\begin{align}\label{lin-pha}
&\left(1+|\eta+kt|^N\right)\left| \int_{\mathbb{R}^3}h\mu^{\frac{1}{2}}\,\mathrm{d}v\right|\lesssim \sum_{|\omega|\leq N} \left| \int_{\mathbb{R}^3}\left[\left(Y_{k,\eta}-\nabla_v\right)^{\omega}h\right]
\mu^{\frac{1}{2}}\,\mathrm{d}v\right|\nonumber\\
&\qquad\lesssim \sum_{|\omega_1+\omega_2|\leq N} \left| \int_{\mathbb{R}^3}\left(Y_{k,\eta}^{\omega_1}h\right)
\left(\nabla_v^{\omega_2}\mu^{\frac{1}{2}}\right)\,\mathrm{d}v\right|\lesssim 
\sum_{|\omega|\leq N} \|Y_{k,\eta}^{\omega}h\|_{L^2}.
\end{align}
Combining \eqref{algeb-Yh} and \eqref{lin-pha} yields \eqref{Pro-linear-decay-1}.

\noindent\underline{{\it Step 3. Stretched exponential decay:}}
Corresponding to \eqref{g-kh} with $N=0$, we define
\begin{align*}
\mathbb{g}^2(t,v)=&A_{0}\left\{\langle v\rangle^{-\frac{\gamma}{s}}| h|^2+|k  h|^2\right\}+C_{A_{0}}| h|^2+\nu^{2\kappa}|\nabla_{v}h|^2-\mathfrak{R}\left\{[\nu^\kappa\nabla_{v} h] \cdot[ ik \overline{h}]\right\}.
\end{align*}
Similarly, as the derivation of \eqref{equation-decay}, we can obtain
\begin{align*}
    \frac{\mathrm{d}}{\mathrm{d}t}\int_{\mathbb{R}^3}\mathbb{g}^2(t,v)\,\mathrm{d}v+\delta\nu^\kappa\int_{\mathbb{R}^3}\langle v\rangle^{\frac{\gamma}{s}} \mathbb{g}^2(t,v)\,\mathrm{d}v\lesssim 0,
\end{align*}
and
\begin{align*}
    \int_{\mathbb{R}^3}\mathbb{g}^2(t,v)e^{2\bar{q}\langle v\rangle}\,\mathrm{d}v\lesssim\int_{\mathbb{R}^3}\mathbb{g}^2(0,v)e^{2\bar{q}\langle v\rangle}\,\mathrm{d}v.
\end{align*}
Consequently, we apply Lemma \ref{decay-exponent} to have
\begin{eqnarray*} 
   |k|^2\int_{\mathbb{R}^3}h^2(t,v)\,\mathrm{d}v\lesssim \int_{\mathbb{R}^3}\mathbb{g}^2(t,v)\,\mathrm{d}v\lesssim e^{-\delta\langle\nu^\kappa t\rangle^{-\frac{s}{s-\gamma}}}  \int_{\mathbb{R}^3}\mathbb{g}^2(0,v)e^{\bar{q}\langle v\rangle}\,\mathrm{d}v\lesssim |k|^2e^{-\delta\langle\nu^\kappa t\rangle^{-\frac{s}{s-\gamma}}} \mathbb{E}_{k,\eta,0}^{(\bar{q},1)}[h_0].
\end{eqnarray*}
This together with \eqref{lin-pha} gives 
\begin{align}\label{expone-lin}
  \left|\int_{\mathbb{R}^3}S_k(t)[h_0]\mu^{\frac{1}{2}}\,\mathrm{d}v\right|\lesssim e^{-\delta_0(\nu^\kappa t)^{\frac{s}{s-\gamma}}}\left\{\mathbb{E}_{k,\eta,0}^{(\bar{q},1)}[h_0]\right\}^{\frac12}.
\end{align}
On the other hand,  noting the following fact
\begin{eqnarray*}
    &&|k Y_{k,\eta} h|^2_{L^2}
   +\nu^{1-\kappa}\left|\langle v\rangle^{\frac{-\gamma}{2s}}  Y_{k,\eta} h\right|_D^2+\nu^{1-\kappa}\left|k Y_{k,\eta} h\right|_D^2+\nu^{1+\kappa}\left| \nabla_v Y_{k,\eta} h\right|^2_D\nonumber\\
  &&\qquad \gtrsim \nu^{1-\kappa}\int_{\mathbb{R}^3}\langle v\rangle^{\gamma+2s} \mathbb{g}^2(t,v) \,\mathrm{d}v,
\end{eqnarray*}
we can modify the derivation of \eqref{equation-decay} to have
\begin{align*}
    \frac{\mathrm{d}}{\mathrm{d}t}\int_{\mathbb{R}^3}\mathbb{g}^2(t,v)\,\mathrm{d}v+\delta\nu\int_{\mathbb{R}^3}\langle v\rangle^{\gamma+2s} \mathbb{g}^2(t,v)\,\mathrm{d}v\lesssim 0.
\end{align*}
Then we apply Lemma \ref{decay-exponent} again to obtain
\begin{eqnarray*}
  |k|^2 | h|^2_{L^2} \lesssim  \int_{\mathbb{R}^3}\mathbb{g}^2(t,v)\,\mathrm{d}v\lesssim e^{-\delta\langle\nu t\rangle^{\frac{1}{1-\gamma-2s}}}  \int_{\mathbb{R}^3}\mathbb{g}^2(0,v)e^{\bar{q}\langle v\rangle}\,\mathrm{d}v\lesssim |k|^2e^{-\delta\langle\nu t\rangle^{\frac{1}{1-\gamma-2s}}} \mathbb{E}_{k,\eta,0}^{(\bar{q},1)}[h_0] ,
   \end{eqnarray*}
   which together with \eqref{lin-pha} for the case $N=0$ gives 
\begin{align}\label{expone-lin-0}
  \left|\int_{\mathbb{R}^3}S_k(t)[h_0]\mu^{\frac{1}{2}}\,\mathrm{d}v\right|\lesssim e^{-\delta_0(\nu t)^{\frac{1}{1-\gamma-2s}}}\left\{\mathbb{E}_{k,\eta,0}^{(\bar{q},1)}[h_0]\right\}^{\frac12}.
\end{align}
\eqref{Pro-linear-decay-2} follows from \eqref{expone-lin} and \eqref{expone-lin-0}.
   
\noindent\underline{{\it Step 4. Mixed time decay:}}
For $h(t)=S_k(t)h_0$, we can use \eqref{lin-pha} and \eqref{Pro-linear-decay-2} and employ an interpolation inequality to derive \eqref{Pro-linear-decay-3}:
%
\begin{eqnarray*}
\int_{\mathbb{R}^3}h(t)\mu^{\frac{1}{2}}\,\mathrm{d}v&\lesssim&
\langle kt+\eta\rangle^{-N}\sum_{|\omega|\leq N}|Y^\omega_{k,\eta}h|_{L^2}\lesssim \langle kt+\eta\rangle^{-N}\Big(\sum_{|\omega|\leq N+1}|Y^\omega_{k,\eta}h|_{L^2}\Big)^{\frac{N}{N+1}}\left(|h|_{L^2}\right)^{\frac{1}{N+1}}\nonumber\\
&\lesssim&\langle kt+\eta\rangle^{-N}\min\left\{e^{-\delta_N(\nu^\kappa t)^{\frac{s}{s-\gamma}}},e^{-\delta_N(\nu t)^{\frac{1}{1-\gamma-2s}}}\right\}\left[\mathbb{E}^{(0,5)}_{k,\eta,N+1}[h_0]
+\mathbb{E}^{(\bar{q},1)}_{k,\eta,0}[h_0]\right]^{\frac12}.
\end{eqnarray*}
Thus, the proof of Proposition \ref{linear-decay} is complete.
\end{proof}

\section{Density estimates}\label{density-estimates}
In this section, we are devoted to uniform bound estimates of the density and its derivatives of the VPB system \eqref{f}. As in \cite{CLN-JAMS-2023}, we will first derive Landau damping and enhanced dissipation for the linear version of the system \eqref{f}  based on estimates in Proposition \ref{linear-decay} for the linear Boltzmann equation, then further apply it to the full system.

\subsection{Linear density estimates}
Now we consider the following linear version of the VPB system
\begin{eqnarray}\label{linear-eqn-VPL}
	\partial_t f+v\cdot \nabla_x f+\nabla_x\phi\cdot v \mu^{\frac12}+\nu \mathcal{L}f=\mathcal{R}(t,x,v)
\end{eqnarray}
with the initial datum $f(0,x,v)=f_0(x,v)$, where $\mathcal{R}(t,x,v)$ is a source term and the electric potential $\phi(t,x)$ satisfies the Poisson equation $\phi=(-\Delta_x)^{-1}\big(\int_{\mathbb{R}^3}f(t,x,v)\mu^{\frac{1}{2}}-\frac{1}{(2\pi)^3}\displaystyle{\int_{\T^3\times\mathbb{R}^3}}f_0(x,v)\mu^{\frac{1}{2}}\,\mathrm{d}v\mathrm{d}x\big)$.

For the linear VPB system \eqref{linear-eqn-VPL}, we have the following result related to Landau damping.

\begin{proposition}
	For any initial datum $f_0(x,v)$ and any source term $\mathcal{ N}(t,x,v)$, the unique density solution
	$\rho(t,x)$ to \eqref{linear-eqn-VPL} can be written explicitly as
	\begin{eqnarray}\label{dens-exp}
	\widehat{\rho}(t)=\mathcal{N}_k(t)+\int_{0}^{t}G_k(t-s)\mathcal{N}_k(s)\,\mathrm{d}s
\end{eqnarray}
for each Fourier mode $k\in \mathbb{Z}^3/\{0\}$. Here the source term $\mathcal{ N}_k(t)$ is given by
\begin{eqnarray*}
	\mathcal{ N}_k(t)=\int_{{\mathbb{R}}^3}S_k(t)\hat{f}_{0k}(v)\mu^{\frac{1}{2}}\,\mathrm{d}v
+\int_{0}^{t}\int_{{\mathbb{R}}^3}S_k(t-\tau)
[\widehat{\mathcal{R}}_k(\tau,v)]\mu^{\frac{1}{2}}\,\mathrm{d}v\mathrm{d}\tau,
\end{eqnarray*}
where $S_k(t)$ is the semigroup operator of the linear Boltzmann equation \eqref{linear-h}
For any $N_0\geq 2, t\in[0,\infty)$, there exist constants $C_{N_0}>0$ and $\delta^{'}_{N_0}>0$ such that the kernel $G_k(t)$ in \eqref{dens-exp} satisfies
\begin{eqnarray}\label{kernel-decay}
	|G_k(t)|\leq C_{N_0}|k|^{-1}\langle kt\rangle^{-N_0+2}\min\left\{e^{-\delta'_{N_0}(\nu^\kappa t)^{\frac{s}{s-\gamma}}},e^{-\delta'_{N_0}(\nu t)^{\frac{1}{1-\gamma-2s}}}\right\}.
\end{eqnarray}
	\end{proposition}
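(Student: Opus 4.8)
\emph{The plan} is to reduce the problem, mode by mode in $x$, to a scalar Volterra equation for the density, and then control its resolvent kernel with the linear Boltzmann decay estimates of Proposition \ref{linear-decay}. On the $x$--Fourier side, writing $h=\widehat f_k$ and using $\widehat{\nabla_x\phi}_k=ik|k|^{-2}\widehat\rho_k$, the mode of \eqref{linear-eqn-VPL} reads
\[
\partial_t h+ik\cdot v\,h+|k|^{-2}\widehat\rho_k(t)\,\big[i(k\cdot v)\mu^{1/2}\big]+\nu\mathcal L h=\widehat{\mathcal R}_k .
\]
Since $|k|^{-2}\widehat\rho_k(t)$ is a scalar, applying Duhamel's formula for the linear Boltzmann semigroup $S_k$ and then integrating against $\mu^{1/2}$ in $v$ will produce precisely
\[
\widehat\rho_k(t)=\mathcal N_k(t)+\int_0^t K_k(t-\tau)\,\widehat\rho_k(\tau)\,\md\tau,\qquad
K_k(t):=-|k|^{-2}\int_{\mathbb R^3}S_k(t)\big[i(k\cdot v)\mu^{1/2}\big]\,\mu^{1/2}\,\md v,
\]
with $\mathcal N_k$ as in the statement. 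Being a scalar Volterra equation of the second kind, this has a unique continuous solution $\widehat\rho_k=\mathcal N_k+G_k*\mathcal N_k$, where $G_k=\sum_{n\ge1}K_k^{*n}$ is the resolvent kernel figuring in \eqref{dens-exp}; note also $K_k(0)=0$ because $\int_{\mathbb R^3}v\mu\,\md v=0$.

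Next I would estimate $K_k$ by the mixed decay bound \eqref{Pro-linear-decay-3} applied with $h_0=i(k\cdot v)\mu^{1/2}$, $\eta=0$, and an index $N$ as large as desired --- legitimate since $h_0$ is $|k|$ times a fixed Schwartz function, so all its $\nabla_v$--derivatives lie in every algebraically- and exponentially-weighted $L^2_v$. Then $\mathbb E^{(0,5)}_{k,0,N+1}[h_0]+\mathbb E^{(\bar q,1)}_{k,0,0}[h_0]\lesssim|k|^2$ (the $\nu^{2\kappa}|k|^{-2}$--weighted pieces contributing $\lesssim\nu^{2\kappa}\lesssim1$), which gives, for every $N$,
\[
|K_k(t)|\lesssim|k|^{-1}\langle kt\rangle^{-N}\min\Big\{e^{-\delta(\nu^\kappa t)^{\frac{s}{s-\gamma}}},\,e^{-\delta(\nu t)^{\frac{1}{1-\gamma-2s}}}\Big\},
\]
and in particular $\|K_k\|_{L^1_t}\lesssim|k|^{-2}$.

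The hard part will be transferring this decay to $G_k=\sum_{n\ge1}K_k^{*n}$, uniformly in $\nu$ and $k$. I would proceed by two cooperating mechanisms. First, the stretched-exponential factors survive convolution: since $\vartheta:=\tfrac{s}{s-\gamma}<1$ and $\vartheta':=\tfrac1{1-\gamma-2s}<1$ under \eqref{case}, the map $x\mapsto x^{\vartheta}$ is subadditive, so $\prod_j e^{-\delta(\nu^\kappa s_j)^{\vartheta}}\le e^{-\delta(\nu^\kappa t)^{\vartheta}}$ on $\{s_1+\dots+s_n=t\}$ (and likewise for $\vartheta'$); hence $|K_k^{*n}(t)|\le e^{-\delta(\nu^\kappa t)^{\vartheta}}\,\bar K_k^{*n}(t)$, where $\bar K_k(t):=|K_k(t)|e^{\delta(\nu^\kappa t)^{\vartheta}}\lesssim|k|^{-1}\langle kt\rangle^{-N}$ is a purely algebraic positive kernel, and it suffices to bound its resolvent $\bar G_k=\sum_{n\ge1}\bar K_k^{*n}$. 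Second, algebraic weights are convolution-stable: $\int_0^t\langle k(t-\tau)\rangle^{-m}\langle k\tau\rangle^{-m}\,\md\tau\lesssim|k|^{-1}\langle kt\rangle^{-m}$ for $m>1$, so each convolution of $\bar K_k$ gains a factor $C|k|^{-1}$, and for all but the six modes with $|k|^2=1$ the Neumann series sums geometrically to $|\bar G_k(t)|\lesssim|k|^{-1}\langle kt\rangle^{-m}$. For the remaining modes, which sit exactly at the borderline $\|K_k\|_{L^1}=1$, I would invoke the standard Volterra resolvent (Paley--Wiener) lemma together with the Penrose/Landau-damping stability of the Maxwellian --- $1-\widetilde K_k(z)\neq0$ for $\operatorname{Re}z\ge0$, readable off the explicit plasma dispersion function --- so that $\widetilde G_k=\widetilde K_k/(1-\widetilde K_k)$ is bounded on a strip $\operatorname{Re}z\ge-\sigma_0$, $\sigma_0>0$, whence $G_k$ decays like $\langle kt\rangle^{-N+2}e^{-\sigma_0 t}$ (the two-power loss coming from two integrations by parts), which dominates the target $\nu$-exponential rate for $\nu$ small. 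Taking $N=N_0$ and recombining the two mechanisms yields \eqref{kernel-decay}. The genuinely delicate issue throughout is keeping all constants independent of $\nu$ and $k$: the $|k|^2=1$ modes force the use of the linearized Vlasov--Poisson spectral stability of $\mu$ rather than any naive smallness, and the concavity inequality $a^{\vartheta}+b^{\vartheta}\ge(a+b)^{\vartheta}$ for $\vartheta<1$ is precisely what keeps the collisional decay rate $\delta'_{N_0}$ from collapsing as $\nu\to0$.
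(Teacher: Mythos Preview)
Your proposal is essentially correct and follows the same route the paper takes, namely the argument of Proposition~7.1 in \cite{CLN-JAMS-2023} with the linear Boltzmann decay estimates of Proposition~\ref{linear-decay} substituted for the Landau ones; the paper's own proof consists only of this citation. Two small imprecisions are worth noting: first, it is not that $\|K_k\|_{L^1}=1$ exactly for the unit modes, but rather that one cannot guarantee $\|K_k\|_{L^1}<1$ uniformly there, which is why Penrose stability is genuinely needed; second, the Penrose condition must be verified uniformly in $\nu\in(0,\nu_0]$, not just at $\nu=0$, which requires a continuity argument for $\widetilde K_k^{(\nu)}$ as $\nu\to0$ (this is handled in \cite{CLN-JAMS-2023}).
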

\begin{proof}
	This proposition can be proved in the same manner as Proposition 7.1 in \cite{CLN-JAMS-2023}. The only difference is that here we consider the linear non-cutoff Boltzmann collision operator instead of the linear Landau collision operator. Nevertheless, this does not lead to any essential changes, thanks to the time decay estimates in Proposition \ref{linear-decay}, which correspond to Proposition 6.2 in \cite{CLN-JAMS-2023}.
\end{proof}
\subsection{Nonlinear density estimates}
Before deriving the nonlinear density estimates for the solution to the Cauchy problem \eqref{f} and \eqref{f-initial}, we assume the local-in-time existence of a unique solution $f(t,x,v)$ on the time interval $[0, T]$ and impose the following {\it a priori} assumption:
\begin{eqnarray}\label{priori-assu}
	X(T)\equiv&&\sup_{ 0\leq t\leq T}\Bigg\{\max\{\nu^{-\kappa},\langle t\rangle\}^{\min\{0,-N+N_{max}-2\}}\left\{\widetilde{\mathbb{E}}^{(q,\iota_0)}_N[f](t)+\nu^\kappa\int_{0}^{t}\widetilde{\mathbb{D}}^{(q,\iota_0)}_N[f](\tau)\mathrm{d}\tau\right\}\nonumber\\
	&&+\langle t\rangle^{4}\Big\{\|\phi(t)\|_{W^{5,\infty}_x}^2+\sum_{|\alpha|+|\omega|\leq 4}\|\partial^\alpha_xY^\omega \nabla_x\phi(t)\|^2_{L^\infty_x}\Big\}\Bigg\}\leq \mathcal{M}\nu^{2\kappa},
\end{eqnarray}
for any $T\in[0,\infty)$.
Under this {\it a priori} assumption, we are able to deduce the following uniform density estimates.
\begin{theorem}\label{dens-esti}
Under the same assumptions as in Theorem \ref{Main-Th.}, suppose the {\it a priori} assumption \eqref{priori-assu} holds for the unique local smooth solution $ f(t,x,v) $ to the Cauchy problem \eqref{f} and \eqref{f-initial} on the time interval $ [0, T]$. Then, for any $t\in [0, T]$, it holds that
	\begin{eqnarray}\label{Th-rho-1}
		\sum_{|\alpha|+|\omega|\leq N_{max}}\left\{\|\partial^\alpha_xY^\omega \rho_{\neq 0}(t) \|^2+\nu^\kappa\int_{0}^{t}\|\partial^\alpha_xY^\omega \rho_{\neq 0}(\tau) \|^2\mathrm{d}\tau\right\}\lesssim \mathcal{M}^2\nu^{2\kappa}.
	\end{eqnarray}
\end{theorem}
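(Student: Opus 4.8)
\textbf{Proof strategy for Theorem \ref{dens-esti}.}
The plan is to use the Volterra representation \eqref{dens-exp} together with the kernel decay \eqref{kernel-decay} to reduce the nonlinear density estimate to bounding the source term $\mathcal{N}_k(t)$, which itself splits into a free-evolution piece acting on $\hat f_{0k}$ and a Duhamel piece acting on the nonlinear remainder. First I would fix a Fourier mode $k\neq 0$ and write the full equation \eqref{f} in the form \eqref{linear-eqn-VPL} by collecting all nonlinear terms --- the transport correction $\nabla_x\phi\cdot\nabla_v f$, the streaming-field term $\tfrac12\nabla_x\phi\cdot v f$, and the nonlinear collision $\nu\Gamma(f,f)$ --- into the source $\mathcal{R}$. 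Applying \eqref{dens-exp}, $\widehat\rho_k(t)=\mathcal{N}_k(t)+\int_0^t G_k(t-s)\mathcal{N}_k(s)\,\mathrm{d}s$, and observing that the Volterra kernel bound \eqref{kernel-decay} gives $\|G_k\|_{L^1_t}\lesssim |k|^{-1}$ uniformly (and in fact with integrable time decay), a Young-type convolution estimate reduces matters to showing $\sum_{|\alpha|+|\omega|\le N_{\max}}\{\|\partial_x^\alpha Y^\omega \mathcal{N}_{\neq0}(t)\|^2+\nu^\kappa\int_0^t\|\partial_x^\alpha Y^\omega\mathcal{N}_{\neq0}(\tau)\|^2\mathrm{d}\tau\}\lesssim \mathcal{M}^2\nu^{2\kappa}$. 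The $\partial_x^\alpha Y^\omega$ derivatives are handled at the Fourier level by the multiplier identities $\widehat{\partial_x^\alpha Y^\omega(\cdot)}_k = (ik)^\alpha (Y_{k,0})^\omega \widehat{(\cdot)}_k$, so the relevant object is $(1+|k|+|kt|)^{|\alpha|+|\omega|}|\mathcal{N}_k(t)|$, and one must keep the polynomial weight $\langle kt\rangle$ small relative to the regularity budget $N_{\max}$.

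\textbf{Estimating the source.} For the free-evolution contribution $\int_{\mathbb{R}^3}S_k(t)\hat f_{0k}\,\mu^{1/2}\mathrm{d}v$, the mixed decay estimate \eqref{Pro-linear-decay-3} of Proposition \ref{linear-decay} gives precisely the $\langle kt\rangle^{-N}$ phase-mixing gain together with the stretched-exponential-in-$\nu^\kappa t$ factor; summing over the $Y^\omega$ derivatives costs one extra derivative in the $\mathbb{E}_{k,\eta,N+1}^{(0,5)}$ norm, which is affordable because the smallness hypothesis on $f_0$ in Theorem \ref{Main-Th.} controls $\iota_0\ge N_{\max}+9$ velocity-weighted derivatives and hence bounds $\mathbb{E}_{k,0,N_{\max}+2}^{(0,5)}[\hat f_{0k}]\lesssim \mathcal{M}^2\nu^{2\kappa}$ after summation in $k$. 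For the Duhamel contribution $\int_0^t\int_{\mathbb{R}^3}S_k(t-\tau)[\widehat{\mathcal{R}}_k(\tau)]\mu^{1/2}\mathrm{d}v\,\mathrm{d}\tau$, I would again apply \eqref{Pro-linear-decay-3} to the inner semigroup, transferring one $Y_{k,\eta}$-derivative onto $\widehat{\mathcal{R}}_k$, then estimate $\|\mathcal{R}(\tau)\|$ in the appropriate weighted norm using the \emph{a priori} assumption \eqref{priori-assu}. Here the electric-field terms $\nabla_x\phi\cdot\nabla_v f$ and $\nabla_x\phi\cdot v f$ are controlled by the $\langle t\rangle^{-2}$ decay of $\|\phi\|_{W^{5,\infty}}$ and $\|\nabla_x\phi\|_{L^\infty}$ built into \eqref{priori-assu}, paired with $\widetilde{\mathbb{E}}_N^{(q,\iota_0)}[f]\lesssim\mathcal{M}\nu^{2\kappa}$; the nonlinear collision term $\nu\Gamma(f,f)$ is handled by the non-cutoff trilinear estimates from Section \ref{Appendix}, where the crucial bookkeeping is that $\Gamma(f,f)$ carries a factor $\nu$ while requiring $H^s_v$-regularity of $f$ up to order $\max\{1+2s,2\}$ --- as flagged in the strategy discussion around \eqref{dens-colli} --- so the $\nu^\kappa|\beta|$-weights must be tracked carefully to produce the right power $\nu^{2\kappa}$ on the right-hand side.

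\textbf{Closing the time integral.} The final step is to combine the pointwise-in-$k$ bounds with the time-convolution structure: since both the kernel $G_k$ and the inner semigroup contribute integrable factors in $t$ (the stretched-exponential factors $e^{-\delta(\nu^\kappa t)^{s/(s-\gamma)}}$ are $\nu^\kappa$-times integrable, and the polynomial $\langle kt\rangle^{-N_{\max}+2}$ is integrable for $N_{\max}\ge 4$), a Schur-test / Young's inequality argument on $L^2_\tau$ yields both the pointwise-in-time bound $\|\partial_x^\alpha Y^\omega\rho_{\neq0}(t)\|^2\lesssim\mathcal{M}^2\nu^{2\kappa}$ and the space-time bound $\nu^\kappa\int_0^t\|\partial_x^\alpha Y^\omega\rho_{\neq0}(\tau)\|^2\mathrm{d}\tau\lesssim\mathcal{M}^2\nu^{2\kappa}$, the latter using that the $\nu^\kappa$ prefactor is absorbed by the integrable stretched-exponential weight. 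Summing over $k\in\mathbb{Z}^3\setminus\{0\}$ and over $|\alpha|+|\omega|\le N_{\max}$ gives \eqref{Th-rho-1}. The main obstacle I anticipate is the delicate $\nu$-exponent bookkeeping in the nonlinear collision contribution: because $\Gamma(f,f)$ demands velocity regularity of order up to $1+2s$ which sits outside the basic energy $\widetilde{\mathbb{E}}_N$, one must interpolate using the top-order velocity-derivative terms built into \eqref{def-E}--\eqref{def-D}, and verify that the resulting powers of $\nu^\kappa$ (combined with the $\nu$ from $\Gamma$ and the $\nu^\kappa$ from the dissipation integral) do not degrade below $\nu^{2\kappa}$ --- this is exactly the refined bookkeeping the authors emphasize is new relative to the Landau case \cite{CLN-JAMS-2023}.
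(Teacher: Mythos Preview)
Your overall architecture matches the paper's: Volterra representation \eqref{dens-exp}, kernel bound \eqref{kernel-decay}, and a three-way split of $\mathcal{N}_k$ into initial, electric, and collision pieces. The collision analysis you sketch, including the $\nu$-exponent bookkeeping via interpolation between the $H^s$-type dissipation norms, is exactly what the paper does in \eqref{dens-colli}--\eqref{colli-I3}.

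However, there is a genuine gap in your treatment of the electric-field contribution $\mathcal{N}_{k,\textbf{NL-electric}}$. You propose to bound it directly by $\mathcal{M}^2\nu^{2\kappa}$ using the $\langle t\rangle^{-2}$ decay of $\|\phi\|_{W^{5,\infty}}$ from \eqref{priori-assu}. But that a priori bound only controls $\partial_x^{\alpha_1}Y^{\omega_1}\nabla_x\phi$ for $|\alpha_1|+|\omega_1|\le 4$; when you distribute $\partial_x^\alpha Y^\omega$ with $|\alpha|+|\omega|=N_{\max}\ge 9$ across the product $\nabla_x\phi\cdot\nabla_v f$ (or $\nabla_x\phi\cdot v f$), many derivatives can land on $\phi$, and for $|\alpha_1|+|\omega_1|\ge 5$ you have no decay from \eqref{priori-assu} at all. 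The only available control on high derivatives of $\phi$ is through $\rho_{\neq 0}$ itself --- which is precisely the quantity you are trying to estimate.

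The paper resolves this by proving a weaker claim \eqref{rho-claim}: the source $\mathcal{N}_k$ is bounded not by $\mathcal{M}^2\nu^{2\kappa}$ alone, but by $\mathcal{M}^2\nu^{2\kappa}+\mathcal{M}\cdot(\text{the density norms being estimated})$; see \eqref{Nonli-dens-ele}. This feeds into a bootstrap inequality \eqref{rho-infty-1}--\eqref{rho-integral-1} of the form $X\lesssim \mathcal{M}^2\nu^{2\kappa}+\mathcal{M}X$, which closes because $\mathcal{M}$ is small. Your reduction ``it suffices to show $\|\partial_x^\alpha Y^\omega\mathcal{N}_{\neq 0}\|^2\lesssim\mathcal{M}^2\nu^{2\kappa}$'' is therefore too strong and cannot be established as stated; you need to carry the density-dependent term on the right-hand side through the Volterra convolution and then absorb it at the end.
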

\begin{proof}
We prove this theorem by employing a strategy similar to that of Theorem 8.1 in \cite{CLN-JAMS-2023}. Compared with the Landau collision operator, the linear non-cutoff Boltzmann collision operator exhibits weaker dissipativity, and the estimates for its corresponding nonlinear operator are more involved. To ensure the proof is complete, we provide detailed calculations and estimates highlighting the main differences. To this end, we rewrite the nonlinear equation in \eqref{f} as follows:
\begin{eqnarray}
	\partial_t f+v\cdot \nabla_x f+\nabla_x\phi\cdot v \mu^{\frac12}+\nu \mathcal{L}f=\mathcal{R}(t,x,v),\notag
\end{eqnarray}
where
\[\mathcal{R}(t,x,v)\equiv -\frac12\nabla_x\phi \cdot vf+\nabla_x\phi\cdot \nabla_vf+\nu \Gamma(f,f).\]

For $|\alpha|+|\omega|\leq N_{\max}$, we take $N_0=2N_{\max}+4$ in \eqref{kernel-decay},
and apply the H\"{o}lder inequality to have
\begin{align}\label{rho-infty}
    &\sum_{k\neq 0}|k|^{2|\alpha|}\langle kt\rangle^{2|\omega|}|\widehat{\rho}_k(t)|^2\nonumber\\
       \lesssim&\sum_{k\neq 0}|k|^{2|\alpha|}\langle kt\rangle^{2|\omega|}\left(|\mathcal{N}_k(t)|^2+\int_{0}^{t}|G_k(t-s)||\mathcal{N}_k(s)|^2\,
       \mathrm{d}s\int_{0}^{t}|G_k(t-s)|\,\mathrm{d}s\right)\nonumber\\
              \lesssim&\sum_{k\neq 0}|k|^{2|\alpha|}\langle kt\rangle^{2|\omega|}|\mathcal{N}_k(t)|^2+\sum_{k\neq 0}|k|^{2|\alpha|}\langle kt\rangle^{2|\omega|}\nonumber\\
              &\times\int_{0}^{t}
              |k|^{-1}\langle k(t-s)\rangle^{-2N_{\max}-2}|\mathcal{N}_k(s)|^2\,\mathrm{d}s\int_{0}^{t}|k|^{-1}\langle k(t-s)\rangle^{-2N_{\max}-2}\,\mathrm{d}s\nonumber\\
                     \lesssim&\sum_{k\neq 0}|k|^{2|\alpha|}\langle kt\rangle^{2|\omega|}|\mathcal{N}_k(t)|^2+\sum_{k\neq 0}|k|^{2|\alpha|}\langle kt\rangle^{2|\omega|}\int_{0}^{t}
              |k|^{-3}\langle k(t-s)\rangle^{-2N_{\max}-2}|\mathcal{N}_k(s)|^2\,\mathrm{d}s\nonumber \\
              \lesssim&\sum_{k\neq 0}|k|^{2|\alpha|}\langle kt\rangle^{2|\omega|}|\mathcal{N}_k(t)|^2+\sum_{k\neq 0}\int_{0}^{t}
              |k|^{-3}\langle k(t-s)\rangle^{-2}|k|^{2|\alpha|}\langle ks\rangle^{2|\omega|}|\mathcal{N}_k(s)|^2\,\mathrm{d}s,
\end{align}
where we used the following fact
\begin{align*}
    \langle kt\rangle^{2|\omega|}\lesssim \langle k(t-s)\rangle^{2|\omega|}\langle ks\rangle^{2|\omega|}\leq \langle k(t-s)\rangle^{2N_{max}}\langle ks\rangle^{2|\omega|}.
\end{align*}
To estimate the terms in \eqref{rho-infty}, we first claim that
\begin{align}\label{rho-claim}
 &\sum_{|\alpha|+|\omega|\leq N_{\max}}\Big[\sum_{k\neq 0}|k|^{2|\alpha|}\langle kt\rangle^{2|\omega|}|\mathcal{N}_k(t)|^2\Big]+\nu^\kappa\int_0^t\sum_{k\neq 0}|k|^{2|\alpha|}\langle k\tau\rangle^{2|\omega|}|\mathcal{N}_k(\tau)|^2\mathrm{d}\tau
\lesssim\mathcal{M}^2\nu^{2\kappa}
\nonumber\\&+\mathcal{M}\sum_{|\alpha|+|\omega|\leq N_{\max}}\left\{\sup_{0\leq \tau\leq t}\sum_{k\neq 0}|k|^{2|\alpha|}\langle k\tau\rangle^{2|\omega|}|\widehat{\rho}_k(\tau)|^2+\nu^\kappa\int_0^t\sum_{k\neq 0}|k|^{2|\alpha|}\langle ks\rangle^{2|\omega|}|\widehat{\rho}_k(s)|^2\,\mathrm{d}s\right\}.
\end{align}
Applying this claim in \eqref{rho-infty} yields
\begin{align}\label{rho-infty-1}
 &\sum_{|\alpha|+|\omega|\leq N_{\max}}\Big[\sum_{k\neq 0}|k|^{2|\alpha|}\langle kt\rangle^{2|\omega|}|\widehat{\rho}_k(t)|^2\Big]\nonumber\\
\lesssim&\mathcal{M}^2\nu^{2\kappa}+\mathcal{M}\sum_{|\alpha|+|\omega|\leq N_{\max}}\left\{\sup_{0\leq \tau\leq t}\sum_{k\neq 0}|k|^{2|\alpha|}\langle k\tau\rangle^{2|\omega|}|\widehat{\rho}_k(\tau)|^2+\nu^\kappa\int_0^t\sum_{k\neq 0}|k|^{2|\alpha|}\langle ks\rangle^{2|\omega|}|\widehat{\rho}_k(s)|^2\,\mathrm{d}s\right\}\nonumber\\
&+\sum_{k\neq 0}\int_{0}^{t}
              |k|^{-3}\langle k(t-s)\rangle^{-2}\Bigg[\mathcal{M}^2\nu^{2\kappa}\nonumber\\
&\qquad+\mathcal{M}\sum_{|\alpha|+|\omega|\leq N_{\max}}\left\{\sup_{0\leq \tau\leq t}\sum_{k\neq 0}|k|^{2|\alpha|}\langle k\tau\rangle^{2|\omega|}|\widehat{\rho}_k(\tau)|^2+\nu^\kappa\int_0^s\sum_{k\neq 0}|k|^{2|\alpha|}\langle ks\rangle^{2|\omega|}|\widehat{\rho}_k(\tau)|^2\mathrm{d}\tau\right\}\Bigg]\,\mathrm{d}s\nonumber\\
\lesssim&\mathcal{M}^2\nu^{2\kappa}\nonumber\\
&+\mathcal{M}\sum_{|\alpha|+|\omega|\leq N_{\max}}\left\{\sup_{0\leq \tau\leq t}\sum_{k\neq 0}|k|^{2|\alpha|}\langle k\tau\rangle^{2|\omega|}|\widehat{\rho}_k(\tau)|^2+\nu^\kappa\int_0^t\sum_{k\neq 0}|k|^{2|\alpha|}\langle ks\rangle^{2|\omega|}|\widehat{\rho}_k(s)|^2\,\mathrm{d}s\right\}.
\end{align}
Similarly, we can obtain
\begin{align}\label{rho-integral-1}
    &\nu^\kappa\int_0^t\sum_{k\neq 0}|k|^{2|\alpha|}\langle ks\rangle^{2|\omega|}|\widehat{\rho}_k(s)|^2\,\mathrm{d}s
\lesssim\mathcal{M}^2\nu^{2\kappa}\nonumber\\&+\mathcal{M}\sum_{|\alpha|+|\omega|\leq N_{\max}}\left\{\sup_{0\leq \tau\leq t}\sum_{k\neq 0}|k|^{2|\alpha|}\langle k\tau\rangle^{2|\omega|}|\widehat{\rho}_k(\tau)|^2+\nu^\kappa\int_0^t\sum_{k\neq 0}|k|^{2|\alpha|}\langle ks\rangle^{2|\omega|}|\widehat{\rho}_k(s)|^2\,\mathrm{d}s\right\}.
\end{align}
Therefore, applying the Parseval identity, combing \eqref{rho-infty-1} and \eqref{rho-integral-1}, we derive \eqref{Th-rho-1} and thus complete the whole proof.
Therefore, it suffices to verify the validity of the claim \eqref{rho-claim}.
Note that
\begin{align}\label{def-N-k-int-NL}
\mathcal{N}_k(t)=&\int_{{\mathbb{R}}^3}S_k(t)\hat{f}_{0k}(v)\mu^{\frac{1}{2}}\,\mathrm{d}v+\int_{0}^{t}\int_{{\mathbb{R}}^3}S_k(t-\tau)[\widehat{\mathcal{R}}_k(\tau,v)]\mu^{\frac{1}{2}}\,\mathrm{d}v\nonumber\\
    =&\int_{{\mathbb{R}}^3}S_k(t)\hat{f}_{0k}(v)\mu^{\frac{1}{2}}\,\mathrm{d}v+\int_{0}^{t}\int_{{\mathbb{R}}^3}S_k(t-\tau)\mathcal{F}_x\left[-\frac12\nabla_x\phi\cdot vf+\nabla_x\phi\cdot \nabla_vf\right]\mu^{\frac{1}{2}}\,\mathrm{d}v\mathrm{d}\tau\nonumber\\
    &+\int_{0}^{t}\int_{{\mathbb{R}}^3}S_k(t-\tau)\mathcal{F}_x\left[\nu\Gamma(f,f)\right]\mu^{\frac{1}{2}}\,\mathrm{d}vt\tau\nonumber\\
    =&:\mathcal{N}_{k,\textbf{initial}}(t)+\mathcal{N}_{k,\textbf{NL-electric}}(t)+\mathcal{N}_{k,\textbf{NL-collision}}(t),
\end{align}
where $\mathcal{F}_x$ denotes the Fourier transform with respect to the variable $x$. Now we estimate terms in \eqref{def-N-k-int-NL} one by one.\newline

\noindent $\bullet$~(Estimate of $\mathcal{N}_{k,\textbf{initial}}(t)$):~ We apply \eqref{Pro-linear-decay-1} with $\eta=k$ and $N=N_{\max}+1$ to have
\begin{align*}
    &\sum_{k\neq 0}|k|^{2|\alpha|}\langle kt\rangle^{2|\omega|}|\mathcal{N}_{k,\textbf{initial}}(t)|^2    \lesssim \sum_{k\neq 0}|k|^{2|\alpha|}\langle kt\rangle^{2|\omega|}\left|\int_{{\mathbb{R}}^3}S_k(t)\hat{f}_{0k}(v)\mu^{\frac{1}{2}}\,\mathrm{d}v\right|^2\nonumber\\
   &\qquad \lesssim\sum_{k\neq 0}|k|^{2|\alpha|}\langle kt\rangle^{2|\omega|}\langle kt+k\rangle^{-2N_{\max}-2}
			\langle \nu^\kappa t\rangle^{-\frac32}\mathbb{E}_{k,\eta,N_{\max}+1}^{(0,5)}[f_0]    \lesssim\mathcal{M}^2\nu^{2\kappa}\langle t\rangle^{-2}.
\end{align*}
Similarly, we also deduce that
\begin{align*}
    &\int_0^t\sum_{k\neq 0}|k|^{2|\alpha|}\langle ks\rangle^{2|\omega|}|\mathcal{N}_{k,\textbf{initial}}(s)|^2\,\mathrm{d}s
    \lesssim\int_0^t\mathcal{M}^2\nu^{2\kappa}\langle s\rangle^{-2}\,\mathrm{d}s
    \lesssim\mathcal{M}^2\nu^{2\kappa}.
\end{align*}
Thus we conclude that
\begin{align}\label{Nonli-dens-ini}
    &\sum_{k\neq 0}|k|^{2|\alpha|}\langle kt\rangle^{2|\omega|}|\mathcal{N}_{k,\textbf{initial}}(t)|^2+\nu^\kappa\int_0^t\sum_{k\neq 0}|k|^{2|\alpha|}\langle ks\rangle^{2|\omega|}|\mathcal{N}_{k,\textbf{initial}}(s)|^2\,\mathrm{d}s\lesssim\mathcal{M}^2\nu^{2\kappa}.
\end{align}
\newline

\noindent $\bullet$~(Estimate of $\mathcal{N}_{k,\textbf{NL-electric}}(t)$):~Under the {\it a priori} assumption \eqref{priori-assu}, we can deduce that
\begin{align}\label{Nonli-dens-ele}
    &\sum_{k\neq 0}|k|^{2|\alpha|}\langle kt\rangle^{2|\omega|}|\mathcal{N}_{k,\textbf{NL-electric}}(t)|^2+\nu^\kappa\int_0^t\sum_{k\neq 0}|k|^{2|\alpha|}\langle ks\rangle^{2|\omega|}|\mathcal{N}_{k,\textbf{NL-electric}}(s)|^2\,\mathrm{d}s\nonumber\\
    \lesssim&\mathcal{M}\sum_{|\alpha|+|\omega|\leq N_{\max}}\left\{\sup_{0\leq \tau\leq t}\sum_{k\neq 0}|k|^{2|\alpha|}\langle k\tau\rangle^{2|\omega|}|\widehat{\rho}_k(\tau)|^2+\nu^\kappa\int_0^t\sum_{k\neq 0}|k|^{2|\alpha|}\langle ks\rangle^{2|\omega|}|\widehat{\rho}_k(s)|^2\,\mathrm{d}s\right\}.
\end{align}
Since the proof of \eqref{Nonli-dens-ini} is based on the norms \eqref{Energy-N} and \eqref{Dissi-N} and time decay estimates in Proposition \ref{linear-decay}, which are similar to the counterparts in  \cite{CLN-JAMS-2023}, it can be done in the same way as the part ``\textbf{Nonlinear interaction I}" of Theorem 8.1 in \cite{CLN-JAMS-2023}. We omit the proof for brevity.
\newline

\noindent $\bullet$~(Estimate of $\mathcal{N}_{k,\textbf{NL-collision}}(t)$):~We claim that
\begin{align}\label{colli-sum}
    &\sum_{|\alpha|+|\omega|\leq N_{max}}\sum_{k\neq 0}|k|^{2|\alpha|}\langle kt\rangle^{2|\omega|}|\mathcal{N}_{k,\textbf{NL-collision}}(t)|^2\nonumber\\
    &+\nu^\kappa\sum_{|\alpha|+|\omega|\leq N_{max}}\int_0^t\sum_{k\neq 0}|k|^{2|\alpha|}\langle ks\rangle^{2|\omega|}|\mathcal{N}_{k,\textbf{NL-collision}}(s)|^2\,\mathrm{d}s
    \lesssim\mathcal{M}^2\nu^{2\kappa}.
\end{align}
Once \eqref{colli-sum} is verified, we collect the estimates \eqref{Nonli-dens-ini}, \eqref{Nonli-dens-ele}, and \eqref{colli-sum} to obtain \eqref{rho-claim}.

Now we turn to the proof of \eqref{rho-claim}. For $|\alpha|+|\omega|\leq N_{max}$, we use \eqref{Pro-linear-decay-1} and Lemma \ref{Lemma GammaL} to have
\begin{align}\label{dens-colli}
   &\sum_{k\neq 0}|k|^{2|\alpha|}\langle kt\rangle^{2|\omega|}|\mathcal{N}_{k,\textbf{NL-collision}}(t)|^2\nonumber\\
   \lesssim&\sum_{k\neq 0}|k|^{2|\alpha|}\langle kt\rangle^{2|\omega|}\left|\int_{0}^{t}\int_{{\mathbb{R}}^3}S_k(t-\tau)\mathcal{F}_x\left[\nu\Gamma(f,f)\right]\mu^{\frac{1}{2}}\,\mathrm{d}v\mathrm{d}\tau\right|^2\nonumber\\
     \lesssim&\nu^{2}
   \sum_{k\neq 0}|k|^{2|\alpha|}\left|\int_{0}^{t}
\langle \nu^\kappa(t-\tau)\rangle^{-\frac32}  \left\{\mathbb{E}_{k,k\tau,|\omega|}^{(0,5)}[\mathcal{F}_x\left[\Gamma(f,f)\right](\tau,k,v)]\right\}^{\frac12}\mathrm{d}\tau\right|^2\nonumber\\
    \lesssim&\nu^{2}
   \sum_{k\neq 0}|k|^{2|\alpha|}\int_{0}^{t}
\mathbb{E}_{k,k\tau,|\omega|}^{(0,5)}\left[\mathcal{F}_x
\left[\Gamma(f,f)\right](\tau,k,v)\right]\mathrm{d}\tau\int_{0}^{t}
\langle \nu^\kappa(t-\tau)\rangle^{-3} \mathrm{d}\tau\nonumber\\
  \lesssim&\nu^{2-\kappa}\sum_{
|\bar{\omega}|\leq |\omega|,|\beta|\leq 1}\int_{0}^{t}\nu^{2\kappa|\beta|}
\|\langle v\rangle^{\frac{-5\gamma}{2s}}\partial^{\alpha}_x\partial^\beta_vY^{\bar{\omega}}
\Gamma(f,f)(\tau)\|^2\,\mathrm{d}\tau\nonumber\\
\lesssim&\nu^{2-\kappa}\sum_{\alpha'+\alpha''=\alpha
,|\bar{\omega}|+|\bar{\omega}'|\leq |\omega|\atop
|\beta'|+|\beta''|\leq |\beta|\leq 1}\int_0^t\nu^{2\kappa|\beta|}\left\|\left|\langle v\rangle^{\frac{-5\gamma}{2s}}\partial^{\alpha'}_x\partial^{\beta'}_vY^{\bar{\omega}}
f(\tau)\right|_{L^2}\left|\langle v\rangle^{\frac{-5\gamma}{2s}}\partial^{\alpha''}_x\partial^{\beta''}_vY^{\bar{\omega}'}f(\tau)
\right|_{H^{2s}}\right\|^2\,\mathrm{d}\tau\nonumber\\
=&\nu^{2-\kappa}\sum_{\alpha'+\alpha''= \alpha
\atop|\bar{\omega}|+|\bar{\omega}'|\leq |\omega|}\int_0^t\left\|\left|\langle v\rangle^{\frac{-5\gamma}{2s}}\partial^{\alpha'}_xY^{\bar{\omega}}f\right|_{L^2}\left|\langle v\rangle^{\frac{-5\gamma}{2s}}\partial^{\alpha''}_xY^{\bar{\omega}'}f(\tau)
\right|_{H^{2s}}\right\|^2\,\mathrm{d}\tau\nonumber\\
&+\nu^{2-\kappa}\sum_{\alpha'+\alpha''= \alpha\atop |\bar{\omega}|+|\bar{\omega}'|\leq |\omega|,
|\beta|= 1}\nu^{2\kappa|\beta|}\int_0^t\left\|\left|\langle v\rangle^{\frac{-5\gamma}{2s}}\partial^{\alpha'}_x\partial^{\beta}_vY^{\bar{\omega}}f(\tau)\right|_{L^2}
\left|\langle v\rangle^{\frac{-5\gamma}{2s}}\partial^{\alpha''}_xY^{\bar{\omega}'}f(\tau)\right|_{H^{2s}}
\right\|^2\,\mathrm{d}\tau\nonumber\\
&+\nu^{2-\kappa}\sum_{\alpha'+\alpha''= \alpha\atop |\bar{\omega}|+|\bar{\omega}'|\leq |\omega|,
|\beta|= 1}\nu^{2\kappa|\beta|}\int_0^t\left\|\left|\langle v\rangle^{\frac{-5\gamma}{2s}}\partial^{\alpha'}_xY^{\bar{\omega}}f(\tau)\right|_{L^2}
\left|\langle v\rangle^{\frac{-5\gamma}{2s}}\partial^{\alpha''}_x\partial^{\beta}_vY^{\bar{\omega}'}f(\tau)
\right|_{H^{2s}}
\right|^2_{L^2_x}\,\mathrm{d}\tau \nonumber\\
=:&\mathcal{I}_1+\mathcal{I}_2+\mathcal{I}_3.
\end{align}
For brevity, we only focus on the estimate of the third term $\mathcal{I}_3$ in \eqref{dens-colli}, since the other terms can be treated in a similar or simpler manner. Note that when $ |\alpha'|+|\bar{\omega}|\leq N_{max}/2$, we have $$N_{max}-|\alpha'|-|\bar{\omega}|\geq N_{max}/2>4,$$
as $N_{max}\geq 9$. On the other hand, when $ |\alpha'|+|\bar{\omega}|> N_{max}/2$,  it follows that $ |\alpha'|+|\bar{\omega}|\geq5$, and thus $$|\alpha''|+|\bar{\omega}'|+1+2\leq N_{max}-2.$$ Using the definitions of $\widetilde{\mathbb{E}}^{(0,\iota_0)}_{N}[f](t)$ and $\widetilde{\mathbb{D}}^{(0,\iota_0)}_{N}[f](t)$ in \eqref{Energy-N} and \eqref{Dissi-N}, together with the {\it a priori} assumption \eqref{priori-assu}, we can estimate $\mathcal{I}_3$ as
  \begin{align}\label{colli-I3}
\mathcal{I}_3\lesssim&\nu^{2-\kappa-2\kappa s}\sum_{|\alpha'|+|\bar{\omega}|\leq N_{max}/2
}\sum_{|\tilde{\alpha}|\leq2}\int_0^t\left\|\langle v\rangle^{\frac{-5\gamma}{2s}}\partial^{\alpha'+\tilde{\alpha}}_xY^{\bar{\omega}}f(\tau)\right\|^2
\nu^{2\kappa+2\kappa s}\left\|\langle v\rangle^{\frac{-5\gamma}{2s}}\partial^{\alpha''}_xY^{\bar{\omega}'}\nabla_vf(\tau)\right\|^2
_{H^{2s}}
\,\mathrm{d}\tau\nonumber\\
&+\nu^{2-\kappa-2\kappa s}\sum_{|\alpha'|+|\bar{\omega}|> N_{max}/2
}\sum_{|\tilde{\alpha}|\leq2}\int_0^t\left\|\langle v\rangle^{\frac{-5\gamma}{2s}}\partial^{\alpha'}_xY^{\bar{\omega}}f(\tau)\right\|^2
\nu^{2\kappa+2\kappa s}\left\|\langle v\rangle^{\frac{-5\gamma}{2s}}\partial^{\alpha''+\tilde{\alpha}}_xY^{\bar{\omega}'}
\nabla_vf(\tau)\right\|^2_{H^{2s}}
\,\mathrm{d}\tau\nonumber\\
\lesssim&\nu^{\kappa} \int_0^t \widetilde{\mathbb{E}}^{(0,\iota_0)}_{N_{\max}-2}[f(\tau)]
\widetilde{\mathbb{D}}^{(0,\iota_0)}_{N_{\max}}[f(\tau)]\,\mathrm{d}\tau+\nu^{\kappa} \int_0^t \widetilde{\mathbb{E}}^{(0,\iota_0)}_{N_{\max}}[f(\tau)]
\widetilde{\mathbb{D}}^{(0,\iota_0)}_{N_{\max}-2}[f(\tau)]\,\mathrm{d}\tau\,\nonumber\\
\lesssim&\mathcal{M}\nu^{2\kappa }\nu^\kappa\int_0^t\widetilde{\mathbb{D}}^{(0,\iota_0)}_{N_{\max}}[f(\tau)]\,\mathrm{d}\tau
+\mathcal{M}\nu^\kappa\int_0^t\widetilde{\mathbb{D}}^{(0,\iota_0)}_{N_{\max}-2}[f(\tau)]\,\mathrm{d}\tau\nonumber\\
\lesssim&\mathcal{M}^2\nu^{2\kappa},
\end{align}
where we used the interpolation estimate
\begin{align*}
&\nu^{1-\kappa+2s\kappa}\left\|\langle v\rangle^{\frac{-5\gamma}{2s}}\widetilde{f}(\tau)\right\|^2_{H^{2s}}\lesssim \left(\nu^{1-\kappa}\left\|\langle v\rangle^{\frac{-5\gamma}{2s}}\widetilde{f}(\tau)\right\|^2_{H^{s}}\right)^{1-s}\left(\nu^{1+\kappa}\left\|\langle v\rangle^{\frac{-5\gamma}{2s}}\nabla_v\widetilde{f}(\tau)\right\|^2_{H^{s}}\right)^{s}\\
&\qquad\lesssim \left(\nu^{1-\kappa}\left\|\langle v\rangle^{\frac{-5\gamma}{2s}-\gamma}\widetilde{f}(\tau)\right\|^2_{D}\right)^{1-s}\left(\nu^{1+\kappa}\left\|\langle v\rangle^{\frac{-5\gamma}{2s}-\gamma}\nabla_v\widetilde{f}(\tau)\right\|^2_{D}\right)^{s}
\\
&\qquad\lesssim \widetilde{\mathbb{D}}^{(0,\iota_0)}_{0}[f(\tau)],
\end{align*}
with $\widetilde{f} =\nu^{\kappa }\partial^{\alpha''}_xY^{\bar{\omega}'}
\nabla_vf.$
This is the very place we require that $\iota_0\geq N_{max}+9$, due to the following restriction:
\begin{align*}
  \left(\iota_0-N_{max}-2\right)\frac{-\gamma}{2s} \geq \frac{-5\gamma}{2s}-\gamma.
\end{align*}
Meanwhile, for $\mathcal{I}_1$ and $\mathcal{I}_2$, we similarly obtain
\begin{align}\label{colli-I12}
\mathcal{I}_1+\mathcal{I}_2\lesssim \mathcal{M}^2\nu^{2\kappa}.
\end{align}
Inserting \eqref{colli-I3} and \eqref{colli-I12} into \eqref{dens-colli} yields
\begin{align}\label{colli-infi}
  \sum_{|\alpha|+|\omega|\leq N_{max}}\sum_{k\neq 0}|k|^{2|\alpha|}\langle kt\rangle^{2|\omega|}|\mathcal{N}_{k,\textbf{NL-collision}}(t)|^2
    \lesssim\mathcal{M}^2\nu^{2\kappa}.
\end{align}
Similarly, we also have
\begin{align}\label{colli-inte}
\nu^\kappa\sum_{|\alpha|+|\omega|\leq N_{max}}\int_0^t\sum_{k\neq 0}|k|^{2|\alpha|}\langle ks\rangle^{2|\omega|}|\mathcal{N}_{k,\textbf{NL-collision}}(s)|^2\,\mathrm{d}s\lesssim\mathcal{M}^2\nu^{2\kappa}.
\end{align}
Combining \eqref{colli-infi} and \eqref{colli-inte}, we obtain \eqref{colli-sum}. This finishes the proof of Theorem \ref{dens-esti}.
\end{proof}

With the density estimates in  Theorem \ref{dens-esti}, we can obtain estimates on $\partial_t\phi(t)$  and $\nabla_x\phi(t)$.
\begin{lemma}\label{elec-esti}
Assume conditions in Theorem \ref{dens-esti} hold. Then for any $t\in [0, T]$, we have that
	\begin{eqnarray}\label{coro-elec}
		\sum_{|\alpha|+|\omega|\leq N_{max}-2,\atop|\alpha'|\leq 1}\|\partial^{\alpha}_xY^{\omega}\nabla_x\phi(t) \|_{L^{\infty}_x}+\sum_{|\alpha|+|\omega|\leq N_{max}-2,\atop|\alpha'|\leq 3}\|\partial^{\alpha+\alpha'}_xY^{\omega}\nabla_x\phi(t) \| \lesssim \langle t\rangle^{-2} \mathcal{M}\nu^{\kappa},
	\end{eqnarray}
and
\begin{eqnarray}\label{coro-phi}
		\|\partial_t\phi(t) \|_{L^{\infty}_x} \lesssim\sum_{|\alpha|+|\omega|\leq 3}\langle t\rangle^{-2}\left\| \partial^\alpha_xY^\omega f\right \|_{L^2_{x,v}}.
	\end{eqnarray}
\end{lemma}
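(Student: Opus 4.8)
The plan is to reduce both estimates to the density bounds of Theorem~\ref{dens-esti} combined with the elliptic regularity of the Poisson equation, the Sobolev embedding $H^2(\mathbb{T}^3)\hookrightarrow L^\infty(\mathbb{T}^3)$, and---for the time decay---the commutator identity $\nabla_x=\tfrac1t(Y-\nabla_v)$ valid for $t>0$. For \eqref{coro-elec}: the conservation laws \eqref{cons-laws} force the spatial mean of $\rho=\int_{\mathbb{R}^3}\mu^{\frac12}f\,\mathrm{d}v$ to vanish and stay zero, so the Poisson equation is $-\Delta_x\phi=\rho_{\neq0}$ and hence $\partial^\alpha_xY^\omega\nabla_x\phi=\nabla_x(-\Delta_x)^{-1}\partial^\alpha_xY^\omega\rho_{\neq0}$, since on functions of $(t,x)$ one has $Y^\omega=t^{|\omega|}\partial^\omega_x$ and all operators involved commute. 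By Parseval, $\|\partial^{\alpha+\alpha'}_xY^\omega\nabla_x\phi\|^2_{L^2_x}=\sum_{k\neq0}t^{2|\omega|}|k|^{2(|\alpha|+|\alpha'|)+2|\omega|-2}|\widehat\rho_k|^2$; using $t^{2|\omega|}|k|^{2|\omega|}=(|k|t)^{2|\omega|}\le\langle kt\rangle^{2|\omega|}$ and, for $t\ge1$, extracting decay via $\langle kt\rangle^{-4}\le(|k|t)^{-4}$, this is $\lesssim t^{-4}\sum_{k\neq0}|k|^{2|\alpha|+2|\alpha'|-6}\langle kt\rangle^{2|\omega|+4}|\widehat\rho_k|^2$; since $|\alpha'|\le3$ the factor $|k|^{2|\alpha'|-6}\le1$ is dropped, and what remains is a quantity controlled by Theorem~\ref{dens-esti} because $|\alpha|+(|\omega|+2)\le N_{max}$. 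For $0\le t\le1$ one simply uses $t^{2|\omega|}\le1$ with $|\alpha|+|\alpha'|+|\omega|-1\le N_{max}$. This yields $\|\partial^{\alpha+\alpha'}_xY^\omega\nabla_x\phi\|_{L^2_x}\lesssim\langle t\rangle^{-2}\mathcal{M}\nu^\kappa$, and the $L^\infty_x$ bounds follow by adding two more spatial derivatives through $H^2_x\hookrightarrow L^\infty_x$, which still fits inside the $|\alpha'|\le3$ budget.

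For \eqref{coro-phi} I would first establish the local conservation law. Multiplying the first equation of \eqref{f} by $\mu^{\frac12}$ and integrating in $v$ gives $\partial_t\rho+\nabla_x\cdot m=0$ with $m_i:=\int_{\mathbb{R}^3}v_i\mu^{\frac12}f\,\mathrm{d}v$: indeed $\int\mu^{\frac12}\mathcal{L}f\,\mathrm{d}v=\langle f,\mathcal{L}\mu^{\frac12}\rangle=0$ since $\mu^{\frac12}\in\mathcal{N}$ and $\mathcal{L}$ is self-adjoint, and $\int\mu^{\frac12}\Gamma(f,f)\,\mathrm{d}v=0$ by the mass-invariance of $Q$; meanwhile the three electric terms $\nabla_x\phi\cdot\nabla_vf$, $\nabla_x\phi\cdot v\mu^{\frac12}$, $\tfrac12\nabla_x\phi\cdot vf$ contribute $\tfrac12\nabla_x\phi\cdot m$, $0$, $-\tfrac12\nabla_x\phi\cdot m$, which cancel. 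Differentiating the Poisson equation in $t$ then gives $-\Delta_x\partial_t\phi=\partial_t\rho=-\nabla_x\cdot m$, i.e.\ $\partial_t\phi=(-\Delta_x)^{-1}(-\nabla_x\cdot m)$, a zero-mean function with $|\widehat{\partial_t\phi}_k|\le|k|^{-1}|\widehat m_k|$.

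Next I would extract the $\langle t\rangle^{-2}$ decay. Cauchy--Schwarz on the Fourier side gives $\|\partial_t\phi\|_{L^\infty_x}\le\sum_{k\neq0}|k|^{-1}|\widehat m_k|\le(\sum_{k\neq0}|k|^{-6})^{\frac12}(\sum_{k\neq0}|k|^4|\widehat m_k|^2)^{\frac12}\lesssim\|\nabla^2_xm\|_{L^2_x}$. For $t\ge1$, each spatial derivative falling on $f$ inside $\nabla^2_xm_i=\int v_i\mu^{\frac12}\nabla^2_xf\,\mathrm{d}v$ is rewritten via $\nabla_x=\tfrac1t(Y-\nabla_v)$; since $\partial_x,\partial_v,Y$ all commute with one another, $\nabla^2_xf$ becomes $t^{-2}$ times a combination of $Y^2f$, $Y\nabla_vf$, $\nabla^2_vf$, and integrating the $\nabla_v$'s by parts onto the rapidly decaying weight $v_i\mu^{\frac12}$ leaves only $L^2_v$-integrable weights, so $\|\nabla^2_xm\|_{L^2_x}\lesssim t^{-2}\sum_{|\omega|\le2}\|Y^\omega f\|_{L^2_{x,v}}$. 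For $0\le t\le1$ one keeps $\|\partial_t\phi\|_{L^\infty_x}\lesssim\|\nabla^2_xm\|_{L^2_x}\lesssim\|\nabla^2_xf\|_{L^2_{x,v}}$ and uses $\langle t\rangle^{-2}\sim1$. Combining the two regimes gives \eqref{coro-phi}, with room to spare in the index count.

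The routine part is \eqref{coro-elec}, which is pure Fourier bookkeeping once Theorem~\ref{dens-esti} is in hand. I expect the delicate point to be \eqref{coro-phi}: the $\langle t\rangle^{-2}$ decay is not visible in $\nabla_x\phi$ or $\rho$ alone and must be manufactured by (i) passing from $\partial_t\phi$ to the momentum density $m$ via the continuity equation, (ii) using elliptic smoothing to move two spatial derivatives onto $m$, and (iii) trading those two spatial derivatives for $Y$-derivatives at the cost of $t^{-2}$, all while keeping the velocity weights integrable through integration by parts. One also has to verify carefully that the three $\nabla_x\phi$-terms in \eqref{f} cancel in the mass equation, which is exactly what makes the right-hand side of \eqref{coro-phi} independent of the field $\phi$.
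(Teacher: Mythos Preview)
Your proposal is correct and follows essentially the same approach as the paper: reduce to the density bounds of Theorem~\ref{dens-esti} via the Poisson equation and Sobolev embedding, and manufacture the $\langle t\rangle^{-2}$ decay by trading two spatial derivatives for two $Y$-derivatives (the paper writes this as $\|\partial^\alpha_x\cdot\|\lesssim t^{-2}\|Y^\omega\partial^{\alpha-\omega}_x\cdot\|$ on functions of $(t,x)$, which is equivalent to your Fourier-side $\langle kt\rangle^{-4}\le(|k|t)^{-4}$ and your identity $\nabla_x=\tfrac1t(Y-\nabla_v)$ applied inside the $v$-integral for the momentum). The only cosmetic difference is that for \eqref{coro-phi} the paper goes up to $\sum_{|\alpha|=4}\|\partial^\alpha_x\partial_t\phi\|_{L^2_x}$ via Poincar\'e and Sobolev before trading derivatives, while you pass directly to $\|\nabla_x^2 m\|_{L^2_x}$; both routes land on the same right-hand side. (One minor point: in your cancellation check the signs of the first and third electric contributions are swapped relative to how they sit on the left-hand side of \eqref{f}, but the sum is still zero, so the continuity equation $\partial_t\rho+\nabla_x\!\cdot m=0$ is unaffected.)
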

\begin{proof} By the Poincar\'{e} inequality and the Sobolev inequalities, together with \eqref{Th-rho-1}, we obtain
\begin{align*}
  &\sum_{|\alpha|+|\omega|\leq N_{max}-2}\sum_{|\alpha'|\leq1}\|\partial^{\alpha+\alpha'}_xY^{\omega}\nabla_x\phi(t) \|_{L^{\infty}_x} \lesssim  \sum_{|\alpha|+|\omega|\leq N_{max}-2}\sum_{|\alpha'|\leq3}\|\partial^{\alpha+\alpha'}_xY^{\omega}\nabla_x\phi(t) \|
  \\
  &\qquad\lesssim\sum_{|\alpha|+|\omega|\leq N_{max}-2}\|\partial^{\alpha}_xY^{\omega}\nabla^4_x\phi(t) \|\lesssim\sum_{|\alpha|+|\omega|\leq N_{max}-2}\|\partial^{\alpha}_xY^{\omega}\nabla^2_x\rho_{\neq0}(t) \|\\
  &\qquad\lesssim \langle t\rangle^{-2}\sum_{|\alpha|+|\omega|\leq N_{max}}\|\partial^{\alpha}_xY^{\omega}\rho_{\neq0}(t) \|\lesssim \langle t\rangle^{-2} \mathcal{M}\nu^{\kappa}.
\end{align*}
In the same way, one can show
\begin{align*}
\sum_{|\alpha|+|\omega|\leq N_{max}-2,\atop|\alpha'|\leq 3}\|\partial^{\alpha+\alpha'}_xY^{\omega}\nabla_x\phi(t) \| \lesssim \langle t\rangle^{-2} \mathcal{M}\nu^{\kappa}.
\end{align*}
Then \eqref{coro-elec} follows.

To estimate \eqref{coro-phi}, we first note that
$$\int_{\T^3}\partial_t\phi\,\mathrm{d}x=0,\quad -\Delta_x\partial_t\phi=\nabla_x\cdot\int_{\mathbb{R}^3}v\mu^{\frac12} f\,\mathrm{d}v. $$
Applying the Poincar\'{e} and Sobolev inequalities then gives
\begin{align*}
\|\partial_t\phi(t) \|_{L^{\infty}_x}
\lesssim&  \sum_{|\alpha|=4}\left\|\partial^\alpha_x\partial_t\phi(t) \right \|_{L^2_x}\lesssim  \sum_{|\omega|=2,|\alpha|=2}t^{-2}\left\|Y^\omega\partial^\alpha_x\partial_t\phi(t) \right \|_{L^2_x}\\
\lesssim&\sum_{|\omega|=2}t^{-2}\left\|Y^\omega \nabla_x\int_{\mathbb{R}^3}v\mu^{\frac12} f\right \|_{L^2_x}\lesssim\sum_{|\alpha|+|\omega|\leq 3}t^{-2}\left\| \partial^\alpha_xY^\omega f\right \|_{L^2_{x,v}}.
\end{align*}
This ends the proof of Lemma \ref{elec-esti}.
\end{proof}

\section{Nonlinear energy estimates and global existence}\label{Nonlinear energy estimates}
In this section, we prove the global existence of a smooth solution
$
f(t,x,v)$ to the Cauchy problem \eqref{f} and \eqref{f-initial} by combining the a priori energy estimates with a continuation argument and the local existence theory. To derive the {\it a priori} energy estimates, we first establish the enhanced dissipation of the VPB system, and then perform weighted energy estimates to close the energy norms $\widetilde{\mathbb{E}}_{N}^{(q,\iota_0)}[f](t)$ for $N\leq N_{max}$. With these estimates at hand, the standard continuation argument, together with the local existence result, yields the desired global existence of a smooth solution to the Cauchy problem \eqref{f} and \eqref{f-initial}.

\subsection{Enhanced dissipation}
In this subsection, we will capture the enhanced dissipation of the VPB system \eqref{VPB}, which is essential to obtain the global existence of the VPB system in our weakly collision regime. As in \cite{CLN-JAMS-2023},  we achieve this aim by the energy method.

\begin{lemma}\label{elec-potenfield}

For $|\alpha|+|\beta|+|\omega|=N\leq N_{max}$, we denote $w_{\iota_0-N}=w^{(q)}_{\iota_0,N}$ as the weight function defined in \eqref{def-weight-1} for simplicity. Under the same assumptions in Theorem \ref{dens-esti}, the smooth solution $f(t,x,v)$ to the Cauchy problem \eqref{f} and \eqref{f-initial} satisfies
	
\begin{eqnarray}\label{Enhanced-eqn}
&&\frac{\mathrm{d}}{\mathrm{d}t}\int_{\mathbb{T}^3\times\mathbb{R}^3}
e^{\phi}\nu^{2\kappa|\beta|+\kappa}w^2_{\iota_0-N-1}\partial_{x_j}
\partial^\alpha_x\partial^\beta_v Y^\omega f\partial_{v_j}\partial^\alpha_x\partial^\beta_v Y^\omega f \,\mathrm{d}v\mathrm{d}x\nonumber\\
&&+\int_{\mathbb{T}^3\times\mathbb{R}^3}e^{\phi}
\nu^{2\kappa|\beta|+\kappa}w^2_{\iota_0-N-1}(\partial_{x_j}\partial^\alpha_x\partial^\beta_v Y^\omega f)^2\,\mathrm{d}v \mathrm{d}x\nonumber\\	
&\lesssim&\langle t\rangle^{-2}\mathcal{M}^{\frac{1}{2}}\nu^{\kappa}\widetilde{\mathbb{E}}_{N}[f](t) +\sum_{|\alpha|+|\omega|\leq N}\|\partial^\alpha_x Y^\omega\rho_{\neq0}\|\min\left\{\left(\widetilde{\mathbb{E}}^{(q,\iota_0)}_N[f](t)\right)^{1/2},
    \left(\widetilde{\mathbb{D}}^{(q,\iota_0)}_N[f](t)\right)^{1/2} \right\}\nonumber\\	&&+A_0^{-\frac{s}{2}}\nu^{\kappa}\widetilde{\mathbb{D}}_{N}^{(q,\iota_0)}[f](t)+\nu^{\kappa} \left(\widetilde{\mathbb{E}}_{N}^{(q,\iota_0)}[f](t)
\widetilde{\mathbb{D}}_{N_{\max}-2}^{(q,\iota_0)}[f](t)
\widetilde{\mathbb{D}}_{N}^{(q,\iota_0)}[f](t)\right)^{1/2}.
\end{eqnarray}
\end{lemma}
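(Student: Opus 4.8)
The plan is to obtain \eqref{Enhanced-eqn} by testing the equation for $\partial^\alpha_x\partial^\beta_v Y^\omega f$ against a mixed multiplier of the form $e^\phi\nu^{2\kappa|\beta|+\kappa}w^2_{\iota_0-N-1}\partial_{x_j}\partial^\alpha_x\partial^\beta_v Y^\omega f$, which is the standard device for producing the $Y$-type cross term whose time derivative appears on the left. First I would apply $\partial^\alpha_x\partial^\beta_v Y^\omega$ to the perturbed equation in \eqref{f}, keeping track of the commutators generated by the transport operator $v\cdot\nabla_x$ with $\partial^\beta_v$ and with $Y^\omega$ (recall $[Y,\partial_t+v\cdot\nabla_x]=0$, so the $Y$-commutators are harmless, while $[\partial_{v_j},v\cdot\nabla_x]=\partial_{x_j}$ produces exactly the structure that, after pairing with the multiplier, yields the positive term $\int e^\phi\nu^{2\kappa|\beta|+\kappa}w^2_{\iota_0-N-1}(\partial_{x_j}\partial^\alpha_x\partial^\beta_v Y^\omega f)^2$ on the left). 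The weight $e^\phi$ is inserted so that the contribution of the self-consistent force term $\nabla_x\phi\cdot\nabla_v f$ partially cancels against the $\tfrac12\nabla_x\phi\cdot v f$ term and the derivative of $e^\phi$; this is the same bookkeeping as in \cite{CLN-JAMS-2023}, and I would mimic it.

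Next I would organize the resulting identity into four groups of error terms, matching the four terms on the right-hand side of \eqref{Enhanced-eqn}. The time-derivative landing on $e^\phi$ contributes $\tfrac12(\partial_t\phi)e^\phi(\cdots)$, which by \eqref{coro-phi} is bounded by $\langle t\rangle^{-2}$ times the energy, giving the first term $\langle t\rangle^{-2}\mathcal{M}^{1/2}\nu^\kappa\widetilde{\mathbb{E}}_N[f](t)$. The genuinely linear part $\nu\mathcal{L}f$, after the mixed testing, is handled by the coercivity $\langle\mathcal{L}f,f\rangle\gtrsim|{\bf P}^\bot f|_D^2$ together with the weighted $\mathcal{L}$-estimates and the Strain--Guo type lemmas promised for Section \ref{Appendix}; the loss of one $\nu$-power relative to the Landau case is absorbed into the small factor $A_0^{-s/2}$ via the interpolation inequality highlighted in the introduction (the inequality bounding $\|\nu^{\kappa|\beta|}w_{\iota_0-N}\partial^{\beta'}_v\partial^\alpha_x\partial^{\beta-\beta'}_v Y^\omega f\|^2$ by $A_0^{-s}$ times dissipation terms), producing $A_0^{-s/2}\nu^\kappa\widetilde{\mathbb{D}}_N^{(q,\iota_0)}[f](t)$. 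The linear field term $\nabla_x\phi\cdot v\mu^{1/2}$, after all derivatives are distributed, is linear in $\rho_{\neq0}$ and its derivatives (since $\nabla_x\phi$ is determined by $\rho$ via the Poisson equation), and pairing it with the multiplier and using either Cauchy--Schwarz with $\widetilde{\mathbb{E}}_N^{1/2}$ or, after integrating by parts in $v$ to move a derivative onto the Gaussian, with $\widetilde{\mathbb{D}}_N^{1/2}$, yields the term $\sum_{|\alpha|+|\omega|\le N}\|\partial^\alpha_x Y^\omega\rho_{\neq0}\|\min\{\widetilde{\mathbb{E}}_N^{1/2},\widetilde{\mathbb{D}}_N^{1/2}\}$.

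The remaining nonlinear terms — the quadratic transport-type pieces $\tfrac12\nabla_x\phi\cdot vf$ and $\nabla_x\phi\cdot\nabla_v f$ with derivatives split by Leibniz, and the Boltzmann nonlinearity $\nu\Gamma(f,f)$ — are estimated by splitting into low–high frequency cases on the number of derivatives falling on $\phi$ (or on one factor of $f$): when few derivatives hit $\phi$ one uses the decay $\|\partial^\alpha_x Y^\omega\nabla_x\phi\|_{L^\infty_x}\lesssim\langle t\rangle^{-2}\mathcal{M}\nu^\kappa$ from \eqref{coro-elec}, and when many derivatives hit $\phi$ one converts them back to $\rho_{\neq0}$ derivatives controlled by $\widetilde{\mathbb{E}}_{N_{\max}-2}$ via the a priori assumption \eqref{priori-assu}; the $\nu\Gamma$ term is estimated with Lemma \ref{Lemma GammaL} and the refined $\nu$-exponent bookkeeping (as in \eqref{dens-colli}), yielding the last term $\nu^\kappa(\widetilde{\mathbb{E}}_N\widetilde{\mathbb{D}}_{N_{\max}-2}\widetilde{\mathbb{D}}_N)^{1/2}$. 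I expect the main obstacle to be the weighted $v$-derivative estimate for the collision operator $\nu\mathcal{L}$ paired with the $\partial_{x_j}$-multiplier: because the top-order derivative in the VPB energy is in $v$, the transport commutator does not drop out (unlike \cite{BCD-PLMS-2024}), and one must show that the extra $\partial_{v}$ landing on $f$ from the multiplier, combined with the weight gap of one unit in $w_{\iota_0-N-1}$ versus $w_{\iota_0-N}$, is exactly compensated by the choice $\kappa=\tfrac{1}{1+2s}$ together with the $A_0^{-s}$-gaining interpolation — this is the delicate point where the algebraic weight $\langle v\rangle^{-\frac{\gamma}{2s}(\iota_0-|\alpha|-|\beta|-|\omega|)}$ must be used sharply, and getting the powers of $\nu$ and $A_0$ to align is where the argument could fail if the weight exponents are not tuned correctly.
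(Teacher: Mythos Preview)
Your approach matches the paper's, but two points need sharpening. First, the cross-term identity is obtained not by testing the equation for $g:=\partial^\alpha_x\partial^\beta_v Y^\omega f$ against $e^\phi w^2\partial_{x_j}g$, but by applying $\partial_{x_j}$ and $\partial_{v_j}$ separately to the equation for $g$, multiplying each by the \emph{other} derivative of $g$, and summing---this directly yields $\partial_t(\partial_{x_j}g\,\partial_{v_j}g)+v\cdot\nabla_x(\partial_{x_j}g\,\partial_{v_j}g)+(\partial_{x_j}g)^2+\ldots$, with the positive term coming from $[\partial_{v_j},v\cdot\nabla_x]g=\partial_{x_j}g$ paired against $\partial_{x_j}g$. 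Second, you misplace the source of the $A_0^{-s/2}$ factor: the $\nu\mathcal{L}$ cross contribution is bounded by $A_0^{-1/2}\nu^\kappa\widetilde{\mathbb{D}}_N$ straight from the definition of $\widetilde{\mathbb{D}}$ (the $\partial_{x_j}$-dissipation carries an $A_0$), with no interpolation needed. The term that genuinely requires the $A_0^{-s}$-gaining interpolation is the transport commutator in the general case,
\[
\mathcal{I}_4=\int e^\phi\nu^{2\kappa|\beta|+\kappa}w^2_{\iota_0-N-1}\big[\partial_t+v\cdot\nabla_x,\partial_{x_j}\partial^\alpha_x\partial^\beta_v Y^\omega\big]f\cdot\partial_{v_j}\partial^\alpha_x\partial^\beta_v Y^\omega f\,\mathrm{d}v\mathrm{d}x,
\]
which for $|\beta|\geq1$ produces a $\partial^{\alpha+\beta_1}_x\partial^{\beta-\beta_1}_v Y^\omega f$ factor and is controlled via the weighted Gagliardo--Nirenberg bound on $\nu^{2\kappa}\|w_{\iota_0-N-1}\partial_{v_j}g\|^2$ exactly as you cite. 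Minor corrections: the $\nu\Gamma$ term is handled by the trilinear $D$-norm estimate \eqref{Gamma-noncut-1} in Lemma~\ref{Gamma-noncut}, not Lemma~\ref{Lemma GammaL} (which is an $L^2_v$ bound used only in the density section); and the Strain--Guo type lemmas are time-decay tools irrelevant to this energy identity.
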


\begin{proof} For brevity, we focus primarily on the proof of \eqref{Enhanced-eqn} in the case $|\alpha|=|\beta|=|\omega|=0$.
Applying $\partial_{x_j}$ and $\partial_{v_j}$ to \eqref{f} yields
\begin{eqnarray}\label{enhanced-eqn-x}
	&&\partial_t\partial_{x_j}f+ v  \cdot\nabla_x\partial_{x_j}f+\nu\partial_{x_j}
\mathcal{L}f\nonumber\\
	&=&-\partial_{x_j}\nabla_x\phi\cdot v \mu^{\frac12}+\partial_{x_j}\{\nabla_x\phi\cdot\nabla_{ v  }f\}-\frac12 \partial_{x_j}\{\nabla_x\phi\cdot v f\}+\nu \partial_{x_j}{\Gamma}(f,f),
\end{eqnarray}
and
\begin{eqnarray}\label{enhanced-eqn-v}
	&&\partial_t\partial_{v_j}f+ v  \cdot\nabla_x\partial_{v_j}f+\partial_{x_j}f+\nu\partial_{v_j}\mathcal{L} f\nonumber\\
	&=&-\partial_{v_j}\{\nabla_x\phi\cdot v \mu^{\frac12}\}+\partial_{v_j}\{\nabla_x\phi\cdot\nabla_{ v  }f\}-\frac12 \partial_{v_j}\{\nabla_x\phi\cdot v f\}+\nu \partial_{v_j}{\Gamma}(f,f).
\end{eqnarray}
We multiply \eqref{enhanced-eqn-x} and \eqref{enhanced-eqn-v}  by $\partial_{v_j} f$ and $\partial_{x_j} f$ respectively  to have
\begin{eqnarray*}
&&\partial_t(\partial_{x_j} f\partial_{v_j} f)+v\cdot\nabla_x(\partial_{x_j} f\partial_{v_j} f)	+\nabla_x\phi\cdot v(\partial_{x_j} f\partial_{v_j} f)\nonumber\\
&&+(\partial_{x_j}f)^2+\nu(\partial_{x_j}f\partial_{v_j}\mathcal{L}f+\partial_{v_j}f\partial_{x_j}
\mathcal{L}f)\nonumber\\
&=&\nabla_x\phi\cdot\nabla_v(\partial_{x_j}f\partial_{v_j}f)+\partial_{x_j}\nabla_x\phi\cdot\nabla_v f\partial_{v_j}f-\frac{1}{2}\partial_{x_j}\nabla_x\phi\cdot v f\partial_{v_j}f-\frac12\partial_{x_j}\phi f\partial_{x_j}f\nonumber\\
&&-\partial_{x_j}\nabla_x\phi\cdot v\mu^{\frac12}\partial_{v_j}f-\partial_{v_j}\{\nabla_x\phi\cdot v\mu^{\frac12}\}\partial_{x_j}f+\nu\partial_{x_j}\Gamma(f,f)\partial_{v_j}f+\nu\partial_{v_j}\Gamma(f,f)\partial_{x_j}f.
\end{eqnarray*}
Multiplying the above equation by $\nu^\kappa e^\phi w^2_{\iota_0-1}$, we can further obtain
\begin{eqnarray}\label{xv-1}
	&&\frac{\mathrm{d}}{\mathrm{d}t}\int_{\mathbb{T}^3\times\mathbb{R}^3}\nu^\kappa e^\phi w^2_{\iota_0-1}\partial_{x_j} f\partial_{v_j} f \,\mathrm{d}v\mathrm{d}x+\int_{\mathbb{T}^3\times\mathbb{R}^3}\nu^\kappa e^\phi w^2_{\iota_0-1}(\partial_{x_j}f)^2\,\mathrm{d}v \mathrm{d}x\nonumber\\
	&=&\int_{\mathbb{T}^3\times\mathbb{R}^3}(\partial_t+v\cdot\nabla_x)(\nu^\kappa e^\phi w^2_{\iota_0-1})\partial_{x_j} f\partial_{v_j} f \,\mathrm{d}v\mathrm{d}x\nonumber\\
    &&-\int_{\mathbb{T}^3\times\mathbb{R}^3}\nu^\kappa e^\phi w^2_{\iota_0-1}\nabla_x\phi\cdot v(\partial_{x_j} f\partial_{v_j} f)\,\mathrm{d}v\mathrm{d}x\nonumber\\
	&&-\int_{\mathbb{T}^3\times\mathbb{R}^3}\nu^\kappa e^\phi w^2_{\iota_0-1}\nu(\partial_{x_j}f\partial_{v_j}\mathcal{L}f
+\partial_{v_j}f\partial_{x_j}\mathcal{L}f)\,\mathrm{d}v\mathrm{d}x\nonumber\\
	&&+\int_{\mathbb{T}^3\times\mathbb{R}^3}\nu^\kappa e^\phi w^2_{\iota_0-1}\nabla_x\phi\cdot\nabla_v(\partial_{x_j}f\partial_{v_j}f)\,\mathrm{d}v\mathrm{d}x\nonumber\\
	&&+\int_{\mathbb{T}^3\times\mathbb{R}^3}\nu^\kappa e^\phi w^2_{\iota_0-1}\left\{\partial_{x_j}\nabla_x\phi\cdot\nabla_v f\partial_{v_j}f-\frac{1}{2}\partial_{x_j}\nabla_x\phi\cdot v f\partial_{v_j}f-\frac12\partial_{x_j}\phi f\partial_{x_j}f\right\}\,\mathrm{d}v\mathrm{d}x\nonumber\\
	&&-\int_{\mathbb{T}^3\times\mathbb{R}^3}\nu^\kappa e^\phi w^2_{\iota_0-1}\{\partial_{x_j}\nabla_x\phi\cdot v\mu^{\frac12}\partial_{v_j}f+\partial_{v_j}\{\nabla_x\phi\cdot v\mu^{\frac12}\}\partial_{x_j}f\}\,\mathrm{d}v\mathrm{d}x\nonumber\\
	&&+\int_{\mathbb{T}^3\times\mathbb{R}^3}\nu^\kappa e^\phi w^2_{\iota_0-1}\{\nu\partial_{x_j}\Gamma(f,f)\partial_{v_j}f+\nu\partial_{v_j}
\Gamma(f,f)\partial_{x_j}f\}\,\mathrm{d}v\mathrm{d}x.
\end{eqnarray}
Now we estimate the terms in \eqref{xv-1} individually.
Using the {\it a priori} assumption \eqref{priori-assu} together with \eqref{coro-phi}, the first and second terms on the right-hand side of the above inequality can be estimated as
\begin{eqnarray*}
  &&\int_{\mathbb{T}^3\times\mathbb{R}^3}(\partial_t+v\cdot\nabla_x)(\nu^\kappa e^\phi w^2_{\iota_0-1})\partial_{x_j} f\partial_{v_j} f \,\mathrm{d}v\mathrm{d}x\nonumber\\
  &&-\int_{\mathbb{T}^3\times\mathbb{R}^3}\nu^\kappa e^\phi w^2_{\iota_0-1}\nabla_x\phi\cdot v(\partial_{x_j} f\partial_{v_j} f)\,\mathrm{d}v\mathrm{d}x\nonumber\\
  &=&\int_{\mathbb{T}^3\times\mathbb{R}^3}\partial_t\phi \nu^\kappa e^\phi w^2_{\iota_0-1}\partial_{x_j} f\partial_{v_j}f\,\mathrm{d}v\mathrm{d}x\lesssim\|\partial_t\phi\|_{L^\infty_x}
  \|w_{\iota_0-1}\partial_{x_j} f\|\nu^\kappa\|w_{\iota_0-1}\partial_{v_j}f\|\\
  &\lesssim& \langle t\rangle^{-2}\mathcal{M}^{\frac{1}{2}}\nu^{\kappa}\left(\|w_{\iota_0-1}\partial_{x_j} f\|^2+\nu^{2\kappa}\|w_{\iota_0-1}\partial_{v_j}f\|^2\right)\lesssim
\langle t\rangle^{-2}\mathcal{M}^{\frac{1}{2}}\nu^{\kappa}\widetilde{\mathbb{E}}^{(q,\iota_0)}_0[f](t).
\end{eqnarray*}
From \eqref{Gamma-noncut-1} in Lemma \ref{Gamma-noncut}, the third term can be bounded
\begin{eqnarray*}
     &&\nu^{1+\kappa} \|w_{\iota_0-1}\partial_{x_j} f\|_D\sum_{|\beta_j|\leq1} \|w_{\iota_0-1}\partial^{\beta_j}_{v_j} f\|_D\nonumber\\
     &&\qquad
     \lesssim\nu^{1+\kappa}\left(A_0^{-1}\nu^{\kappa-1}\widetilde{\mathbb{D}}_{0}^{(q,\iota_0)}
     [f](t)\right)^{1/2}\left(\nu^{-\kappa-1}
     \widetilde{\mathbb{D}}_{0}^{(q,\iota_0)}[f](t)\right)^{1/2}
      \lesssim
     A_0^{-\frac{1}{2}}\nu^{\kappa}\widetilde{\mathbb{D}}_{0}^{(q,\iota_0)}[f](t).\nonumber
\end{eqnarray*}
 Applying integration by parts with respect to $v$ in the fourth term, we can use the {\it a priori} assumption \eqref{priori-assu} together with the estimate \eqref{coro-elec} to control the fourth and the fifth terms  by
\begin{eqnarray*}
&&\left(\|\nabla_x\phi\|_{L^{\infty}}+\|\nabla^2_x\phi\|_{L^{\infty}}\right)
\nu^\kappa\left(\|w_{\iota_0-1}\partial_{x_j} f\|^2+\|w_{\iota_0-1}\partial_{v_j}f\|^2+\|w_{\iota_0}f\|^2\right)\\
&& \qquad\lesssim
\mathcal{M}^{\frac{1}{2}}\langle t\rangle^{-2}
\nu^{2\kappa}\left(\|w_{\iota_0-1}\partial_{x_j} f\|^2+\|w_{\iota_0-1}\partial_{v_j}f\|^2+\|w_{\iota_0}f\|^2\right)\\
&& \qquad\lesssim
\langle t\rangle^{-2}\mathcal{M}^{\frac{1}{2}}\widetilde{\mathbb{E}}^{(q,\iota_0)}_0[f](t),
\end{eqnarray*}
where we have used the fact $w_{\iota_0-1}|v|\leq w_{\iota_0}$ since $-\gamma>2s$.

By integrating by parts in $v$ and $x$,  the sixth term can be dominated by
\begin{align*}
    &\nu^\kappa\|\nabla_x\phi\|\left(\|\nabla_x\phi f\|+ \|\nabla_x f\|\right)\lesssim
    \langle t\rangle^{-2} \mathcal{M}\nu^{2\kappa}\left(\|\nabla_x\phi\|_{L^{\infty}}^2+\| f\|^2\right)+\nu^\kappa\|\rho_{\neq0}\|\|\nabla_x f\|\\
    &\qquad\lesssim \langle t\rangle^{-2} \mathcal{M}\nu^{2\kappa} \widetilde{\mathbb{E}}^{(q,\iota_0)}_0[f](t)+\nu^\kappa \|\rho_{\neq0}\|\min\left\{\left(\widetilde{\mathbb{E}}^{(q,\iota_0)}_0[f](t)\right)^{1/2}, \left(\widetilde{\mathbb{D}}^{(q,\iota_0)}_0[f](t)\right)^{1/2} \right\},
\end{align*}
where we have used \eqref{coro-elec}.

Applying \eqref{Gamma-noncut-1}, the Sobolev inequalities and the {\it a priori} assumption \eqref{priori-assu}, the last term can be bounded as
 \begin{align}\label{enhan-00}
 &\nu^{1+\kappa} \Big(\left|w_{\iota_0-1}f\right|_{L^2}\left|e^{{q}\langle v\rangle}\partial_{x_j} f\right|_{D}+\left|w_{\iota_0-1}\partial_{x_j}f\right|_{L^2}\left|e^{{q}\langle v\rangle} f\right|_{D} \nonumber\\
&+\left|w_{\iota_0-1}f\right|_{D}\left|e^{{q}\langle v\rangle}\partial_{x_j} f\right|_{L^2}+\left|w_{\iota_0-1}\partial_{x_j}f\right|_{D}\left|e^{{q}\langle v\rangle} f\right|_{L^2},\left|w_{\iota_0-1} \partial_{v_j}f\right|_{D} \Big)\nonumber \\
&+\nu^{1+\kappa}\sum_{|\beta_j|\leq1} \Big(\left|w_{\iota_0-1}f\right|_{L^2}\left|e^{{q}\langle v\rangle}\partial^{\beta_j}_{v_j} f\right|_{D}+\left|w_{\iota_0-1}\partial^{\beta_j}_{v_j}f\right|_{L^2}\left|e^{{q}\langle v\rangle} f\right|_{D} \nonumber\\
&+\left|w_{\iota_0-1}f\right|_{D}\left|e^{{q}\langle v\rangle}\partial^{\beta_j}_{v_j} f\right|_{L^2}+\left|w_{\iota_0-1}\partial^{\beta_j}_{v_j}f\right|_{D}\left|\langle v\rangle^{1-\iota_0}w_{\iota_0-1} f\right|_{L^2},\left|w_{\iota_0-1} \partial_{x_j}f\right|_{D} \Big) \nonumber\\
\lesssim &\nu^{1+\kappa} \sum_{|\alpha_j|\leq1} \left|w_{\iota_0-1} \partial_{v_j}f\right|_{D}\nonumber\\
&\times \Big[\|w_{\iota_0-1} \partial^{\alpha_j}_{x_j}f\|\left(\nu^{\kappa-1}\widetilde{\mathbb{D}}_{N_{\max}-2}^{(q,\iota_0)}[f](t)\right)^{1/2}+
\|w_{\iota_0-1} \partial^{\alpha_j}_{x_j}f\|_D
\left(\widetilde{\mathbb{E}}_{N_{\max}-2}^{(q,\iota_0)}[f](t)\right)^{1/2}\Big]\nonumber\\
&+\nu \sum_{|\beta_j|\leq1}\nu^{|\beta_j|\kappa} \left|w_{\iota_0-1} \partial_{x_j}f\right|_{D}\nonumber\\
&\times\Big[\|w_{\iota_0-1} \partial^{\beta_j}_{v_j}f\|\left(\nu^{\kappa-1}\widetilde{\mathbb{D}}_{N_{\max}-2}^{(q,\iota_0)}[f](t)\right)^{1/2}+
\|w_{\iota_0-1} \partial^{\beta_j}_{v_j}f\|_D
\left(\widetilde{\mathbb{E}}_{N_{\max}-2}^{(q,\iota_0)}[f](t)\right)^{1/2}\Big] \nonumber
\\
&\lesssim\nu^{\kappa} \left(\widetilde{\mathbb{E}}_{0}^{(q,\iota_0)}[f](t)
\widetilde{\mathbb{D}}_{N_{\max}-2}^{(q,\iota_0)}[f](t)
\widetilde{\mathbb{D}}_{0}^{(q,\iota_0)}[f](t)\right)^{1/2}.
\end{align}

Therefore, we insert the above estimates into \eqref{xv-1} to conclude that
\begin{eqnarray}
	&&\frac{\mathrm{d}}{\mathrm{d}t}\int_{\mathbb{T}^3\times\mathbb{R}^3}\nu^\kappa e^\phi w^2_{\iota_0-1}\partial_{x_j} f\partial_{v_j} f \,\mathrm{d}v\mathrm{d}x+\int_{\mathbb{T}^3\times\mathbb{R}^3}\nu^\kappa e^\phi w^2_{\iota_0-1}(\partial_{x_j}f)^2\,\mathrm{d}v \mathrm{d}x\nonumber\\
	&\lesssim&\langle t\rangle^{-2}\mathcal{M}^{\frac{1}{2}}\nu^{\kappa}
\widetilde{\mathbb{E}}^{(q,\iota_0)}_0[f](t) +\|\rho_{\neq0}\|\min\left\{\left(\widetilde{\mathbb{E}}^{(q,\iota_0)}_0[f](t)\right)^{1/2},
    \left(\widetilde{\mathbb{D}}^{(q,\iota_0)}_0[f](t)\right)^{1/2} \right\}\nonumber\\	&&+A_0^{-\frac{s}{2}}\nu^{\kappa}\widetilde{\mathbb{D}}^{(q,\iota_0)}_{0}[f](t)+\nu^{\kappa} \left(\widetilde{\mathbb{E}}_{0}^{(q,\iota_0)}[f](t)
\widetilde{\mathbb{D}}_{N_{\max}-2}^{(q,\iota_0)}[f](t)
\widetilde{\mathbb{D}}_{0}^{(q,\iota_0)}[f](t)\right)^{1/2}.\nonumber
\end{eqnarray}
The derivation of \eqref{Enhanced-eqn}  for the general case $\partial^\alpha_x\partial^\beta_v Y^\omega f$ with $|\alpha|,|\beta|,|\omega|\neq0$ can be carried out in a similar manner. For brevity, we only estimate the following term:
\begin{align*}
  \mathcal{I}_4= & \int_{\mathbb{T}^3\times\mathbb{R}^3}
e^{\phi}\nu^{2\kappa|\beta|+\kappa}w^2_{\iota_0-N-1}[\partial_tf+ v  \cdot\nabla_x,\partial_{x_j}
\partial^\alpha_x\partial^\beta_v Y^\omega] f\partial_{v_j}\partial^\alpha_x\partial^\beta_v Y^\omega f \,\mathrm{d}v\mathrm{d}x\\
=& -\sum_{|\beta'|= |\beta|-1
,\atop
|\beta_1|= 1}\int_{\mathbb{T}^3\times\mathbb{R}^3}
e^{\phi}\nu^{2\kappa|\beta|+\kappa}w^2_{\iota_0-N-1}\partial_{x_j}
\partial^{\alpha+\beta_1}_x\partial^{\beta'}_v Y^\omega f\partial_{v_j}\partial^\alpha_x\partial^\beta_v Y^\omega f \,\mathrm{d}v\mathrm{d}x,
\end{align*}
whose treatment differs from the Vlasov-Poisson-Landau system case in \cite{CLN-JAMS-2023}. Note that, as in \eqref{nabla-Diss},
\begin{align*}
  \nu^{2\kappa}\|w_{\iota_0-N-1}\partial_{v_j}\partial^\alpha_x\partial^\beta_v Y^\omega f\|^2 \lesssim &A_0^{-s}\left(
  A_0\nu^{1-\kappa}\|w_{\iota_0-N} \partial^\alpha_x\partial^\beta_v Y^\omega  f\|_D^2+\nu^{1+\kappa}\| w_{\iota_0-N-1}\nabla_v \partial^\alpha_x\partial^\beta_v Y^\omega f\|^2_D\right).
\end{align*}
It then follows that
 \begin{align*}
   |\mathcal{I}_4|\lesssim &\nu^{2\kappa|\beta|+\kappa}\sum_{|\beta'|= |\beta|-1
,\atop
|\beta_1|= 1}\left\|w_{\iota_0-N-1}\partial_{x_j}
\partial^{\alpha+\beta_1}_x\partial^{\beta'}_v Y^\omega f\right\|\left\|w_{\iota_0-N-1}\partial_{v_j}\partial^\alpha_x\partial^\beta_v Y^\omega f\right\|
\\
\lesssim &A_0^{-\frac{s}{2}}\nu^{2\kappa} \left(\widetilde{\mathbb{D}}_{N}^{(q,\iota_0)}[f](t)\right)^{1/2}
\left(\nu^{-2\kappa}\widetilde{\mathbb{D}}_{N}^{(q,\iota_0)}[f](t)\right)^{1/2}
\lesssim A_0^{-\frac{s}{2}}\nu^{\kappa} \widetilde{\mathbb{D}}_{N}^{(q,\iota_0)}[f](t).
 \end{align*}
This completes the proof of Lemma \ref{elec-potenfield}.

\end{proof}

\subsection{Weighted energy estimates}
In this subsection, we derive weighted energy estimates for the unique smooth solution $ f(t,x,v) $ to the Cauchy problem \eqref{f} and \eqref{f-initial} on the time interval $ [0, T]$ for any $T>0$, under the {\it a priori} assumption \eqref{priori-assu}.

As a preliminary step, we first establish the no-weight energy estimates for $Y^{\omega}f(t,x,v)$ with $|\omega|\leq N_{max}$.

\begin{lemma}\label{lemma-nonweight-Y}
Suppose the same conditions in Theorem \ref{dens-esti} hold, there exists a proper positive constant $\delta$ such that
\begin{eqnarray}\label{basic-l2}
&&\frac{1}{2}\frac{\mathrm{d}}{\mathrm{d}t}\left\|f\right\|^2+\delta\nu\left\|{\bf P}^{\bot}f\right\|^2_{D}\nonumber\\
&\lesssim& \left\|\rho_{\neq0}\right\|\|\langle v\rangle^{\frac{\gamma}{2s}}f\|+\langle t\rangle^{-2}\mathcal{M}\nu^{\kappa}\left(\mathcal{M}\nu^{1+2\kappa}+\|f\|^2\right)+
\mathcal{M}^{\frac{1}{2}}\nu^{1+\kappa}\left(\left\|\nabla_xf\right\|^2+\left\|{\bf P}^{\bot}f\right\|^2_{D}\right),
\end{eqnarray} 
and for $1\leq |\omega|\leq N$,
    \begin{eqnarray}\label{G-omega-noweight-end}
&&\frac{\mathrm{d}}{\mathrm{d}t}\left\|Y^\omega f(t)\right\|^2+\delta\nu\left\|{\bf P}^{\bot} Y^\omega f\right\|^2_{D}\nonumber\\
&\lesssim& 
\nu^{\kappa}\left[A^{-1}_0\eta\widetilde{\mathbb{D}}^{(q,\iota_0)}_N[f](t)
   +\left(A^{-1}_0C({\eta})+A^{-\frac{s}{2}}_0\right)\left(\widetilde{\mathbb{D}}^{(q,\iota_0)}_{N-1}[f](t)
   +\widetilde{\mathbb{D}}^{(q,\iota_0)}_0[f](t)\right)\right]\nonumber\\
   &&+\mathcal{M}^{\frac12}\langle t\rangle^{-2}\widetilde{\mathbb{E}}^{(q,\iota_0)}_N[f](t)+C(\eta) \left(\mathcal{M}^2\nu^{1+2\kappa}\langle t\rangle^{-4}+\|{\bf P}^{\bot}f\|^2_D\right)\nonumber\\
	&&+A_0^{-\frac{1}{2}}\sum_{|\alpha|+|\omega|\leq N}\|\partial^\alpha_xY^\omega \rho_{\neq0}\| \min\left\{\left(\widetilde{\mathbb{E}}^{(q,\iota_0)}_N[f](t)\right)^{1/2},
\left(\widetilde{\mathbb{D}}^{(q,\iota_0)}_N[f](t)\right)^{1/2}\right\}\nonumber\\
           &&+\nu^{\kappa} \left(\widetilde{\mathbb{E}}_{N}^{(q,\iota_0)}[f](t)
\widetilde{\mathbb{D}}_{N_{\max}-2}^{(q,\iota_0)}[f](t)
\widetilde{\mathbb{D}}_{N}^{(q,\iota_0)}[f](t)\right)^{1/2}.
\end{eqnarray}
\end{lemma}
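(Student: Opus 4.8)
The plan is to establish both bounds by the (unweighted) $L^2$ energy method applied directly to \eqref{f} and to its $Y^\omega$-differentiated form, with the essential care being to track the powers of $\nu$ so that every microscopic remainder lands inside the dissipation functionals $\widetilde{\mathbb{D}}^{(q,\iota_0)}_\bullet[f]$ defined in \eqref{def-D}.

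\emph{Step 1: the base estimate \eqref{basic-l2}.} First I would take the $L^2_{x,v}$ inner product of \eqref{f} with $f$. The transport term $(v\cdot\nabla_x f,f)$ vanishes after integrating by parts in $x$, and the Vlasov term $(-\nabla_x\phi\cdot\nabla_v f,f)=-\tfrac12\int\nabla_x\phi\cdot\nabla_v(f^2)=0$ since $\nabla_x\phi$ is $v$-independent; the linear term gives $\nu\langle\mathcal{L}f,f\rangle\gtrsim\nu\|{\bf P}^\bot f\|_D^2$ by the coercivity recalled in the introduction. For the field--source term I write $(\nabla_x\phi\cdot v\mu^{1/2},f)=\int_{\mathbb{T}^3}\nabla_x\phi\cdot b\,\mathrm{d}x$ and estimate it by $\|\nabla_x\phi\|\,\|b\|\lesssim\|\rho_{\neq0}\|\,\|\langle v\rangle^{\gamma/2s}f\|$, using the elliptic bound $\|\nabla_x\phi\|\lesssim\|\rho_{\neq0}\|$ from the Poisson equation and the Gaussian decay of $\mu^{1/2}$. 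The cubic term $\tfrac12(\nabla_x\phi\cdot v f,f)$ is handled by placing the weight $|v|$ onto one factor of $f$ together with a slice of the strong velocity weight controlled by \eqref{priori-assu}, and using the $\langle t\rangle^{-2}$ decay of $\|\nabla_x\phi\|_{L^\infty}$ from \eqref{coro-elec} (and of $\partial_t\phi$ from \eqref{coro-phi}); this produces $\langle t\rangle^{-2}\mathcal{M}\nu^\kappa(\mathcal{M}\nu^{1+2\kappa}+\|f\|^2)$. Finally the nonlinear collision term $\nu(\Gamma(f,f),f)$ is bounded by the trilinear estimate \eqref{Gamma-noncut-1} together with the weighted Gagliardo--Nirenberg interpolation of Lemma \ref{ws-ine-lem}, absorbing the low-order factor by \eqref{priori-assu} to produce $\mathcal{M}^{1/2}\nu^{1+\kappa}(\|\nabla_x f\|^2+\|{\bf P}^\bot f\|_D^2)$.

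\emph{Step 2: the high-order estimate \eqref{G-omega-noweight-end}.} I apply $Y^\omega$ to \eqref{f} and pair with $Y^\omega f$. Since $[Y,\partial_t+v\cdot\nabla_x]=0$ the transport term again drops out, and $[Y,\nabla_v]=0$ keeps the divergence structure of the leading Vlasov term, so it too vanishes. The new contributions are: (i) the commutator $\nu([Y^\omega,\mathcal{L}]f,Y^\omega f)$, which by the Strain--Guo type lemmas of Section \ref{Appendix} is dominated, up to weight shifts, by $\nu\sum_{|\omega'|\le|\omega|-1}\|Y^{\omega'}f\|_D\,\|Y^\omega f\|_D$; (ii) the Leibniz terms in which $Y$'s fall on $\nabla_x\phi$ (or on $v\mu^{1/2}$), controlled by \eqref{coro-elec} with its $\langle t\rangle^{-2}$ decay, giving the contribution $\mathcal{M}^{1/2}\langle t\rangle^{-2}\widetilde{\mathbb{E}}^{(q,\iota_0)}_N[f](t)$; and (iii) the nonlinear term $\nu Y^\omega\Gamma(f,f)$, treated by the trilinear bound with \eqref{priori-assu} absorbing the low-order factors, which produces $\nu^\kappa(\widetilde{\mathbb{E}}_N\widetilde{\mathbb{D}}_{N_{\max}-2}\widetilde{\mathbb{D}}_N)^{1/2}$. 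A Cauchy--Schwarz splitting of (i) into $\eta\nu\|Y^\omega f\|_D^2+C_\eta\nu\sum_{|\omega'|\le|\omega|-1}\|Y^{\omega'}f\|_D^2$, together with the coercivity and the $A_0^{-s}$-type interpolation $\nu^{2\kappa}\|\nabla_v Y^\omega f\|^2\lesssim A_0^{-s}(\cdots)$ modeled on \eqref{nabla-Diss}, converts all the microscopic pieces into $\widetilde{\mathbb{D}}^{(q,\iota_0)}_\bullet[f]$ with the stated $A_0^{-1}$ and $A_0^{-s/2}$ gains.

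\emph{Step 3: closing the macroscopic part --- the main obstacle.} The difficulty, as flagged in the "Difficulties and strategies" discussion, is that the coercivity only controls $\|{\bf P}^\bot Y^\omega f\|_D$, so the $\|Y^\omega f\|_D^2$ left over from the commutator splitting, and likewise the low-order pieces after the interpolation $C_\eta\nu\sum_{|\omega'|\le|\omega|-1}\|Y^{\omega'}f\|_D^2\lesssim C_{\tilde\eta}\nu\|f\|_D^2+\tilde\eta\nu\sum_{|\omega'|=N}\|Y^{\omega'}f\|_D^2$, leave a macroscopic residue $C_{\tilde\eta}\nu\|{\bf P}f\|_D^2+\tilde\eta\nu\sum_{|\omega|=N}\|{\bf P}Y^\omega f\|_D^2$. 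I would split each such macroscopic quantity as $(\cdot)_{\neq0}+(\cdot)_{=0}$: the nonzero-mode part is dominated by $\|\nabla_x{\bf P}(\cdot)\|^2\lesssim A_0^{-1}\widetilde{\mathbb{D}}^{(q,\iota_0)}_\bullet[f]$ plus the density bound $\sum_{|\alpha|+|\omega|\le N}\|\partial_x^\alpha Y^\omega\rho_{\neq0}\|$ from Theorem \ref{dens-esti}, while the zero-mode part reduces, by the conservation laws \eqref{cons-laws}, to $\|\bar c\|^2+\|{\bf P}^\bot f\|_D^2$ with $\|\bar c\|^2\lesssim\mathcal{M}^2\nu^{1+2\kappa}\langle t\rangle^{-4}$ coming from \eqref{coro-elec}. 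The genuine subtlety --- and the reason $\widetilde{\mathbb{D}}_{N-1}$, $\widetilde{\mathbb{D}}_0$ and the $\eta,\tilde\eta$-weighted $\widetilde{\mathbb{D}}_N$ are kept as separate terms on the right-hand side --- is to choose $\eta$ and $\tilde\eta$ small and to order the interpolations so that the two mutually incompatible $\nu$-scalings \eqref{diff-2-1}--\eqref{diff-2-2} never arise simultaneously; this is exactly the point at which the present argument departs from, and replaces, the treatment in (5.35) of \cite{CLN-JAMS-2023}. Summing over $1\le|\omega|\le N$ and collecting terms then yields \eqref{G-omega-noweight-end}.
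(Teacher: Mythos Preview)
Your Steps 2--3 track the paper's argument closely: applying $Y^\omega$, using \eqref{LemmaL-Y-omega} for the linear commutator, interpolating the lower-order $\|Y^{\omega'}f\|_D$ terms down to $|\omega'|=0$ and $|\omega'|=N$, and then splitting the macroscopic residue into $(\cdot)_{\neq0}$ (Poincar\'e, absorbed into $A_0^{-1}\widetilde{\mathbb{D}}$) and $(\cdot)_{=0}$ (reduced, via $({\bf P}Y^\omega f)_{=0}={\bf P}\big(\partial_v^\omega\!\int_{\mathbb{T}^3} f\,\mathrm{d}x\big)$ and integration by parts in $v$, to $\bar c$ and ${\bf P}^\bot f$). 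That is exactly the mechanism in the paper, including the avoidance of \eqref{diff-2-1}--\eqref{diff-2-2}.

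The one genuine difference is in Step~1. The paper pairs \eqref{f} with $e^{\phi}f$ rather than $f$: this is Guo's cancellation \cite{Guo-JAMS-2012}, which makes the dangerous term $\tfrac12\nabla_x\phi\cdot v f$ cancel \emph{exactly} against the piece $\tfrac12 v\cdot\nabla_x\phi$ coming from $v\cdot\nabla_x(e^{\phi}f^2)$; what survives is only the harmless $\|\partial_t\phi\|_{L^\infty}\|e^{\phi/2}f\|^2$ term, bounded via \eqref{coro-phi}. Your approach of pairing with $f$ leaves $\tfrac12(\nabla_x\phi\cdot v f,f)$ uncancelled; your suggestion to ``place the weight $|v|$ onto one factor of $f$ together with a slice of the strong velocity weight controlled by \eqref{priori-assu}'' produces $\langle t\rangle^{-2}\mathcal{M}\nu^{\kappa}\,\|\langle v\rangle^{1/2}f\|^2$, i.e.\ a weighted energy on the right, not $\|f\|^2$. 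This is still adequate for the downstream closure, but it does not yield \eqref{basic-l2} in the exact form stated. If you want \eqref{basic-l2} as written, use the $e^{\phi}$ multiplier.
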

\begin{proof} We first prove \eqref{basic-l2}. To this aim,  we first denote $\bar{h}=\int_{\mathbb{T}^3}h\,\mathrm{d}x$ for any proper function $h$. Note that
\begin{align*}
   &\left|\left(\nabla_x\phi\cdot v\mu^{\frac{1}{2}}, e^{\phi}f \right)\right|\lesssim \|\nabla_x\phi\|\|\langle v\rangle^{\frac{\gamma}{2s}}f\|\lesssim \left\|\rho_{\neq0}\right\|\|\langle v\rangle^{\frac{\gamma}{2s}}f\|, \quad \mbox{and}
\end{align*}
\begin{align}\label{abc}
   & \bar{a}=\bar{b}_j(1\leq j\leq3)=0, \quad |\bar{c}|=\frac{1}{6}\|\nabla_x\phi\|^2\lesssim \langle t\rangle^{-4} \mathcal{M}^2\nu^{2\kappa}
\end{align}
by initial conditions of the conservation laws \eqref{cons-laws} in Theorem \ref{Main-Th.} and the estimate \eqref{coro-elec}.  
Then we further use the {\it a priori} assumption \eqref{priori-assu}, \eqref{coro-phi}, \eqref{abc}, \eqref{Lemma L_1}, and \eqref{Lemma L_3} to obtain
\begin{eqnarray*}
&&\frac{1}{2}\frac{\mathrm{d}}{\mathrm{d}t}\left\|e^{\frac{\phi}2} f\right\|^2+\delta\nu\left\|e^{\frac{\phi}{2}}{\bf P}^{\bot}f\right\|^2_{D}\nonumber\\
&\lesssim& \|\partial_t \phi\|_{L^{\infty}}\left\|e^{\frac{\phi}2} f\right\|^2+\left|\left(\nabla_x\phi\cdot v\mu^{\frac{1}{2}}, e^{\phi}f \right)\right|
+\nu\|f\|_{L^{\infty}_xL^2_v}\|f\|_D\|{\bf P}^{\bot}f\|_D\nonumber\\
&\lesssim& \langle t\rangle^{-2}\mathcal{M}\nu^{\kappa}\|f\|^2+\left\|\rho_{\neq0}\right\|\|\langle v\rangle^{\frac{\gamma}{2s}}f\|+
\mathcal{M}^{\frac{1}{2}}\nu^{1+\kappa}\left(|\bar{c}|^2+\left\|\nabla_x{\bf P}f\right\|^2+\left\|{\bf P}^{\bot}f\right\|^2_{D}\right)\nonumber\\
&\lesssim& \left\|\rho_{\neq0}\right\|\|\langle v\rangle^{\frac{\gamma}{2s}}f\|+\langle t\rangle^{-2}\mathcal{M}\nu^{\kappa}\left(\mathcal{M}\nu^{1+2\kappa}+\|f\|^2\right)+
\mathcal{M}^{\frac{1}{2}}\nu^{1+\kappa}\left(\left\|\nabla_xf\right\|^2+\left\|{\bf P}^{\bot}f\right\|^2_{D}\right).
\end{eqnarray*} 
This implies \eqref{basic-l2}.

Now we come to prove \eqref{G-omega-noweight-end}. For $1\leq|\omega|=N\leq N_{max}$, applying $Y^\omega$ into \eqref{f} and multiplying the result equality by $Y^\omega f$, one can deduce by using \eqref{LemmaL-Y-omega} that
\begin{eqnarray}\label{G-omega-noweight}
&&\frac{1}{2}\sum_{|\omega|=N}\frac{\mathrm{d}}{\mathrm{d}t}\left\|Y^\omega f(t)\right\|^2+\delta\nu\sum_{|\omega|=N}\left\|{\bf P}^{\bot} Y^\omega f\right\|^2_{D}\nonumber\\
&\lesssim& 
C_\eta\nu\sum_{|\omega'|\leq |\omega|-1}\|Y^{\omega'} f\|_{D}^2+\sum_{|\omega|=N}\left(Y^\omega(\nabla_x\phi\cdot v \mu^{\frac12}),Y^\omega f\right) \nonumber\\
&&+\sum_{|\omega|=N}\left(Y^\omega(\nabla_x\phi\cdot v f),Y^\omega f\right) +\sum_{|\omega|=N}\left(Y^\omega(\nabla_x\phi\cdot\nabla_v f),Y^\omega f\right) \nonumber\\
&&+\sum_{|\omega|=N}\nu\left(Y^\omega\Gamma(f,f),Y^\omega f\right).
\end{eqnarray}
Here we used the fact that
\[[\partial_t+v\cdot\nabla_x, Y^\omega]=0.\]
By interpolation method, the first term on the right-hand side of \eqref{G-omega-noweight} can be estimated as
\begin{align}\label{N-1omega}
C_\eta\nu\sum_{|\omega'|\leq |\omega|-1}\|Y^{\omega'} f\|_{D}^2\lesssim& C_{\tilde{\eta}}\nu\|f\|_{D}^2+\tilde{\eta}\nu\sum_{|\omega'|=N}\|Y^{\omega'} f\|_{D}^2\\
\lesssim& C_{\tilde{\eta}}\nu\|{\bf P}^{\bot}f\|_{D}^2+\tilde{\eta}\nu\sum_{|\omega|=N}\|{\bf P}^{\bot}Y^{\omega} f\|_{D}^2+
C_{\tilde{\eta}}\nu\|{\bf P}f\|^2+\tilde{\eta}\nu\sum_{|\omega|=N}\|{\bf P}Y^{\omega} f\|^2,\nonumber
\end{align}
where $\tilde{\eta}$ is a constant much smaller than $\eta$.

To estimate the last two terms in \eqref{N-1omega}, we decompose them into as $(\cdot)_{\neq0}+(\cdot)_{=0}$, respectively. Then, for the $(\cdot)_{\neq0}$ part of ${\bf P}f$ and ${\bf P}Y^{\omega} f$, we apply the Poincaré inequality to obtain
\begin{align}\label{nozero-Y}
   & C(\tilde{\eta})\nu\|\left({\bf P}f\right)_{\neq0}\|^2+\tilde{\eta}\nu\sum_{|\omega|=N}\left\|\left({\bf P}Y^{\omega} f\right)_{\neq0}\right\|^2
\lesssim C_{\tilde{\eta}}\nu\|\nabla_x{\bf P}f\|^2+\tilde{\eta}\nu\sum_{|\omega|=N}\|\nabla_x{\bf P}Y^{\omega} f\|^2.
\end{align}
For the $(\cdot)_{\neq0}$ part of ${\bf P}f$ and ${\bf P}Y^{\omega} f$, we first use
the definition of ${\bf P}f$ and \eqref{abc} to have
\begin{align*}
&(2\pi)^3\left( {\bf P} Y^{\omega} f\right)_{=0}
=
{\bf P}\left(\int_{\mathbb{T}^3}  Y^{\omega} f\,\mathrm{d}x\right)={\bf P}\left(\partial_v^{\omega}\int_{\mathbb{T}^3} f\,\mathrm{d}x\right)\\
&\quad= \int_{\mathbb{R}^3}\partial_v^{\omega}\left[\int_{\mathbb{T}^3} f\,\mathrm{d}x\right] \left(1,v, |v|^2-3\right)\mu^{\frac12}\,\mathrm{d}v\cdot\left(1,v,  \frac{|v|^2-3}{6}\right)\mu^{\frac12}\\
&\quad=(-1)^{|\omega|}\int_{\mathbb{R}^3}\left[\bar{c}+\int_{\mathbb{T}^3}{\bf P}^{\bot} f\,\mathrm{d}x\right] \partial_v^{\omega}\left[\left(1,v, |v|^2-3\right)\mu^{\frac12}\right]\,\mathrm{d}v\cdot\left(1,v, \frac{|v|^2-3}{6}\right)\mu^{\frac12}.
\end{align*}
This implies 
\begin{align}\label{zero-Y}
&C_{\tilde{\eta}}\nu\left\|\left( {\bf P} f\right)_{=0}\right\|^2+\tilde{\eta}\nu\sum_{|\omega|=N}\left\|\left({\bf P} Y^{\omega} f\right)_{=0}\right\|^2\nonumber\\
&\quad\lesssim C_{\tilde{\eta}}\nu\left(\|\bar{c}\|^2+\|{\bf P}^{\bot}f\|^2_D\right)\lesssim C_{\tilde{\eta}}\nu\left(\langle t\rangle^{-4} \mathcal{M}^2\nu^{2\kappa}+\|{\bf P}^{\bot}f\|^2_D\right).
\end{align}
We use  \eqref{nozero-Y} and \eqref{zero-Y} to obtain that
\begin{align*}
&C_{\tilde{\eta}}\nu\|{\bf P}f\|^2+\tilde{\eta}\nu\sum_{|\omega|=N}\|{\bf P}Y^{\omega} f\|^2\\
&\hspace{1cm}\lesssim C_{\tilde{\eta}}\nu\|\nabla_x{\bf P}f\|^2+\tilde{\eta}\nu\sum_{|\omega|=N}\|\nabla_x{\bf P}Y^{\omega} f\|^2+C_{\tilde{\eta}}\nu\left(\langle t\rangle^{-4} \mathcal{M}^2\nu^{2\kappa}+\|{\bf P}^{\bot}f\|^2_D\right)\\
&\hspace{1cm}\lesssim A^{-1}_0\nu\left(C_{\tilde{\eta}}\widetilde{\mathbb{D}}^{(q,\iota_0)}_N[f](t)
+\tilde{\eta}\widetilde{\mathbb{D}}^{(q,\iota_0)}_0[f](t)\right)+C_{\tilde{\eta}}\nu\left(\mathcal{M}^2\nu^{1+2\kappa}\langle t\rangle^{-4}+\|{\bf P}^{\bot}f\|^2_D\right).
\end{align*}
Combining the above estimate and \eqref{N-1omega},  the second term on the right-hand side of \eqref{G-omega-noweight} can be bounded by 
\begin{align*}
\tilde{\eta}\nu\sum_{|\omega|=N}\|{\bf P}^{\bot}Y^{\omega} f\|_{D}^2+C({\tilde{\eta}})\nu\left(\mathcal{M}^2\nu^{1+2\kappa}\langle t\rangle^{-4}+\|{\bf P}^{\bot}f\|^2_D\right)+A^{-1}_0\nu\left(C({\tilde{\eta}})\widetilde{\mathbb{D}}^{(q,\iota_0)}_N[f](t)
+\tilde{\eta}\widetilde{\mathbb{D}}^{(q,\iota_0)}_0[f](t)\right).
\end{align*}

For the  last three terms in \eqref{G-omega-noweight}, since their estimation is much easier than that for corresponding terms in \eqref{G-alpha-beta}, we omit the details here and give the following estimate
\begin{align*}
&\sum_{|\omega|=N}\left(Y^\omega(\nabla_x\phi\cdot v f),Y^\omega f\right)+\sum_{|\omega|=N}\left(Y^\omega(\nabla_x\phi\cdot\nabla_v f),Y^\omega f\right)  \nonumber\\ &
+\nu\sum_{|\omega|=N}\left(Y^\omega\Gamma(f,f),Y^\omega f\right)\nonumber\\
\lesssim
   &\mathcal{M}^{\frac12}\langle t\rangle^{-2}\widetilde{\mathbb{E}}^{(q,\iota_0)}_N[f](t)+C(\eta) \mathcal{M}^2\nu^{1+2\kappa}\langle t\rangle^{-4}\nonumber\\
	&+A_0^{-\frac{1}{2}}\sum_{|\alpha|+|\omega|\leq N}\|\partial^\alpha_xY^\omega \rho_{\neq0}\| \min\left\{\left(\widetilde{\mathbb{E}}^{(q,\iota_0)}_N[f](t)\right)^{1/2},
\left(\widetilde{\mathbb{D}}^{(q,\iota_0)}_N[f](t)\right)^{1/2}\right\}\nonumber\\
            &+\nu^{\kappa} \left(\widetilde{\mathbb{E}}_{N}^{(q,\iota_0)}[f](t)
\widetilde{\mathbb{D}}_{N_{\max}-2}^{(q,\iota_0)}[f](t)
\widetilde{\mathbb{D}}_{N}^{(q,\iota_0)}[f](t)\right)^{1/2}.
\end{align*}
Consequently, we denote $\tilde{\eta}$ as new $\eta$, which is sufficiently small, and collect the estimates above in \eqref{G-omega-noweight} to have
\eqref{G-omega-noweight-end}.
\end{proof}

The following theorem provides the total {\it a priori} energy estimates for the classical solution of \eqref{f} and \eqref{f-initial}.

\begin{theorem}\label{Th-basic-estimate} Suppose the same conditions in Theorem \ref{dens-esti} hold. For $ t\in [0, T_0]$, the solution $ f(t,x,v) $ to the Cauchy problem \eqref{f}-\eqref{f-initial} satisfies
	\begin{eqnarray}\label{priori-ener}
		\sup_{ 0\leq t\leq T}\widetilde{\mathbb{E}}^{(q,\iota_0)}_N[f](t)
		+\nu^\kappa\int_{0}^{T}\widetilde{\mathbb{D}}^{(q,\iota_0)}_N[f](t)dt\lesssim \mathcal{M}^2\nu^{2\kappa}\min\{\nu^{-\kappa},\langle t\rangle\}^{\max\{0,N-N_{max}+2\}}.
	\end{eqnarray}
\end{theorem}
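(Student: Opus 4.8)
The plan is to prove \eqref{priori-ener} by a weighted energy estimate carried out inductively on the order $N$, combined with the density bound of Theorem \ref{dens-esti}, the electric-field decay of Lemma \ref{elec-esti}, and a concluding Gr\"onwall argument, all under the a priori assumption \eqref{priori-assu}. For each multi-index $(\alpha,\beta,\omega)$ with $|\alpha|+|\beta|+|\omega|=N\le N_{max}$ I would apply $\partial^\alpha_x\partial^\beta_v Y^\omega$ to the first equation of \eqref{f} and pair the result against $e^{\phi}\nu^{2\kappa|\beta|}(w^{(q)}_{\iota_0,N})^2\partial^\alpha_x\partial^\beta_v Y^\omega f$, together with the companion mixed $\nabla_x$--$\nabla_v$ pairing already built into $\widetilde{\mathbb{E}}^{(q,\iota_0)}_N$. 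The coercivity of $\mathcal{L}$ and the Strain--Guo type lemmas of Section \ref{Appendix} extract $\nu^\kappa$ times the dissipation $\widetilde{\mathbb{D}}^{(q,\iota_0)}_N$ on the left; the commutator $[v\cdot\nabla_x,\partial^\beta_v]$ produces the transport term, which does \emph{not} vanish since the top-order derivative is in $v$, and which is controlled via the weighted interpolation inequality displayed in the introduction --- valid precisely because $\kappa=\tfrac1{1+2s}$ satisfies $2\kappa\ge(1-\kappa)s+(1+\kappa)(1-s)$ --- against the $D$-norms in $\widetilde{\mathbb{D}}^{(q,\iota_0)}_N$ with the small prefactor $A_0^{-s}$; the enhanced-dissipation cross term is handled by Lemma \ref{elec-potenfield}, which supplies the extra $x$-dissipation, and the no-weight $Y$-derivative pieces by Lemma \ref{lemma-nonweight-Y}. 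Summing over all admissible $(\alpha,\beta,\omega)$, for $\eta$ small and $A_0$ large, yields
\[\frac{\mathrm{d}}{\mathrm{d}t}\widetilde{\mathbb{E}}^{(q,\iota_0)}_N[f](t)+\delta\nu^\kappa\widetilde{\mathbb{D}}^{(q,\iota_0)}_N[f](t)\lesssim \mathrm{RHS},\]
where $\mathrm{RHS}$ collects the density terms $\sum_{|\alpha|+|\omega|\le N}\|\partial^\alpha_x Y^\omega\rho_{\neq0}\|\min\{(\widetilde{\mathbb{E}}^{(q,\iota_0)}_N)^{1/2},(\widetilde{\mathbb{D}}^{(q,\iota_0)}_N)^{1/2}\}$, small-prefactor contributions $(A_0^{-s/2}+\mathcal{M}^{1/2}\langle t\rangle^{-2})\nu^\kappa(\widetilde{\mathbb{E}}^{(q,\iota_0)}_N+\widetilde{\mathbb{D}}^{(q,\iota_0)}_N)$, the lower-order dissipation $\nu^\kappa(\widetilde{\mathbb{D}}^{(q,\iota_0)}_{N-1}+\widetilde{\mathbb{D}}^{(q,\iota_0)}_0)$, the cubic term $\nu^\kappa(\widetilde{\mathbb{E}}^{(q,\iota_0)}_N\widetilde{\mathbb{D}}^{(q,\iota_0)}_{N_{max}-2}\widetilde{\mathbb{D}}^{(q,\iota_0)}_N)^{1/2}$, and the forcing $\mathcal{M}^2\nu^{1+2\kappa}\langle t\rangle^{-4}$.

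Next I would absorb the borderline contributions. After fixing $A_0$ large and $\mathcal{M}$ small, the small-prefactor $\widetilde{\mathbb{D}}^{(q,\iota_0)}_N$ terms and the cubic term --- since $\widetilde{\mathbb{D}}^{(q,\iota_0)}_{N_{max}-2}$ is integrable in $\nu^\kappa\,\mathrm{d}t$ with size $\mathcal{M}^2\nu^{2\kappa}$ by the inductive hypothesis --- are moved to the left, leaving there a positive multiple of $\nu^\kappa\widetilde{\mathbb{D}}^{(q,\iota_0)}_N$. The density term is controlled after time integration by Cauchy--Schwarz against \eqref{Th-rho-1}, $\int_0^t\|\rho_{\neq0}\|(\widetilde{\mathbb{D}}^{(q,\iota_0)}_N)^{1/2}\le\big(\nu^{-\kappa}\int_0^t\|\rho_{\neq0}\|^2\big)^{1/2}\big(\nu^\kappa\int_0^t\widetilde{\mathbb{D}}^{(q,\iota_0)}_N\big)^{1/2}$, and then by Young's inequality contributes $\tfrac\delta4\nu^\kappa\int_0^t\widetilde{\mathbb{D}}^{(q,\iota_0)}_N+C\mathcal{M}^2\nu^{2\kappa}$; the term $\nu^\kappa\int_0^t\widetilde{\mathbb{D}}^{(q,\iota_0)}_{N-1}$ is bounded by $\mathcal{M}^2\nu^{2\kappa}\min\{\nu^{-\kappa},\langle t\rangle\}^{\max\{0,N-N_{max}+1\}}$ by induction. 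What survives is an inequality of the form
\[\widetilde{\mathbb{E}}^{(q,\iota_0)}_N[f](t)+\tfrac\delta2\nu^\kappa\int_0^t\widetilde{\mathbb{D}}^{(q,\iota_0)}_N[f](\tau)\,\mathrm{d}\tau\lesssim \mathcal{M}^2\nu^{2\kappa}\min\{\nu^{-\kappa},\langle t\rangle\}^{\max\{0,N-N_{max}+1\}}+C\mathcal{M}\int_0^t\langle\tau\rangle^{-2}\widetilde{\mathbb{E}}^{(q,\iota_0)}_N[f](\tau)\,\mathrm{d}\tau,\]
and since $\langle\tau\rangle^{-2}$ is integrable, Gr\"onwall's lemma gives \eqref{priori-ener}: no growth for $N\le N_{max}-2$, and one extra power of $\min\{\nu^{-\kappa},\langle t\rangle\}$ per order above $N_{max}-2$.

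The main obstacle lies at the top two orders $N=N_{max}-1,N_{max}$, in the electric-field interaction $\sum_{|\alpha_1|+|\omega_1|\le|\alpha|+|\omega|}\big(\partial^{\alpha_1+e_i}_x Y^{\omega_1}\phi\,\partial^{\alpha-\alpha_1}_x\partial^{\beta+e_i}_v Y^{\omega-\omega_1}f,\,e^\phi\nu^{2\kappa|\beta|}(w^{(q)}_{\iota_0,N})^2\partial^\alpha_x\partial^\beta_v Y^\omega f\big)$, which cannot be pushed into the dissipation by integration by parts as in the Landau case, because the Boltzmann coercivity is only $H^s_v$ with $s<1$ and the worst case involves two extra velocity derivatives $\partial^{\beta+\beta'}_v$, $|\beta'|=2$ --- this is exactly why $\widetilde{\mathbb{E}}^{(q,\iota_0)}_N$ carries the additional $\nu^{4\kappa}\|w^{(q)}_{\iota_0,N+2}\partial^\alpha_x\partial^{\beta+\beta'}_v Y^\omega f\|^2$. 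Instead I would split at $|\alpha_1|+|\omega_1|\le N_{max}-2$ versus $\ge N_{max}-1$: in the former case bound $\partial^{\alpha_1+e_i}_x Y^{\omega_1}\phi$ in $L^\infty_x$ by the sharp $\langle t\rangle^{-2}$ decay of \eqref{coro-elec}, contributing $\lesssim\mathcal{M}^{1/2}\langle t\rangle^{-2}\widetilde{\mathbb{E}}^{(q,\iota_0)}_N[f](t)$, harmless for Gr\"onwall; in the latter case $|\alpha-\alpha_1|+|\omega-\omega_1|\le1$, so the $f$-factor lies in a low-order norm $\widetilde{\mathbb{E}}^{(q,\iota_0)}_{\bar N}[f]$ already bounded by $\mathcal{M}^2\nu^{2\kappa}$, and $\|\partial^{\alpha_1+e_i}_x Y^{\omega_1}\phi\|$ is converted via the Poisson equation into $\|\partial^{\alpha_1-e_i}_x Y^{\omega_1}\rho_{\neq0}\|$, controlled by \eqref{Th-rho-1} --- it is this mechanism, together with the fact that the enhanced dissipation only acts after a transient of duration $\nu^{-\kappa}$ (the variable $\nu^\kappa t$), that produces the loss $\min\{\nu^{-\kappa},\langle t\rangle\}$ per extra order. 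Beyond this, the delicate point is the $\nu$-exponent bookkeeping in the nonlinear collision term (cf. \eqref{dens-colli}--\eqref{colli-I3}) and in the weighted energy identities, arranged so that every term that is not genuinely of lower order carries a small coefficient --- a power of $A_0^{-1}$, $\mathcal{M}$, or $\langle t\rangle^{-1}$ --- and never the bare $\nu^\kappa\widetilde{\mathbb{D}}^{(q,\iota_0)}_N$ with an $O(1)$ constant; this is where the refined algebraic weight $\langle v\rangle^{\frac{-\gamma}{2s}(\iota_0-|\alpha|-|\beta|-|\omega|)}$ and the constraint $\iota_0\ge N_{max}+9$ enter. Finally, the resulting bound improves the a priori assumption \eqref{priori-assu} --- it carries $\mathcal{M}^2$ where the assumption has $\mathcal{M}$, with the same $\min/\max$ structure in $t$ and $\nu$ --- so the continuation argument closes.
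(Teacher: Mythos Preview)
Your proposal follows the paper's approach closely and identifies all the key mechanisms: the weighted energy identity with the $e^\phi$ factor, the $A_0^{-s}$ interpolation for the transport commutator (valid precisely because $\kappa=\tfrac{1}{1+2s}$), the crucial $|\alpha_1|+|\omega_1|\le N_{max}-2$ versus $\ge N_{max}-1$ split for the electric-field interaction via \eqref{coro-elec} and low-order norms, Lemmas \ref{elec-potenfield} and \ref{lemma-nonweight-Y}, and the induction-plus-Gr\"onwall closure.

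There is, however, a concrete $\nu$-counting slip in your treatment of the density term that makes the argument fail as written at low $N$. Your Cauchy--Schwarz against the $\widetilde{\mathbb{D}}^{1/2}$ branch yields $C\mathcal{M}^2$, not $C\mathcal{M}^2\nu^{2\kappa}$: by \eqref{Th-rho-1} one has $\nu^\kappa\int_0^t\sum\|\partial^\alpha_x Y^\omega\rho_{\neq0}\|^2\lesssim\mathcal{M}^2\nu^{2\kappa}$, hence $\nu^{-\kappa}\int_0^t\sum\|\cdot\|^2\lesssim\mathcal{M}^2$. This is acceptable at $N=N_{max}$ (where $\mathcal{M}^2=\mathcal{M}^2\nu^{2\kappa}\cdot\nu^{-2\kappa}$ matches the target), but for $N\le N_{max}-2$ it destroys the required bound $\mathcal{M}^2\nu^{2\kappa}$; your displayed pre-Gr\"onwall inequality would then give only $\max\{0,N-N_{max}+1\}$ powers of growth from induction, inconsistent with your stated conclusion. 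The paper resolves this by exploiting the other branch of the $\min$: since $\rho_{\neq0}$ is $v$-independent, $Y$ acts as $t\nabla_x$ on it, so Poincar\'e plus this identity give
\[
\sum_{|\alpha|+|\omega|\le N}\|\partial^\alpha_x Y^\omega\rho_{\neq0}\|\lesssim\langle t\rangle^{N-N_{max}}\sum_{|\alpha|+|\omega|\le N_{max}}\|\partial^\alpha_x Y^\omega\rho_{\neq0}\|,
\]
and pairing with $(\widetilde{\mathbb{E}}^{(q,\iota_0)}_N)^{1/2}$ yields, after Cauchy, $\eta\sup\widetilde{\mathbb{E}}^{(q,\iota_0)}_N+C_\eta\mathcal{M}^2\nu^{2\kappa}\big(\int_0^t\langle\tau\rangle^{N-N_{max}}\,\mathrm{d}\tau\big)^2$, bounded when $N\le N_{max}-2$ and $[\ln\langle t\rangle]^2$ when $N=N_{max}-1$. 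At $N=N_{max}$ the paper takes the minimum of both branches --- one giving $\mathcal{M}^2\nu^{2\kappa}\langle t\rangle^2$, the other $\mathcal{M}^2$ --- and this is precisely what produces the $\min\{\nu^{-\kappa},\langle t\rangle\}^2$ factor.
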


\begin{proof} 
According to the definition of $\widetilde{\mathbb{E}}^{(q,\iota_0)}_N[f](t)$ in \eqref{Energy-N}, we will estimate the following norms $$\sum_{|\alpha'|\leq1,
|\beta'|\leq2}\left\|\nu^{\kappa(|\beta|+|\beta'|)}w_{\iota_0-N-|\alpha'|-|\beta'|}\partial^{\alpha+\alpha'}_x\partial^{\beta+\beta'}_vY^\omega f\right\|^2,\quad |\alpha|+|\beta|+|\omega|=N\leq N_{max}.$$

Since the whole proof is quite long, we divide it into three steps.

\noindent\underline{{\it Step 1. Estimation of $\left\|\nu^{\kappa|\beta|}e^{\frac{\phi}2}w_{\iota_0-N}\partial^\alpha_x\partial^\beta_vY^\omega f(t)\right\|^2$:}} 
Applying $\partial^\alpha_x\partial^\beta_vY^\omega$ to \eqref{f} and multiplying the resulting equation by $e^{\phi}\nu^{2\kappa|\beta|}w^2_{\iota_0-N}\partial^\alpha_x\partial^\beta_vY^\omega f$ gives
\begin{eqnarray}\label{G-alpha-beta}
&&\frac{1}{2}\frac{\mathrm{d}}{\mathrm{d}t}\left\|\nu^{\kappa|\beta|}e^{\frac{\phi}2}w_{\iota_0-N}\partial^\alpha_x\partial^\beta_vY^\omega f(t)\right\|^2+\nu\left\|\nu^{\kappa|\beta|}e^{\frac{\phi}2}w_{\iota_0-N}\partial^\alpha_x\partial^\beta_vY^\omega f\right\|^2_{D}\nonumber\\
&\lesssim& \Big(\eta \nu\sum_{|\beta'|\leq|\beta|,|\bar{\omega}|\leq|\omega|}
+C(\eta) \nu\sum_{|\beta'|\leq|\beta|,|\bar{\omega}|\leq|\omega|\atop
|\beta'|+|\bar{\omega}|\leq|\beta|+|\omega|-1}\Big)
\left\|\nu^{\kappa|\beta|}e^{\frac{\phi}2}w_{\iota_0-N}\partial^\alpha_x\partial^{\beta'}_v
Y^{\bar{\omega}} f\right\|^2_{D}\nonumber\\
&&+C(\eta) \nu\sum_{|\bar{\omega}|\leq|\omega|}\left\|\chi_{\{| v|\leq2C_{\eta}\}}\partial^\alpha_x Y^{\bar{\omega}} f\right\|^2+\|\partial_t\phi\|_{L^\infty_x}\left\|\nu^{\kappa|\beta|}e^{\frac{\phi}2}w_{\iota_0-N}\partial^\alpha_x\partial^\beta_vY^\omega f\right\|^2\nonumber\\
&&+\left(\partial^\alpha_x\partial^\beta_vY^\omega(\nabla_x\phi\cdot v \mu^{\frac12}),e^{\phi}\nu^{2\kappa|\beta|}w^2_{\iota_0-N}\partial^\alpha_x\partial^\beta_vY^\omega f\right) \nonumber\\
&&+\sum_{|\beta'|=1}\left(\partial^{\alpha+\beta'}_x\partial^{\beta-\beta'}_vY^\omega f, e^{\phi}\nu^{2\kappa|\beta|}
w^2_{\iota_0-N}\partial^\alpha_x\partial^\beta_vY^\omega f\right)\nonumber\\
&&
+\sum_{1\leq|\alpha_1|+|\omega_1|\leq |\alpha|+|\beta|}\left(v_i\partial^{\alpha_1+e_i}_x Y^{\omega_1}\phi
\partial^{\alpha-\alpha_1}_x\partial^{\beta}_vY^{\omega-\omega_1}f,e^{\phi}\nu^{2\kappa|\beta|}w^2_{\iota_0-N}\partial^\alpha_x\partial^\beta_vY^\omega f\right)\nonumber\\
&&
+\sum_{1\leq|\alpha_1|+|\omega_1|\leq |\alpha|+|\omega|}\left(\partial^{\alpha_1+e_i}_x Y^{\omega_1}\phi
	\partial^{\alpha-\alpha_1}_x\partial^{\beta-e_i}_vY^{\omega-\omega_1}f,
e^{\phi}\nu^{2\kappa|\beta|}w^2_{\iota_0-N}\partial^\alpha_x\partial^\beta_vY^\omega f\right)\nonumber\\
&&
+\sum_{|\alpha_1|+|\omega_1|\leq |\alpha|+|\omega|}\left(\partial^{\alpha_1+e_i}_x Y^{\omega_1}\phi
	\partial^{\alpha-\alpha_1}_x\partial^{\beta+e_i}_vY^{\omega-\omega_1}f,
e^{\phi}\nu^{2\kappa|\beta|}w^2_{\iota_0-N}\partial^\alpha_x\partial^\beta_vY^\omega f\right)\nonumber\\
&&+\nu\left(\partial^\alpha_x\partial^\beta_vY^\omega\Gamma(f,f),e^{\phi}\nu^{2\kappa|\beta|}
w^2_{\iota_0-N}\partial^\alpha_x\partial^\beta_vY^\omega f\right).
\end{eqnarray}
Here we used \eqref{wL-lin} for arbitrary $\eta>0$, the cancellation technique in \cite{Guo-JAMS-2012}:
$$-\frac{1}{2}v\cdot\nabla_x\left(e^{\phi}w^2_{\iota_0-N}\partial^\alpha_x\partial^\beta_vY^\omega f\nu^{2\kappa|\beta|}\right)+\frac{1}{2}v\cdot\nabla_x\phi e^{\phi}w^2_{\iota_0-N}\partial^\alpha_x\partial^\beta_vY^\omega f\nu^{2\kappa|\beta|}=0,$$
and the following fact due to commuting property $[\partial_t+v\cdot\nabla_x,Y^\omega]=0$: 
\begin{eqnarray*}
[\partial^\alpha_x\partial^\beta_vY^\omega,\partial_t+v\cdot\nabla_x]=\sum_{ |\beta'|=1}\partial^{\alpha+\beta'}_x\partial^{\beta-\beta'}_vY^\omega.
\end{eqnarray*}
Now we estimate the terms in \eqref{G-alpha-beta} one by one.

The first term in the right hand side of \eqref{G-alpha-beta} can be dominated by
\begin{align*}
   & A^{-1}_0\nu^{\kappa}\left(\eta\widetilde{\mathbb{D}}^{(q,\iota_0)}_N[f](t)
   +C_{\eta}\widetilde{\mathbb{D}}^{(q,\iota_0)}_{N-1}[f](t)\right).
\end{align*}
For the second term, we decompose it as $(\cdot)_{\neq0}+(\cdot)_{=0}$ two parts.
For the part $(\cdot)_{\neq0}$, we apply the Poincaré inequality to obtain
\begin{align*}
   & C(\eta) \nu\sum_{|\bar{\omega}|\leq|\omega|}\left\|\left(\chi_{\{| v|\leq2C_{\eta}\}}\partial^\alpha_x Y^{\bar{\omega}} f\right)_{\neq0}\right\|^2\nonumber\\
   &\qquad\lesssim C(\eta) \nu\sum_{|\bar{\omega}|\leq|\omega|}\left\|\chi_{\{| v|\leq2C_{\eta}\}}\nabla_x\partial^\alpha_x Y^{\bar{\omega}} f\right\|^2\lesssim A^{-1}_0C_{\eta}\nu\widetilde{\mathbb{D}}^{(q,\iota_0)}_N[f](t).
\end{align*}
For the other part $(\cdot)_{=0}$, it doesn't vanish only when $|\alpha|=0$. Then, as the derivation of \eqref{zero-Y}, we can obtain 
\begin{align*}
   & C(\eta) \nu\sum_{|\bar{\omega}|\leq|\omega|}\left\|\left(\chi_{\{| v|\leq2C_{\eta}\}} Y^{\bar{\omega}} f\right)_{=0}\right\|^2\nonumber\\
      &\qquad  \leq C(\eta) \nu\sum_{|\bar{\omega}|\leq|\omega|}\left\|{\bf P}^{\bot} \left(Y^{\bar{\omega}}f\right)\right\|^2_D+C(\eta) \nu\left(\|\bar{c}\|^2+\|{\bf P}^{\bot}f\|^2_D\right)\nonumber\\
    &\qquad
   \leq C({\eta})\nu \sum_{|\bar{\omega}|\leq|\omega|}\left\|{\bf P}^{\bot} \left(Y^{\bar{\omega}}f\right)\right\|^2_D+C(\eta) \mathcal{M}^2\nu^{1+2\kappa}\langle t\rangle^{-4}.\nonumber
\end{align*}
Therefore, the upper bound of the second term is
\begin{align*}
A^{-1}_0C({\eta})\nu\widetilde{\mathbb{D}}^{(q,\iota_0)}_{N}[f](t)+C({\eta})\nu \sum_{|\bar{\omega}|\leq|\omega|}\left\|{\bf P}^{\bot} \left(Y^{\bar{\omega}}f\right)\right\|^2_D+C(\eta) \mathcal{M}^2\nu^{1+2\kappa}\langle t\rangle^{-4}.
\end{align*}
Due to \eqref{coro-phi} and the {\it a priori} assumption \eqref{priori-assu}, the upper bound of the third term is 
\begin{align*}
    \mathcal{M}^{\frac12}\langle t\rangle^{-2}\left\|\nu^{\kappa|\beta|}e^{\frac{\phi}2}w_{\iota_0-N}\partial^\alpha_x\partial^\beta_vY^\omega f\right\|^2\lesssim \mathcal{M}^{\frac12}\langle t\rangle^{-2}\widetilde{\mathbb{E}}^{(q,\iota_0)}_N[f](t).
\end{align*}
For the fourth term, we integrate by parts with respect to $v$ and apply the Poincaré inequality to bound it by
\begin{align*}
\|\partial^\alpha_xY^\omega \nabla_x\phi\|\|\partial^\alpha_xY^\omega f\|\lesssim
A_0^{-\frac{1}{2}}\sum_{|\alpha|+|\omega|\leq N}\|\partial^\alpha_xY^\omega \rho_{\neq0}\| \left(\widetilde{\mathbb{E}}^{(q,\iota_0)}_N[f](t)\right)^{1/2}.
\end{align*}
On the other hand, we can also integrate by parts with respect to $v,x$ and bound it with the $\widetilde{\mathbb{D}}^{(q,\iota_0)}_N[f](t)$ norm as
\begin{align*}
   & \left(\partial^\alpha_x\partial^\beta_vY^\omega(\nabla_x\phi\cdot v \mu^{\frac12}),\nu^{2\kappa|\beta|}e^{\phi}w^2_{\iota_0-N}\partial^\alpha_x\partial^\beta_vY^\omega f\right) \\
   =&(-1)^{|\beta|+1}\left(\partial^\alpha_x Y^{\omega}\phi , \nu^{2\kappa|\beta|}e^{\phi}\partial^{2\beta}_v( v w^2_{\iota_0-N}\mu^{\frac12})\cdot\nabla_x\partial^\alpha_xY^\omega f\right)\\
   &+(-1)^{|\beta|+1}\left(\partial^\alpha_x Y^{\omega}\phi ,\nabla_x\phi\cdot \partial^{2\beta}_v(v w^2_{\iota_0-N}\mu^{\frac12})\nu^{2\kappa|\beta|}e^{\phi}\partial^\alpha_xY^\omega f\right)\\
   \lesssim& \|\partial^\alpha_x Y^{\omega}\phi\| \left(\|\nabla_x\partial^\alpha_xY^\omega f\|+ \|\nabla_x\phi\|_{\infty} \|\partial^\alpha_xY^\omega f\|\right)\\
   \lesssim& A_0^{-\frac{1}{2}}\sum_{|\alpha|+|\omega|\leq N}\|\partial^\alpha_xY^\omega \rho_{\neq0}\| \left(\widetilde{\mathbb{D}}^{(q,\iota_0)}_N[f](t)\right)^{1/2}+\langle t\rangle^{-2} \mathcal{M}\nu^{\kappa}\widetilde{\mathbb{E}}^{(q,\iota_0)}_N[f](t).
\end{align*}
Then we have the following upper bound for the fourth term:
\begin{align*}
A_0^{-\frac{1}{2}}\sum_{|\alpha|+|\omega|\leq N}\|\partial^\alpha_xY^\omega \rho_{\neq0}\| \min\left\{\left(\widetilde{\mathbb{E}}^{(q,\iota_0)}_N[f](t)\right)^{1/2},
\left(\widetilde{\mathbb{D}}^{(q,\iota_0)}_N[f](t)\right)^{1/2}\right\}+\langle t\rangle^{-2} \mathcal{M}\nu^{\kappa}\widetilde{\mathbb{E}}^{(q,\iota_0)}_N[f](t).
\end{align*}

For the fifth term, noting that
\begin{align}\label{beta-DD}
  &\left\|\nu^{\kappa|\beta|}w_{\iota_0-N}\partial^{\beta'}_{v}
  \partial^\alpha_x\partial^{\beta-\beta'}_v Y^\omega f\right\|^2\lesssim A_0^{-s}\Big(
  A_0
  \nu^{1-\kappa}\left\|\nu^{2\kappa(|\beta|-1)}w_{\iota_0-N+1} \partial^\alpha_x\partial^{\beta-\beta'}_v Y^\omega  f\right\|_D^2\nonumber\\
  &\qquad +\nu^{1+\kappa}\left\| \nu^{\kappa(|\beta|-1)}w_{\iota_0-N}\nabla_v \partial^\alpha_x\partial^{\beta-\beta'}_v Y^\omega f\right\|^2_D\Big)
\end{align}
for $|\beta'|=1$,
we have
\begin{eqnarray}	&&\sum_{|\beta'|=1}\left(w_{\iota_0-N}\partial^{\alpha+\beta'}_x\partial^{\beta-\beta'}_vY^\omega f, e^{\phi}\nu^{2\kappa|\beta|}
w^2_{\iota_0-N}\partial^\alpha_x\partial^\beta_vY^\omega f\right)\nonumber\\
	&=&\nu^\kappa\sum_{|\beta'|=1}\left(\nu^{\kappa(|\beta|-1)}w_{\iota_0-N}\partial^{\alpha+\beta'}_x
\partial^{\beta-\beta'}_vY^\omega f, e^{\phi}\nu^{\kappa|\beta|}
w_{\iota_0-N}\partial^\alpha_x\partial^\beta_vY^\omega f\right)\nonumber\\
	&\lesssim&\nu^{\kappa}\sum_{|\beta'|=1}\left\|\nu^{\kappa(|\beta|-1)}w_{\iota_0-N}\partial^{\alpha+\beta'}_x
\partial^{\beta-\beta'}_vY^\omega f\right\| \left\|\nu^{\kappa|\beta|}w_{\iota_0-N}\partial^{\beta'}_{v}\partial^\alpha_x
\partial^{\beta-\beta'}_v Y^\omega f\right\|  \nonumber\\
		&\lesssim& A_0^{-\frac{s}{2}}\nu^{\kappa} \widetilde{\mathbb{D}}^{(q,\iota_0)}_{N-1}[f](t).\nonumber
\end{eqnarray}

For the eighth term, its estimation is most difficult due to the appearance of one order extra velocity derivative, especially when $|\alpha_1|+|\omega_1|$ is large. In fact, in \cite{CLN-JAMS-2023}, this term is controlled by $\langle t\rangle^{-2} \mathcal{M}^{1/2} \widetilde{\mathbb{E}}^{(q,\iota_0)}_N[f](t)$ with the help of the time decay of $\partial^{\alpha_1+e_i}_x Y^{\omega_1}\phi$ when $|\alpha_1|+|\omega_1|\leq 4$. While for the case $|\alpha_1|+|\omega_1|\geq 5$, thanks to the $H^1_v$ dissipation of the linear Landau collision operator, this term was dominated by 
$$\sum_{|\alpha|+|\omega|\leq N}\|\partial^\alpha_xY^\omega \rho_{\neq0}\| \min\left\{\left(\widetilde{\mathbb{E}}^{(q,\iota_0)}_N[f](t)\right)^{1/2},
\left(\widetilde{\mathbb{D}}^{(q,\iota_0)}_N[f](t)\right)^{1/2}\right\}$$ via integration by parts and thus absorbing the extra velocity derivative by the dissipation norm $\widetilde{\mathbb{D}}^{(q,\iota_0)}_N[f](t)$. Due to the $H^s_v$ dissipation of the linear Boltzmann collision operator, this method is not applicable to our situation, especially for the corresponding estimate of $\partial^\alpha_x\partial^{\beta+\beta'}_vY^\omega$ with $|\beta'|=2$ in Step 2. 

To overcome this difficulty, instead of the above method in \cite{CLN-JAMS-2023}, we  will make use of the accurate time decay of $\nabla_v\phi$ in \eqref{coro-elec} and  the good time decay of norms $\widetilde{\mathbb{E}}^{(q,\iota_0)}_{\bar{N}}[f](t)$ for $\bar{N}$ small enough by the {\it a priori} assumption \eqref{priori-assu} when $|\alpha_1|+|\omega_1|\leq N_{max}-2$ and $|\alpha_1|+|\omega_1|\geq N_{max}-1$, respectively. More explicitly, we can use \eqref{priori-assu},  \eqref{Th-rho-1}, and \eqref{coro-elec} to dominate it by
\begin{eqnarray}\label{new1}
&&\sum_{|\alpha_1|+|\omega_1|\leq N_{max}-2}\|\partial^{\alpha_1+e_i}_x Y^{\omega_1}\phi\|_{\infty}\|\nu^{\kappa|\beta|}e^{\frac{\phi}{2}}
w_{\iota_0-N}\partial^{\alpha-\alpha_1}_x\partial^{\beta+e_i}_vY^{\omega-\omega_1}f\| \|\nu^{\kappa|\beta|}e^{\frac{\phi}{2}}w_{\iota_0-N}\partial^\alpha_x\partial^\beta_vY^\omega f\|\nonumber\\
&& + \sum_{|\alpha_1|+|\omega_1|\geq N_{max}-1}\|\partial^{\alpha_1+e_i}_x Y^{\omega_1}\phi\|\nonumber\\
&&\times\left\|\nu^{\kappa|\beta|}e^{\frac{\phi}{2}}
w_{\iota_0-N}
\partial^{\alpha-\alpha_1}_x\partial^{\beta+e_i}_vY^{\omega-\omega_1}f\right\|_{L^{\infty}_xL^2_v} \left\|\nu^{\kappa|\beta|}e^{\frac{\phi}{2}}w_{\iota_0-N}
\partial^\alpha_x\partial^{\beta}_vY^\omega f\right\|\nonumber\\
&\lesssim& \langle t\rangle^{-2} \mathcal{M} \widetilde{\mathbb{E}}^{(q,\iota_0)}_N[f](t)
 + \sum_{|\alpha_1|+|\omega_1|\geq N_{max}-1}\|\partial^{\alpha_1}_x Y^{\omega_1}\rho_{\neq0}\|_{L^3_x}\\
&&\times\sum_{|\alpha_2|+|\beta|+|\omega_2|\leq1}\left\|\nu^{\kappa|\beta|}e^{\frac{\phi}{2}}
w_{\iota_0-|\alpha_2|-|\beta|-|\omega_2|-2}
\partial^{\alpha_2}_x\partial^{\beta+e_i}_vY^{\omega_2}f\right\|_{L^6_xL^2_v} \left\|\nu^{\kappa|\beta|}e^{\frac{\phi}{2}}w_{\iota_0-N}
\partial^\alpha_x\partial^{\beta}_vY^\omega f\right\|\nonumber\\
&\lesssim& \langle t\rangle^{-2} \mathcal{M} \widetilde{\mathbb{E}}^{(q,\iota_0)}_N[f](t)+\mathcal{M}\nu^{\kappa} \sum_{|\alpha_2|+|\beta|+|\omega_2|\leq1\atop
|\alpha'|=|\beta'|=1}\left\|\nu^{\kappa|\beta|}e^{\frac{\phi}{2}}
w_{\iota_0-N}
\partial^{\alpha_2+\alpha'}_x\partial^{\beta+\beta'}_vY^{\omega_2}f\right\| \left\|\nu^{\kappa|\beta|}e^{\frac{\phi}{2}}w_{\iota_0-N}
\partial^\alpha_x\partial^{\beta}_vY^\omega f\right\|\nonumber\\
&\lesssim& \langle t\rangle^{-2} \mathcal{M} \widetilde{\mathbb{E}}^{(q,\iota_0)}_N[f](t)+\langle t\rangle^{-4} \mathcal{M}^{\frac{1}{2}}\left(\widetilde{\mathbb{E}}^{(q,\iota_0)}_N[f](t)
+\mathcal{M}^2\nu^{2\kappa}\right)\nonumber\\
&\lesssim& \langle t\rangle^{-2}  \left(\mathcal{M}^{\frac{1}{2}} \widetilde{\mathbb{E}}^{(q,\iota_0)}_N[f](t)+\mathcal{M}^2\nu^{2\kappa}\right).\nonumber
\end{eqnarray}
The sixth term and seventh term can be estimated in the similar and simpler way and have the same upper bound as the eighth term.

As in the estimation of \eqref{enhan-00}, the last term in \eqref{G-alpha-beta} is bounded by
\begin{align*}
&\nu\sum_{\alpha_1+\alpha_2=\alpha\atop\beta_1+\beta_2\leq\beta,\omega_1+\omega_2\leq \omega }\left[
\left(\left|\nu^{\kappa|\beta_1|}e^{\frac{\phi}{2}}w_{\iota_0-N}
\partial^{\alpha_1}_x\partial^{\beta_1}_vY^{\omega_1} f\right|_{L^2}\left|\nu^{\kappa|\beta_2|}e^{q\langle v\rangle}
\partial^{\alpha_2}_x\partial^{\beta_2}_vY^{\omega_2} f\right|_{D}\right.\right.\\
&\left.\left.+\left|\nu^{\kappa|\beta_1|}e^{\frac{\phi}{2}}w_{\iota_0-N}
\partial^{\alpha_1}_x\partial^{\beta_1}_vY^{\omega_1} f\right|_{D}\left|\nu^{\kappa|\beta_2|}e^{q\langle v\rangle}
\partial^{\alpha_2}_x\partial^{\beta_2}_vY^{\omega_2} f\right|_{L^2},\left|\nu^{\kappa|\beta|}e^{\frac{\phi}{2}}w_{\iota_0-N}
\partial^\alpha_x\partial^{\beta}_vY^\omega f\right|_D\right)\right]\\
\lesssim&\nu\sum_{\alpha_1+\alpha_2=\alpha}\sum_{\beta_1+\beta_2\leq\beta,\omega_1+\omega_2\leq \omega\atop |\alpha_2|+|\beta_2|+|\omega_2|\leq N_{max}-4}
\left(\left\|\nu^{\kappa|\beta_1|}e^{\frac{\phi}{2}}w_{\iota_0-N}
\partial^{\alpha_1}_x\partial^{\beta_1}_vY^{\omega_1} f\right\|\left\||\nu^{\kappa|\beta_2|}e^{\frac{\phi}{2}}
\partial^{\alpha_2}_x\partial^{\beta_2}_vY^{\omega_2} f|_{D}\right\|_{L^{\infty}_x}\right.\\
&\left.+\left\|\nu^{\kappa|\beta_1|}e^{\frac{\phi}{2}}w_{\iota_0-N}
\partial^{\alpha_1}_x\partial^{\beta_1}_vY^{\omega_1} f\right\|_{D}\left\||\nu^{\kappa|\beta_2|}e^{\frac{\phi}{2}}e^{q\langle v\rangle}
\partial^{\alpha_2}_x\partial^{\beta_2}_vY^{\omega_2} f\right\|_{L^{\infty}_xL^2_v},\left\|\nu^{\kappa|\beta|}e^{\frac{\phi}{2}}w_{\iota_0-N}
\partial^\alpha_x\partial^{\beta}_vY^\omega f\right\|_D\right)\\
&+\nu\sum_{\alpha_1+\alpha_2=\alpha}\sum_{\beta_1+\beta_2\leq\beta,\omega_1+\omega_2\leq \omega\atop |\alpha_2|+|\beta_2|+|\omega_2|\geq N_{max}-3}
\left(\left\|\nu^{\kappa|\beta_1|}e^{\frac{\phi}{2}}w_{\iota_0-N}
\partial^{\alpha_1}_x\partial^{\beta_1}_vY^{\omega_1} f\right\|_{L^{\infty}_xL^2_v}\left\||\nu^{\kappa|\beta_2|}e^{\frac{\phi}{2}}
\partial^{\alpha_2}_x\partial^{\beta_2}_vY^{\omega_2} f\right\|_{D}\right.\\
&\left.+\left\||\nu^{\kappa|\beta_1|}e^{\frac{\phi}{2}}w_{\iota_0-N}
\partial^{\alpha_1}_x\partial^{\beta_1}_vY^{\omega_1} f|_{D}\right\|_{L^{\infty}_x}\left\||\nu^{\kappa|\beta_2|}e^{\frac{\phi}{2}}e^{q\langle v\rangle}
\partial^{\alpha_2}_x\partial^{\beta_2}_vY^{\omega_2} f\right\|,\left\|\nu^{\kappa|\beta|}e^{\frac{\phi}{2}}w_{\iota_0-N}
\partial^\alpha_x\partial^{\beta}_vY^\omega f\right\|_D\right)\\
\lesssim& \nu^{\kappa} \left[\left(\widetilde{\mathbb{E}}^{(q,\iota_0)}_N[f](t)
\widetilde{\mathbb{D}}^{(q,\iota_0)}_{N_{max}-2}[f](t)\right)^{1/2}+
\left(\widetilde{\mathbb{D}}^{(q,\iota_0)}_N[f](t)
\widetilde{\mathbb{E}}^{(q,\iota_0)}_{N_{max}-2}[f](t)\right)^{1/2}\right]
\left(\widetilde{\mathbb{D}}^{(q,\iota_0)}_N[f](t)\right)^{1/2}
\\
\lesssim& \nu^{\kappa} \left(\widetilde{\mathbb{E}}_{N}^{(q,\iota_0)}[f](t)
\widetilde{\mathbb{D}}_{N_{\max}-2}^{(q,\iota_0)}[f](t)
\widetilde{\mathbb{D}}_{N}^{(q,\iota_0)}[f](t)\right)^{1/2}. 
\end{align*}

Therefore, inserting the above estimates  into \eqref{G-alpha-beta}, we arrive at
\begin{eqnarray}\label{G-1}
	&&\frac{1}{2}\frac{\mathrm{d}}{\mathrm{d}t}\left\|\nu^{\kappa|\beta|}e^{\frac{\phi}2}w_{\iota_0-N}\partial^\alpha_x\partial^\beta_vY^\omega f\right\|^2+\nu\left\|\nu^{\kappa|\beta|}e^{\frac{\phi}2}w_{\iota_0-N}\partial^\alpha_x\partial^\beta_vY^\omega f\right\|^2_{D}\nonumber\\ 
	&\lesssim&\nu^{\kappa}\left[A^{-1}_0\eta\widetilde{\mathbb{D}}^{(q,\iota_0)}_N[f](t)
   +\left(A^{-1}_0C_{\eta}+A^{-\frac{s}{2}}_0\right)\widetilde{\mathbb{D}}^{(q,\iota_0)}_{N-1}[f](t)
   \right]\nonumber\\
   &&+\mathcal{M}^{\frac12}\langle t\rangle^{-2}\widetilde{\mathbb{E}}^{(q,\iota_0)}_N[f](t)+C_{\eta} \nu\sum_{|\bar{\omega}|\leq N}\left\|{\bf P}^{\bot} \left(Y^{\bar{\omega}}f\right)\right\|^2_D+C(\eta) \mathcal{M}^2\nu^{1+2\kappa}\langle t\rangle^{-4}\nonumber\\
	&&+A_0^{-\frac{1}{2}}\sum_{|\alpha|+|\omega|\leq N}\|\partial^\alpha_xY^\omega \rho_{\neq0}\| \min\left\{\left(\widetilde{\mathbb{E}}^{(q,\iota_0)}_N[f](t)\right)^{1/2},
\left(\widetilde{\mathbb{D}}^{(q,\iota_0)}_N[f](t)\right)^{1/2}\right\}\nonumber\\
            &&+\nu^{\kappa} \left(\widetilde{\mathbb{E}}_{N}^{(q,\iota_0)}[f](t)
\widetilde{\mathbb{D}}_{N_{\max}-2}^{(q,\iota_0)}[f](t)
\widetilde{\mathbb{D}}_{N}^{(q,\iota_0)}[f](t)\right)^{1/2}.
\end{eqnarray}

\noindent\underline{{\it Step 2. Estimation of $\sum_{|\alpha'|\leq1,|\beta'|\leq2\atop
|\alpha'|+|\beta'|\geq1}\left\|\nu^{\kappa(|\beta|+|\beta'|)}w_{\iota_0-N-|\alpha'|-|\beta'|}\partial^{\alpha+\alpha'}_x\partial^{\beta+\beta'}_vY^\omega f\right\|^2$:}} For the case with extra spatial derivatives or velocity derivatives, we can also obtain inequalities as in \eqref{G-alpha-beta}.  The estimation of terms in these inequalities are quite similar as that in Step 1 except for the transport terms, which correspond
 to  the fifth term in the right side of \eqref{G-alpha-beta}. Therefore, we only focus on these transport terms, whose estimation is quite different from that in \cite{CLN-JAMS-2023} when $|\beta'|=2$,  and omit the details for other terms.  According to the definition of the norms $\widetilde{\mathbb{D}}^{(q,\iota_0)}_N[f](t)$ for $N\leq N_{max}$, we can obtain
\begin{eqnarray*}	&&\sum_{|\alpha'|=|\beta'|=1}\left(\partial^{\alpha+\alpha'+\beta'}_x
\partial^{\beta-\beta'}_vY^\omega f, e^{\phi}\nu^{2\kappa|\beta|}
w^2_{\iota_0-N-1}\partial^{\alpha+\alpha'}_x\partial^{\beta}_vY^\omega f\right)\nonumber\\
	&=&\nu^\kappa\sum_{|\alpha'|=|\beta'|=1}\left(\nu^{\kappa(|\beta|-1)}w_{\iota_0-N-1}
\partial^{\alpha'}_x\partial^{\alpha+\beta'}_x
\partial^{\beta-\beta'}_vY^\omega f, e^{\phi}\nu^{\kappa|\beta|}
w_{\iota_0-N-1}\partial^{\beta'}_v\partial^{\alpha+\alpha'}_x\partial^{\beta-\beta'}_vY^\omega f\right)\nonumber\\
	&\lesssim&\nu^{\kappa}\sum_{|\alpha'|=|\beta'|=1}\left\|\nu^{\kappa(|\beta|-1)}w_{\iota_0-N-1}\partial^{\alpha'}_x\partial^{\alpha+\beta'}_x
\partial^{\beta-\beta'}_vY^\omega f\right\| \left\|\nu^{\kappa|\beta|}w_{\iota_0-N-1}\partial^{\beta'}_v\partial^{\alpha+\alpha'}_x
\partial^{\beta-\beta'}_v Y^\omega f\right\|  \nonumber\\
		&\lesssim& \nu^{\kappa} \sum_{|\alpha''|=|\alpha|+1\atop
|\beta''|=|\beta|-1} \left(\widetilde{\mathbb{D}}^{(q)}_{\alpha'',\beta'',\omega}[f](t) \right)^{\frac{1}{2}} \left(A_0^{-s}\widetilde{\mathbb{D}}^{(q)}_{\alpha'',\beta'',\omega}[f](t) \right)^{\frac{1}{2}}\nonumber\\
&\lesssim&   A_0^{-\frac{s}{2}} \nu^{\kappa} \sum_{|\alpha''|=|\alpha|+1\atop
|\beta''|=|\beta|-1} \widetilde{\mathbb{D}}^{(q)}_{\alpha'',\beta'',\omega}[f](t) ,
\end{eqnarray*}

\begin{eqnarray*}	&&\sum_{|\beta_1|=|\beta'|=1}\left(\partial^{\alpha+\beta'}_x
\partial^{\beta+\beta_1-\beta'}_vY^\omega f, e^{\phi}\nu^{2\kappa(|\beta|+1)}
w^2_{\iota_0-N-1}\partial^{\alpha}_x\partial^{\beta+\beta_1}_vY^\omega f\right)\nonumber\\
	&=&\nu^\kappa\sum_{|\beta_1|=|\beta'|=1}\left(\nu^{\kappa|\beta|}w_{\iota_0-N-1}
\partial^{\beta'}_x\partial^{\alpha}_x
\partial^{\beta+\beta_1-\beta'}_vY^\omega f, e^{\phi}\nu^{\kappa(|\beta|+1)}
w_{\iota_0-N-1}\partial^{\beta_1}_v\partial^{\alpha}_x\partial^{\beta}_vY^\omega f\right)\nonumber\\
	&\lesssim&\nu^{\kappa}\sum_{|\beta_1|=|\beta'|=1}\left\|\nu^{\kappa|\beta|}w_{\iota_0-N-1}
\partial^{\beta'}_x\partial^{\alpha}_x
\partial^{\beta+\beta_1-\beta'}_vY^\omega f\right\| \left\|\nu^{\kappa(|\beta|+1)}
w_{\iota_0-N-1}\partial^{\beta_1}_v\partial^{\alpha}_x\partial^{\beta}_vY^\omega f\right\|  \nonumber\\
		&\lesssim& \nu^{\kappa} \sum_{|\alpha''|=|\alpha|+1\atop
|\beta''|=|\beta|-1} \left(A_0^{-s}\widetilde{\mathbb{D}}^{(q)}_{\alpha'',\beta'',\omega}[f](t) \right)^{\frac{1}{2}} \left(A_0^{-s}\widetilde{\mathbb{D}}_{N}^{(q,\iota_0)}[f](t) \right)^{\frac{1}{2}}\lesssim   A_0^{-s} \nu^{\kappa} \widetilde{\mathbb{D}}_{N}^{(q,\iota_0)}[f](t) ,
\end{eqnarray*}
 and 
\begin{eqnarray*}	&&\sum_{|\beta'|=1,|\tilde{\beta}|=2}\left(w_{\iota_0-N-2}\partial^{\alpha+\beta'}_x
\partial^{\beta+\tilde{\beta}-\beta'}_vY^\omega f, e^{\phi}\nu^{2\kappa(|\beta|+2)}
w^2_{\iota_0-N-2}\partial^\alpha_x\partial^{\beta+\tilde{\beta}}_vY^\omega f\right)\nonumber\\	&=&\nu^\kappa\sum_{|\beta'|=1,|\tilde{\beta}|=2}\left(\nu^{\kappa(|\beta|+1)}w_{\iota_0-N-2}\partial^{\alpha+\beta'}_x
\partial^{\beta+\tilde{\beta}-\beta'}_vY^\omega f, e^{\phi}\nu^{\kappa(|\beta|+2)}
w_{\iota_0-N-2}\partial^\alpha_x\partial^{\beta+\tilde{\beta}}_vY^\omega f\right)\nonumber\\
	&\lesssim&\nu^{\kappa}\sum_{|\beta'|=1,|\tilde{\beta}|=2}\left\|\nu^{\kappa(|\beta|+1)}w_{\iota_0-N-2}
\partial^{\alpha+\beta'}_x
\partial^{\beta+\tilde{\beta}-\beta'}_vY^\omega f\right\| \left\|\nu^{\kappa(|\beta|+2)}w_{\iota_0-N-2}\partial^{\beta'}_{v}\partial^\alpha_x\partial^{\beta+\tilde{\beta}-\beta'}_v Y^\omega f\right\|  \nonumber\\
		&\lesssim& \nu^{\kappa} A_0^{\frac{1-s}{2}}\sum_{|\alpha''|=|\alpha|+1\atop
|\beta''|=|\beta|-1} \left(\widetilde{\mathbb{D}}^{(q)}_{\alpha'',\beta'',\omega}[f](t) \right)^{\frac{1}{2}} A_0^{\frac{1}{2}}\sum_{|\beta''|=|\beta|-1} \left(\widetilde{\mathbb{D}}^{(q)}_{\alpha,\beta'',\omega}[f](t) \right)^{\frac{s}{2}}\left(\widetilde{\mathbb{D}}^{(q)}_{\alpha,\beta,\omega}[f](t) \right)^{\frac{1-s}{2}}\nonumber\\
&\lesssim& \nu^{\kappa} A_0^{\frac{1}{2}}\Big[\sum_{|\alpha''|=|\alpha|+1\atop
|\beta''|=|\beta|-1} \left(\widetilde{\mathbb{D}}^{(q)}_{\alpha'',\beta'',\omega}[f](t) \right)^{\frac{1}{2}} \sum_{|\beta''|=|\beta|-1} \left(\widetilde{\mathbb{D}}^{(q)}_{\alpha,\beta'',\omega}[f](t) \right)^{\frac{s}{2}}\Big]\left(A_0\widetilde{\mathbb{D}}^{(q)}_{\alpha,\beta,\omega}[f](t) \right)^{\frac{1-s}{2}}\nonumber\\
&\lesssim& \eta A_0\nu^{\kappa}\widetilde{\mathbb{D}}^{(q)}_{\alpha,\beta,\omega}[f](t) +A_0^{\frac{1}{1+s}}\nu^{\kappa}\bigg( \sum_{|\alpha''|=|\alpha|+1\atop
|\beta''|=|\beta|-1} \widetilde{\mathbb{D}}^{(q)}_{\alpha'',\beta'',\omega}[f](t) + \widetilde{\mathbb{D}}^{(q,\iota_0)}_{N-1}[f](t)\bigg)
\end{eqnarray*}
for any small constant $\eta>0$.
Therefore, applying $\partial_{x_i}\partial^\alpha_x\partial^\beta_vY^\omega, \partial_{v_i}\partial^\alpha_x\partial^\beta_vY^\omega$, and $ \partial^\alpha_x\partial^{\beta+\beta'}_vY^\omega$ with $|\beta'|=2$ to \eqref{f}, and multiplying the resulting equations by 
\begin{align*}
&\nu^{2\kappa|\beta|}e^{\phi}w_{\iota_0-N-1}^2\partial_{x_i}\partial^\alpha_x\partial^\beta_vY^\omega f, \quad \nu^{2\kappa(|\beta|+1)}e^{\phi}w_{\iota_0-N-1}^2\partial_{v_i}
\partial^\alpha_x\partial^\beta_vY^\omega f, \quad \mbox{and}\\ &\nu^{2\kappa(|\beta|+2)}e^{\phi}w_{\iota_0-N-2}^2\partial^\alpha_x\partial^{\beta+\beta'}_vY^\omega f,
\end{align*}
respectively, we can obtain
\begin{eqnarray}\label{G-2}
	&&\frac{1}{2}\frac{\mathrm{d}}{\mathrm{d}t}\left\|\nu^{\kappa|\beta|}e^{\frac{\phi}2}w_{\iota_0-N-1}\partial_{x_i}\partial^\alpha_x\partial^\beta_vY^\omega f\right\|^2+\nu\left\|\nu^{\kappa|\beta|}e^{\frac{\phi}2}w_{\iota_0-N-1}\partial_{x_i}\partial^\alpha_x\partial^\beta_vY^\omega f\right\|^2_{D}\nonumber\\
&\lesssim&\nu^{\kappa}A^{-1}_0\left(\eta\widetilde{\mathbb{D}}^{(q,\iota_0)}_N[f](t)
   +C_{\eta}
   \widetilde{\mathbb{D}}^{(q,\iota_0)}_{N-1}[f](t)\right)
   \nonumber\\
   &&+ A_0^{-\frac{s}{2}} \nu^{\kappa} \sum_{|\alpha''|=|\alpha|+1\atop
|\beta''|=|\beta|-1} \widetilde{\mathbb{D}}^{(q)}_{\alpha'',\beta'',\omega}[f](t)
   +\mathcal{M}^{\frac12}\langle t\rangle^{-2}\widetilde{\mathbb{E}}^{(q,\iota_0)}_N[f](t) \nonumber\\
	&&+A_0^{-\frac{1}{2}}\sum_{|\alpha|+|\omega|\leq N}\|\partial^\alpha_xY^\omega \rho_{\neq0}\| \min\left\{\left(\widetilde{\mathbb{E}}^{(q,\iota_0)}_N[f](t)\right)^{1/2},
\left(\widetilde{\mathbb{D}}^{(q,\iota_0)}_N[f](t)\right)^{1/2}\right\}\nonumber\\
            &&+\nu^{\kappa} \left(\widetilde{\mathbb{E}}_{N}^{(q,\iota_0)}[f](t)
\widetilde{\mathbb{D}}_{N_{\max}-2}^{(q,\iota_0)}[f](t)
\widetilde{\mathbb{D}}_{N}^{(q,\iota_0)}[f](t)\right)^{1/2},
\end{eqnarray}
\begin{eqnarray}\label{G-3}
	&&\frac{1}{2}\frac{\mathrm{d}}{\mathrm{d}t}\left\|\nu^{\kappa(|\beta|+1)}e^{\frac{\phi}2}w_{\iota_0-N-1}\partial_{v_i}\partial^\alpha_x\partial^\beta_vY^\omega f\right\|^2\nonumber\\
   &&+\nu\left\|\nu^{\kappa(|\beta|+1)}e^{\frac{\phi}2}w_{\iota_0-N-1}\partial_{v_i}\partial^\alpha_x\partial^\beta_vY^\omega f(\tau)\right\|^2_{D}\nonumber\\
&\lesssim&\nu^{\kappa}\left[\left(\eta+A_0^{-s}\right)\widetilde{\mathbb{D}}^{(q,\iota_0)}_N[f](t)
   +C_{\eta}\widetilde{\mathbb{D}}^{(q,\iota_0)}_{N-1}[f](t)
   \right]\nonumber\\
   &&+\mathcal{M}^{\frac12}\langle t\rangle^{-2}\widetilde{\mathbb{E}}^{(q,\iota_0)}_N[f](t)+C_{\eta} \nu\sum_{|\bar{\omega}|\leq N}\left\|{\bf P}^{\bot} \left(Y^{\bar{\omega}}f\right)\right\|^2_D+C(\eta) \mathcal{M}^2\nu^{1+2\kappa}\langle t\rangle^{-4}\nonumber\\
	&&+A_0^{-\frac{1}{2}}\sum_{|\alpha|+|\omega|\leq N}\|\partial^\alpha_xY^\omega \rho_{\neq0}\| \min\left\{\left(\widetilde{\mathbb{E}}^{(q,\iota_0)}_N[f](t)\right)^{1/2},
\left(\widetilde{\mathbb{D}}^{(q,\iota_0)}_N[f](t)\right)^{1/2}\right\}\nonumber\\
            &&+\nu^{\kappa} \left(\widetilde{\mathbb{E}}_{N}^{(q,\iota_0)}[f](t)
\widetilde{\mathbb{D}}_{N_{\max}-2}^{(q,\iota_0)}[f](t)
\widetilde{\mathbb{D}}_{N}^{(q,\iota_0)}[f](t)\right)^{1/2},
\end{eqnarray}
and
\begin{eqnarray}\label{G-4-max}
	&&\sum_{|\beta'|=2}\frac{1}{2}\frac{\mathrm{d}}{\mathrm{d}t}
\left\|\nu^{\kappa(|\beta|+2)}e^{\frac{\phi}2}w_{\iota_0-N-2}\partial^\alpha_x
\partial^{\beta+\beta'}_vY^\omega f(t)\right\|^2\nonumber\\
	&&+\nu\sum_{|\beta'|=2}\left\|\nu^{\kappa(|\beta|+2)}e^{\frac{\phi}2}w_{\iota_0-N-2}
\partial^\alpha_x
\partial^{\beta+\beta'}_vY^\omega f\right\|^2_{D}\nonumber\\
	&\lesssim&\nu^{\kappa}\left[A_0\eta\widetilde{\mathbb{D}}^{(q,\iota_0)}_N[f](t)
   +A_0C_{\eta}
   \widetilde{\mathbb{D}}^{(q,\iota_0)}_{N-1}[f](t)\right]
   \nonumber\\
   &&+A^{\frac{1}{2}}_0\nu^{\kappa} \sum_{|\alpha'|=|\alpha|+1,|\beta'|=|\beta|-1
   }\widetilde{\mathbb{D}}^{(q)}_{\alpha',\beta',\omega}[f](t)+\mathcal{M}^{\frac12}\langle t\rangle^{-2}\widetilde{\mathbb{E}}^{(q,\iota_0)}_N[f](t) \nonumber\\
   &&+C_{\eta} \nu\sum_{|\bar{\omega}|\leq N}\left\|{\bf P}^{\bot} \left(Y^{\bar{\omega}}f\right)\right\|^2_D+C(\eta) \mathcal{M}^2\nu^{1+2\kappa}\langle t\rangle^{-4}\nonumber\\
	&&+A_0^{-\frac{1}{2}}\sum_{|\alpha|+|\omega|\leq N}\|\partial^\alpha_xY^\omega \rho_{\neq0}\| \min\left\{\left(\widetilde{\mathbb{E}}^{(q,\iota_0)}_N[f](t)\right)^{1/2},
\left(\widetilde{\mathbb{D}}^{(q,\iota_0)}_N[f](t)\right)^{1/2}\right\}\nonumber\\
            &&+\nu^{\kappa} \left(\widetilde{\mathbb{E}}_{N}^{(q,\iota_0)}[f](t)
\widetilde{\mathbb{D}}_{N_{\max}-2}^{(q,\iota_0)}[f](t)
\widetilde{\mathbb{D}}_{N}^{(q,\iota_0)}[f](t)\right)^{1/2}.
\end{eqnarray}

\noindent\underline{{\it Step 3. Estimation of $\widetilde{\mathbb{E}}_{N}^{(q,\iota_0)}[f](t)$:}} 
Finally, we estimate $\widetilde{\mathbb{E}}_{N}^{(q,\iota_0)}[f](t)$ based on the estimates in Step 1, Step 2, and Lemma \ref{lemma-nonweight-Y}. We make a proper linear combination of \eqref{Enhanced-eqn}, \eqref{basic-l2}, \eqref{G-omega-noweight-end}, \eqref{G-1}, \eqref{G-2}, \eqref{G-3}, and \eqref{G-4-max} for all indexes $(\alpha,\beta,\omega)$ with $|\alpha|+|\beta|+|\omega|=N\leq N_{max}$, apply an induction on $N, |\beta|$, and integrate the resultant with respect to time $t$ to obtain
\begin{eqnarray}\label{G-end0}
		&&\widetilde{\mathbb{E}}^{(q,\iota_0)}_N[f](t)		+\nu^\kappa\int_{0}^{t}\widetilde{\mathbb{D}}^{(q,\iota_0)}_N[f](\tau)\,\mathrm{d}\tau\nonumber\\
        &\lesssim&\widetilde{\mathbb{E}}^{(q,\iota_0)}_N[f](0)+\int_0^t \langle \tau\rangle^{-2} \widetilde{\mathbb{E}}^{(q,\iota_0)}_N[f](\tau)\,\mathrm{d}\tau\nonumber\\
&&+\nu^\kappa\int_0^t \left(\widetilde{\mathbb{E}}_{N}^{(q,\iota_0)}[f](\tau)
\widetilde{\mathbb{D}}_{N_{\max}-2}^{(q,\iota_0)}[f](\tau)
\widetilde{\mathbb{D}}_{N}^{(q,\iota_0)}[f](\tau)\right)^{1/2}\,\mathrm{d}\tau\nonumber\\
&&+\int^t_0\sum_{|\alpha|+|\omega|\leq N}\|\partial^\alpha_xY^\omega \rho_{\neq0}(\tau)\| \min\left\{\left(\widetilde{\mathbb{E}}^{(q,\iota_0)}_N[f](\tau)\right)^{1/2},
\left(\widetilde{\mathbb{D}}^{(q,\iota_0)}_N[f](\tau)\right)^{1/2}\right\}
\,\mathrm{d}\tau.
	\end{eqnarray}
Note that for any small $\eta>0$,
\begin{align*}
   &\nu^\kappa\int_0^t \left(\widetilde{\mathbb{E}}_{N}^{(q,\iota_0)}[f](\tau)
\widetilde{\mathbb{D}}_{N_{\max}-2}^{(q,\iota_0)}[f](\tau)
\widetilde{\mathbb{D}}_{N}^{(q,\iota_0)}[f](\tau)\right)^{1/2}\,\mathrm{d}\tau\\
&\qquad \lesssim \eta \nu^\kappa\int_{0}^{t}\widetilde{\mathbb{D}}^{(q,\iota_0)}_N[f](\tau)\,\mathrm{d}\tau
+C(\eta)\nu^\kappa\sup_{0\leq \tau\leq t}\widetilde{\mathbb{E}}_{N}^{(q,\iota_0)}[f](\tau)
\int_{0}^{t}\widetilde{\mathbb{D}}^{(q,\iota_0)}_{N_{\max}-2}[f](\tau)\,\mathrm{d}\tau\\
&\qquad \lesssim \eta \nu^\kappa\int_{0}^{t}\widetilde{\mathbb{D}}^{(q,\iota_0)}_N[f](\tau)\,\mathrm{d}\tau
+C(\eta)\mathcal{M}\nu^{2\kappa}\sup_{0\leq \tau\leq t}\widetilde{\mathbb{E}}_{N}^{(q,\iota_0)}[f](\tau)
\end{align*}
by the {\it a priori} assumption \eqref{priori-assu}.

Now we estimate the last term in \eqref{G-end0}. For $N\leq N_{max}-1$, choosing small positive constant $\eta$, we apply the poincaré inequality and use \eqref{priori-assu}, \eqref{Th-rho-1} to bound it by
\begin{align*}
&\int^t_0\sum_{|\alpha|+|\omega|\leq N}\|\nabla_x^{N_{max}-N}\partial^\alpha_xY^\omega \rho_{\neq0}(\tau)\| 
\left(\widetilde{\mathbb{E}}^{(q,\iota_0)}_N[f](\tau)\right)^{1/2}
\,\mathrm{d}\tau\\
&\qquad\lesssim  \sup_{0\leq \tau\leq t}\left(\widetilde{\mathbb{E}}_{N}^{(q,\iota_0)}[f](\tau)\right)^{1/2}
 \int^t_0\langle \tau\rangle^{N-N_{max}}\sum_{|\alpha|+|\bar{\omega}|\leq N_{max}}\|\partial^\alpha_xY^{\bar{\omega}} \rho_{\neq0}(\tau)\|
\,\mathrm{d}\tau\\
&\qquad\lesssim \eta \sup_{0\leq \tau\leq t}\widetilde{\mathbb{E}}_{N}^{(q,\iota_0)}[f](\tau) +
C(\eta)  \mathcal{M}^{2}\nu^{2\kappa} \left(\int_{0}^{t}\langle \tau\rangle^{N-N_{max}}\,\mathrm{d}\tau\right)^2\\
&\qquad\lesssim \eta \sup_{0\leq \tau\leq t}\widetilde{\mathbb{E}}_{N}^{(q,\iota_0)}[f](\tau) +
C(\eta)  \mathcal{M}^{2}\nu^{2\kappa} \left(\chi_{\{N\leq N_{max}-2\}}+ [\ln \langle t\rangle ]^2 1\chi_{\{N= N_{max}-1\}}\right).
\end{align*}
For $N= N_{\max}$, we can once again apply the Cauchy inequalities together with \eqref{priori-assu} and \eqref{Th-rho-1} to bound it by
    \begin{eqnarray*}
     &&\min\left\{ \eta \sup_{0\leq \tau\leq t}\widetilde{\mathbb{E}}_{N}^{(q,\iota_0)}[f](\tau)
     +C(\eta)\mathcal{M}^2\nu^{2\kappa}\langle t\rangle^{2}, \int_0^t \sum_{|\alpha|+|\omega|= N_{max}}\left\|\partial^\alpha_xY^{\bar{\omega}} \rho_{\neq0}(\tau)\right\|\left(\widetilde{\mathbb{D}}^{(q,\iota_0)}_N[f](\tau)
     \right)^{1/2}\,\mathrm{d}\tau\right\}\nonumber\\
     &\lesssim&\min\left\{\eta \sup_{0\leq \tau\leq t}\widetilde{\mathbb{E}}_{N}^{(q,\iota_0)}[f](\tau)
     +C(\eta)\mathcal{M}^2\nu^{2\kappa}\langle t\rangle^{2},\right.\\
       &&\qquad\qquad\left.\eta \nu^\kappa\int_{0}^{t}\widetilde{\mathbb{D}}^{(q,\iota_0)}_N[f](\tau)\,\mathrm{d}\tau
+C(\eta)\nu^{-\kappa}\int_0^t\sum_{|\alpha|+|\omega|= N_{max}}\left\|\partial^\alpha_xY^{\bar{\omega}} \rho_{\neq0}(\tau)\right\|^2\, \mathrm{d}\tau \right\}\nonumber\\
        &\lesssim&\min\left\{\eta \sup_{0\leq \tau\leq t}\widetilde{\mathbb{E}}_{N}^{(q,\iota_0)}[f](\tau)
     +C(\eta)\mathcal{M}^2\nu^{2\kappa}\langle t\rangle^{2}, \eta \nu^\kappa\int_{0}^{t}\widetilde{\mathbb{D}}^{(q,\iota_0)}_N[f](\tau)\,\mathrm{d}\tau
+C(\eta)\mathcal{M}^2\right\}.
    \end{eqnarray*}
We insert the above estimates in \eqref{G-end0}, apply the Grönwall inequality and use the smallness of $\eta, \nu$ to derive \eqref{priori-ener}. This completes the proof of Theorem \ref{Th-basic-estimate}.
\end{proof}

\subsection{Global existence}
Theorem \ref{Th-basic-estimate} together with Lemma \ref{elec-esti} yields the a priori bound
\begin{align}
\sup_{ 0\leq t\leq T_0}&\Bigg\{\max\{\nu^{-\kappa},\langle t\rangle\}^{\min\{0,-N+N_{max}-2\}}\left\{\widetilde{\mathbb{E}}^{(q,\iota_0)}_N[f](t)
+\nu^\kappa\int_{0}^{t}\widetilde{\mathbb{D}}^{(q,\iota_0)}_N[f](\tau)\mathrm{d}\tau\right\}\nonumber\\
	&+\langle t\rangle^{4}\Big\{\|\phi(t)\|_{W^{5,\infty}_x}^2+\sum_{|\alpha|+|\omega|\leq 4}\|\partial^\alpha_xY^\omega \nabla_x\phi(t)\|^2_{L^\infty_x}\Big\}\Bigg\}\leq \mathcal{M}^2\nu^{2\kappa},\label{apri-es}
\end{align}
which closes the energy estimates.

On the other hand, the local existence and uniqueness of solutions to the Cauchy problem \eqref{f} and \eqref{f-initial} can be established, following a suitable adaptation of the arguments in \cite{DLYZ-KRM2013}. Combining this with the a priori estimate \eqref{apri-es}, the standard continuation argument extends the local solution globally in time. We therefore arrive at the following result:
\begin{theorem}\label{glob-exis}
  Under the assumptions of Theorem \ref{Main-Th.}, the Cauchy problem \eqref{f} and \eqref{f-initial} admits a unique global smooth solution $f(t,x,v)$. Moreover, the estimates in \eqref{Th-rho-1}, Lemma \ref{elec-esti}, and Theorem \ref{Th-basic-estimate} remain valid uniformly for all $t\in [0,\infty)$.
\end{theorem}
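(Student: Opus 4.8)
The plan is to deduce Theorem \ref{glob-exis} from the \emph{a priori} estimates already in hand by the standard continuation scheme, the only genuinely new input being a local-in-time existence result. First I would record the local theory: for data $f_0$ as in Theorem \ref{Main-Th.} there exist $\tau_0=\tau_0(\mathcal{M},\nu)>0$ and a unique smooth solution $f(t,x,v)$ of \eqref{f}--\eqref{f-initial} on $[0,\tau_0]$ that propagates the full weighted norm $\widetilde{\mathbb{E}}^{(q,\iota_0)}_{N_{max}}$ and obeys the \emph{a priori} assumption \eqref{priori-assu} on that interval. This is obtained by adapting the iteration scheme of \cite{DLYZ-KRM2013}: linearize \eqref{f} around the previous iterate, solve the resulting non-cutoff Boltzmann--type equation coupled to the Poisson equation, and close a short-time energy estimate in the weighted space built from $\partial^\alpha_x\partial^\beta_v Y^\omega$, the algebraic weight $\langle v\rangle^{\frac{-\gamma}{2s}(\iota_0-|\alpha|-|\beta|-|\omega|)}$ and the exponential weight $e^{q\langle v\rangle}$, using the coercivity of $\mathcal{L}$ and the collision-operator bounds of Section \ref{Appendix}. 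Since at $t=0$ the time-weighted terms reduce to $\widetilde{\mathbb{E}}^{(q,\iota_0)}_{N_{max}}[f](0)\lesssim\mathcal{M}^2\nu^{2\kappa}$ and the $\phi$-contributions to $X(0)$ are controlled through the Poisson equation by the same data norm (and the $Y$-derivatives of $\phi$ vanish at $t=0$), continuity in time gives \eqref{priori-assu} on a possibly shorter interval.

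Next comes the bootstrap. Set
\[
T^\ast:=\sup\bigl\{\,T\ge 0:\ \text{the solution exists on }[0,T]\text{ and }X(T)\le \mathcal{M}\nu^{2\kappa}\,\bigr\},
\]
with $X$ the functional in \eqref{priori-assu}; by the local theory $T^\ast>0$. On every $[0,T]$ with $T<T^\ast$ the assumption \eqref{priori-assu} holds, so Theorem \ref{dens-esti}, Lemma \ref{elec-esti} and Theorem \ref{Th-basic-estimate} apply and yield \eqref{apri-es}, i.e. the quantity defining $X(T)$ is bounded by $C_0\mathcal{M}^2\nu^{2\kappa}$ for an absolute constant $C_0$. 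Choosing $\mathcal{M}_0$ so small that $C_0\mathcal{M}_0\le\tfrac12$, and recalling $\mathcal{M}\le\mathcal{M}_0$, we get $X(T)\le\tfrac12\mathcal{M}\nu^{2\kappa}$ for all $T<T^\ast$, hence also $X(T^\ast)\le\tfrac12\mathcal{M}\nu^{2\kappa}$ by lower semicontinuity of $X$. If $T^\ast<\infty$, applying the local theory with initial time $T^\ast$ and the (small) datum $f(T^\ast)$ extends the solution to $[0,T^\ast+\tau_0']$ with $X\le\mathcal{M}\nu^{2\kappa}$ still holding there by continuity, contradicting maximality. Therefore $T^\ast=\infty$. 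The conservation laws \eqref{cons-laws} and the constraint $\int_{\T^3}\phi\,\md x=0$ are preserved along the flow, so the hypotheses used at each step remain valid.

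Finally, since $X(T)\le\mathcal{M}\nu^{2\kappa}$ now holds for every $T\in[0,\infty)$, the bounds \eqref{Th-rho-1}, Lemma \ref{elec-esti} and Theorem \ref{Th-basic-estimate} are valid uniformly on $[0,\infty)$, which is precisely the content of Theorem \ref{glob-exis}; uniqueness is inherited from the local theory applied on each finite interval, glued along overlaps. I expect the only delicate step to be the local existence one: one must run the approximation scheme in the \emph{full} mixed weighted norm $\widetilde{\mathbb{E}}^{(q,\iota_0)}_{N_{max}}$—which couples $\partial_x$, $\partial_v$, the commuting field $Y$, the non-cutoff collision terms, the top-order transport term $v\cdot\nabla_x f$ (not absorbed here), and the self-consistent field—and verify that each of these is controllable at the linearized level on a short time interval so that the fixed point lands in the space in which \eqref{priori-assu} can be propagated. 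Everything past that is a routine assembly of the estimates proved in Sections \ref{density-estimates} and \ref{Nonlinear energy estimates}.
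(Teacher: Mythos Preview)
Your proposal is correct and follows essentially the same route as the paper: local existence is obtained by adapting the iteration scheme of \cite{DLYZ-KRM2013} to the weighted norm $\widetilde{\mathbb{E}}^{(q,\iota_0)}_{N_{max}}$, and global existence follows from the standard continuation argument once the \emph{a priori} bound \eqref{apri-es} (combining Theorem \ref{Th-basic-estimate} with Lemma \ref{elec-esti}) improves the bootstrap assumption \eqref{priori-assu} from $\mathcal{M}\nu^{2\kappa}$ to $C_0\mathcal{M}^2\nu^{2\kappa}$ for $\mathcal{M}$ small. The paper states this argument in a single paragraph without spelling out the bootstrap mechanics you describe, but the content is identical.
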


\section{The stretched exponential decay}\label{The stretched exponential decay}
To complete the proof of Theorem \ref{Main-Th.}, it remains to establish the stretched exponential decay of the solution $f(t,x,v)$ to the Cauchy problem \eqref{f}–\eqref{f-initial}, as well as the corresponding decay of its density function $\rho(t,x)$. We derive these time-decay estimates in this section.

To this end, let $\bar{q}=\tfrac{q}{2}$, fix a small positive constant $\delta$, and let $T\in (0,\infty)$ be given. we introduce another {\it a priori} assumptions:
\begin{align}\label{priori-assu0}
    \sup_{0\leq t<T}e^{\delta(\nu^\kappa  t
    )^{\frac{s}{s-\gamma}}}\widetilde{\mathbb{E}}^{(\bar{q},\iota_0)}_{1,1,0,0}[f](t)
    +\nu^\kappa\int_0^{T}e^{\delta(\nu^\kappa  t
    )^{\frac{s}{s-\gamma}}}\widetilde{\mathbb{D}}^{(\bar{q},\iota_0)}_{1,1,0,0}[f](\tau)\,\mathrm{d}\tau\leq \mathcal{M}\nu^{2\kappa},
\end{align}
and
\begin{align}\label{priori-assu1}
    \sup_{0\leq t<T}e^{\delta(\nu  t
    )^{\frac{1}{1-\gamma-2s}}}\widetilde{\mathbb{E}}^{(\bar{q},\iota_0)}_{1,1,0,0}[f](t)+\nu^\kappa\int_0^{T}e^{\delta(\nu  t
    )^{\frac{1}{1-\gamma-2s}}}\widetilde{\mathbb{D}}^{(\bar{q},\iota_0)}_{1,1,0,0}[f](\tau)
    \,\mathrm{d}\tau\leq \mathcal{M}\nu^{2\kappa}.
\end{align}
\subsection{Nonlinear density estimates}
In this subsection, we will prove the  stretched exponential decay of the density $\rho(t,x,v)$ as follows.
\begin{theorem}\label{exp-dens}
Under conditions in Theorem \ref{Main-Th.},  we further assume that the above {\it a priori} assumptions \eqref{priori-assu0} and \eqref{priori-assu1} hold for the unique solution $f(t,x,v)$ to the Cauchy problem \eqref{f}-\eqref{f-initial} for $t\in [0, T_0)$. Then we have
\begin{itemize}
    \item For any $p\in[2,+\infty]$, $\rho_{\neq 0}$ satisfies
    \begin{align}\label{density-decay-1}
        \sum_{|\alpha|\leq 1}\left(\left\|e^{\frac{\delta}2(\nu^\kappa  t
    )^{\frac{s}{s-\gamma}}}\partial^\alpha_x\rho_{\neq 0}\right\|^2_{L^p_t([0,T_0); L^2_x)}+\left\|e^{\frac{\delta}2(\nu  t
    )^{\frac{1}{1-\gamma-2s}}}\partial^\alpha_x\rho_{\neq 0}\right\|^2_{L^p_t([0,T_0); L^2_x)}\right)\lesssim \mathcal{M}^2\nu^{2\kappa}.
    \end{align}
    \item $\rho_{\neq 0}$ admits a decomposition $\rho_{\neq 0}=\rho_{\neq 0}^{(1)}+\rho_{\neq 0}^{(2)}$ such that
     \begin{align}\label{density-decay-3}
        \sum_{|\alpha|\leq 1}\left(\left\|e^{\frac{\delta}2(\nu^\kappa  t
    )^{\frac{s}{s-\gamma}}}\partial^\alpha_x\rho^{(1)}_{\neq 0}\right\|^2_{L^1_t([0,T_0); L^2_x)}+\left\|e^{\frac{\delta}2(\nu  t
    )^{\frac{1}{1-\gamma-2s}}}\partial^\alpha_x\rho^{(1)}_{\neq 0}\right\|^2_{L^1_t([0,T_0); L^2_x)}\right)\lesssim \mathcal{M}^2\nu^{2\kappa}, \quad \mbox{and}
    \end{align}
             \begin{align}\label{density-decay-4}
        \sum_{|\alpha|\leq 1}\left(\left\|e^{\frac{\delta}2(\nu^\kappa  t
    )^{\frac{s}{s-\gamma}}}\partial^\alpha_x\rho^{(2)}_{\neq 0}\right\|^2_{L^2_t([0,T_0); L^2_x)}+\left\|e^{\frac{\delta}2(\nu  t
    )^{\frac{1}{1-\gamma-2s}}}\partial^\alpha_x\rho^{(2)}_{\neq 0}\right\|^2_{L^2_t([0,T_0); L^2_x)}\right)\lesssim \mathcal{M}^2\nu.
    \end{align}
Here  $\rho_{\neq 0}^{(1)}=\sum_{k\neq0}\widehat{\rho}_k^{(1)}e^{ik\cdot x}$ with
 \begin{eqnarray*}
	\widehat{\rho}_k^{(1)}(t)=\mathcal{N}_{k,\textbf{initial}}(t)
+\mathcal{N}_{k,\textbf{NL-electric}}(t)+\int_{0}^{t}G_k(t-\tau)
[\mathcal{N}_{k,\textbf{initial}}(\tau)+\mathcal{N}_{k,\textbf{NL-electric}}(\tau)]\,
\mathrm{d}\tau,
\end{eqnarray*}
and $\rho_{\neq 0}^{(2)}=\sum_{k\neq0}\widehat{\rho}_k^{(2)}e^{ik\cdot x}$, 
 \begin{eqnarray*}
	\widehat{\rho}_k^{(2)}(t)=\mathcal{N}_{k,\textbf{NL-collision}}(t)
+\int_{0}^{t}G_k(t-\tau)\mathcal{N}_{k,\textbf{NL-collision}}(\tau)\,\mathrm{d}\tau.
\end{eqnarray*}
Note that $\mathcal{N}_{k,\textbf{initial}}(t),\mathcal{N}_{k,\textbf{NL-electric}}(t)$, and $\mathcal{N}_{k,\textbf{NL-collision}}(t)$ have been defined in \eqref{def-N-k-int-NL}.
\end{itemize}
\end{theorem}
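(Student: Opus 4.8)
The plan is to follow the Volterra--kernel strategy of Theorem~\ref{dens-esti}, now carrying the stretched exponential weights through the representation \eqref{dens-exp}. I would first fix a small $\delta>0$, chosen below the decay constants $\delta_N$ in \eqref{Pro-linear-decay-3} and $\delta'_{N_0}$ in \eqref{kernel-decay} taken with $N_0=2N_{max}+4$, and exploit the linearity of the Volterra equation $\widehat{\rho}_k=\mathcal{N}_k+G_k*\mathcal{N}_k$ together with the source splitting $\mathcal{N}_k=\mathcal{N}_{k,\textbf{initial}}+\mathcal{N}_{k,\textbf{NL-electric}}+\mathcal{N}_{k,\textbf{NL-collision}}$ from \eqref{def-N-k-int-NL}: define $\widehat{\rho}_k^{(1)}$ from the first two sources and $\widehat{\rho}_k^{(2)}$ from the collision source exactly as in the statement, so that $\rho_{\neq 0}=\rho_{\neq 0}^{(1)}+\rho_{\neq 0}^{(2)}$ automatically. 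Hence it suffices to prove \eqref{density-decay-3} and \eqref{density-decay-4}; the $L^p_t$ bound \eqref{density-decay-1} then follows by combining them with the uniform-in-time pointwise decay of $\rho_{\neq 0}$ already available from Theorem~\ref{glob-exis}, since $L^1_t\cap L^\infty_t\subset L^p_t$ and $L^2_t\cap L^\infty_t\subset L^p_t$ for $p\ge 2$.

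Second, I would establish weighted estimates for the three source pieces. For $\mathcal{N}_{k,\textbf{initial}}$ I would apply the mixed linear decay \eqref{Pro-linear-decay-3} with $\eta=k$ and a large $N$, producing the factor $\langle kt+k\rangle^{-N}\min\{e^{-\delta_N(\nu^\kappa t)^{s/(s-\gamma)}},e^{-\delta_N(\nu t)^{1/(1-\gamma-2s)}}\}$ times $\{\mathbb{E}^{(0,5)}_{k,k,N+1}[f_0]+\mathbb{E}^{(\bar q,1)}_{k,k,0}[f_0]\}^{1/2}$, which after summation over $k$ is bounded by $\mathcal{M}\nu^\kappa$ by the initial smallness hypothesis of Theorem~\ref{Main-Th.}. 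For $\mathcal{N}_{k,\textbf{NL-electric}}$ I would rerun the ``Nonlinear interaction I'' estimate of Theorem~\ref{dens-esti} with the weight $e^{\frac{\delta}{2}(\nu^\kappa t)^{s/(s-\gamma)}}$ (and its $(\nu t)$-counterpart) inserted, absorbing it by means of the decay of $\nabla_x\phi$ and $f$ from Lemma~\ref{elec-esti} and Theorem~\ref{glob-exis} together with the weighted \emph{a priori} bounds \eqref{priori-assu0}--\eqref{priori-assu1} on the low-index norm $\widetilde{\mathbb{E}}^{(\bar q,\iota_0)}_{1,1,0,0}$. For $\mathcal{N}_{k,\textbf{NL-collision}}$ I would run the chain \eqref{dens-colli}--\eqref{colli-I3} with the stretched weight, placing the exponential decay on whichever of the two quadratic factors carries fewer derivatives --- using \eqref{priori-assu0}--\eqref{priori-assu1} on that low-index factor and the uniform bound $\nu^\kappa\int_0^t\widetilde{\mathbb{D}}^{(0,\iota_0)}_{N_{max}}[f]\lesssim\mathcal{M}^2\nu^{2\kappa}$ from Theorem~\ref{glob-exis} on the high-index one. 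This is the step that costs a factor $\nu^{2-\kappa}$ and yields only $L^2_t$-integrability of $\rho^{(2)}_{\neq 0}$, which is exactly why \eqref{density-decay-4} carries $\mathcal{M}^2\nu$ rather than $\mathcal{M}^2\nu^{2\kappa}$ and lives in $L^2_t$.

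Third, I would feed these source bounds into $\widehat{\rho}_k^{(j)}=(\text{source})_k+G_k*(\text{source})_k$. The genuinely new point, and the main obstacle, is passing the weight $e^{\frac{\delta}{2}(\nu^\kappa t)^{s/(s-\gamma)}}$ through the time-convolution with $G_k$. Here I would use that the exponent satisfies $s/(s-\gamma)\in(0,1)$ because $\gamma<0$, so $t\mapsto t^{s/(s-\gamma)}$ is subadditive,
\begin{equation*}
(\nu^\kappa t)^{\frac{s}{s-\gamma}}\le(\nu^\kappa(t-\tau))^{\frac{s}{s-\gamma}}+(\nu^\kappa\tau)^{\frac{s}{s-\gamma}},
\end{equation*}
whence by \eqref{kernel-decay}
$e^{\frac{\delta}{2}(\nu^\kappa t)^{s/(s-\gamma)}}|G_k(t-\tau)|\lesssim|k|^{-1}\langle k(t-\tau)\rangle^{-2N_{max}-2}e^{-(\delta'_{N_0}-\frac{\delta}{2})(\nu^\kappa(t-\tau))^{s/(s-\gamma)}}\,e^{\frac{\delta}{2}(\nu^\kappa\tau)^{s/(s-\gamma)}}$, and the choice $\delta<2\delta'_{N_0}$ keeps the kernel integrable in $\tau$ in both its polynomial and exponential factors, so Young's inequality in time replaces the convolution by $\|G_k\|_{L^1_t}$ times the weighted source norm. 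The same argument applies verbatim with the exponent $1/(1-\gamma-2s)\in(0,1)$, which holds under \eqref{case}. Summing the resulting Fourier-side estimates over $k$ (the $\langle k(t-\tau)\rangle$-powers supplying both the extra spatial derivative and the time-integrability) and invoking Parseval yields \eqref{density-decay-3} and \eqref{density-decay-4}, and then \eqref{density-decay-1}; the $\nu$-power bookkeeping that forces the collision contribution to close at $\mathcal{M}^2\nu$ and all other contributions at $\mathcal{M}^2\nu^{2\kappa}$ requires the same refined tracking of exponents already used in \eqref{dens-colli}--\eqref{colli-I3}.
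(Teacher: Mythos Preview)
Your proposal is correct and follows essentially the same route as the paper: reduce to weighted bounds on the three source pieces $\mathcal{N}_{k,\textbf{initial}}$, $\mathcal{N}_{k,\textbf{NL-electric}}$, $\mathcal{N}_{k,\textbf{NL-collision}}$, pass the stretched weight through the Volterra convolution via the subadditivity $(\nu^\kappa t)^{s/(s-\gamma)}\le(\nu^\kappa\tau)^{s/(s-\gamma)}+(\nu^\kappa(t-\tau))^{s/(s-\gamma)}$ together with the kernel decay \eqref{kernel-decay}, and track the $\nu$-powers so that the collision piece closes only at $\mathcal{M}^2\nu$ in $L^2_t$. One point worth making explicit is that the electric source estimate is genuinely self-referential in $\rho$ and closes as a bootstrap---the paper formalizes this by bounding the initial and electric contributions by $\mathcal{M}^2\nu^{2\kappa}+\mathcal{M}\zeta(t)$ with $\zeta(t)$ the weighted density functional being estimated, then absorbing the $\mathcal{M}\zeta(t)$ term by smallness (exactly as in \cite[Theorem~11.1]{CLN-JAMS-2023}); your reference to ``Nonlinear interaction~I'' implicitly invokes the same mechanism.
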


\begin{proof} The proof of \eqref{density-decay-1}, \eqref{density-decay-3}, and \eqref{density-decay-4} with different stretched exponential weights is similar, we focus on these estimates with the stretched exponential weight $e^{\frac{\delta}2(\nu^\kappa  t
    )^{\frac{s}{s-\gamma}}}$ for brevity.
Then, as the proof of Theorem $11.1$ in \cite{CLN-JAMS-2023}, our proof can be reduced to establishing the validity of the following two claims:
\begin{align}\label{claim-1}
 &\sup_{0\leq\tau\leq t}\sum_{k\neq 0}|k|^2e^{\delta(\nu^\kappa  \tau
)^{\frac{s}{s-\gamma}}}|\mathcal{N}_{k,\textbf{initial}}(\tau)|^2+\sup_{0\leq\tau\leq t}\sum_{k\neq 0}|k|^2e^{\delta(\nu^\kappa  \tau
)^{\frac{s}{s-\gamma}}}|\mathcal{N}_{k,\textbf{NL-electric}}(\tau)|^2\nonumber\\
&+\left\{\int_0^t\left[\sum_{k\neq 0}|k|^2e^{\delta(\nu^\kappa  \tau
)^{\frac{s}{s-\gamma}}}\left(|\mathcal{N}_{k,\textbf{initial}}(\tau)|^2+|\mathcal{N}_{k,\textbf{NL-electric}}(\tau)|^2\right)\right]^{\frac12}\mathrm{d}\tau\right\}^2\nonumber\\
\lesssim& \mathcal{M}^2\nu^{2\kappa}
+\mathcal{M} \zeta (t),
\end{align}
and
\begin{align}\label{claim-2}
 &\sup_{0\leq\tau\leq t}\sum_{k\neq 0}|k|^2e^{\delta(\nu^\kappa  \tau
)^{\frac{s}{s-\gamma}}}|\mathcal{N}_{k,\textbf{NL-collision}}(\tau)|^2\nonumber\\
&
+\int_0^t\sum_{k\neq 0}|k|^2e^{\delta(\nu^\kappa  \tau
)^{\frac{s}{s-\gamma}}}|\mathcal{N}_{k,\textbf{NL-collision}}(\tau)|^2\mathrm{d}\tau
\lesssim \mathcal{M}^2\nu,
\end{align}
where
\begin{align*}
    \zeta(t)=&\sup_{0\leq\tau\leq t}\sum_{k\neq 0}e^{\delta(\nu^\kappa  t
)^{\frac{s}{s-\gamma}}}|k|^2|\widehat{\rho}_k^{(1)}(t)|^2+\left(\int_0^t\bigg[\sum_{k\neq 0}e^{\delta(\nu^\kappa  \tau
)^{\frac{s}{s-\gamma}}}|k|^2|\widehat{\rho}_k^{(1)}(\tau)|^2\bigg]^{\frac12}\mathrm{d}\tau\right)^2\nonumber\\
&+\nu^{2\kappa-1}\sup_{0\leq\tau\leq t}\sum_{k\neq 0}e^{\delta(\nu^\kappa  t
)^{\frac{s}{s-\gamma}}}|k|^2|\widehat{\rho}_k^{(2)}(t)|^2+\nu^{2\kappa-1}\int_0^t\sum_{k\neq 0}e^{\delta(\nu^\kappa  \tau
)^{\frac{s}{s-\gamma}}}|k|^2|\widehat{\rho}_k^{(2)}(\tau)|^2\mathrm{d}\tau.
\end{align*}
Since the proof of \eqref{claim-1}, which corresponds to \cite[$(11.9)$ in Proposition $11.2$]{CLN-JAMS-2023}, is quite similar to the Vlasov-Poisson-Landau system case treated there, we only prove \eqref{claim-2} for simplicity.

For $\delta_0$ given in \eqref{Pro-linear-decay-2}, we choose $\delta>0$ sufficiently small such that $\delta \leq \frac{\delta_0}{2}$, which yields
\begin{align}\label{delt-delt}
\delta(\nu^\kappa  \tau
)^{\frac{s}{s-\gamma}}\leq \delta(\nu^\kappa\tau_1
)^{\frac{s}{s-\gamma}}+\frac{\delta_0}{2}  (\nu^\kappa  (\tau-\tau_1)
)^{\frac{s}{s-\gamma}},\qquad 0\leq \tau_1\leq \tau.
\end{align}
Then we apply \eqref{Pro-linear-decay-2} to have
\begin{align*}
&\sum_{k\neq 0}|k|^2e^{\delta(\nu^\kappa  \tau
)^{\frac{s}{s-\gamma}}}|\mathcal{N}_{k,\textbf{NL-collision}}(\tau)|^2\nonumber\\
\lesssim  &\sum_{k\neq 0}|k|^2e^{\delta(\nu^\kappa  \tau
)^{\frac{s}{s-\gamma}}}\left(\int_{0}^{\tau}\int_{{\mathbb{R}}^3}
S_k(\tau-\tau_1)\mathcal{F}_x\left[\nu\Gamma(f,f)(\tau_1)\right]\mu^{\frac{1}{2}}\,\mathrm{d}v\mathrm{d}\tau_1\right)^2\nonumber\\
\lesssim  &\sum_{k\neq 0}\nu^2e^{\delta(\nu^\kappa  \tau
)^{\frac{s}{s-\gamma}}}\left(\int_{0}^{\tau}e^{-\delta_0(\nu^\kappa  (\tau-\tau_1  )
)^{\frac{s}{s-\gamma}}}\sum_{|\alpha|=1,|\beta|\leq 1}\nu^{\kappa|\beta|}\left|\langle v\rangle^{\frac{-\gamma}{2s}}e^{\bar{q}\langle v\rangle}\mathcal{F}_x\left[\partial^\alpha_x\partial^\beta_v\Gamma(f,f)(\tau_1)
\right]\right|_{L^2}\,\mathrm{d}\tau_1\right)^2\nonumber\\
\lesssim  &\sum_{k\neq 0}\nu^2e^{\delta(\nu^\kappa  \tau
)^{\frac{s}{s-\gamma}}}\int_{0}^{\tau}e^{-\delta_0(\nu^\kappa  (\tau-\tau_1 )
)^{\frac{s}{s-\gamma}}}\,\mathrm{d}\tau_1\nonumber\\
&\times\int_{0}^{\tau}e^{-\delta_0(\nu^\kappa  (\tau-\tau_1 )
)^{\frac{s}{s-\gamma}}}\sum_{|\alpha|=1,|\beta|\leq 1}\nu^{2\kappa|\beta|}\left|\langle v\rangle^{\frac{-\gamma}{2s}}e^{\bar{q}\langle v\rangle}\mathcal{F}_x\left[\partial^\alpha_x\partial^\beta_v\Gamma(f,f)\right]
\right|^2_{L^2}\,
\mathrm{d}\tau_1\nonumber\\
\lesssim  &\sum_{k\neq 0}\nu^{2-\kappa}\int_{0}^{\tau}e^{\delta(\nu^\kappa  \tau_1
)^{\frac{s}{s-\gamma}}}e^{-\frac{\delta_0}{2}
(\nu^\kappa  (\tau-\tau_1  )   )^{\frac{s}{s-\gamma}}}
\sum_{|\alpha|=1,|\beta|\leq 1}\nu^{2\kappa|\beta|}\left|\langle v\rangle^{\frac{-\gamma}{2s}}e^{\bar{q}\langle v\rangle}\mathcal{F}_x\left[\partial^\alpha_x\partial^\beta_v\Gamma(f,f)(\tau_1)\right]
\right|^2_{L^2}\,\mathrm{d}\tau_1.
\end{align*}
Note that from Lemma \ref{Lemma GammaL},
\begin{align*}
&\sum_{k\neq 0}\sum_{|\alpha|=1,|\beta|\leq 1}\left|\langle v\rangle^{\frac{-\gamma}{2s}}e^{\bar{q}\langle v\rangle}\mathcal{F}_x\left[\partial^\alpha_x\partial^\beta_v\Gamma(f,f)(\tau_1)\right]
\right|^2_{L^2}\\
&\qquad \leq\sum_{|\alpha|=1,|\beta|\leq 1\atop |\beta_1|\leq |\beta|}\nu^{2\kappa|\beta|}\left\|\langle v\rangle^{\frac{-\gamma}{2s}}e^{\bar{q}\langle v\rangle}\partial^\alpha_x\partial^{\beta_1}_vf(\tau_1)\right\|^2
\left\||\langle v\rangle^{\frac{-\gamma}{2s}}e^{\bar{q}\langle v\rangle}\partial^{\beta-\beta_1}_vf(\tau_1)|_{H^{2s}}\right\|_{L^{\infty}_x}^2\\
&\qquad +\sum_{|\alpha|=1,|\beta|\leq 1\atop |\beta_1|\leq |\beta|}\nu^{2\kappa|\beta|}\left\||\langle v\rangle^{\frac{-\gamma}{2s}}e^{\bar{q}\langle v\rangle}\partial^{\beta_1}_vf(\tau_1)|_{L^2}\right\|_{L^{6}_x}^2
\left\||\langle v\rangle^{\frac{-\gamma}{2s}}e^{\bar{q}\langle v\rangle}\partial^\alpha_x\partial^{\beta-\beta_1}_vf(\tau_1)
|_{H^{2s}}\right\|_{L^{3}_x}^2\\
&\qquad \lesssim \nu^{\kappa-1-2\kappa s}\left(\widetilde{\mathbb{E}}^{(\bar{q},\iota_0)}_{1,1,0,0}[f](\tau_1) \widetilde{\mathbb{D}}^{(\bar{q},\iota_0)}_{N_{\max}-2}[f](\tau_1)
+\widetilde{\mathbb{E}}^{(\bar{q},\iota_0)}_{N_{\max}-2}[f](\tau_1)
\widetilde{\mathbb{D}}^{(\bar{q},\iota_0)}_{1,1,0,0}[f](\tau_1)\right),
\end{align*}
Here we have used the condition $\gamma + 2s < 0$ and the following interpolation estimates:
\begin{align*}
&\nu^{1-\kappa+2\kappa s}\left\|\langle v\rangle^{\frac{-\gamma}{2s}}e^{\bar{q}\langle v\rangle}\widetilde{f}(\tau)\right\|^2_{H^{2s}}\nonumber\\
\lesssim& \left(\nu^{1-\kappa}\left\|\langle v\rangle^{\frac{-\gamma}{2s}}e^{\bar{q}\langle v\rangle}\widetilde{f}(\tau)\right\|^2_{H^{s}}\right)^{1-s}\left(\nu^{1+\kappa}\left\|\langle v\rangle^{\frac{-\gamma}{2s}}e^{\bar{q}\langle v\rangle}\nabla_v\widetilde{f}(\tau)\right\|^2_{H^{s}}\right)^{s}\\
\lesssim& \left(\nu^{1-\kappa}\left\|\langle v\rangle^{\frac{-\gamma}{2s}-\gamma}e^{\bar{q}\langle v\rangle}\widetilde{f}(\tau)\right\|^2_{D}\right)^{1-s}\left(\nu^{1+\kappa}\left\|\langle v\rangle^{\frac{-\gamma}{2s}-\gamma}e^{\bar{q}\langle v\rangle}\nabla_v\widetilde{f}(\tau)\right\|^2_{D}\right)^{s}
\\
\lesssim& \widetilde{\mathbb{D}}^{(\bar{q},\iota_0)}_{N_{max}-2}[f(\tau)],
\end{align*}
where $\widetilde{f} = \nu^{\kappa|\beta-\beta_1|}\partial_x^{\alpha''} \partial_v^{\beta-\beta_1} f$ with $|\alpha''| \leq 2$.

Consequently, we can further obtain
\begin{align}
\label{claim-2-1}
&\sum_{k\neq 0}|k|^2e^{\delta(\nu^\kappa  \tau
)^{\frac{s}{s-\gamma}}}|\mathcal{N}_{k,\textbf{NL-collision}}(\tau)|^2\nonumber\\
\lesssim  &\nu^{1-2\kappa s}\int_{0}^{\tau}e^{-\frac{\delta_0}{2}(\nu^\kappa  (\tau-\tau_1
)^{\frac{s}{s-\gamma}}}\widetilde{\mathbb{E}}^{(\bar{q},\iota_0)}_{N_{\max}-2}[f](\tau_1)
e^{\delta(\nu^\kappa  \tau_1
)^{\frac{s}{s-\gamma}}}\widetilde{\mathbb{D}}^{(\bar{q},\iota_0)}_{1,1,0,0}[f](\tau_1)
\mathrm{d}\tau_1\nonumber\\
&+\nu^{1-2\kappa s}\int_{0}^{\tau}e^{-\frac{\delta_0}{2}(\nu^\kappa  (\tau-\tau_1
)^{\frac{s}{s-\gamma}}}\widetilde{\mathbb{D}}^{(\bar{q},\iota_0)}_{N_{\max}-2}[f](\tau_1)e^{\delta(\nu^\kappa  \tau_1
)^{\frac{s}{s-\gamma}}}\widetilde{\mathbb{E}}^{(\bar{q},\iota_0)}_{1,1,0,0}[f](\tau_1)
\mathrm{d}\tau_1\nonumber\\
\lesssim  &\nu^{1-2\kappa s}\sup_{\tau_1\in[0,\tau]}\widetilde{\mathbb{E}}^{(\bar{q},\iota_0)}_{N_{\max}-2}[f](\tau_1)\int_{0}^{\tau}e^{\delta(\nu^\kappa  \tau_1
)^{\frac{s}{s-\gamma}}}\widetilde{\mathbb{D}}^{(\bar{q},\iota_0)}_{1,1,0,0}[f](\tau_1)
\mathrm{d}\tau_1\nonumber\\
&+\nu^{1-2\kappa s}\sup_{\tau_1\in[0,\tau]}\left\{e^{\delta(\nu^\kappa  \tau_1
)^{\frac{s}{s-\gamma}}}\widetilde{\mathbb{E}}^{(\bar{q},\iota_0)}_{1,1,0,0}[f](\tau_1)\right\}
\int_{0}^{\tau}\widetilde{\mathbb{D}}^{(\bar{q},\iota_0)}_{N_{\max}-2}[f](\tau_1)\,
\mathrm{d}\tau_1\nonumber\\
\lesssim  &\mathcal{M}\nu\int_{0}^{\tau}e^{\delta(\nu^\kappa  \tau_1
)^{\frac{s}{s-\gamma}}}\widetilde{\mathbb{D}}^{(\bar{q},\iota_0)}_{1,1,0,0}[f](\tau_1)
\mathrm{d}\tau_1+\mathcal{M}\nu^{1-\kappa}\sup_{\tau_1\in[0,\tau]}\left\{e^{\delta(\nu^\kappa  \tau_1
)^{\frac{s}{s-\gamma}}}\widetilde{\mathbb{E}}^{(\bar{q},\iota_0)}_{1,1,0,0}[f](\tau_1)\right\}\nonumber\\
\lesssim&\mathcal{M}^2\nu^{1+\kappa}.
\end{align}
For the $L^1$ norm estimate, we use \eqref{delt-delt} and proceed similarly to obtain
\begin{align}\label{claim-2-2}
&\int_0^t\sum_{k\neq 0}|k|^2e^{\delta(\nu^\kappa  \tau
)^{\frac{s}{s-\gamma}}}|\mathcal{N}_{k,\textbf{NL-collision}}(\tau)|^2\,\mathrm{d}\tau\nonumber\\
\lesssim&\nu^{1-2\kappa}\int_0^t\int_{0}^{\tau}e^{-\frac{\delta_0}{2}(\nu^\kappa  (\tau-\tau_1
)^{\frac{s}{s-\gamma}}}\widetilde{\mathbb{E}}^{(\bar{q},\iota_0)}_{N_{\max}-2}[f](\tau_1)
e^{\delta(\nu^\kappa  \tau_1
)^{\frac{s}{s-\gamma}}}\widetilde{\mathbb{D}}^{(\bar{q},\iota_0)}_{1,1,0,0}[f](\tau_1)
\,\mathrm{d}\tau_1\mathrm{d}\tau\nonumber\\
&+\nu^{1-2\kappa}\int_0^t\int_{0}^{\tau}e^{-\frac{\delta_0}{2}(\nu^\kappa  (\tau-\tau_1
)^{\frac{s}{s-\gamma}}}\widetilde{\mathbb{D}}^{(\bar{q},\iota_0)}_{N_{\max}-2}[f](\tau_1)
e^{\delta(\nu^\kappa  \tau_1
)^{\frac{s}{s-\gamma}}}\widetilde{\mathbb{E}}^{(\bar{q},\iota_0)}_{1,1,0,0}[f](\tau_1)\,
\mathrm{d}\tau_1\mathrm{d}\tau\nonumber\\
\lesssim  &\nu^{1-3\kappa}\sup_{\tau_1\in[0,\tau]}\widetilde{\mathbb{E}}^{(\bar{q},\iota_0)}_{N_{\max}-2}[f](\tau_1)\int_{0}^{\tau}e^{\delta(\nu^\kappa  \tau_1
)^{\frac{s}{s-\gamma}}}\widetilde{\mathbb{D}}^{(\bar{q},\iota_0)}_{1,1,0,0}[f](\tau_1)\,
\mathrm{d}\tau_1\nonumber\\
&+\nu^{1-3\kappa}\sup_{\tau_1\in[0,\tau]}\left\{e^{\delta(\nu^\kappa  \tau_1
)^{\frac{s}{s-\gamma}}}\widetilde{\mathbb{E}}^{(\bar{q},\iota_0)}_{1,1,0,0}[f](\tau_1)\right\}
\int_{0}^{\tau}\widetilde{\mathbb{D}}^{(\bar{q},\iota_0)}_{N_{\max}-2}[f](\tau_1)\,
\mathrm{d}\tau_1\nonumber\\
\lesssim  &\mathcal{M}\nu^{1-\kappa}\int_{0}^{\tau}e^{\delta(\nu^\kappa  \tau_1
)^{\frac{s}{s-\gamma}}}\widetilde{\mathbb{D}}^{(\bar{q},\iota_0)}_{1,1,0,0}[f](\tau_1)\,
\mathrm{d}\tau_1+\mathcal{M}\nu^{1-2\kappa}\sup_{\tau_1\in[0,\tau]}
\left\{e^{\delta(\nu^\kappa  \tau_1
)^{\frac{s}{s-\gamma}}}\widetilde{\mathbb{E}}^{(\bar{q},\iota_0)}_{1,1,0,0}[f](\tau_1)\right\}\nonumber\\
\lesssim&\mathcal{M}^2\nu.
\end{align}
Combing \eqref{claim-2-1} and \eqref{claim-2-2} yields \eqref{claim-2}. This completes the proof of Theorem \eqref{exp-dens}.
 \end{proof}
\subsection{Nonlinear energy estimates}
In this subsection, we will prove the nonlinear energy estimates with exponential time decay.
Unlike the angular cutoff Boltzmann operator and the Landau operator, according to relevant proofs in \cite{DLYZ-KRM2013}, due to the complexity of the non-cutoff Boltzmann operator, only exponential weighted estimates of first order in velocity have been established so far. Therefore, in our subsequent estimates, the velocity variable in the exponential weight is merely of first order.

\begin{theorem}\label{Th-nonlinear-energy-exponential}
    Under the conditions in Theorem \ref{Main-Th.}, for $\bar{q}=\frac{q}{2}$ and small constant $\delta$,  we have
  \begin{equation}\label{Th-nonlinear-energy-exponential-1}
      \sup_{0\leq \tau <+\infty}\left\{e^{\delta(\nu  \tau
    )^{\frac{1}{1-\gamma-2s}}}\widetilde{\mathbb{E}}^{(\bar{q},\iota_0)}_{0,0,0,0}[f](\tau)\right\}
    +\nu^\kappa\int_0^\infty e^{\delta(\nu  \tau
    )^{\frac{1}{1-\gamma-2s}}}\widetilde{\mathbb{D}}^{(\bar{q},\iota_0)}_{0,0,0,0}[f](\tau)\mathrm{d}\tau \lesssim\mathcal{M}^2\nu^{1-\kappa},
  \end{equation}
  \begin{equation}\label{Th-nonlinear-energy-exponential-2}
      \sup_{0\leq \tau <+\infty}\left\{e^{\delta(\nu  \tau
    )^{\frac{1}{1-\gamma-2s}}}\widetilde{\mathbb{E}}^{(\bar{q},\iota_0)}_{1,1,0,0}[f](\tau)\right\}+\nu^\kappa\int_0^\infty e^{\delta(\nu  \tau
    )^{\frac{1}{1-\gamma-2s}}}\widetilde{\mathbb{D}}^{(\bar{q},\iota_0)}_{1,1,0,0}[f](\tau)\mathrm{d}\tau \lesssim\mathcal{M}^2\nu^{1-\kappa},
  \end{equation}
  and
   \begin{equation}\label{Th-nonlinear-energy-exponential-3}
      \sup_{0\leq \tau <+\infty}\left\{e^{\delta(\nu^\kappa  \tau
)^{\frac{s}{s-\gamma}}}\widetilde{\mathbb{E}}^{(\bar{q},\iota_0)}_{1,1,0,0}[f](\tau)\right\}+\nu^\kappa\int_0^\infty e^{\delta(\nu^\kappa  \tau
)^{\frac{s}{s-\gamma}}}\widetilde{\mathbb{D}}^{(\bar{q},\iota_0)}_{1,1,0,0}[f](\tau)\mathrm{d}\tau \lesssim\mathcal{M}^2\nu^{1-\kappa}.
  \end{equation}
\end{theorem}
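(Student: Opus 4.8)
The plan is to run a time‑weighted energy argument and then close the whole scheme by continuation. The three ingredients are: (a) re‑derive the nonlinear weighted energy inequalities of Section~\ref{Nonlinear energy estimates} for the reduced velocity weight $e^{\bar q\langle v\rangle}$ with $\bar q=q/2$, at the low indices defining $\widetilde{\mathbb{E}}^{(\bar q,\iota_0)}_{0,0,0,0}$ and $\widetilde{\mathbb{E}}^{(\bar q,\iota_0)}_{1,1,0,0}$; (b) convert the resulting dissipative inequality into stretched‑exponential decay via a Strain--Guo type lemma (Lemma~\ref{decay-exponent}), where the two exponents $\tfrac{1}{1-\gamma-2s}$ and $\tfrac{s}{s-\gamma}$ come from two distinct coercivity‑with‑weight‑loss properties of $\widetilde{\mathbb{D}}^{(\bar q,\iota_0)}$; and (c) observe that \eqref{Th-nonlinear-energy-exponential-1}--\eqref{Th-nonlinear-energy-exponential-3} improve the \emph{a priori} assumptions \eqref{priori-assu0}--\eqref{priori-assu1}, so that the standard continuation argument, built on the global solution of Theorem~\ref{glob-exis} and the density decay of Theorem~\ref{exp-dens}, upgrades $T_0$ to $+\infty$.

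For (a), halving $q$ only relaxes the weight, so the computations behind \eqref{basic-l2}, \eqref{G-1}--\eqref{G-4-max} and Lemma~\ref{lemma-nonweight-Y} carry over and produce, for the relevant low index sets $\mathcal I\in\{(0,0,0,0),(1,1,0,0)\}$, a differential inequality of the schematic form
\[
\frac{d}{dt}\widetilde{\mathbb{E}}^{(\bar q,\iota_0)}_{\mathcal I}[f]+\delta\nu\,\widetilde{\mathbb{D}}^{(\bar q,\iota_0)}_{\mathcal I}[f]\;\lesssim\;\sum_{|\alpha|\le1}\|\partial^\alpha_x\rho_{\neq0}\|^2+\nu^\kappa\Big(\widetilde{\mathbb{E}}^{(\bar q,\iota_0)}_{\mathcal I}[f]\,\widetilde{\mathbb{D}}^{(\bar q,\iota_0)}_{N_{\max}-2}[f]\,\widetilde{\mathbb{D}}^{(\bar q,\iota_0)}_{\mathcal I}[f]\Big)^{1/2}+\cdots .
\]
Here one first upgrades the field estimates of Lemma~\ref{elec-esti}: since $\|\nabla_x\phi\|\lesssim\|\rho_{\neq0}\|$ and $|\bar c|\lesssim\|\rho_{\neq0}\|^2$, Theorem~\ref{exp-dens} replaces all $\langle t\rangle^{-2}$‑decaying contributions of $\phi$ by stretched‑exponentially decaying ones, so that the only forcing which is not \emph{a priori} exponentially small is the density term, controlled by \eqref{density-decay-1}. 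The cubic term is split by Cauchy--Schwarz as $\eta\nu^\kappa\widetilde{\mathbb{D}}^{(\bar q,\iota_0)}_{\mathcal I}[f]+C(\eta)\widetilde{\mathbb{E}}^{(\bar q,\iota_0)}_{\mathcal I}[f]\,\widetilde{\mathbb{D}}^{(\bar q,\iota_0)}_{N_{\max}-2}[f]$: the first piece is absorbed on the left, and the second, after the time weight is placed only on $\widetilde{\mathbb{E}}^{(\bar q,\iota_0)}_{\mathcal I}$, is controlled using $\nu^\kappa\int_0^\infty\widetilde{\mathbb{D}}^{(q,\iota_0)}_{N_{\max}-2}[f]\lesssim\mathcal M^2\nu^{2\kappa}$ from Theorem~\ref{Th-basic-estimate} together with the smallness of $\mathcal M$.

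For (b), two facts are needed. First, $\widetilde{\mathbb{D}}^{(\bar q,\iota_0)}_{\mathcal I}$ dominates $\nu$ times the $\langle v\rangle^{\gamma+2s}$‑weighted copy of $\widetilde{\mathbb{E}}^{(\bar q,\iota_0)}_{\mathcal I}$, because $|\cdot|_D\gtrsim|\cdot|_{L^2_{\gamma+2s}}$; second, exactly as in \eqref{nabla-Diss}--\eqref{equation-decay}, interpolating between the $\nu^{1-\kappa}\|\cdot\|_D$ and $\nu^{1+\kappa}\|\nabla_v\cdot\|_D$ pieces of $\widetilde{\mathbb{D}}^{(\bar q,\iota_0)}_{\mathcal I}$ shows that it also dominates $\nu^\kappa$ times the $\langle v\rangle^{\gamma/s}$‑weighted copy of $\widetilde{\mathbb{E}}^{(\bar q,\iota_0)}_{\mathcal I}$. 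Multiplying the displayed inequality by $e^{\delta\psi(t)}$ with $\psi=(\nu t)^{\frac{1}{1-\gamma-2s}}$, respectively $\psi=(\nu^\kappa t)^{\frac{s}{s-\gamma}}$, integrating in time, and invoking Lemma~\ref{decay-exponent} — whose mechanism is the decomposition $\{|v|\le R(t)\}\cup\{|v|>R(t)\}$, using the weighted dissipation on the core and the spare growth $e^{-\bar qR(t)/2}\widetilde{\mathbb{E}}^{(q,\iota_0)}_{\mathcal I}[f]\lesssim e^{-\bar qR(t)/2}\mathcal M^2\nu^{2\kappa}$ on the tail, then optimizing $R(t)$ — yields the two stated exponents. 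The time‑integrated density forcing $\int_0^\infty e^{\delta\psi}\sum_{|\alpha|\le1}\|\partial^\alpha_x\rho_{\neq0}\|^2\lesssim\mathcal M^2\nu^{2\kappa}$ comes from \eqref{density-decay-1}, and a final Grönwall step produces \eqref{Th-nonlinear-energy-exponential-1}--\eqref{Th-nonlinear-energy-exponential-3}, the right‑hand side $\mathcal M^2\nu^{1-\kappa}$ reflecting the balance between the stretched‑exponential time weight and the $\nu$‑scale of the dissipation.

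I expect the main obstacle to be step (b): carrying out the $|v|\lessgtr R(t)$ splitting and the optimization of $R(t)$ with \emph{only} the first‑order velocity weight $e^{\bar q\langle v\rangle}$ available (a second‑order weight $e^{\bar q\langle v\rangle^2}$, which the non‑cutoff Boltzmann operator does not tolerate, would make this routine and even improve the rates), and in particular verifying that the $\nu^\kappa$‑scaled interpolated dissipation is still strong enough on the core to beat $\delta\dot\psi(t)e^{\delta\psi(t)}\widetilde{\mathbb{E}}^{(\bar q,\iota_0)}_{\mathcal I}[f]$ for the enhanced rate \eqref{Th-nonlinear-energy-exponential-3}, while simultaneously keeping the cubic term and the Volterra‑type density contribution $\rho^{(2)}_{\neq0}$ — which is smaller by a factor $\nu^{1-2\kappa}$, see \eqref{density-decay-4} — under control with the correct powers of $\nu$ so that the continuation in (c) actually closes.
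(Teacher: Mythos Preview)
Your proposal is essentially the paper's own approach: re-run the weighted energy estimates of Section~\ref{Nonlinear energy estimates} with the halved exponential weight $e^{\bar q\langle v\rangle}$ at the low indices $(0,0,0,0)$ and $(1,1,0,0)$, feed the resulting differential inequality into the Strain--Guo machinery (Lemma~\ref{decay-exponent}) with the two weight-loss exponents $\langle v\rangle^{\gamma+2s}$ and $\langle v\rangle^{\gamma/s}$, and close by continuation. The paper likewise details only \eqref{Th-nonlinear-energy-exponential-3} and declares the other two analogous.

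One point to sharpen: in your schematic differential inequality, the density forcing is not $\|\partial^\alpha_x\rho_{\neq0}\|^2$ but rather $\|\partial^\alpha_x\rho_{\neq0}\|\cdot\min\big\{\widetilde{\mathbb E}^{(\bar q,\iota_0)}_{\mathcal I}[f]^{1/2},\widetilde{\mathbb D}^{(\bar q,\iota_0)}_{\mathcal I}[f]^{1/2}\big\}$, and the paper exploits this $\min$ structure through the decomposition $\rho_{\neq0}=\rho^{(1)}_{\neq0}+\rho^{(2)}_{\neq0}$ of Theorem~\ref{exp-dens}. One pairs $\rho^{(1)}$ (which has the $L^1_t$ bound \eqref{density-decay-3}) against $\widetilde{\mathbb E}^{1/2}$, contributing $\eta\sup_\tau e^{\delta\psi}\widetilde{\mathbb E}+C(\eta)\mathcal M^2\nu^{2\kappa}$, and pairs $\rho^{(2)}$ (the $L^2_t$ piece \eqref{density-decay-4}, of size $\mathcal M^2\nu$) against $\widetilde{\mathbb D}^{1/2}$; after Cauchy--Schwarz with the $\nu^\kappa\widetilde{\mathbb D}$ dissipation this yields $C(\eta)\nu^{-\kappa}\cdot\mathcal M^2\nu=\mathcal M^2\nu^{1-\kappa}$. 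This is exactly where the right-hand side $\mathcal M^2\nu^{1-\kappa}$ in the theorem originates. Your attempt to treat the forcing as $\|\rho\|^2$ via \eqref{density-decay-1} alone does not produce the inequality in usable form: the $\|\rho\|\sqrt E$ route leaves an unabsorbable $E$ on the right, while the $\|\rho\|\sqrt D$ route forces the $\nu^{-\kappa}$ prefactor onto $\|\rho\|^2$. You correctly flag this subtlety in your final paragraph; it is not an obstacle but the actual mechanism, and the paper carries it out exactly along the lines you anticipate.
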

\begin{proof} For brevity, we only prove the enhanced decay estimate \eqref{Th-nonlinear-energy-exponential-3} since the proof of \eqref{Th-nonlinear-energy-exponential-1} and \eqref{Th-nonlinear-energy-exponential-2} can be proceeded in the same way.

By an argument analogous to the proof of Theorem \ref{Th-basic-estimate}, we obtain the following inequality:
\begin{align}\label{11ED}
 &\frac{\mathrm{d}}{\mathrm{d}t}\widetilde{\mathbb{E}}^{(\bar{q},\iota_0)}_{1,1,0,0}[f](t)
 +\nu^\kappa \widetilde{\mathbb{D}}^{(\bar{q},\iota_0)}_{1,1,0,0}[f](t)\nonumber\\
 \lesssim& \langle t\rangle^{-2} \widetilde{\mathbb{E}}^{(\bar{q},\iota_0)}_{1,1,0,0}[f](t)
 +\nu^{\kappa} \left(\widetilde{\mathbb{E}}^{(\bar{q},\iota_0)}_{1,1,0,0}[f](t)
 \widetilde{\mathbb{D}}^{(\bar{q},\iota_0)}_{N_{max}-2}[f](t)
 \widetilde{\mathbb{D}}^{(\bar{q},\iota_0)}_{1,1,0,0}[f](t)\right)^{\frac12}\nonumber\\
 &+ \sum_{|\alpha|\leq 1}\left\|\partial^\alpha \rho_{\neq0}(t)\right\| \min\left\{\widetilde{\mathbb{E}}^{(\bar{q},\iota_0)}_{1,1,0,0}[f](t),
 \widetilde{\mathbb{D}}^{(\bar{q},\iota_0)}_{1,1,0,0}[f](t)\right\}.
\end{align}
Recalling the definition of $\widetilde{\mathbb{E}}^{(q,\iota_0)}_{1,1,0,0}[f](t)$ and $\widetilde{\mathbb{D}}^{(q,\iota_0)}_{1,1,0,0}[f](t)$ with $q$ replaced by $\bar{q}$, we define functions $g$ and $h$ as
\begin{align}
 &\int_{\mathbb{R}^3}g^2(t,v)\,\mathrm{d}v\equiv \widetilde{\mathbb{E}}^{(\bar{q},\iota_0)}_{1,1,0,0}[f](t)\nonumber\\
	&\qquad=
	\sum_{|\alpha|=1}\Big[A_0\sum_{|\alpha'|\leq 1}\left\|e^{\frac12\phi} \langle v\rangle^{\frac{-\gamma}{2s}(\iota_0-1-|\alpha'|)}\partial^{\alpha+\alpha'}_x f\right\|^2\nonumber\\
	&\qquad\quad+\nu^\kappa\int_{\mathbb{R}^3\times \mathbb{R}^3}e^{\phi}\langle v\rangle^{\frac{-\gamma}{2s}(\iota_0-2)}\nabla_x\partial^{\alpha}_x Y^\omega f\cdot \langle v\rangle^{\frac{-\gamma}{2s}(\iota_0-2)}\nabla_v\partial^{\alpha}_x  f\,\mathrm{d}v\mathrm{d}x\nonumber\\
	&\qquad\quad
+\nu^{2\kappa}\sum_{|\beta'|= 1}\left\|e^{\frac12\phi} \langle v\rangle^{\frac{-\gamma}{2s}(\iota_0-2)}\partial^{\alpha}_x\partial^{\beta'}_v Y^\omega f\right\|^2\nonumber\\
&\qquad\quad+A_0^{-1}\sum_{|\beta'|=2}\nu^{4\kappa}\left\|e^{\frac12\phi}\langle v\rangle^{\frac{-\gamma}{2s}(\iota_0-3)}\partial^{\alpha}_x\partial^{\beta+\beta'}_v  f\right\|^2\Big]\nonumber,\\
&\int_{\mathbb{R}^3}h^2(t,v)\,\mathrm{d}v\equiv \widetilde{\mathbb{D}}^{(\bar{q},\iota_0)}_{1,1,0,0}[f](t).\nonumber
\end{align}
By utilizing enhanced dissipation and the Poincar\'{e} inequality, we can obtain that
\begin{align*}
    \nu^{\kappa}\int_{\mathbb{R}^3}\langle v\rangle^{\frac{\gamma}{s}}g^2(t,v)\,\mathrm{d}v\lesssim \nu^\kappa \widetilde{\mathbb{D}}^{(\bar{q},\iota_0)}_{1,1,0,0}[f](t).
\end{align*}
Thus we can estimate \eqref{11ED} as
\begin{align}\label{11gh}
   & \frac{\mathrm{d}}{\mathrm{d}t}\int_{\mathbb{R}^3}g^2(t,v)\,\mathrm{d}v
   +\nu^\kappa\int_{\mathbb{R}^3}\langle v\rangle^{\frac{\gamma}{s}}g^2(t,v)\,\mathrm{d}v+\nu^\kappa\int_{\mathbb{R}^3}h^2(t,v)
   \,\mathrm{d}v\nonumber\\
\lesssim&\nu^{\kappa} \left(\widetilde{\mathbb{E}}^{(\bar{q},\iota_0)}_{1,1,0,0}[f](t)
 \widetilde{\mathbb{D}}^{(\bar{q},\iota_0)}_{N_{max}-2}[f](t)
 \widetilde{\mathbb{D}}^{(\bar{q},\iota_0)}_{1,1,0,0}[f](t)\right)^{\frac12}+\sum_{|\alpha|= 1}\|\partial^\alpha \rho_{\neq0}(t)\|\min\left\{\widetilde{\mathbb{E}}^{(\bar{q},\iota_0)}_{1,1,0,0}
 [f](t),\widetilde{\mathbb{D}}^{(\bar{q},\iota_0)}_{1,1,0,0}[f](t)\right\}.
\end{align}
According to Lemma \ref{decay-exponent}, it suffices to estimate the time integral of the two terms on the right-hand side of \eqref{11gh}. For the first term, using \eqref{priori-ener} and \eqref{priori-assu0}, an upper bound for the corresponding integral is given by
\begin{align*}
&\eta\nu^{\kappa}\int_0^t\widetilde{\mathbb{D}}^{(\bar{q},\iota_0)}_{1,1,0,0}[f](\tau)
e^{\delta(\nu^\kappa  \tau
)^{\frac{s}{s-\gamma}}}\,\mathrm{d}\tau+C(\eta) \nu^{\kappa}\sup_{\tau\in[0,t]}\left\{e^{\delta(\nu^\kappa  \tau
)^{\frac{s}{s-\gamma}}}\widetilde{\mathbb{E}}^{(\bar{q},\iota_0)}_{1,1,0,0}[f](\tau)\right\} \int_0^t\widetilde{\mathbb{D}}^{(\bar{q},\iota_0)}_{N_{max}-2}[f](\tau)
\,\mathrm{d}\tau\\
&\lesssim \eta\nu^{\kappa}\int_0^t\widetilde{\mathbb{D}}^{(\bar{q},\iota_0)}_{1,1,0,0}[f](\tau)
e^{\delta(\nu^\kappa  \tau
)^{\frac{s}{s-\gamma}}}\,\mathrm{d}\tau+C(\eta) \mathcal{M}^{2}\nu^{4\kappa}.
\end{align*}
For the second term, we use Theorem \ref{exp-dens} to obtain
\begin{align*}
  & \int_0^t\min\left\{\widetilde{\mathbb{E}}^{(\bar{q},\iota_0)}_{1,1,0,0}[f](\tau),
\widetilde{\mathbb{D}}^{(\bar{q},\iota_0)}_{1,1,0,0}[f](\tau)\right\}^{\frac12}
\sum_{|\alpha|= 1}\left\|\partial^{\alpha} \rho_{\neq0}(\tau)\right\|
e^{\delta(\nu^\kappa  \tau )^{\frac{s}{s-\gamma}}}\,\mathrm{d}\tau\nonumber\\
\lesssim&\int_0^t\sum_{|\alpha|= 1}\left\|\partial^\alpha \rho^{(1)}_{\neq0}(t)\right\|
e^{\delta(\nu^\kappa \tau
)^{\frac{s}{s-\gamma}}}\left\{\widetilde{\mathbb{E}}^{(\bar{q},\iota_0)}_{1,1,0,0}[f](t)\right\}
^{\frac12}\,\mathrm{d}\tau\\
&+\int_0^t
\left\{\widetilde{\mathbb{D}}^{(\bar{q},\iota_0)}_{1,1,0,0}[f](t)\right\}^{\frac12}
\sum_{|\alpha|= 1}\left\|\partial^\alpha \rho^{(2)}_{\neq0}(\tau)\right\|
 e^{\delta(\nu^\kappa  \tau
)^{\frac{s}{s-\gamma}}}\,\mathrm{d}\tau\nonumber\\
\lesssim&\eta \sup_{\tau\in[0,t]}\left\{e^{\delta(\nu^\kappa  \tau
)^{\frac{s}{s-\gamma}}}\widetilde{\mathbb{E}}^{(\bar{q},\iota_0)}_{1,1,0,0}[f](\tau)\right\}
+C_\eta \left\{\int_0^t\sum_{|\alpha|= 1}\left\|\partial^\alpha \rho^{(1)}_{\neq0}(\tau)\right\| e^{\frac\delta2(\nu^\kappa  \tau
)^{\frac{s}{s-\gamma}}}\,\mathrm{d}\tau\right\}^2\nonumber\\
&+\eta\nu^{\kappa}\int_0^t\widetilde{\mathbb{D}}^{(\bar{q},\iota_0)}_{1,1,0,0}[f](\tau)
e^{\delta(\nu^\kappa  \tau
)^{\frac{s}{s-\gamma}}}\,\mathrm{d}\tau+C_\eta\nu^{-\kappa}\int_0^t\sum_{|\alpha|= 1}\left\|\partial^\alpha \rho^{(2)}_{\neq0}(\tau)\right\|^2 e^{\delta(\nu^\kappa  \tau
)^{\frac{s}{s-\gamma}}}\,\mathrm{d}\tau\nonumber\\
\lesssim&\eta \sup_{\tau\in[0,t]}\left\{e^{\delta(\nu^\kappa  \tau
)^{\frac{s}{s-\gamma}}}\widetilde{\mathbb{E}}^{(\bar{q},\iota_0)}_{1,1,0,0}[f](\tau)\right\}
+C_\eta\mathcal{M}^2\nu^{2\kappa}\nonumber\\
&+\eta\nu^{\kappa}\int_0^t\widetilde{\mathbb{D}}^{(q,\iota_0)}_{1,1,0,0}[f](\tau)
e^{\delta(\nu^\kappa  \tau
)^{\frac{s}{s-\gamma}}}\,\mathrm{d}\tau+C_\eta\mathcal{M}^2\nu^{1-\kappa}\nonumber\\
\lesssim&\eta \sup_{\tau\in[0,t]}\left\{e^{\delta(\nu^\kappa  \tau
)^{\frac{s}{s-\gamma}}}\widetilde{\mathbb{E}}^{(\bar{q},\iota_0)}_{1,1,0,0}[f](\tau)\right\}+\eta\nu^{\kappa}\int_0^T\widetilde{\mathbb{D}}^{(q,\iota_0)}_{1,1,0,0}[f](t)e^{\delta(\nu^\kappa  t
)^{\frac{s}{s-\gamma}}}\,\mathrm{d}\tau+C_\eta\mathcal{M}^2\nu^{1-\kappa},
\end{align*}
where we have used the fact that $2\kappa\geq1- \kappa$.

Substituting the above estimates into \eqref{11gh} and applying Lemma \ref{decay-exponent}, we obtain
\begin{align*}
&\widetilde{\mathbb{E}}^{(\bar{q},\iota_0)}_{1,1,0,0}[f](t)
+\nu^{\kappa}\int_0^t\widetilde{\mathbb{D}}^{(\bar{q},\iota_0)}_{1,1,0,0}[f](\tau)
e^{\delta(\nu^\kappa  \tau
)^{\frac{s}{s-\gamma}}}\,\mathrm{d}\tau    \\
&\qquad \lesssim C_\eta\mathcal{M}^2\nu^{1-\kappa}+ \eta\left(\sup_{\tau\in[0,t]}\left\{e^{\delta(\nu^\kappa  \tau
)^{\frac{s}{s-\gamma}}}\widetilde{\mathbb{E}}^{(\bar{q},\iota_0)}_{1,1,0,0}[f](\tau)\right\}
+\nu^{\kappa}\int_0^t\widetilde{\mathbb{D}}^{(\bar{q},\iota_0)}_{1,1,0,0}[f](\tau)
e^{\delta(\nu^\kappa  \tau
)^{\frac{s}{s-\gamma}}}\,\mathrm{d}\tau\right),
\end{align*}
where the smallness of $\nu$ is used. Choosing $\eta$ sufficiently small then yields \eqref{Th-nonlinear-energy-exponential-3}.
This ends the proof of \eqref{Th-nonlinear-energy-exponential}.
\end{proof}

\subsection{The proof of Theorem 1.1}
Finally, we are in a position to complete the proof of Theorem \ref{Main-Th.}.
\begin{proof}[The proof of Theorem \ref{Main-Th.}]

In fact, \eqref{main1} and \eqref{main2} follow from our global existence result Theorem \ref{glob-exis}.  \eqref{main3} and \eqref{main10} follow from interpolation inequalities based on \eqref{main2} and estimates in Theorem \ref{exp-dens}. For the Landau damping estimate \eqref{main11}, it is a direct result by the interpolation between \eqref{Th-rho-1} in Theorem \ref{dens-esti} and \eqref{density-decay-1} via Parseval's theorem:
\begin{align*}
\left|\widehat{\rho}_k\right|(t)\lesssim& [k(1+t)]^{1-N_{max}}\sum_{|\alpha|+|\omega|\leq N_{max}-1}\|\partial^\alpha_xY^\omega \rho_{\neq 0}(t) \|\\
\lesssim&  [k(1+t)]^{1-N_{max}}\sum_{|\alpha|+|\omega|\leq N_{max}}\rho_{\neq 0}(t) \|^{\frac{1}{N_{max}}}\Big(\sum_{|\alpha|+|\omega|\leq N_{max}}\|\partial^\alpha_xY^\omega \rho_{\neq 0}(t) \|\Big)^{\frac{N_{max}-1}{N_{max}}}\\
\lesssim& \mathcal{M}\nu^{\kappa}\langle [k(1+t)]^{1-N_{max}}\min\left\{e^{-\delta_N(\nu^\kappa t)^{\frac{s}{s-\gamma}}},e^{-\delta_N(\nu t)^{\frac{1}{1-\gamma-2s}}}\right\},
\end{align*}
where $\delta_N$ is a proper constant less than $\delta$ in \eqref{density-decay-1}.
Therefore the proof of Theorem \ref{Main-Th.}
is finished.
\end{proof}


\section{Appendix}\label{Appendix}
\setcounter{equation}{0}
In this appendix, we collect some significant estimates used in the previous sections.


We begin by recalling some fundamental results on the weighted energy type estimates for the linearized collision operator $\mathcal{L}$ and the nonlinear term $\Gamma$ for non-cutoff cases, the proofs can be found in \cite{AMUXY-JFA-2012,DLYZ-KRM2013,FLLZ-SCM-2018,LLXZ-JLMS-2024}. Throughout this appendix, we denote \[w_{\ell,q}=\langle v\rangle^{\ell}e^{q\langle v\rangle}.\]

\begin{lemma}\label{L-noncut}
	Let $\ell\in \mathbb{R}$, $\eta>0$, $0<s<1$ and $\max\{-3,-\frac32-2s\}<\gamma<0$.
	\begin{itemize}
		\item [(i).] It holds that
		\begin{equation}\label{Lemma L_1}
			\langle\mathcal{L}g,g\rangle\gtrsim|{\bf{P}}^{\bot}g|_{L^2_D}^2.
		\end{equation}
        \item[(ii).] It holds that
        \begin{equation}\label{LemmaL-Y-omega}
            \langle Y^\omega\mathcal{L}g,Y^\omega g\rangle\gtrsim |{\bf P}^{\bot} Y^{\omega}g|_{L^2_D}^2
            -C_\eta\sum_{|\omega'|\leq |\omega|-1}|Y^{\omega'} g|_{L^2_D}^2.
        \end{equation}
		\item [(iii).] It holds that
		\begin{equation}\label{Lemma L_2}
			\left\langle w_{\ell,q}^{2}\mathcal{L}g, g\right\rangle\gtrsim |w_{\ell,q} g|_D^2-C|\chi_{\{| v|\leq C\}}g|^2_{L^2},
		\end{equation}
where $\chi_{\{| v|\leq C\}}$ is a smooth cutoff function defined as
\begin{align}
\chi_{\{| v|\leq C\}}=\left\{\begin{array}{rll}
&0,\ |v|\geq C+1,\\[2mm]
&1,\ |v|\leq C.\end{array}\right.\notag
\end{align}
		For $|\beta|\geq 1$, one has
		\begin{equation}\label{Lemma L_3}
			\left\langle w_{\ell,q}^{2}\partial _\beta \mathcal{L}g,\partial_\beta g\right\rangle\gtrsim \left|w_{\ell,q}\partial_\beta g\right|_{D}^2-C\sum_{\beta'<\beta}\left|w_{\ell,q}\partial_{\beta'} g\right|_{D}^2 -C|\chi_{\{| v|\leq C\}}g|^2_{L^2}.
		\end{equation}
        \item [(iv)] Moreover, for $\ell\geq 0$, $q\in [0, \infty)$, $|\beta|+|\omega|\geq1$, and some small constant $\eta>0$, it holds that
		\begin{align}\label{wL-lin}
			&\big(\partial^{\beta}_vY^{\omega}\mathcal{L}g,
 \langle v\rangle^{2\ell}\partial^{\beta}_vY^{\omega}g\big)\nonumber\\
			\geq& \big| \langle v\rangle^{\ell}\partial^{\beta}_vY^{\omega}g\big|_{ D}^2-\eta\sum_{\beta'\leq\beta}\sum_{\omega'\leq \omega}\big| \langle v\rangle^{\ell}\partial^{\beta'}_vY^{\omega'}g\big|_{ D}^2\nonumber\\
&-C(\eta)\sum_{\beta'\leq\beta,\omega'\leq \omega,\atop
|\beta'|+|\omega'|\leq|\beta|+|\omega|-1}\big| \langle v\rangle^{\ell}\partial^{\beta'}_vY^{\omega'}g\big|_{ D}^2-C(\eta)\sum_{\omega'\leq \omega}\big| \chi_{\{| v|\leq2C_{\eta}\}}Y^{\omega'}g\big|^2_{L^2}.
		\end{align}
	\end{itemize}
\end{lemma}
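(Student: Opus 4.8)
All four assertions are weighted coercivity and energy estimates for the non-cutoff linearized operator, and the plan is to assemble them from the existing machinery in \cite{AMUXY-JFA-2012,Gressman_Strain-JAMS-2011,DLYZ-KRM2013,FLLZ-SCM-2018,LLXZ-JLMS-2024}, tracking only the two features specific to our setting: the algebraic-exponential weight $w_{\ell,q}=\langle v\rangle^{\ell}e^{q\langle v\rangle}$ and the commuting vector field $Y$. First I would fix the standard splitting $\mathcal{L}g=\mathcal{L}_1g+\mathcal{L}_2g$ with $\mathcal{L}_1g=-\mu^{-\frac12}Q(\mu,\mu^{\frac12}g)$ and $\mathcal{L}_2g=-\mu^{-\frac12}Q(\mu^{\frac12}g,\mu)$, noting that $\mathcal{L}_2$ is, modulo lower-order multiplication operators, a pointwise factor comparable to $\langle v\rangle^{\gamma+2s}$, while $\mathcal{L}_1$ carries the anisotropic fractional dissipation measured by $|\cdot|_D$; the Carleman representation and the cancellation technique of \cite{Guo-JAMS-2012} are the tools for $\mathcal{L}_1$.

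For (i), the plan is to recall the sharp coercivity of Gressman-Strain and AMUXY: one first proves the local bound $\langle\mathcal{L}g,g\rangle\gtrsim|g|_D^2-C|\chi_{\{|v|\leq C\}}g|_{L^2}^2$ from the representation of $\mathcal{L}_1$ together with the positivity of $\mathcal{L}_2$, and then upgrades it to the projected form $\gtrsim|{\bf P}^{\bot}g|_D^2$ using self-adjointness of $\mathcal{L}$, the explicit null space $\mathcal{N}$, and a compactness argument equivalent to the spectral gap on $\mathcal{N}^{\bot}$. For (ii), I would use that $\mathcal{L}$ acts only in $v$ and commutes with $\partial_x$ and with multiplication by $t$, so $[Y^{\omega},\mathcal{L}]g$ is a finite sum of terms of the form $t^{j}\partial_x^{\alpha'}[\partial_v^{\beta'},\mathcal{L}]\bigl(Y^{\omega'}g\bigr)$ with $|\omega'|<|\omega|$; the commutators $[\partial_v^{\beta'},\mathcal{L}]$ are classical and are dominated by $D$-norms strictly lower in the number of $v$-derivatives, which is the same mechanism that produces \eqref{Lemma L_3}. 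Applying (i) to $\mathcal{L}(Y^{\omega}g)$ and absorbing these commutators into $C_\eta\sum_{|\omega'|\leq|\omega|-1}|Y^{\omega'}g|_D^2$ yields \eqref{LemmaL-Y-omega}.

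For (iii) and (iv), the plan is to conjugate by the weight, writing $\langle w_{\ell,q}^2\partial_\beta\mathcal{L}g,\partial_\beta g\rangle=\langle w_{\ell,q}\partial_\beta\mathcal{L}g,\,w_{\ell,q}\partial_\beta g\rangle$ and $w_{\ell,q}\partial_\beta\mathcal{L}g=\mathcal{L}(w_{\ell,q}\partial_\beta g)+[w_{\ell,q},\mathcal{L}]\partial_\beta g+(\text{derivative commutators})$. The derivative commutators produce exactly the lower-order terms $\sum_{\beta'<\beta}|w_{\ell,q}\partial_{\beta'}g|_D^2$ and $\sum_{\omega'\leq\omega}|\chi_{\{|v|\leq 2C_\eta\}}Y^{\omega'}g|_{L^2}^2$ appearing in the statements, and are handled as in (ii); the genuinely new ingredient is the nonlocal weight commutator $[w_{\ell,q},\mathcal{L}_1]$, which I would estimate by returning to the Carleman representation and exploiting the uniform bound $|\nabla w_{\ell,q}|\lesssim w_{\ell,q}$, so that the extra factor of $|v-v'|$ it generates is absorbed against the kernel singularity and the commutator contributes only $\eta|w_{\ell,q}g|_D^2+C(\eta)|\chi_{\{|v|\leq C\}}g|_{L^2}^2$. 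For (iv) I would then run a double induction on $|\beta|$ and on $|\omega|$: at each level the leading term is the good $|\langle v\rangle^{\ell}\partial_v^\beta Y^\omega g|_D^2$, the same-order errors carry a factor $\eta$ and are absorbed into the left side, and all remaining contributions are either strictly lower order with constant $C(\eta)$ or supported on a fixed ball.

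The hard part will be the uniform control of the nonlocal weight commutator $[w_{\ell,q},\mathcal{L}_1]$ in the very-soft-potential range \eqref{case}: since $\gamma+2s<0$ the kernel is singular at $v=u$ and the dissipation $|\cdot|_D$ is correspondingly weak, so the margin for absorbing the weight-induced errors is thin, and one must carefully balance the algebraic gain $\langle v\rangle^{\gamma+2s}$ coming from $\mathcal{L}_2$ against the growth of $e^{q\langle v\rangle}$; the estimate $|\nabla w_{\ell,q}|\lesssim w_{\ell,q}$ holds only because the exponential factor is of first order, which is exactly the obstruction that forces the velocity weight to contain $e^{q\langle v\rangle}$ rather than $e^{q\langle v\rangle^2}$ and that motivates the purely algebraic weight $\langle v\rangle^{\frac{-\gamma}{2s}(\iota_0-|\alpha|-|\beta|-|\omega|)}$ in the main energy functional. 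All the required weighted commutator bounds, together with the Strain-Guo type interpolation lemmas needed to trade $v$-derivatives for the $D$-norm, are available in \cite{DLYZ-KRM2013,FLLZ-SCM-2018,LLXZ-JLMS-2024}, and (iii)--(iv) then follow by combining them with the unweighted coercivity (i).
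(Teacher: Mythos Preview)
Your proposal is correct and matches the paper's approach: the paper's proof is essentially a citation list, attributing (i) to \cite{AMUXY-JFA-2012}, (iii) to \cite{DLYZ-KRM2013}, and stating that (iv) follows by combining (ii) and (iii), which is exactly the assembly-and-induction scheme you outline. The only minor addition is that for (ii) the paper points specifically to the commutator estimates (3.7)--(3.8) in \cite{BCD-PLMS-2024} rather than deriving $[Y^{\omega},\mathcal{L}]$ from scratch, but your description of that commutator argument is the same mechanism.
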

\begin{proof}
	\eqref{Lemma L_1}has been shown in \cite{AMUXY-JFA-2012}.
   \eqref{LemmaL-Y-omega} can be obtained from the commutator estimate (3.7)-(3.8) in \cite{BCD-PLMS-2024}.
	The relevant coercive estimate \eqref{Lemma L_2} and \eqref{Lemma L_3} with exponential weights can be found in \cite{DLYZ-KRM2013}. \eqref{wL-lin} can be obtained deduced by combing \eqref{Lemma L_2} and \eqref{Lemma L_3} and we omit its proof for brevity.
	
\end{proof}
\begin{lemma}\label{Gamma-noncut}\quad
	For all $0<s<1$, ${q}>0$ and  $\ell\geq 0$,
\begin{itemize}
	\item [i)] It holds that
	\begin{eqnarray}\label{Gamma-noncut-1}
		&&|\langle \partial^\alpha_x\partial^\beta_v Y^\omega\Gamma(f,g), w_{\ell,q}\partial^\alpha_x\partial^\beta_v Y^\omega h\rangle|\nonumber\\
		&\lesssim&\sum\bigg\{\left|w_{\ell,q}\partial^{\alpha_1}_x\partial^{\beta_1}_v Y^{\omega_1}f\right|_{L^2_
			{\frac\gamma2+s}}
		\left|\partial^{\alpha_2}_x\partial^{\beta_2}_v Y^{\omega_2}g\right|_{L^2_D}+\left|\partial^{\alpha_2}_x\partial^{\beta_2}_v Y^{\omega_2}g\right|_{L^2_{\frac\gamma2+s}}
		\left|w_{\ell,q}\partial^{\alpha_1}_x\partial^{\beta_1}_v Y^{\omega_1}f\right|_{L^2_D}
		\nonumber\\
		&&+\min\left\{\left|w_{\ell,q}\partial^{\alpha_1}_x\partial^{\beta_1}_v Y^{\omega_1}f\right|_{L^2_v}
		\left|\partial^{\alpha_2}_x\partial^{\beta_2}_v Y^{\omega_2}g\right|_{L^2_{\frac\gamma2+s}},\left|\partial^{\alpha_2}_x\partial^{\beta_2}_v Y^{\omega_2}g\right|_{L^2_v}
		\left|w_{\ell,q}\partial^{\alpha_1}_x\partial^{\beta_1}_v Y^{\omega_1}f\right|_{L^2_{\frac\gamma2+s}}\right\}
		\nonumber\\
		&&+\left|e^{{q}\langle v\rangle}\partial^{\alpha_1}_x\partial^{\beta_1}_v Y^{\omega_1}g\right|_{L^2_v}
		\left|w_{\ell,q}\partial^{\alpha_2}_x\partial^{\beta_2}_v Y^{\omega_2}f\right|_{L^2_{\frac\gamma2+s}}\bigg\}\left|w_{\ell,q}\partial^{\alpha}_x\partial^{\beta}_v Y^{\omega}h\right|_{L^2_D},
	\end{eqnarray}
	where the summation $\sum$ is taken over $\alpha_1+\alpha_2\leq \alpha$, $\beta_1+\beta_2\leq\beta$ and $\omega_1+\omega_2\leq\omega$.
	
	\item [ii)]	Furthermore, we have
	\begin{eqnarray}\label{Gamma-noncut-2}
		\left|\langle{\Gamma}(f,g), h\rangle\right|&\lesssim& \left\{|f|_{L^2_{\frac\gamma2+s}}|g|_{L^2_D}+|g|_{L^2_{\frac\gamma2+s}}|f|_{L^2_D}\right.\nonumber\\
		&&\left.+\min\left\{|f|_{L^2_v}|g|_{L^2_{\frac\gamma2+s}},|g|_{L^2}|f|_{L^2_{\frac\gamma2+s}}\right\}\right\}|h|_{L^2_D}.
	\end{eqnarray}

\end{itemize}	
\end{lemma}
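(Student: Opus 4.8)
\textbf{Proof proposal for Lemma \ref{Gamma-noncut}.}

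The plan is to reduce everything to the basic trilinear estimate \eqref{Gamma-noncut-2} for $\Gamma$ (the case $\alpha=\beta=\omega=0$ with trivial weight), which is the standard non-cutoff commutator estimate established in \cite{AMUXY-JFA-2012}, and then propagate it through the derivatives and the weight $w_{\ell,q}=\langle v\rangle^\ell e^{q\langle v\rangle}$. First I would recall the bilinear structure $\Gamma(f,g)=\mu^{-1/2}Q(\mu^{1/2}f,\mu^{1/2}g)+\mu^{-1/2}Q(\mu^{1/2}g,\mu^{1/2}f)$ and note that each derivative $\partial^\alpha_x$, $\partial^\beta_v$, $Y^\omega$ acting on $\Gamma(f,g)$ distributes by the Leibniz rule; the $x$- and $Y$-derivatives pass harmlessly onto the arguments (no $v$-weight is generated by $\partial_x$ or $Y$, since $Y=t\nabla_x+\nabla_v$ contributes only a $\nabla_v$ that is already accounted for in the $\beta$-count), and $\partial^\beta_v$ falls on the collision kernel in $v-u$, on $\mu^{1/2}(v)$, and on the arguments $f(v'),g(v')$ via the chain rule. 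The point is that the Carleman-type representation of $Q$ together with the cancellation $Q(\mu^{1/2}\cdot,\mu^{1/2}\cdot)$ absorbs $v$-derivatives of $\mu^{1/2}$ into polynomial weights controlled by $\langle v\rangle^{\gamma/2+s}$ relative to the $|\cdot|_D$-norm. This is exactly the mechanism recorded in the cited works \cite{DLYZ-KRM2013,FLLZ-SCM-2018,LLXZ-JLMS-2024}, so the bulk of the argument is to quote their weighted bilinear estimate and then perform the bookkeeping of which factor carries the weight $w_{\ell,q}$.

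The key steps, in order, are: (1) Expand $\partial^\alpha_x\partial^\beta_v Y^\omega\Gamma(f,g)$ by the Leibniz rule into a finite sum over splittings $\alpha_1+\alpha_2\le\alpha$, $\beta_1+\beta_2\le\beta$, $\omega_1+\omega_2\le\omega$ of terms of the schematic form $\Gamma_*(\partial^{\alpha_1}_x\partial^{\beta_1}_v Y^{\omega_1}f,\,\partial^{\alpha_2}_x\partial^{\beta_2}_v Y^{\omega_2}g)$, where $\Gamma_*$ is a bilinear operator of the same type as $\Gamma$ but with a modified (still admissible, still satisfying (A1)--(A3) with the same $s$ and a kernel power $\le\gamma$) kernel coming from the residual $v$-derivatives on $\mu^{1/2}$ and on $\Phi(|v-u|)$. (2) Apply the weighted trilinear estimate for each such $\Gamma_*$ against the test function $w_{\ell,q}\partial^\alpha_x\partial^\beta_v Y^\omega h$: the weight $w_{\ell,q}$ on the test side is commuted onto whichever of the two input arguments is ``large'' in the sense of the $L^\infty_x$/$L^2_v$ Sobolev splitting, producing the four families of products displayed in \eqref{Gamma-noncut-1}; the extra polynomial decay $\langle v\rangle^{\gamma/2+s}$ on the low-regularity factor is precisely what the coercive norm $|\cdot|_D$ on the remaining factors can afford, since $\gamma/2+s<0$. (3) Use the elementary fact that $w_{\ell,q}(v)\lesssim w_{\ell,q}(v')w_{\ell,q}(v-v')$ and that $e^{q\langle v\rangle}$ is a Maxwellian-subcritical weight (first order only, $q>0$ small) to guarantee that the Gaussian factors in $\Gamma$ still dominate $e^{q\langle v\rangle}$; this is where the restriction to a \emph{first-order} exponential weight is used and is what forces the weight on one input of the last term in \eqref{Gamma-noncut-1} to be merely $e^{q\langle v\rangle}$ rather than $w_{\ell,q}$. (4) Take the minimum over the two ways of assigning the $\langle v\rangle^{\gamma/2+s}$ factor in the symmetric middle term, which is legitimate since the bilinear estimate for $\Gamma_*$ holds in both orderings by the symmetry $\Gamma(f,g)\leftrightarrow\Gamma(g,f)$ built into its definition. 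Then \eqref{Gamma-noncut-2} is just the specialization $\alpha=\beta=\omega=0$, $\ell=0$, $q\to0^+$.

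The main obstacle I expect is step (2)--(3): carefully tracking, for the ``top-order'' splitting where $\beta_1=\beta$ (all $v$-derivatives on $f$) and simultaneously the weight must sit on $f$, that the residual kernel from differentiating $\mu^{1/2}(v)$ does not destroy the $|\cdot|_D$-coercivity — one needs the anisotropic fractional-diffusion structure of $|\cdot|_D$ (the $d(v,v')$-metric in the non-cutoff norm) to absorb the $\langle v\rangle^{\gamma/2+s}$ loss uniformly, which is a genuinely non-cutoff phenomenon and the reason the statement is phrased with $L^2_{\gamma/2+s}$-norms rather than plain $L^2$-norms. A secondary technical point is ensuring the $L^\infty_x$/$L^6_x$/$L^3_x$ Sobolev splittings used implicitly in the later applications (cf. \eqref{dens-colli}, \eqref{enhan-00}) are compatible with the way the weight is distributed here; but for the bare statement of Lemma \ref{Gamma-noncut}, which is purely in the $v$-variable, this does not arise and the argument is a direct, if lengthy, bookkeeping exercise on top of the cited bilinear bounds. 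I would therefore present the proof as: ``expand by Leibniz, apply the weighted bilinear estimates of \cite{DLYZ-KRM2013,FLLZ-SCM-2018,LLXZ-JLMS-2024} term by term, distribute the weight onto the appropriate factor using $w_{\ell,q}(v)\lesssim w_{\ell,q}(v')w_{\ell,q}(v-v')$ together with the Gaussian surplus from $Q(\mu^{1/2}\cdot,\mu^{1/2}\cdot)$, and collect terms,'' with the $v$-derivative-on-$\mu^{1/2}$ case flagged as the one requiring the full strength of the $|\cdot|_D$ norm.
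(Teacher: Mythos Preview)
Your proposal is correct and aligns with the paper's own treatment: the paper simply cites \cite{FLLZ-SCM-2018} (for $0<s<\tfrac12$) and \cite{LLXZ-JLMS-2024} (for $\tfrac12\le s<1$) for \eqref{Gamma-noncut-1} when $\omega=0$, remarks that the case $\omega\neq0$ follows by a slight modification (precisely your Leibniz expansion), and cites \cite{AMUXY-JFA-2012} for \eqref{Gamma-noncut-2}. Your outline is in fact more detailed than what the paper provides, but the route---quote the existing weighted bilinear estimates and distribute derivatives via Leibniz---is the same.
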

\begin{proof}
When $\omega =0$, \eqref{Gamma-noncut-1} is shown as in \cite{FLLZ-SCM-2018} for the case of weak angular singularity $0<s<\frac12$ and in \cite{LLXZ-JLMS-2024} for the case of strong angular singularity $\frac12\leq s<1 $. When $\omega\neq 0$, by a slight modification, we can obtain \eqref{Gamma-noncut-1} and omit its proof for brevity. \eqref{Gamma-noncut-2} can be seen in \cite{AMUXY-JFA-2012}.
\end{proof}

\begin{lemma}\label{Lemma GammaL}(cf. \cite{Strain-KRM-2012})
For $0<s<1, \ell>0$, it holds that
\begin{equation*}\label{Lemma GammaL_1}
\left|\langle v\rangle^\ell\partial^\beta_v Y^\omega\Gamma(f,f)\right|_{L^2_v}\lesssim
\sum_{\beta_1+\beta_2\leq\beta,\atop\omega_1+\omega_2\leq \omega }\left|\langle v\rangle^\ell\partial^{\beta_1}Y^{\omega_1} f\right|_{L^2_{\gamma+2s}}
\left|\langle v\rangle^\ell\partial^{\beta_2}Y^{\omega_2} f\right|_{H^{2s}_{\gamma+2s}}.
\end{equation*}
\end{lemma}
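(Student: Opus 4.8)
The plan is to strip off all the derivatives by Leibniz-type identities, reducing matters to a single weighted trilinear estimate for $\Gamma$ itself, and then to invoke (with only minor bookkeeping adaptations) the corresponding $L^2_v$ bilinear estimate of Strain \cite{Strain-KRM-2012}.

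\textbf{Step 1: reduction to a model estimate.} Since $\Gamma(\cdot,\cdot)$ acts on the velocity variable at each fixed $(t,x)$ and is bilinear and local in $x$, the spatial component $t\nabla_x$ of each vector field $Y=t\nabla_x+\nabla_v$ commutes with $\Gamma$ through the ordinary Leibniz rule, while its $\nabla_v$ component is treated on exactly the same footing as the $\partial_v$-derivatives. For the velocity directions I would use the classical differentiation formula for the Boltzmann operator: starting from $\Gamma(f,g)=\mu^{-1/2}Q(\mu^{1/2}f,\mu^{1/2}g)+(\text{symmetric term})$ and performing the standard pre/post-collisional change of variables, $\partial^\beta_v\Gamma(f,g)$ can be written as a finite linear combination, over $\beta_1+\beta_2\le\beta$, of bilinear operators $\Gamma_{\widetilde{\mathbf{B}}}(\partial^{\beta_1}_vf,\partial^{\beta_2}_vg)$ in which $\widetilde{\mathbf{B}}$ ranges over a finite family of collision kernels of the same qualitative type as $\mathbf{B}$ in (A1)--(A3) (the kinetic factor $\Phi$ possibly replaced by one of its derivatives, plus extra Gaussian factors produced by differentiating $\mu^{1/2}$, which only improve the velocity decay). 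Combining this with the $x$-Leibniz rule and using that the two arguments of $\Gamma(f,f)$ coincide, the assertion follows once one has the model estimate
\[
\left|\langle v\rangle^\ell\,\Gamma_{\widetilde{\mathbf{B}}}(F,G)\right|_{L^2_v}\lesssim
\left|\langle v\rangle^\ell F\right|_{L^2_{\gamma+2s}}\left|\langle v\rangle^\ell G\right|_{H^{2s}_{\gamma+2s}},
\]
uniformly over the finite family $\{\widetilde{\mathbf{B}}\}$; summing the resulting contributions reproduces precisely the double sum over $\beta_1+\beta_2\le\beta$, $\omega_1+\omega_2\le\omega$ on the right-hand side (the variant kernels only improve the admissible exponents, so their contributions are dominated by the stated norms).

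\textbf{Step 2: the model estimate.} I would prove this by duality, writing $|\langle v\rangle^\ell\Gamma_{\widetilde{\mathbf{B}}}(F,G)|_{L^2_v}=\sup_{|h|_{L^2_v}\le1}\langle\Gamma_{\widetilde{\mathbf{B}}}(F,G),\langle v\rangle^\ell h\rangle$, splitting $\Gamma_{\widetilde{\mathbf{B}}}=\Gamma^{\mathrm{gain}}-\Gamma^{\mathrm{loss}}$ and applying the cancellation lemma. After removing the region near $\theta=\pi$ and the large-relative-velocity region (both harmless thanks to $\mu^{1/2}(u)$ and $\mathbf{b}\in L^1$ away from $0$), the pairing reduces to terms of the type $\big(F(u)-F(u')\big)G(v')\langle v\rangle^\ell h(v)$ and $F(u)\big(G(v')-G(v)\big)\langle v\rangle^\ell h(v)$. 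The first, together with the non-singular part of the second, is dominated by weighted convolution-type operators and controlled purely by weighted $L^2$ norms; for the genuinely singular term $F(u)(G(v')-G(v))\langle v\rangle^\ell h(v)$ one performs a dyadic decomposition $\theta\sim 2^{-j}$, estimates $G(v')-G(v)$ on the Fourier side using $|v'-v|\sim 2^{-j}|v-u|$, and, since the test function $h$ now carries no regularity, is forced to load the full $2s$ worth of velocity derivatives onto $G$ — which is exactly why the norm $|\langle v\rangle^\ell G|_{H^{2s}_{\gamma+2s}}$ appears — the $j$-sum converging geometrically because $2s<2$. The polynomial weight is propagated via $\langle v\rangle\lesssim\langle v'\rangle\langle u'\rangle$ and $\langle v\rangle\lesssim\langle u\rangle\langle v-u\rangle$, with any growth in $u$ absorbed into $\mu^{1/2}(u)$ (or $\mu^{1/4}$ after a Cauchy--Schwarz split), while the factor $\langle v-u\rangle^{\gamma+2s}$ is distributed between $F$ and $G$.

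\textbf{Main obstacle.} The delicate point is precisely this last mechanism. The estimates already available in Lemma \ref{Gamma-noncut} control $\langle v\rangle^\ell\Gamma$ only when tested against $h\in L^2_D\hookrightarrow H^s$, i.e. they lose no velocity regularity but measure the output in the \emph{weaker} (dual) norm; upgrading the left-hand side to the genuine $|\cdot|_{L^2_v}$ norm unavoidably costs $2s$ velocity derivatives on one factor, and this trade-off must be carried out uniformly in the weight exponent $\ell\ge0$ and over the entire soft-potential window $\max\{-3,-\tfrac{3}{2}-2s\}<\gamma<-2s$. This is exactly the content of the $L^2$ bilinear estimate in \cite{Strain-KRM-2012}, so in the write-up I would either cite it directly or indicate the routine modifications — tracking the weight $\langle v\rangle^\ell$ and the auxiliary kernels $\widetilde{\mathbf{B}}$ generated in Step 1 — needed to fit the present framework.
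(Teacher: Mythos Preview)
Your proposal is correct and matches the paper's treatment: the paper gives no proof at all for this lemma, merely citing \cite{Strain-KRM-2012} in the statement header, so the result is taken as essentially known from Strain's $L^2_v$ bilinear estimate together with the obvious Leibniz distribution of $\partial^\beta_v$ and $Y^\omega$. Your sketch of the reduction (Step~1) and of Strain's duality/dyadic argument (Step~2) is accurate and in fact more detailed than anything the paper provides.
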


We now state the following velocity weighted Sobolev interpolation inequality, which can be regarded as a modified version of the one in \cite[pp.27]{BCD-PLMS-2024}.
\begin{lemma}\label{ws-ine-lem}
For $\ell\leq0$, it holds that for $f\in C_0^\infty(\R^3)$
\begin{align}\label{ws-ine}
    |\lag v\rag^\ell\lag v\rag^{\frac{\ga}{2s}}f|_{L^2}\lesssim |\lag v\rag^\ell\lag v\rag^{\frac{\ga}{2}}f|^s_{\dot{H}^s}|\lag v\rag^\ell\langle v\rangle^{\frac{(1+s)\gamma}{2s}}\nabla_v f|^{1-s}_{\dot{H}^s}.
\end{align}
\end{lemma}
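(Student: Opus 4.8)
The plan is to treat \eqref{ws-ine} as a weighted fractional Gagliardo--Nirenberg--Sobolev interpolation of the type established in \cite{BCD-PLMS-2024}, the only new features being the overall decaying factor $\langle v\rangle^{\ell}$ with $\ell\le0$ and the specific triple of exponents, which are tuned so that the convexity relation
\[
\ell+\frac{\gamma}{2s}=s\Bigl(\ell+\frac{\gamma}{2}\Bigr)+(1-s)\Bigl(\ell+\frac{(1+s)\gamma}{2s}\Bigr)
\]
holds --- this is the same identity as $\tfrac{\gamma}{s}=s\gamma+(1-s)\tfrac{1+s}{s}\gamma$ used elsewhere in the paper. The first, purely algebraic, step is to factor the left-hand weight as $\langle v\rangle^{\ell+\gamma/(2s)}=\bigl(\langle v\rangle^{\ell+\gamma/2}\bigr)^{s}\bigl(\langle v\rangle^{\ell+(1+s)\gamma/(2s)}\bigr)^{1-s}$ and apply H\"older's inequality in $v$ with exponents $(1/s,1/(1-s))$, which gives at once
\[
\bigl|\langle v\rangle^{\ell+\gamma/(2s)}f\bigr|_{L^2}\le\bigl|\langle v\rangle^{\ell+\gamma/2}f\bigr|_{L^2}^{\,s}\,\bigl|\langle v\rangle^{\ell+(1+s)\gamma/(2s)}f\bigr|_{L^2}^{\,1-s}.
\]
It therefore suffices to upgrade each $L^{2}$ factor on the right to the corresponding fractional quantity appearing in \eqref{ws-ine}.

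Second, I would localize in velocity by a dyadic partition of unity $1=\sum_{j\ge0}\chi_j(v)^2$ with $\operatorname{supp}\chi_0\subset\{|v|\le2\}$ and $\operatorname{supp}\chi_j\subset\{2^{j-1}\le|v|\le2^{j+1}\}$ for $j\ge1$, on whose supports $\langle v\rangle\sim2^{j}$ so that all three weights are comparable to constants. On the $j$-th block one rescales $v=2^{j}w$ to a fixed annulus, converting the weighted estimate into an unweighted fractional interpolation inequality for $(\chi_j f)(2^{j}\,\cdot\,)$; the passage from the $L^2$-norm on the left to the $\dot H^s$-type quantities on the right is supplied by the weighted fractional Hardy--Sobolev bound
\[
\bigl\|\langle v\rangle^{\gamma/2}g\bigr\|_{L^2(\R^3)}\lesssim\|g\|_{\dot H^s(\R^3)},
\]
which follows from the Sobolev embedding $\dot H^s(\R^3)\hookrightarrow L^{6/(3-2s)}$ together with H\"older and is valid \emph{precisely} because $\gamma<-2s$ in \eqref{case} (this makes $\langle v\rangle^{\gamma/2}\in L^{3/s}$). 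Passing the algebraic weights through $\nabla_v$ and through $(-\Delta_v)^{s/2}$ produces commutator terms of strictly lower weight and/or lower order, which are absorbed into the same right-hand quantities; the blocks are then reassembled by another H\"older in the discrete index $j$ with exponents $(1/s,1/(1-s))$, the powers of $2^{j}$ produced by the rescaling cancelling exactly by the convexity relation above, and the restriction $\ell\le0$ guaranteeing that the resulting geometric series is summable.

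The part that needs genuine care is the interplay between the nonlocal seminorm $|\cdot|_{\dot H^s}$ and the algebraic weights. At the level of a single frequency/annulus the target order $0$ does not lie between the endpoint orders $s$ and $1+s$ carried by the two factors on the right, so a naive interpolation would fail; it is the extra decaying weight $\langle v\rangle^{\gamma/2}=\langle v\rangle^{(1+s)\gamma/(2s)-\gamma/(2s)}$ attached to $\nabla_v f$, via the Hardy--Sobolev inequality above (and hence via the hypothesis $\gamma<-2s$), that supplies the missing integrability and makes the estimate close --- this is the mechanism already exploited in \cite{BCD-PLMS-2024}, and here it is only adapted to the extra factor $\langle v\rangle^{\ell}$ and the present exponents. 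The remaining work is bookkeeping: controlling the commutators $[\chi_j,(-\Delta_v)^{s/2}]$ and $[\langle v\rangle^{a},(-\Delta_v)^{s/2}]$, which are of order $<s$ and are dominated by the lower-weight terms already present; verifying that the dyadic blocks are almost orthogonal in $\dot H^s$ so that the global weighted seminorm controls the $\ell^{2}$-sum of the localized ones; and noting that the constant is independent of $\nu$, which is automatic since no $\nu$ occurs in \eqref{ws-ine}. I expect this commutator/almost-orthogonality bookkeeping, rather than any single inequality, to be the main technical hurdle.
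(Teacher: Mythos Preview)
Your opening global H\"older split is the main gap. After writing
\[
\bigl|\langle v\rangle^{\ell+\gamma/(2s)}f\bigr|_{L^2}\le\bigl|\langle v\rangle^{\ell+\gamma/2}f\bigr|_{L^2}^{\,s}\,\bigl|\langle v\rangle^{\ell+(1+s)\gamma/(2s)}f\bigr|_{L^2}^{\,1-s},
\]
you declare that it ``therefore suffices to upgrade each $L^{2}$ factor'' to the matching $\dot H^{s}$ quantity in \eqref{ws-ine}. Neither upgrade is true on its own: the first would require $\|g\|_{L^{2}}\lesssim\|g\|_{\dot H^{s}}$ with $g=\langle v\rangle^{\ell+\gamma/2}f$, which fails for $f$ a smoothed indicator of the annulus $\{N\le|v|\le2N\}$ as $N\to\infty$ (the ratio grows like $N^{1/2}$); the second, $\|w f\|_{L^{2}}\lesssim\|w\nabla_{v}f\|_{\dot H^{s}}$ with identical weight $w$ on both sides, fails for the same example. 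Once the two factors are decoupled at the global level, the cross-cancellation between weight and derivative order that makes the product estimate hold is destroyed, and no subsequent localization or Hardy--Sobolev step will recover it. This is also why your scaling bookkeeping does not close: the weight convexity balances the $2^{j}$-powers coming from the \emph{weights}, but rescaling the Sobolev norms on each shell contributes an additional factor (precisely the order mismatch $0\neq s\cdot s+(1-s)(1+s)=1$ that you correctly flagged), and that factor is not absorbed by $\ell\le0$.

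The paper avoids this by not performing any global H\"older split: it decomposes the left-hand side dyadically and on each shell applies a \emph{single} fractional Gagliardo--Nirenberg inequality that keeps the two right-hand factors coupled from the outset, then sums over $j$. Your dyadic localization, your identification of $\gamma<-2s$ as the source of the needed integrability, and your flags on commutators and $\dot H^{s}$-almost-orthogonality are all sound and in fact more explicit than the paper's terse sketch --- but they belong around the shell-wise interpolation, not after a preliminary H\"older step that has already thrown the inequality away.
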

\begin{proof}
    We use the argument of dyadic decomposition of $\R^3$. Frist, we write
\begin{align}
    \lag v\rag^\ell\lag v\rag^{\frac{\ga}{2s}}f\sim{\FI}_{|v|\leq 1} 2^{-(\ell+\frac{\ga}{2s})}f+\sum\limits_{j=0}^{\infty}{\FI}_{2^j\leq|v|\leq 2^{j+1}} 2^{j(\ell+\frac{\ga}{2s})}f.\notag
\end{align}
Then, applying the fractional Gagliardo–Nirenberg interpolation inequality inequality on each dyadic shell yields
\begin{align}
\left|{\FI}_{|v|\leq 1}2^{-(\ell+\frac{\ga}{2s})}f\right|\leq \left|{\FI}_{|v|\leq 1}2^{-(\ell+\frac{\ga}{2})}f\right|_{\dot{H}^s}^s\left|{\FI}_{|v|\leq 1}2^{-(\ell+\frac{(1+s)\ga}{2s})}\na_vf\right|_{\dot{H}^s}^{1-s}.\notag
\end{align}
and
\begin{align}
\left|{\FI}_{2^j\leq|v|\leq 2^{j+1}}2^{j(\ell+\frac{\ga}{2s})}f\right|\leq \left|2^{j(\ell+\frac{\ga}{2})}f\right|_{\dot{H}^s}^s\left|{\FI}_{2^j\leq|v|\leq 2^{j+1}}2^{j(\ell+\frac{(1+s)\ga}{2s})}\na_vf\right|_{\dot{H}^s}^{1-s}.\notag
\end{align}
Summing over $j=0,1,2,\cdots$, then gives \eqref{ws-ine}.
This ends the proof of Lemma \ref{ws-ine-lem}.

\end{proof}

We conclude with two Strain-Guo type lemmas. First, we state the algebraic decay lemma from \cite{CLN-JAMS-2023}, and then we give a modified version that establishes stretched-exponential decay.

 \begin{lemma}\label{decay-algeb}
Let $g: [0, \infty)\times \mathbb{R}^3\rightarrow \mathbb{R}$ be a smooth function.
    Suppose that for given positive constants $m, \mathfrak{C}$, and $ \bar{\delta}$, the following holds:
    \begin{itemize}
        \item There is a uniform bound of the algebraically weighted estimate:
            \begin{equation}
            \sup_{0\leq t<\infty}\int_{\mathbb{R}^3}\langle v\rangle^{4m}g^2(t,v)\,\mathrm{d}v\leq \mathfrak{C}.\notag
        \end{equation}
        \item For $t\in [0, T)$,
         \begin{align}
\frac{\mathrm{d}}{\mathrm{d}t}\int_{\mathbb{R}^3}g^2(t,v)\,\mathrm{d}v+\bar{\delta}\int_{\mathbb{R}^3}\langle v\rangle^{-m}g^2(t,v)\,\mathrm{d}v\leq \left(\frac{243\pi}{2}+1\right)\mathfrak{C}\langle \bar{\delta}t\rangle^{-3}.\notag
        \end{align}
           \end{itemize}
   Then we have
    \begin{align}
        \int_{\mathbb{R}^3}g^2(t,v)\,\mathrm{d}v \leq C_{q_0,m} \mathfrak{C}.\notag
    \end{align}
\end{lemma}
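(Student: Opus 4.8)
The statement is a Strain--Guo type algebraic decay lemma: a differential inequality of the form $\frac{\mathrm{d}}{\mathrm{d}t}E(t) + \bar\delta F(t) \leq (\text{source})$, together with a uniform bound on a higher-weighted quantity, should force $E(t)$ to stay bounded (and in fact decay, though only boundedness is asserted in this particular lemma). The plan is to exploit the gap between the weight $\langle v\rangle^{-m}$ appearing in the dissipation term $F(t)$ and the weight $\langle v\rangle^{4m}$ controlling the a priori bound, via a splitting of $\mathbb{R}^3$ into a ball $\{|v|\leq R(t)\}$ and its complement, with $R(t)$ chosen to grow like $\langle\bar\delta t\rangle^{\theta}$ for a suitable small power $\theta$. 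On the ball, $\langle v\rangle^{-m} \gtrsim R(t)^{-m}$, so $F(t) \gtrsim R(t)^{-m}\int_{|v|\leq R(t)} g^2$; on the complement, $\int_{|v|>R(t)} g^2 \leq R(t)^{-4m}\int \langle v\rangle^{4m} g^2 \leq \mathfrak{C} R(t)^{-4m}$. Combining, $E(t) = \int g^2 \lesssim R(t)^m F(t) + \mathfrak{C}R(t)^{-4m}$.

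\textbf{Key steps.} First I would set up the interpolation/splitting inequality just described, obtaining $\int_{\mathbb{R}^3} g^2 \,\mathrm{d}v \leq R^m \int_{\mathbb{R}^3}\langle v\rangle^{-m} g^2\,\mathrm{d}v + \mathfrak{C} R^{-4m}$ for any $R>0$ and any fixed $t$. Substituting into the given differential inequality $\frac{\mathrm{d}}{\mathrm{d}t}E + \bar\delta F \leq (\tfrac{243\pi}{2}+1)\mathfrak{C}\langle\bar\delta t\rangle^{-3}$ and using $F \geq R^{-m}(E - \mathfrak{C}R^{-4m})$, one gets a closed inequality for $E$ alone:
\begin{equation*}
\frac{\mathrm{d}}{\mathrm{d}t}E(t) + \bar\delta R(t)^{-m} E(t) \;\leq\; \bar\delta \mathfrak{C} R(t)^{-5m} + \Bigl(\tfrac{243\pi}{2}+1\Bigr)\mathfrak{C}\langle\bar\delta t\rangle^{-3}.
\end{equation*}
Second, I would choose $R(t) = \langle \bar\delta t\rangle^{1/m}$ (so $R(t)^{-m} = \langle\bar\delta t\rangle^{-1}$), turning the damping coefficient into $\bar\delta\langle\bar\delta t\rangle^{-1}$, whose time-integral is $\log\langle\bar\delta t\rangle$ — giving an integrating factor $\langle\bar\delta t\rangle^{\bar\delta/\bar\delta}$ type growth; more precisely $\exp\bigl(\int_0^t \bar\delta\langle\bar\delta\tau\rangle^{-1}d\tau\bigr) \sim \langle\bar\delta t\rangle$. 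Third, apply Grönwall's inequality with this integrating factor: the source terms $\bar\delta\mathfrak{C}\langle\bar\delta t\rangle^{-5}$ and $\mathfrak{C}\langle\bar\delta t\rangle^{-3}$ are both integrable after multiplication by the integrating factor $\langle\bar\delta t\rangle$ (yielding $\langle\bar\delta t\rangle^{-4}$ and $\langle\bar\delta t\rangle^{-2}$, both integrable), so one concludes $\langle\bar\delta t\rangle E(t) \lesssim \langle\bar\delta t\rangle E(0) \cdot(\text{something bounded}) + C\mathfrak{C}$, hence $E(t) \lesssim \mathfrak{C}$ uniformly, after absorbing $E(0) \leq \mathfrak{C}$ from the a priori bound. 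The constant $C_{q_0,m}$ depends only on $m$ (and whatever $q_0$ denotes in the ambient notation) through the combinatorial constants in the dyadic/ball splitting and the Grönwall integration.

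\textbf{Main obstacle.} The delicate point is the precise bookkeeping of exponents in the choice of $R(t)$: one needs the damping power $-m$ to interact with both the a priori weight $4m$ and the prescribed source decay rate $\langle\bar\delta t\rangle^{-3}$ so that, after applying the integrating factor, everything remains integrable in time. The specific numerology — weight $4m$ in the hypothesis, exponent $3$ in the source, and the target conclusion — is calibrated exactly so that the choice $R(t)\sim\langle\bar\delta t\rangle^{1/m}$ works; a different source exponent or weight would require a different power and might only yield boundedness rather than decay, or might fail. I would also need to be slightly careful that the cutoff-splitting constant is explicit enough (this is presumably why the hypothesis carries the curious constant $\tfrac{243\pi}{2}+1$, which tracks the volume of balls and the sharp interpolation constant), but since the lemma only claims a qualitative bound $\lesssim_{q_0,m}\mathfrak{C}$, tracking the exact constant is not essential — only its finiteness and dependence on $m$ matter. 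The remainder is routine Grönwall manipulation.
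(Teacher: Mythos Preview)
The paper does not give its own proof of this lemma; it is simply quoted from \cite{CLN-JAMS-2023} (see the sentence preceding the lemma in the appendix). Your approach---the Strain--Guo splitting $\{\langle v\rangle\le R(t)\}\cup\{\langle v\rangle>R(t)\}$ with $R(t)=\langle\bar\delta t\rangle^{1/m}$, followed by Gr\"onwall with integrating factor $\sim\langle\bar\delta t\rangle$---is exactly the standard argument behind such lemmas and is correct for the stated conclusion; indeed it yields the stronger bound $\int g^2\,\mathrm{d}v\lesssim\mathfrak{C}\langle\bar\delta t\rangle^{-1}$. (A minor remark: after the rescaling $s=\bar\delta t$ the problem becomes $\bar\delta$-free, so the final constant does not depend on $\bar\delta$, consistent with the claim; the subscript $q_0$ in $C_{q_0,m}$ appears to be a typographical slip carried over from the neighbouring Lemma~\ref{decay-exponent}.)
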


For the reader’s convenience, we provide a modified proof of the following lemma, which corresponds to Lemma A.1 in \cite{CLN-JAMS-2023}, but with the second-order exponential weight function replaced by a first-order one.
\begin{lemma}\label{decay-exponent}
Let  $g: [0, T)\times \mathbb{R}^3\rightarrow \mathbb{R}$ be smooth for some given constant $T\in (0, \infty)$.
    Suppose that for given positive constants $q_0, \mathfrak{C}, \delta_0, \delta_1$, and $\tilde{q}\in (0, \frac{q_0}{2})$, the following holds:
    \begin{itemize}
        \item There is a uniform bound of the exponentially weighted estimate:
            \begin{equation}
            \sup_{0\leq t\leq T}\int_{\mathbb{R}^3}e^{q_0\langle v\rangle}g^2(t,v)\,\mathrm{d}v\leq \mathfrak{C}.\notag
        \end{equation}
        \item For proper functions $h:[0, T)\times \mathbb{R}^3\rightarrow \mathbb{R}$ and $\mathfrak{R}: [0, T)\rightarrow \mathbb{R}$ satisfying
         \begin{align}\label{decay-exponent-2}
\frac{\mathrm{d}}{\mathrm{d}t}\int_{\mathbb{R}^3}g^2(t,v)\,\mathrm{d}v+\delta_0\int_{\mathbb{R}^3}\langle v\rangle^{-m}g^2(t,v)\,\mathrm{d}v+\delta_1\int_{\mathbb{R}^3}h^2(t,v)\,\mathrm{d}v\leq \mathfrak{R}(t)
        \end{align}
        with
        \begin{align}
         \int_0^T e^{\tilde{q}\langle\delta_0 t\rangle^{\frac{1}{1+m}}}\mathfrak{R}(t)\,\mathrm{d}t\leq \mathfrak{C},\notag
        \end{align}
    \end{itemize}
    there exists a constant $C_{q_0,m}$ such that
    \begin{align}\label{decay-exponent-4}
        \sup_{0\leq t\leq T}e^{\tilde{q}(\delta_0 t)^{\frac{1}{1+m}}}\int_{\mathbb{R}^3}g^2(t,v)\,\mathrm{d}v+\delta_1\int_0^Te^{\tilde{q}\langle\delta_0 t\rangle^{\frac{1}{1+m}}}\int_{\mathbb{R}^3}h^2(t,v)\,\mathrm{d}v\mathrm{d}t\leq C_{q_0,m} \mathfrak{C}.
    \end{align}
\end{lemma}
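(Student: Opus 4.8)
\textbf{Proof plan for Lemma \ref{decay-exponent}.}

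The plan is to adapt the classical Strain--Guo interpolation trick to the stretched-exponential weight $e^{\tilde q(\delta_0 t)^{1/(1+m)}}$. First I would split the velocity integral at a time-dependent radius. For a threshold $R(t)$ to be chosen, write
\begin{align*}
\int_{\mathbb{R}^3}g^2\,\mathrm{d}v=\int_{\langle v\rangle\le R(t)}g^2\,\mathrm{d}v+\int_{\langle v\rangle> R(t)}g^2\,\mathrm{d}v.
\end{align*}
On the outer region the uniform exponential bound gives $\int_{\langle v\rangle>R}g^2\,\mathrm{d}v\le e^{-q_0 R}\int_{\mathbb{R}^3}e^{q_0\langle v\rangle}g^2\,\mathrm{d}v\le \mathfrak C\,e^{-q_0 R}$, while on the inner region $\langle v\rangle\le R$ we have $1\le R^m\langle v\rangle^{-m}$, so $\int_{\langle v\rangle\le R}g^2\,\mathrm{d}v\le R^m\int_{\mathbb{R}^3}\langle v\rangle^{-m}g^2\,\mathrm{d}v$. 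Denoting $\mathcal G(t)=\int g^2\,\mathrm{d}v$, this yields the interpolation inequality
\begin{align*}
\int_{\mathbb{R}^3}\langle v\rangle^{-m}g^2\,\mathrm{d}v\ \ge\ R^{-m}\bigl(\mathcal G(t)-\mathfrak C e^{-q_0 R}\bigr),
\end{align*}
valid for every $R>0$. Substituting into the differential inequality \eqref{decay-exponent-2} (and discarding the nonnegative $h$-term for the moment) gives
\begin{align*}
\frac{\mathrm{d}}{\mathrm{d}t}\mathcal G(t)+\delta_0 R^{-m}\mathcal G(t)\le \delta_0 R^{-m}\mathfrak C e^{-q_0 R}+\mathfrak R(t).
\end{align*}

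Next I would choose $R=R(t)$ to balance the decay rate against the driving term. The natural choice is $R(t)=\bigl(\tfrac{2(1+m)}{q_0}\,\tilde q\bigr)\langle\delta_0 t\rangle^{1/(1+m)}$ type scaling — concretely, pick $R(t)\sim c\,\langle\delta_0 t\rangle^{1/(1+m)}$ with $c$ tuned so that $\delta_0 R^{-m}\ge \frac{\mathrm{d}}{\mathrm{d}t}\bigl[\tilde q(\delta_0 t)^{1/(1+m)}\bigr]$ for all $t$, which is possible because $\frac{\mathrm{d}}{\mathrm{d}t}(\delta_0 t)^{1/(1+m)}=\frac{1}{1+m}\delta_0(\delta_0 t)^{-m/(1+m)}\sim \delta_0 R(t)^{-m}$ with this scaling. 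With such $R(t)$, the integrating factor $e^{\tilde q(\delta_0 t)^{1/(1+m)}}$ (which I abbreviate $W(t)$) satisfies $W'(t)\le \delta_0 R(t)^{-m}W(t)$, so multiplying the differential inequality by $W$ and using the product rule,
\begin{align*}
\frac{\mathrm{d}}{\mathrm{d}t}\bigl(W(t)\mathcal G(t)\bigr)\le W(t)\Bigl(\delta_0 R^{-m}\mathfrak C e^{-q_0 R}+\mathfrak R(t)\Bigr).
\end{align*}
Moreover, with $R(t)=c\langle\delta_0 t\rangle^{1/(1+m)}$ and $c$ chosen with $q_0 c\ge 2\tilde q$, one has $W(t)e^{-q_0 R(t)}\le e^{-\tilde q(\delta_0 t)^{1/(1+m)}}$ (up to a harmless constant from the $\langle\cdot\rangle$ versus $(\cdot)$ discrepancy), and $\delta_0 R^{-m}W(t)e^{-q_0R}$ is integrable in $t$ with integral bounded by $C_{q_0,m}\mathfrak C$; the term $W(t)\mathfrak R(t)$ is integrable with integral $\le \mathfrak C$ by hypothesis, after absorbing the elementary bound $e^{\tilde q(\delta_0 t)^{1/(1+m)}}\le e^{\tilde q\langle\delta_0 t\rangle^{1/(1+m)}}$. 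Integrating from $0$ to $t$ and using $\mathcal G(0)\le\mathfrak C$ then gives $\sup_{0\le t\le T}W(t)\mathcal G(t)\le C_{q_0,m}\mathfrak C$. Finally, to recover the $h$-term in \eqref{decay-exponent-4}, I would keep the $\delta_1\int h^2$ term throughout: multiplying the full inequality by $W$ and integrating, $\delta_1\int_0^T W(t)\int h^2\,\mathrm{d}v\,\mathrm{d}t$ is bounded by $W(t)\mathcal G(t)$ evaluated at the endpoints plus the same right-hand side, hence by $C_{q_0,m}\mathfrak C$, which is \eqref{decay-exponent-4}.

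The main obstacle I anticipate is purely technical bookkeeping: reconciling $(\delta_0 t)^{1/(1+m)}$ with $\langle\delta_0 t\rangle^{1/(1+m)}$ near $t=0$ and verifying the differential inequality $W'(t)\le \delta_0 R(t)^{-m}W(t)$ holds with a clean, explicit constant $c$ rather than asymptotically. One must check that the chosen $R(t)$ makes both the "error" term $\delta_0 R^{-m}\mathfrak C e^{-q_0 R}$ time-integrable against $W$ and that $q_0 c\ge 2\tilde q$ is compatible with $\tilde q\in(0,q_0/2)$ — this last inequality is exactly why the hypothesis $\tilde q<q_0/2$ appears. Since this is the first-order-weight analogue of Lemma A.1 in \cite{CLN-JAMS-2023}, the structure is identical; only the exponent $1/(1+m)$ replaces $1/(2+m)$ (reflecting $\langle v\rangle$ versus $\langle v\rangle^2$ in the weight), and I would simply track this change through the elementary calculus estimates above.
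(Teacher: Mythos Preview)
Your proposal is correct and follows essentially the same Strain--Guo splitting approach as the paper: multiply by the stretched-exponential weight $W(t)=e^{\tilde q(\delta_0 t)^{1/(1+m)}}$, split the velocity integral at the time-dependent threshold $R(t)\sim(\delta_0 t)^{1/(1+m)}$, absorb the inner piece into the dissipation, and use the hypothesis $\tilde q<q_0/2$ to make the outer piece integrable. The paper simply takes $c=1$ (i.e.\ $R(t)=(\delta_0 t)^{1/(1+m)}$) and hides your constant-matching in a $\lesssim$, but the argument is the same.
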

\begin{proof}
    We first multiply \eqref{decay-exponent-2} by $e^{\tilde{q}(\delta_0 t)^{\frac{1}{1+m}}}$ to have
    \begin{align}\label{lemma-exponent-pr1}
     &\frac{\mathrm{d}}{\mathrm{d}t}\int_{\mathbb{R}^3}e^{\tilde{q}(\delta_0 t)^{\frac{1}{1+m}}}g^2(t,v)\,\mathrm{d}v+\delta_0e^{\tilde{q}(\delta_0 t)^{\frac{1}{1+m}}}\int_{\mathbb{R}^3}\langle v\rangle^{-m}g^2(t,v)\,\mathrm{d}v+\delta_1e^{\tilde{q}(\delta_0 t)^{\frac{1}{1+m}}}\int_{\mathbb{R}^3}h^2(t,v)d\,\mathrm{d}v\nonumber\\
     \lesssim& e^{\tilde{q}(\delta_0 t)^{\frac{1}{1+m}}}\mathfrak{R}(t)+e^{\tilde{q}(\delta_0 t)^{\frac{1}{1+m}}}\tilde{q}(\delta_0t)^{-\frac{m}{1+m}}\int_{\mathbb{R}^3}g^2(t,v)\,\mathrm{d}v.
    \end{align}
Splitting $\mathbb{R}^3_v$ into two regions $\langle v\rangle\leq (\delta_0t)^{\frac1{1+m}}$ and $\langle v\rangle\geq (\delta_0t)^{\frac1{1+m}}$ in the last term of \eqref{lemma-exponent-pr1}, we have
\begin{align}
  &e^{\tilde{q}(\delta_0 t)^{\frac{1}{1+m}}}\tilde{q}
  (\delta_0t)^{-\frac{m}{1+m}}\int_{\mathbb{R}^3}g^2(t,v)\,\mathrm{d}v\nonumber\\
  \lesssim&e^{\tilde{q}(\delta_0 t)^{\frac{1}{1+m}}}\tilde{q}(\delta_0t)^{-\frac{m}{1+m}}\int_{\langle v\rangle\leq (\delta_0t)^{\frac1{1+m}}}g^2(t,v)\,\mathrm{d} v+e^{\tilde{q}(\delta_0 t)^{\frac{1}{1+m}}}\tilde{q}(\delta_0t)^{-\frac{m}{1+m}}\int_{\langle v\rangle\geq (\delta_0t)^{\frac1{1+m}}}g^2(t,v)\,\mathrm{d} v\nonumber\\
\lesssim&\tilde{q}e^{\tilde{q}\langle t\rangle^{\frac{1}{1+m}}}\int_{\mathbb{R}^3} \langle v\rangle^{-m} g^2(t,v)\,\mathrm{d} v+e^{\tilde{q}(\delta_0 t)^{\frac{1}{1+m}}}\tilde{q}(\delta_0t)^{-\frac{m}{1+m}}\int_{\langle v\rangle\geq (\delta_0t)^{\frac1{1+m}}}g^2(t,v)\,\mathrm{d} v\nonumber\\
\lesssim&\tilde{q}e^{\tilde{q}\langle t\rangle^{\frac{1}{1+m}}}\int_{\mathbb{R}^3} \langle v\rangle^{-m} g^2(t,v)\,\mathrm{d} v+e^{\tilde{q}(\delta_0 t)^{\frac{1}{1+m}}}\tilde{q}(\delta_0t)^{-\frac{m}{1+m}}\int_{\mathbb{R}^3}e^{-q_0(\delta_0t)^{-\frac1{1+m}}}e^{q_0\langle v\rangle}g^2(t,v)\,\mathrm{d} v\nonumber\\
\lesssim&\tilde{q}e^{\tilde{q}\langle t\rangle^{\frac{1}{1+m}}}\int_{\mathbb{R}^3} \langle v\rangle^{-m} g^2(t,v)\,\mathrm{d} v+\tilde{q}(\delta_0t)^{-\frac{m}{1+m}}e^{-\frac{q_0}2(\delta_0t)^{-\frac1{1+m}}}\int_{\mathbb{R}^3}e^{q_0\langle v\rangle}g^2(t,v)\,\mathrm{d} v.\nonumber
\end{align}
Substituting the above inequality into \eqref{lemma-exponent-pr1},  one can deduce \eqref{decay-exponent-4} as follows:
\begin{align}
&\sup_{0\leq t\leq T}e^{\tilde{q}(\delta_0 t)^{\frac{1}{1+m}}}\int_{\mathbb{R}^3}g^2(t,v)\,\mathrm{d}v+\delta_1\int_0^Te^{\tilde{q}\langle\delta_0 t\rangle^{\frac{1}{1+m}}}\int_{\mathbb{R}^3}h^2(t,v)\,\mathrm{d}v\mathrm{d}t\nonumber\\
\leq&\int_{\mathbb{R}^3}g^2(0,v)\,\mathrm{d}v+\int_0^T e^{\tilde{q}(\delta_0 t)^{\frac{1}{1+m}}}\mathfrak{R}(t)\,\mathrm{d}t\nonumber\\
&+\int_0^T\tilde{q}(\delta_0t)^{-\frac{m}{1+m}}e^{-\frac{q_0}2(\delta_0t)^{-\frac1{1+m}}}\int_{\mathbb{R}^3}e^{q_0\langle v\rangle}g^2(t,v)\,\mathrm{d} v\mathrm{d}t\nonumber\\
\leq &2\mathfrak{C}+C_{q_0,m}\mathfrak{C}\nonumber
\leq C_{q_0,m}\mathfrak{C}.
\end{align}
Thus the proof of Lemma \ref{decay-exponent} is complete.
\end{proof}

\bigbreak
\begin{center}
{\bf Acknowledgment}
\end{center}
The research of YJL was supported by the National Natural Science Foundation of China (Project No.~12171176). The research of SQL was supported by the grant from the National Natural Science Foundation of China (Project No.~12325107). The work of QHX was supported by the National Natural Science Foundation of China (Project No.~12271506) and the National Key Research and Development Program of
China (Project No.~2020YFA0714200). The research of HJZ was supported by three grant from National Natural Science Foundation of China  (Project No.~ 12221001, 12371225 and 12571242 ).

\medskip
\noindent\textbf{Data Availability Statement:}
Data sharing is not applicable to this article as no datasets were generated or analysed during the current study.

\noindent\textbf{Conflict of Interest:}
The authors declare that they have no conflict of interest.

\end{document}